\numberwithin{equation}{section}
\newtheorem{lemma}[equation]{Lemma}
\newtheorem{conjecture}[equation]{Conjecture}
\newtheorem{theorem}[equation]{Theorem}
\newtheorem{corollary}[equation]{Corollary}
\newtheorem{question}[equation]{Question}
\newtheorem{proposition}[equation]{Proposition}
\newtheorem*{A}{Theorem A}
\newtheorem*{C'}{Theorem C'}
\newtheorem*{C''}{Theorem C''}
\newtheorem*{A'}{Theorem B}
\newtheorem*{A''}{Theorem C}
\newtheorem*{3}{Theorem D}
\newtheorem*{4}{Theorem E}
\newtheorem*{5}{Theorem F}
\newtheorem*{6}{Theorem G}
\newcommand{\beql}[1]{\begin{equation}\label{#1}}
\newcommand{\eeq} {\end{equation}}
\font\Bbb=msbm10
\def\F{\hbox{\Bbb F}}
\def\FF{\hbox{\rm F}}
\def\aut{\mathrm{Aut}}
\def\rad{\mathrm{Rad}}
\def\hom{\mathrm{Hom}}
\def\Hom{\mathrm{Hom}}
\def\FFF{{\mathrm{F}_4}}
\def\EE{{\mathrm{E}}}
\def\G{{\mathrm{G}}}
      \def\PSL{{\rm PSL}}
      \def\SL{{\rm SL}}
      \def\SU{{\rm SU}}
      \def\GU{{\rm GU}}
      \def\PSU{{\rm PSU}}
      \def\Sp{{\rm Sp}}
      \def\GL{{\rm GL}}
      \def\POmega{{\rm P\hbox{$\Omega$}}}
      \def\Re{\mathrm{{^2}G_2}}
      \def\Sz{{\rm Sz}}
\begin{document}

\title[Presentations of Simple Groups]
{Presentations of Finite Simple Groups:  \\
Profinite and Cohomological Approaches}
\author[Guralnick et al]{Robert Guralnick}
\address
{Department of Mathematics, University of Southern California,
 Los Angeles CA 90089-2532, USA}
\email{guralnic@usc.edu}
\author[]{William M. Kantor}
\address{Department of Mathematics, University of Oregon,
Eugene, OR  97403-1222, USA}
\email{kantor@math.uoregon.edu}
\author[]{Martin Kassabov}
\address{Department of Mathematics, Cornell University, Ithaca, NY 14853, USA}
\email{kassabov@math.cornell.edu}
\author[]{Alexander Lubotzky}
\address{Department of Mathematics, Hebrew University, 
Givat Ram, Jerusalem 91904, Israel}
\email{alexlub@math.huji.ac.il}
\date{\today}
\thanks{The authors were partially supported by
        NSF grants DMS 0140578, DMS 0242983, DMS 0600244
          and   DMS 0354731.  The  authors are grateful for
the support and hospitality of   the Institute for Advanced Study, where
this research was carried out.
The research by the fourth author also was supported by the ISF, 
the Ambrose Monell Foundation and the Ellentuck Fund.  We   also 
thank the two referees for their careful reading and comments.}

\begin{abstract}
We prove the
following three closely related results:
\begin{enumerate}
\item Every finite simple group 
$G$ has a profinite presentation with 2 generators and at
most $18$ relations.
\item If $G$ is a finite simple group, $F$  a field and
$M$  an $FG$-module, then
$ \dim H^2(G,M) \le (17.5) \dim M$.
\item If $G$ is a finite group, $F$ a field  and $M$  an irreducible
faithful  $FG$-module, then $\dim H^2(G,M) \le (18.5) \dim M$.
\end{enumerate}
\end{abstract}

\maketitle

\centerline{Dedicated to our friend and colleague Avinoam Mann }

\tableofcontents

\section{Introduction}

The main goal of this paper is to prove the following three
results which are essentially equivalent to each other.  Recall
that a quasisimple group is one that is perfect and  
simple modulo its center.  Note that the last theorem
is about all finite groups. 

\begin{A} Every finite quasisimple group $G$ has a profinite
presentation with $2$ generators and at most $18$ relations.
\end{A}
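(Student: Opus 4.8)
\emph{Proof strategy.} The plan is to reduce Theorem~A to a uniform linear bound on second cohomology and to prove that bound by a case analysis over the classification of finite simple groups. Two classical inputs enter the reduction. First, every finite quasisimple group is generated by two elements: finite simple groups are $2$-generated (by the classification), and the universal central cover of a perfect group is a Frattini extension, so a $2$-generating pair lifts. Second, for a finite group with a fixed number of generators the minimal number of relations in a profinite presentation is controlled by cohomology. Concretely, any discrete presentation $G=\langle x_1,\dots,x_d\mid w_1,\dots,w_r\rangle$ yields, via the five-term exact sequence of $1\to N\to F\to G\to 1$ and $H^2(F,M)=0$, the inequality
\[
\dim_E H^2(G,M)\ \le\ (r-d+1)\,\dim_E M\ -\ \dim_E M^G\ +\ \dim_E H^1(G,M)
\]
for every prime $p$ and every $\F_pG$-module $M$, with $E=\mathrm{End}_{\F_pG}(M)$; by Lubotzky's theorem the cohomological lower bound is attained in the profinite category, and the profinite estimate is in fact slightly sharper than the discrete one (the coefficient of $\dim_E M$ improves for the modules on which the two generators of $G$ are independent). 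Since $d(G)=2$, since the Schur multipliers of quasisimple groups are bounded (so the trivial module, carrying the $p$-part of $M(G)$, is harmless), and since $\dim_E H^1(G,M)=O(\dim_E M)$ by the known first-cohomology bounds, it follows that the estimate $\dim_E H^2(G,M)\le(17.5)\,\dim_E M$ of abstract item~(2) gives $G$ a profinite presentation on $2$ generators and at most $18$ relations --- the half-unit of slack being exactly what the lower-order corrections consume. It remains to prove that cohomological bound.

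For the $H^2$-bound one first reduces $M$: from a composition series and the subadditivity of $\dim H^2(G,-)$ on short exact sequences one may assume $M$ irreducible (the constant is unchanged), then extend scalars to assume $M$ absolutely irreducible over $k=\overline{\F_p}$, and finally use the Lyndon--Hochschild--Serre spectral sequence for $Z(G)\trianglelefteq G$ to reduce the quasisimple $G$ to the simple group $S=G/Z(G)$, the centre contributing only $O(\dim M)$ because it is cyclic-by-bounded with bounded $p$-rank. So the heart of the matter is: for $S$ a finite simple group and $M$ a nontrivial absolutely irreducible $kS$-module, show $\dim H^2(S,M)\le c\,\dim M$ with $c<17.5$. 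For $S$ alternating this follows from the known cohomology of symmetric and alternating groups (stability together with explicit low-degree computations); for $S$ sporadic it is a finite verification; for $S$ of Lie type in cross characteristic ($p$ different from the defining characteristic) it follows from the known linear bounds on cross-characteristic $H^1$ together with control of the $p$-local structure of $S$.

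The substantial case is $S$ of Lie type in its defining characteristic, which I would handle by induction on the (twisted) Lie rank. In rank $\le 2$ --- the groups $\PSL_2(q)$, $\PSU_3(q)$, $\Sz(q)$, ${}^2\mathrm{G}_2(q)$, $\PSL_3(q)$, $\PSp_4(q)$, $\PSU_4(q)$, $\PSU_5(q)$, $\G_2(q)$, ${}^3\mathrm{D}_4(q)$, ${}^2\mathrm{F}_4(q)$ --- I would bound $H^2$ directly, using algebraic-group cohomology (generic cohomology of Cline--Parshall--Scott, bounds on $H^1$ and $H^2$ of Weyl modules, Frobenius-kernel arguments) for all but finitely many modules and explicit computation for the remaining small ones. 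In rank $\ge 3$ I would realize the universal cover of $S$ as the amalgam of its subsystem subgroups of rank $\le 2$, glued along their rank-$\le 1$ intersections --- the Curtis--Steinberg--Tits presentation in the untwisted case, a Phan-type presentation in the twisted cases --- and run a Mayer--Vietoris argument: $\dim H^2(S,M)$ is bounded by a sum of $\dim H^2$ and $\dim H^1$ over the rank-$2$ pieces, with a correction over the rank-$1$ pieces. What keeps the constant from growing with the rank is that, for a fixed irreducible $M$, the restriction of $M$ to a rank-$2$ subsystem subgroup is cohomologically trivial in degrees $1$ and $2$ for all but boundedly many of those subgroups --- the relevant cohomology depends only on the corresponding components of the highest weight and vanishes once those are large --- so only a bounded number of rank-$2$ Levis contribute and the induction hypothesis applies to each.

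I expect the genuine obstacle to be precisely these defining-characteristic base cases of small rank over small fields ($\F_2$, $\F_3$, $\F_4$). There the inductive mechanism has nothing smaller to feed on, $H^2(S,M)$ must be pinned down module by module, and these are exactly the groups for which $\dim H^2(S,M)/\dim M$ is largest; so proving the \emph{sharp-enough} inequality --- constant below $17.5$, rather than some softer linear bound --- requires careful, case-by-case work combining explicit cohomology computations, bounds on the cohomology of Weyl modules, and the known Schur multipliers, and it is here that the final relation count of $18$ is won or lost. A secondary difficulty is making the cross-characteristic estimate uniform across all the families, since the $p$-local structure there, though ``small'', varies from family to family.
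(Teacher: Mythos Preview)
Your reduction of Theorem~A to an $H^2$-bound via (1.1) and (1.4), the reduction from quasisimple to simple via the spectral sequence for $Z(G)\trianglelefteq G$, and the overall case split along the classification all match the paper's framework. The low-rank and cross-characteristic cases are also in the right spirit, though the paper treats the defining-characteristic rank $\le 2$ groups by elementary computations inside a Borel or parabolic subgroup (using Lemma~\ref{H2 for abelian} and Lemma~\ref{usual}) rather than by generic cohomology.

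The substantive divergence --- and the place where your sketch has a real gap --- is the high-rank step. Your plan is purely cohomological: realize $G$ as an amalgam of its rank-$2$ subsystem subgroups along rank-$1$ intersections and run a Mayer--Vietoris argument, relying on the claim that for a fixed irreducible $M$ only boundedly many rank-$2$ Levis have nonzero $H^{\le 2}$ on the restriction of $M$. That vanishing claim is neither established nor obvious: the restriction of an irreducible $G$-module to a Levi is typically highly reducible and its cohomology does not reduce to ``the corresponding components of the highest weight'' in any straightforward way; without this claim, the number of contributing pieces grows with the rank and so does your constant. The paper sidesteps this entirely by reversing direction. Having obtained $H^2$-bounds (hence $\hat r$-bounds) for the alternating groups and for the rank $\le 2$ Lie-type groups, it constructs high-rank \emph{presentations} directly: for $\SL(n,q)$ it glues just two pieces --- $A_n$ acting as permutation matrices and a single $\SL(4,q)$ --- adding only six extra relations, and invokes the Curtis--Steinberg--Tits relations (Lemma~\ref{curtis}) to verify the result is $\SL(n,q)$; for the remaining classical and exceptional groups it glues the large type-$A$ subsystem (an $\SL$, now already bounded) with a single rank-$2$ group. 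Together with Lemma~\ref{saving}, which trades surplus generators for fewer relations in the profinite category, this two-piece gluing yields $\hat r(G)\le 18$ without ever analysing how $M$ restricts to Levi subgroups. Theorem~B is then read off from Theorem~A via (1.4), rather than conversely.
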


\begin{A'} If $G$ is a finite quasisimple group, $F$  a field and
$M$  an $FG$-module, then   
$ \dim H^2(G,M)\leq (17.5) \dim M $.
\end{A'}

\begin{A''} If $G$ is a finite group, $F$  a field and $M$ an   irreducible
faithful $FG$-module, then $\dim H^2(G,M) \le (18.5) \dim M$.
\end{A''}

All three theorems depend on the classification of finite simple groups.
One could prove Theorems A and B independently of the classification
for the known simple groups.

We  abuse notation somewhat and say that an $FG$-module is faithful
if $G$ acts faithfully on $M$.  We call $M$   a trivial $G$-module
if it is $1$-dimensional and $G$ acts trivially on $M$.

In \cite{pap1}, the predecessor of this article, we showed
that  every finite non-abelian simple group, with the
possible exception of the family  ${^2}\G_2(3^{2k+1})$, has a bounded short
presentation (with at most $1000$ relations  -- short being
defined in terms of the sums of the lengths of the relations). 
We deduced results similar to the first two theorems above
but with larger constants.  In \cite{pap3}, we show that 
every finite simple
group (with the possible exception of ${^2}\G_2(3^{2k+1})$) has
a presentation with at most $2$ generators and $100$ relations.

In many cases, the results proved here and in
\cite{pap3}  are  much better, e.g, for $A_n$ and $S_n$,
we produce presentations with $3$ generators and at most $7$
relations \cite{pap3}.  Here we give still better results for these
groups in the profinite case -- there are profinite presentations
with $2$ generators and at most $4$ relations.

We believe that with more effort (and some additional
ideas) the constants in  these  three theorems
 may be dropped to $4$, $2$ and $1/2$
respectively.  

One of the methods used in this paper is  possibly of 
as much interest  as  the 
results themselves.  We show how to combine cohomological and
profinite presentations arguments -- by going back and forth
between the two topics to deduce results on both.
The bridge between the two subjects is a formula given in \cite{lub2}
which states: If $G$ is a finite group and $\hat{r}(G)$ 
is the minimal number of relations in a profinite presentation
of $G$, then 
$$
\hat{r}(G)= \sup_p \sup_M \Big( \Big\lceil  \frac{\dim
H^2 (G, M) - \dim H^1 (G, M)}{\dim M} \Big\rceil  + d(G) - \xi_M
\Big),  \eqno(1.1)
$$  
 where $d(G)$ is the minimum number of  generators for
$G$,
$p$ runs over all primes, $M$ runs over all irreducible
${\F}_pG$-modules, and $\xi_M = 0$ if $M$ is the trivial
module and 1 if not.
By  \cite{gurhoff}, if $G$ is a quasisimple finite group, then 
for every $\mathbb{F}_pG$ module $M$,
$$
\dim \ H^1(G,M) \leq  (1/2)\dim \ M. \eqno(1.2)
$$
Set
$$
h'_p(G)=\max_{M}  \frac{\dim H^2(G,M)}{\dim M}, \ \mathrm{and} \  
      h'(G) = \max_p h'_p(G),  \eqno(1.3)
$$
where $M$ ranges over nontrivial irreducible $\F_pG$-modules.
  If $G$ is a finite quasisimple
group, then $d(G) \leq 2$ \cite[Theorem B]{ag}
and $\dim H^2(G,\F_p) \le 2$ \cite[pp. 312--313]{gls3}) and so
$$
\max \{2, \lceil h'(G) + 1/2 \rceil\} \le \hat{r}(G) \le  
\max\{4, \lceil h'(G) + 1 \rceil \}.   \eqno(1.4)
$$
This explains how Theorems A and B are related and are
essentially equivalent.   We see in Section \ref{faithful section}
that Theorem B implies Theorem C.  On the other hand,
the   bound for Schur multipliers for finite simple groups
 and Theorem C implies a version
of Theorem B.

We also define
$$
h(G)= \max_{M,p}  \frac{\dim H^2(G,M)}{\dim M},   \eqno(1.5)
$$
 where $M$ ranges over all $\F_pG$-modules.  

We now give an outline of the paper. 
After some preparation
in Sections \ref{prelim1}, \ref{covering}, and \ref{faithful section},
we show in Sections \ref{alternating}, 
\ref{SLlow}  and \ref{low rank}, 
respectively,  that:

\begin{3} For every $n$, $h(A_n) < 3$  and  $h(S_n) < 3$
and $\hat{r}(A_n)$ and $\hat{r}(S_n) \le 4$.
\end{3}

\begin{4}  For every prime power $q$ and  $2 \leq n \leq 4, \  h(\SL(n,q)) 
\leq 2$. 
\end{4}

\begin{5}   $\max \{h(G), \hat{r}(G)\} \leq 6$
for each rank $2$ quasisimple finite group $G$ of Lie type, 

 \end{5}

In fact, the results are more precise -- see sections \ref{alternating}, 
\ref{SLlow}  and \ref{low rank} for details.

From (1.4) we see that Theorems D, E and F  imply
that all the groups in those theorems have profinite presentations
with a small number of relations.   
In sections \ref{SLgen} and \ref{classical}, 
we repeat our   ``gluing''
arguments from \cite[\S 6.2]{pap1}  to show how to deduce from these cases the
existence of bounded (profinite) presentations for
all the quasisimple finite groups of Lie type.  In fact, this time 
the proof is easier and the
result is stronger as we do not insist of having a short
presentation as we did in \cite{pap1}; we count only the number of
relations but not their length.
Moreover,  Lemma \ref{saving}
gives an interesting method for saving relations which
seems to be new (the analog is unlikely to work for discrete presentations).
In Section \ref{sporadic}, we discuss the sporadic simple groups.   
If a Sylow $p$-subgroup has order at most $p^{m}$, one can use
the main result of \cite{holt} to deduce the bound 
$h'_p(G) \le 2m$.  In many sporadic cases, discrete presentations for the  
groups  are known
\cite{rwilson} and the results follow.  There are not too many
additional cases to consider.

This completes the outline of the proof of Theorem A.  Applying  (1.4) in
the reverse direction we deduce Theorem B (at least for $\F_p$ -- however,
changing the base field does not change the ratio $\dim H^2(G,M)/\dim M$-- see
Lemma \ref{extension of scalars} and the discussion following it).
 In Section \ref{faithful section},
we prove Theorem \ref{faithful} which shows that 
Theorem B implies Theorem C.

Holt \cite{holt} conjectured Theorem C for some constant  
 $C$. He proved  that
$$
\dim H^2(G,M) \le 2e_p(G) \dim M,
$$
for $M$ an irreducible faithful $G$-module,  where 
$p^{e_p(G)}$ is the order of a Sylow
$p$-subgroup of $G$.   Holt also reduced his proof to simple groups.
However,  he was proving a weaker result than we are aiming for, and
his reduction methods are not sufficient for our purposes.

As we have already noted in (1.2),  the analog of Theorem B for $H^1$ holds
with constant   $1/2$.   It is relatively easy to see
that this implies that 
the analog of Theorem C  for $H^1$ with constant $1/2$ is valid.   
We give examples
to show that the situation for higher cohomology groups is different
(see Section \ref{higher cohomology}).    In particular, the following
holds:

\begin{6}  Let $F$  be an
algebraically closed  field of characteristic 
$p > 0$ and   let $k$ be a positive integer.
\item  
There exists a sequence of finite groups $G_i, i \in \mathbb{N}$
and irreducible faithful $FG_i$ modules $M_i$ such that: 
\begin{enumerate}
\item $\lim_{i \rightarrow \infty}  \dim M_i = \infty$, 
\item $\dim H^k(G_i,M_i) \ge e (\dim M_i)^{k-1}$ for some
constant $e=e(k,p)> 0$, and
\item   if $k \ge 3$, then
$ \displaystyle { \lim_{i \rightarrow \infty} 
\frac{\dim H^k(G_i,M_i)}{\dim M_i} = \infty.}$
 \end{enumerate}
\end{6}

Thus the analog of Theorem C for $H^k$ with $k \ge 3$ does not
hold for any constant -- although it is still possible that an analog of
Theorem B holds.   This also shows that  $\dim H^2(G,M)$ 
can be arbitrarily
large for  faithful absolutely irreducible modules -- it is not
known whether this is possible for 
$H^1(G,M)$ under the same hypotheses.   We suspect that there is
an  upper  bound for $\dim H^k(G,M)$ of the same form as the lower
bound in (2) above.

Finally, in Section \ref{applications} we give some applications of the 
results in \cite{pap1}
and the current paper for general finite groups, as well as some questions.
An especially intriguing question is related to the fact that 
$\hat{r}(G) \le r(G)$, 
the minimal number of relations
required in any presentation of the group $G$.  As far as we know,
it is still not known whether for some finite group $G$, we can have
$\hat{r}(G) < r(G)$.

There is a long history of studying presentations of groups and,
in particular, the number and length of relations required
for finite groups.  Presentations of groups also rise  in connection with various
problems about counting isomorphism classes of groups.
Much of the work done recently on these questions (e.g., \cite{pap1}, 
\cite{korlu}, \cite{lub1}, and
\cite[Chapter 2]{lubsegal}) was motivated by the paper \cite{mann}
of Avinoam Mann.
We dedicate this paper to him on the occasion of his retirement.

\section{General Strategy and Notation} \label{strategy}

\subsection{Strategy} \hfil\break
  
We outline a  method for obtaining bounds of
the form $\dim H^2(G,M) \le C \dim M$ for some constant $C$.
Here $G$ is a finite group and $M$ is an $FG$-module with
$F$ a field of characteristic $p > 0$ (in characteristic zero,
$H^2(G,M)=0$ -- see Corollary \ref{coprime}).  
There are several techniques 
that we use to reduce the problem to smaller groups.

The first is to use the long exact sequence for cohomology 
(Lemma \ref{cohomology sequence}) to reduce to the case
that $M$ is irreducible.  Then we use Lemma \ref{extension of scalars}, 
which allows us to assume that we are over an algebraically closed
field and that $M$ is absolutely irreducible (occasionally, it is convenient
to use this in the reverse direction and assume that $M$ is finite and 
over $\F_p$  -- see the discussion after Lemma \ref{extension of scalars}).
We also  use the standard fact that $H^2(G,M)$ embeds in $H^2(H,M)$ whenever
$H \le G$ contains a Sylow $p$-subgroup of $G$ \cite[p. 91]{relmod}.
Typically, $M$ will no longer
be irreducible as an $FH$-module, but we can reduce to that case as above.

We  use these reductions often without comment.  

We use our results on low rank finite groups of Lie type and
the alternating groups to provide profinite presentations for
the larger rank finite groups of Lie type,  and so also bounds
for $H^2$ via (1.4).

\subsection{Notation}  \hfil\break

We use standard   terminology for finite groups.  In particular,
$\FF(G)$ is the Fitting group, $\FF^*(G)$ is the generalized Fitting
subgroup and $O_p(G)$ is the maximal normal $p$-subgroup of $G$.
A component is a subnormal quasisimple subgroup of $G$. 
The (central) product of all components of $G$ is denoted by $E(G)$. 
  Note that  $\EE(G)$ and $\FF(G)$ and commute. 
The generalized Fitting subgroup is $\FF^*(G) : = \EE(G)\FF(G)$.
 We let $C_t$ denote the
cyclic group of order $t$. 
See \cite{aschbook} for a general reference for finite group theory.
We also use \cite{gls3} as a general reference for properties of the
finite simple groups --  the Schur multipliers and outer automorphism
groups of all the simple groups are given there.

If $M$ is an $H$-module, $M^H$ is the set of $H$ fixed 
points on $M$ and $[H,M]$ is  the submodule 
generated by $\{hv-v| h \in H, v \in M\}$.  Note that $[H,M]$
is the smallest submodule $L$ of $M$ such that $H$ acts trivially 
on $M/L$.  If $V$ is a module for the subgroup $H$ of $G$, 
 $V_H^G$ is the induced module.

\section{Preliminaries on Cohomology} \label{prelim1}

Most of the results in this section are well known.  See \cite{baba}, 
\cite{brown}, \cite{maclane} and
\cite{relmod} for standard facts about group cohomology. 

We first state a result that is an easy  corollary of Wedderburn's 
theorem on finite division
rings. We give a somewhat different proof based
on Lang's theorem (of course, 
Wedderburn's theorem is a special case of Lang's
Theorem).   See also a result of Brauer \cite[19.3]{feit}
that is slightly weaker.

\begin{lemma} \label{endomorphism}
Let $K$ be a (possibly infinite)
field of characteristic $p > 0$, and
let $G$ be a finite group.   Let $V$ be an irreducible
$KG$-module.  
\begin{enumerate} 
\item There is a finite subfield $F$ of $K$
and an irreducible $FG$-module $W$ 
with $V \cong W \otimes_F K$. 
\item  $\mathrm{End}_{KG}(V)$ is  a field.
\end{enumerate}
\end{lemma}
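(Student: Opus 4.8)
The plan is to handle the two parts somewhat independently, though (1) will feed into (2). For part (1), I would invoke a standard field-of-definition argument. Let $F_0$ be the prime field $\mathbb{F}_p$ and let $F_1$ be the subfield of $K$ generated by the matrix entries of a representation affording $V$ with respect to some $K$-basis; since $G$ is finite and each entry is algebraic over $\mathbb{F}_p$ (being a root of the characteristic polynomial of a group element, which has finite order), $F_1$ is a finite field. Let $W_0$ be the corresponding $F_1G$-module, so $W_0 \otimes_{F_1} K \cong V$. The issue is that $W_0$ need not be irreducible over $F_1$. However, if $W_0 = W \oplus (\text{other stuff})$ or more generally has a composition factor $W$, then by a counting/dimension argument $\dim_{F_1} W = \dim_K V$ forces $W_0$ itself to be irreducible once we note that extension of scalars of a nonzero $F_1G$-module to $K$ has the same dimension; more carefully, any composition factor $W$ of $W_0$ gives a nonzero map or submodule after tensoring, and since $V$ is irreducible and $\dim_K(W \otimes_{F_1} K) = \dim_{F_1} W \le \dim_{F_1} W_0 = \dim_K V$, we get $W_0$ irreducible. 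Then set $F = F_1$.

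For part (2), the plan is to use Lang's theorem as the excerpt advertises. Let $D = \mathrm{End}_{KG}(V)$; by Schur's lemma $D$ is a division ring, finite-dimensional over its center $Z$, and it suffices to show $D$ is commutative. By part (1) I may replace $K$ by a finite field (the endomorphism ring only grows in a controlled way — or one reduces directly): working with the finite $F_1G$-module $W$, $\mathrm{End}_{F_1 G}(W)$ is a finite division ring, hence a field by Wedderburn. To get Lang's theorem into the picture in the form the authors want: $D^\times$ is the group of $K$-points of an algebraic group (an inner form of $\mathrm{GL}_n$ over the center), and over a field where Lang's theorem applies — e.g. after reducing to the finite case, or by passing to $\bar{K}$ — the relevant Galois cohomology $H^1$ vanishes, forcing the division algebra to split, i.e. to be a matrix algebra; being a division ring it must then be the base field itself, a field. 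Concretely: Wedderburn's "little" theorem is the clean route once part (1) puts us over a finite field, and Lang's theorem is the uniform reason Wedderburn holds (Lang's theorem for the finite field $F_1$ applied to the multiplicative algebraic group of the would-be division algebra shows every element of the Brauer group restricted there is trivial).

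The main obstacle is the bookkeeping in part (1): verifying that the finitely-generated field of matrix entries is actually finite, and that passing to a composition factor $W$ of $W_0$ does not shrink dimension — i.e. that $W_0$ is forced to be irreducible rather than merely having $V$ as a composition-factor-after-extension. This requires knowing that $- \otimes_{F_1} K$ is exact and dimension-preserving (so $\dim_K(U \otimes_{F_1} K) = \dim_{F_1} U$) and that it sends nonzero modules to nonzero modules, together with irreducibility of $V$; once that is in hand, $\dim_{F_1} W \le \dim_{F_1} W_0 = \dim_K V = \dim_K(W \otimes_{F_1} K) = \dim_{F_1} W$ pins everything down. Part (2) is then essentially a citation of Wedderburn/Lang and should be routine.
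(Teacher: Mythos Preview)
Your argument for (1) contains a genuine error. You claim that, for an arbitrary $K$-basis of $V$, the matrix entries of $\phi(g)$ are algebraic over $\F_p$ because they are ``roots of the characteristic polynomial of a group element.'' This confuses eigenvalues with matrix entries: the roots of $\det(xI-\phi(g))$ are the eigenvalues of $\phi(g)$ (which are indeed roots of unity), not its entries. In fact the entries need not be algebraic at all. Take $G=C_3$, $p=2$, $K=\F_2(t)$, and let $V$ be the $2$-dimensional irreducible $KG$-module (irreducible because $x^2+x+1$ remains irreducible over $\F_2(t)$). In the standard basis the generator acts by the companion matrix of $x^2+x+1$, with entries in $\F_2$; but conjugating by $\begin{pmatrix}1&0\\t&1\end{pmatrix}$ produces a matrix with entry $t$, so the field generated by matrix entries is all of $\F_2(t)$. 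Thus your $F_1$ is not finite in general, and the whole descent step collapses. Choosing a basis in which the entries \emph{do} lie in a finite field is precisely the content of (1), so this cannot be taken for granted.

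The paper's proof of (1) uses exactly the idea you misplaced into part (2). One takes $F$ to be the subfield generated by the \emph{traces} $\mathrm{tr}\,\phi(g)$; these are sums of roots of unity of order dividing $|G|$, hence lie in a fixed finite subfield of $K$. Writing $q=|F|$ and $\sigma$ for the $q$-power Frobenius, the twist $V^{\sigma}$ has the same Brauer character as $V$, so $V^{\sigma}\cong V$; this yields $U\in\GL(n,K)$ with $U\phi(g)U^{-1}=\sigma(\phi(g))$. Lang's theorem (applied over the algebraic closure) writes $U=X^{-\sigma}X$, and then $X\phi(g)X^{-1}$ is $\sigma$-fixed, i.e.\ has entries in $F$. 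So Lang's theorem is the engine for (1), not for (2). Your route to (2) via (1) and Wedderburn's little theorem is fine once (1) is in hand --- indeed the paper notes this --- but you should also say explicitly that $\mathrm{End}_{KG}(V)\cong \mathrm{End}_{FG}(W)\otimes_F K$ is commutative (since $\mathrm{End}_{FG}(W)$ is a field) and a division ring (by Schur for $V$), hence a field.
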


\begin{proof} Clearly, (1) implies (2) by Wedderburn's 
Theorem and Schur's Lemma.  One can give a more direct
proof.  Let $F$ be a finite subfield of $K$.
Then $B:=KG \cong FG \otimes_F K$.  Thus,
$B/\rad(B)$ is a homomorphic image of
$(FG/\rad(FG)) \otimes_F K$.   By Wedderburn's Theorem,
$FG/\rad(FG)$ is a direct product of matrix rings
over fields, and so the same is true for $B/\rad(B)$.
Thus, $B/\mathrm{Ann}_B(V) \cong M_s(K')$ for some
extension field $K'/K$.  Since $K' \cong \mathrm{End}_G(V)$,
(2) follows.

We now prove (1).
  Set $n=\dim V$.
Let $\phi:G \rightarrow \GL(n,K)$ be the representation
determined by $V$.
Let $F$ be the subfield of $K$ generated
by the traces of elements of $\phi(g) \in G$ acting on $V$.
Let $q$ denote the cardinality of the finite field $F$.

 Let $\sigma$ denote the $q$th power map.  Note that $F$ is the fixed field of $\sigma$.
              Then $\sigma$  induces an endomorphisms of $KG$ and 
              so a twisted version of $V$, which denote by $V'$.  Let $L$ denote
the algebraic closure of $K$.   Since the character of
$V$ is defined over $F$, it follows that the characters of
$V$ and $V'$ are equal (and indeed, similarly for the
Brauer characters).  This implies that $V' \cong V$
as $KG$-modules (or equivalently as $LG$-modules).  Thus,
there exist $U \in \GL(n,K)$ with 
$U\phi(g)U^{-1} = \sigma(\phi(g))$ for all $g \in G$.
By Lang's theorem, $U = X^{-\sigma}X$ for some invertible
matrix $X$ (over $L$).   This implies that 
$\phi'(g):=X\phi(g)X^{-1}=\sigma(X\phi(g)X^{-1})$
defines a representation from $G$ into $\GL(n,F)$.
Let $U$ be the corresponding module.  Clearly, $V \cong U \otimes_F K$.
\end{proof}

We  state another  result about extensions of scalars.

\begin{lemma} \label{extension of scalars}
Let $G$ be a finite group and $F$ a field.  Let $M$ be an $FG$-module.
\begin{enumerate}
\item 
If $K$ is an extension field of $F$, then
$H^2(G,M) \otimes_F K$ and $H^2(G,M \otimes_F K)$
are naturally isomorphic, and in particular have
the same dimension.
\item 
If $M$ is irreducible and $F$ has positive characteristic, 
then $E:= \mathrm{End}_{G}(M)$ is a field,  $M$ is an absolutely
irreducible $EG$-module and $ \dim_F H^2(G,M) =[E:F] \dim_E H^2(G,M)$.
\end{enumerate} 
\end{lemma}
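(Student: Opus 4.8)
The plan is to establish (1) first, then derive (2) from it together with Lemma \ref{endomorphism}. For (1), the key point is that $H^2(G,-)$ commutes with the (flat, in fact free) base change $-\otimes_F K$. I would realize $H^2$ via the bar resolution: the cochain complex $C^\bullet(G,M)$ has $C^n(G,M)=\mathrm{Map}(G^n,M)\cong M^{G^n}$, a finite direct sum of copies of $M$, and the differentials are $F$-linear. Tensoring the whole complex with $K$ over $F$ gives the bar complex computing $H^\bullet(G,M\otimes_F K)$, since $\mathrm{Map}(G^n,M)\otimes_F K\cong\mathrm{Map}(G^n,M\otimes_F K)$ naturally (as $G^n$ is finite). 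Because $K$ is flat over the field $F$ (every module over a field is flat), tensoring commutes with taking cohomology of a cochain complex, so $H^n(G,M)\otimes_F K\cong H^n(G,M\otimes_F K)$ naturally for all $n$, in particular for $n=2$. The dimension statement is then immediate since $\dim_K(V\otimes_F K)=\dim_F V$ for any $F$-vector space $V$.

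For (2), first $E:=\mathrm{End}_G(M)$ is a field by Lemma \ref{endomorphism}(2) (applied with $K=F$, since $M$ is irreducible over $F$ of positive characteristic), and $M$ is then naturally a vector space over $E$ on which $G$ acts $E$-linearly, i.e. an $EG$-module; it is irreducible as such because any $EG$-submodule is in particular an $FG$-submodule. It is absolutely irreducible over $E$ because $\mathrm{End}_{EG}(M)=E$ (the $E$-endomorphisms form a division subalgebra of $\mathrm{End}_{FG}(M)=E$ containing $E$, hence equal $E$), and a module whose endomorphism ring over its field of definition is just that field is absolutely irreducible. For the cohomology identity, I would observe that computing $H^2(G,M)$ using the bar resolution over $E$ versus over $F$: the $E$-bar complex $C^\bullet_E(G,M)$ and the $F$-bar complex $C^\bullet_F(G,M)$ have literally the same underlying abelian groups $\mathrm{Map}(G^n,M)$ with the same differentials; the $E$-action on cohomology makes $H^2(G,M)$ an $E$-vector space, and its $F$-dimension is $[E:F]$ times its $E$-dimension, giving $\dim_F H^2(G,M)=[E:F]\dim_E H^2(G,M)$.

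The one point needing a little care — the main obstacle, such as it is — is the compatibility of the $E$-module structure with the cohomology functor: one must check that the $E$-linear structure on cochains induced from the $E$-module structure on $M$ commutes with the (purely combinatorial) bar differentials, so that $H^2(G,M)$ inherits a genuine $E$-vector space structure rather than merely an $F$-structure. This is routine because the differentials are built from the $G$-action and face maps on $G^n$, which are $E$-linear once $M$ is an $EG$-module, but it is the step where the hypothesis "$E$ centralizes the $G$-action" is actually used. Everything else is standard linear algebra over fields and flatness of field extensions; no classification input or delicate estimate is involved here.
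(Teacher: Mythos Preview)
Your proposal is correct and follows essentially the same approach as the paper. The paper's own proof is extremely terse: it cites \cite[0.8]{brown} for (1), invokes Lemma \ref{endomorphism} for the field assertion in (2), and then simply observes that $H^2(G,M)$ inherits an $E$-vector space structure so that the dimension identity is the trivial fact $\dim_F V = [E:F]\dim_E V$ for any $E$-space $V$; your bar-resolution argument just spells out what lies behind the citation and the one-line remark.
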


\begin{proof}  These results are well known.  See \cite[0.8]{brown} for the
first statement.  By Lemma \ref{endomorphism}, $E$ is a field.
Clearly, $M$ is an absolutely irreducible $EG$-module,
and so $H^2(G,M)$ is also a vector space over $E$.   The
last equality holds for any finite dimensional vector space over $E$.
\end{proof}

The previous result allows us to change fields in either direction.
If $F$ is algebraically closed of characteristic $p > 0$ and $M$
is an irreducible $FG$-module, then $M$ is defined over some finite field
$E$ -- i.e. there is an absolutely irreducible $EG$-module $V$ such
that $M = V \otimes_E F$ and we can compute the relevant ratios 
of dimensions over
either field.  Similarly,  if $M$ is an irreducible $FG$-module with
$F$ a finite field, then we can view $M$ an $EG$-module, 
where $E=\mathrm{End}_G(V)$,
and so assume that $M$ is absolutely irreducible.  Alternatively, we can
view $M$ as an $\F_pG$-module.

See \cite[III.6.1 and III.6.2]{brown} for the next two results.

\begin{lemma} \label{cohomology sequence}
Let $G$ be a group and $0 \rightarrow X \rightarrow Y 
\rightarrow Z \rightarrow 0$
a short exact sequence of $G$-modules. This induces an exact sequence:
$$
\begin{array}{clll}
0  & \rightarrow & H^0(G,X) &\rightarrow H^0(G,Y) \rightarrow H^0(G,Z)  
\rightarrow H^1(G,X) \rightarrow \cdots \\
& \rightarrow & H^{j-1}(G,Z) & \rightarrow H^j(G,X) \rightarrow H^j(G,Y) 
\rightarrow
H^j(G,Z) \rightarrow \cdots 
\end{array}
$$
In particular, $\dim H^j(G,Y) \le \dim H^j(G,X) + \dim H^j(G,Z)$ for any
integer $j \ge 0$.
\end{lemma}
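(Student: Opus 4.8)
The plan is to deduce everything from the standard derived-functor description of group cohomology. Recall that for any $G$-module $A$ the cohomology $H^j(G,A)$ is the $j$-th cohomology group of the cochain complex $\Hom_{\mathbb{Z}G}(P_\bullet,A)$, where $\cdots\to P_1\to P_0\to\mathbb{Z}\to 0$ is a fixed free resolution of the trivial $\mathbb{Z}G$-module $\mathbb{Z}$; equivalently $H^j(G,-)$ is the $j$-th right derived functor of the fixed-point functor $(-)^G=\Hom_{\mathbb{Z}G}(\mathbb{Z},-)$.

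First I would apply $\Hom_{\mathbb{Z}G}(P_\bullet,-)$ to the short exact sequence $0\to X\to Y\to Z\to 0$. Since each $P_n$ is free, hence projective, the functor $\Hom_{\mathbb{Z}G}(P_n,-)$ is exact, so one obtains a short exact sequence of cochain complexes
$$
0\longrightarrow \Hom_{\mathbb{Z}G}(P_\bullet,X)\longrightarrow \Hom_{\mathbb{Z}G}(P_\bullet,Y)\longrightarrow \Hom_{\mathbb{Z}G}(P_\bullet,Z)\longrightarrow 0.
$$
Then the long exact sequence in cohomology associated to a short exact sequence of complexes (an iterated application of the snake lemma) is precisely the asserted sequence: the connecting maps $H^{j-1}(G,Z)\to H^j(G,X)$ come from the snake lemma, and the first few terms record the left-exactness of $(-)^G$. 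When $X,Y,Z$ are $FG$-modules every map here is $F$-linear, so this is an exact sequence of $F$-vector spaces and taking $\dim_F$ is meaningful.

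For the inequality I would use exactness at the middle term: the image of $H^j(G,X)\to H^j(G,Y)$ equals the kernel of $H^j(G,Y)\to H^j(G,Z)$, which yields a short exact sequence $0\to A\to H^j(G,Y)\to B\to 0$ with $A$ the image of $H^j(G,X)\to H^j(G,Y)$ and $B$ the image of $H^j(G,Y)\to H^j(G,Z)$; hence $\dim H^j(G,Y)=\dim A+\dim B\le \dim H^j(G,X)+\dim H^j(G,Z)$. I do not expect any genuine obstacle here --- this is textbook homological algebra, and one could simply cite \cite[III.6.1]{brown}. The only mild point to keep in mind is that for an $FG$-module $M$ the group $H^j(G,M)$ computed over $\mathbb{Z}G$ agrees with the one computed over $FG$ and carries the same $F$-vector space structure, so that the dimension count is unambiguous.
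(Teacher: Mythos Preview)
Your proposal is correct and matches the paper's treatment: the paper does not prove this lemma at all but simply cites \cite[III.6.1]{brown}, which is exactly the reference you invoke. Your sketch of the derived-functor argument and the dimension inequality is the standard one and needs no changes.
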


\begin{lemma}[Shapiro's Lemma] \label{shapiro}
Let $G$ be a finite group and $H$ a subgroup
of $G$.  Let $V$ be an $FH$-module.  Then $H^j(H,V) \cong H^j(G,V_H^G)$
for any integer $j \ge 0$.
\end{lemma}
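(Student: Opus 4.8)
The plan is to prove the isomorphism by the standard Eckmann--Shapiro argument: compare the two cohomology groups through a single projective resolution together with an adjunction. First I would record the two facts that make everything work. Since $G$ is finite, $[G:H]<\infty$, so the induced module $V_H^G=FG\otimes_{FH}V$ is canonically isomorphic to the coinduced module $\mathrm{Hom}_{FH}(FG,V)$ (here $FG$ carries its obvious $(FG,FH)$-bimodule structure). Moreover $FG$ is free as a left $FH$-module, a transversal of $H$ in $G$ being a basis; hence the restriction to $FH$ of any projective $FG$-module is again projective.

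Now fix a projective resolution $P_\bullet\to F\to 0$ of the trivial module over $FG$. By the freeness remark it is simultaneously a projective resolution of $F$ over $FH$, so $H^j(H,V)$ is the $j$th cohomology of the complex $\mathrm{Hom}_{FH}(P_\bullet,V)$, while by definition $H^j(G,V_H^G)$ is the $j$th cohomology of $\mathrm{Hom}_{FG}(P_\bullet,V_H^G)$. The crux is the adjunction between restriction and coinduction (Frobenius reciprocity): for each $j$,
$$
\mathrm{Hom}_{FG}\bigl(P_j,\;\mathrm{Hom}_{FH}(FG,V)\bigr)\;\cong\;\mathrm{Hom}_{FH}\bigl(P_j,V\bigr),
$$
naturally in $P_j$. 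Naturality in the first variable means these isomorphisms are compatible with the differentials (which in both complexes are induced by precomposition with $P_{j+1}\to P_j$), so they assemble into an isomorphism of cochain complexes $\mathrm{Hom}_{FG}(P_\bullet,V_H^G)\cong\mathrm{Hom}_{FH}(P_\bullet,V)$. Passing to cohomology gives $H^j(G,V_H^G)\cong H^j(H,V)$ for every $j\ge 0$.

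The only steps that genuinely need care --- and that I would state explicitly rather than grind through --- are the verification that restriction carries projective $FG$-modules to projective $FH$-modules (immediate from freeness of $FG$ over $FH$) and the check that the adjunction isomorphism above is a chain map, not merely a degreewise isomorphism; the latter is precisely the content of ``naturality in the first variable.'' Neither is hard. One could instead argue entirely explicitly at the level of inhomogeneous cochains, exhibiting mutually inverse maps between $C^\bullet(H,V)$ and $C^\bullet(G,V_H^G)$, but the resolution-plus-adjunction route is shorter and makes the naturality transparent, so I would take that.
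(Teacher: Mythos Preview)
Your argument is the standard Eckmann--Shapiro proof and is correct. The paper does not supply its own proof of this lemma at all---it simply cites \cite[III.6.1 and III.6.2]{brown}---so your write-up is entirely consistent with (and more detailed than) what appears there.
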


\begin{lemma} \label{cyclic}   
Let $G$ have a cyclic Sylow $p$-subgroup.
Let $F$ be a field of characteristic $p$. 
If $M$ is an indecomposable $FG$-module and $j$ is a non-negative
integer,  then $\dim H^j(G,M)  \le 1$.
\end{lemma}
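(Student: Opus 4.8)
The plan is to reduce to the case where $G$ has order divisible by $p$ but with a normal $p$-complement controlling the structure — no, more precisely, I would first observe that if $p \nmid |G|$ then by Maschke every module is a direct sum of trivials extended... actually the cleanest route is this. Since $G$ has a cyclic Sylow $p$-subgroup $P$, write $N = O_{p'}(G)$ for the largest normal $p'$-subgroup and note that the interesting cohomology is carried by the quotient $G/N$ only in a limited sense; instead I would work directly. First I would dispose of the case $P = 1$: then $H^j(G,M) = 0$ for $j \ge 1$ by Corollary \ref{coprime} (referenced in the excerpt), so the bound $1$ holds trivially. So assume $|P| = p^a$ with $a \ge 1$ and $P = \langle x \rangle$.

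The key step is to pass from $G$ to a subgroup containing a Sylow $p$-subgroup on which the module becomes easy to analyze. By the standard fact quoted in the Strategy section, $H^j(G,M)$ embeds in $H^j(P,M)$, since $P$ is a Sylow $p$-subgroup. But that loses the indecomposability hypothesis, so instead I would use the normalizer $H = N_G(P)$: it contains $P$, hence $H^j(G,M) \hookrightarrow H^j(H,M)$, and by a Green-correspondence / block-theoretic argument the indecomposable $FG$-module $M$ lies in a block with cyclic defect group, whose modules are classified (Brauer trees), giving strong control on its restriction to $H$. The cleaner approach, which I would actually carry out, is: for $G$ with cyclic Sylow $p$-subgroup, a theorem on periodicity of cohomology applies — $G$ has periodic cohomology of period dividing $2|N_G(P)/C_G(P)|$, but more to the point, for an indecomposable module the relevant computation reduces to the "hook" structure of the corresponding Brauer tree module. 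Concretely, restrict $M$ to $P$: an indecomposable $FP$-module is uniserial of the form $F[x]/(x-1)^k$ for some $1 \le k \le p^a$, and one computes directly that $H^j(P, F[x]/(x-1)^k)$ is at most one-dimensional for every $j \ge 0$ (it is $1$-dimensional precisely when $k \equiv 0$ or, for the relevant parity, related conditions — but always $\le 1$). Since $H^j(G,M) \hookrightarrow H^j(P, M{\downarrow}_P)$ and $M{\downarrow}_P$ need not be indecomposable, I would instead invoke that $M$ is a Green correspondent of an indecomposable module for $N_G(P)$, whose restriction to $P$ IS indecomposable, and chase the embedding through that.

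The main obstacle I expect is precisely the failure of the hypothesis "$M$ indecomposable" to be inherited by restriction to a Sylow subgroup, so the honest argument must route through Green correspondence: the indecomposable non-projective $M$ with vertex $P$ corresponds to an indecomposable $FN_G(P)$-module $f(M)$ with the same vertex, and one shows $\dim H^j(G,M) \le \dim H^j(N_G(P), f(M))$ up to projective summands which contribute nothing to positive cohomology. Then for $L = N_G(P)$ one has $P \trianglelefteq L$ with $L/C_L(P)$ cyclic of order dividing $p-1$ (automorphisms of a cyclic $p$-group have a unique subgroup of each order dividing $p-1$ acting, plus possibly a $p$-part, but that is absorbed into $P$ after adjusting), and the Lyndon–Hochschild–Serre spectral sequence for $1 \to P \to L \to L/P \to 1$ collapses because $L/P$ has order prime to $p$: thus $H^j(L, f(M)) \cong H^j(P, f(M){\downarrow}_P)^{L/P}$, a subspace of the at-most-one-dimensional space $H^j(P, f(M){\downarrow}_P)$. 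The final ingredient is the explicit computation that $H^j(C_{p^a}, U) \le 1$ for $U$ indecomposable, which is the elementary two-periodic resolution computation. So the proof order is: (i) handle $p \nmid |G|$; (ii) reduce to indecomposable non-projective $M$ via Lemma \ref{cohomology sequence} and triviality of projectives in positive cohomology; (iii) Green-correspond to $N_G(P)$; (iv) collapse the LHS spectral sequence using the $p'$-quotient; (v) compute $H^j$ of a cyclic $p$-group on an indecomposable module to finish.
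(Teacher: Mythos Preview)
Your plan reaches the right endpoint---the computation $\dim H^j(P,W) \le 1$ for $P$ a cyclic $p$-group and $W$ an indecomposable $FP$-module---but the Green-correspondence route has two genuine gaps. First, in step (iii) the ``other'' summands appearing in Green correspondence are not projective; they merely have strictly smaller vertex, so your stated justification for $\dim H^j(G,M) \le \dim H^j(N_G(P), f(M))$ is wrong. (This part is repairable: use instead that $f(M)_H^G \cong M \oplus Z$ with $H = N_G(P)$, and apply Shapiro's Lemma.) Second, and more seriously, in step (v) you assert that $f(M)\!\downarrow_P$ is indecomposable, so that $H^j(P, f(M)\!\downarrow_P)$ is at most one-dimensional. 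This fails: take $G = C_5 \times S_3$, $P = C_5$, $F = \mathbb{F}_5$, and $M = U \otimes V$ with $U$ a nontrivial indecomposable $FP$-module and $V$ the $2$-dimensional irreducible $FS_3$-module; then $M$ is indecomposable but $M\!\downarrow_P \cong U \oplus U$, and $H^j(P,M\!\downarrow_P)$ is $2$-dimensional. The bound on the $L/P$-invariants still happens to hold in such examples, but your stated reason does not establish it.

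The paper bypasses all of this with a one-line reduction. By D.~G.~Higman's theorem every $FG$-module is relatively $P$-projective, so an indecomposable $M$ is already a direct summand of $W_P^G$ for some \emph{indecomposable} summand $W$ of $M\!\downarrow_P$. Shapiro's Lemma then makes $H^j(G,M)$ a direct summand of $H^j(P,W)$, and one is immediately at your step (v) with an honestly indecomposable $W$ over the cyclic group $P$---no normalizer, no Green correspondence, no spectral sequence. The final computation (embed a non-free $W$ in a rank-one free $FP$-module and dimension-shift along the resulting short exact sequence) matches yours.
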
 

\begin{proof}  By a result of D. G. Higman (see \cite[3.6.4]{benson}),
$M$ is a direct summand of
$W_P^G$,  where $P$ is a  Sylow $p$-subgroup of $G$ and $W$ is
an $FP$-module.   Since $M$ is indecomposable,
we may assume that   $W$ is an indecomposable 
$P$-module.   
By Shapiro's Lemma (Lemma \ref{shapiro}), $H^j(G,M)$ is a summand of
$H^j(P,W)$.  So it suffices to assume that $G=P$ is
a cyclic $p$-group and $W$ is an indecomposable $P$-module
(which is equivalent to saying $W$ is a cyclic $FP$-module).

In this case we show that $\dim H^j(P,W)=1$ unless $W$ is free
(in which case the dimension is $0$) by induction on $j$.  If
$j=0$, this is clear.  So assume that $W$ is not free.
Since $W$ is self cyclic and self dual, it embeds in a rank
one free module $V$. 
Then $H^i(P,V)=0$ and by Lemma \ref{cohomology sequence}, $H^i(P,W)
\cong H^{i-1}(P,V/W)$ and so is $1$-dimensional (since $V/W$
is nonzero and cyclic).
\end{proof}

The next result is standard -- cf. \cite[p. 91]{relmod}.

\begin{lemma} \label{restriction}  
If $H$ contains a Sylow $p$-subgroup of $G$,
then the restriction map $H^i(G,M) \rightarrow H^i(H,M)$
is an injection.
\end{lemma}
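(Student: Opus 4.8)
The plan is to deduce this from the standard properties of the restriction and corestriction (transfer) maps in group cohomology, exactly as in the cited reference \cite[p. 91]{relmod}. First I would recall that for any subgroup $H \le G$ and any $G$-module $M$ there is a corestriction homomorphism $\mathrm{cor}\colon H^i(H,M) \rightarrow H^i(G,M)$, defined for all $i \ge 0$, and that the composite
$$
\mathrm{cor}\circ\mathrm{res}\colon H^i(G,M) \longrightarrow H^i(H,M) \longrightarrow H^i(G,M)
$$
equals multiplication by the index $[G:H]$; see e.g. \cite{brown} or \cite{relmod}. This is the only nontrivial input, and it is entirely standard.

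Next I would use the hypothesis that $H$ contains a Sylow $p$-subgroup of $G$, where $p$ is the characteristic of the field $F$ over which $M$ is defined (the standing assumption throughout this section). Then $[G:H]$ is coprime to $p$, hence is an invertible element of $F$. Since $H^i(G,M)$ is naturally an $F$-vector space, multiplication by $[G:H]$ is an isomorphism of $H^i(G,M)$. Because this isomorphism factors as $\mathrm{cor}\circ\mathrm{res}$, the map $\mathrm{res}\colon H^i(G,M)\rightarrow H^i(H,M)$ must be injective, which is the assertion of the lemma.

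There is no real obstacle here: the argument is a three-line application of the index formula for $\mathrm{cor}\circ\mathrm{res}$ together with the fact that the coefficient module lives in characteristic $p$. The only point requiring a word of care is that one needs $H^i(G,M)$ to be a vector space over a field in which $[G:H]$ is invertible; this is immediate since $M$ is an $FG$-module with $\operatorname{char} F = p \nmid [G:H]$, and cohomology of a group with coefficients in an $F$-module is computed by an $F$-linear cochain complex.
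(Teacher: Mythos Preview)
Your argument is correct and is exactly the standard one: the paper does not supply its own proof but merely cites \cite[p.~91]{relmod}, and the corestriction--restriction identity $\mathrm{cor}\circ\mathrm{res}=[G:H]$ together with the invertibility of $[G:H]$ in characteristic $p$ is precisely the content of that reference. There is nothing to add.
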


The next result
 is an easy consequence of the Hochschild-Serre spectral sequence
\cite[p. 337]{maclane}. See also \cite{holt1}.

\begin{lemma} \label{spectral}
Let $N$ be a normal subgroup
of $G$, $F$ a field and  $M$  an $FG$-module. Then
$\dim H^q(G,M) \le \sum_{i+j=q} \dim H^i(G/N,H^j(N,M))$.
\end{lemma}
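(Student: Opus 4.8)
The plan is to read this off directly from the Lyndon--Hochschild--Serre spectral sequence associated to the normal subgroup $N \trianglelefteq G$. Recall that this is a first-quadrant cohomological spectral sequence with
$$
E_2^{i,j} = H^i\big(G/N,\, H^j(N,M)\big) \ \Longrightarrow\ H^{i+j}(G,M),
$$
where $H^j(N,M)$ carries its natural $G/N$-module structure. I would cite \cite[p.~337]{maclane} (as the lemma statement already does) for the existence and convergence of this spectral sequence; no characteristic or finiteness hypothesis is needed here beyond what makes the $E_2$-terms finite-dimensional, and convergence is automatic since the spectral sequence lives in the first quadrant.

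The first step is to record the consequence of convergence: the abutment $H^q(G,M)$ admits a finite decreasing filtration
$$
H^q(G,M) = F^0 \supseteq F^1 \supseteq \cdots \supseteq F^q \supseteq F^{q+1} = 0,
$$
whose successive quotients are $F^i/F^{i+1} \cong E_\infty^{i,\,q-i}$ for $0 \le i \le q$. Consequently
$$
\dim H^q(G,M) = \sum_{i+j=q} \dim E_\infty^{i,j}.
$$
The second step is the elementary observation that each $E_\infty^{i,j}$ is a subquotient of $E_2^{i,j}$: indeed $E_{r+1}^{i,j}$ is a subquotient of $E_r^{i,j}$ for every $r \ge 2$ (it is the homology of $E_r^{i,j}$ at that spot), and the first-quadrant condition forces the page to stabilize, so $E_\infty^{i,j}$ is a subquotient of $E_2^{i,j}$. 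Hence $\dim E_\infty^{i,j} \le \dim E_2^{i,j} = \dim H^i(G/N, H^j(N,M))$.

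Combining the two steps gives
$$
\dim H^q(G,M) = \sum_{i+j=q} \dim E_\infty^{i,j} \le \sum_{i+j=q} \dim H^i\big(G/N,\, H^j(N,M)\big),
$$
which is exactly the claimed inequality. I do not anticipate any real obstacle: the only point that needs a word of care is that the $E_2$-terms (and hence everything) be finite-dimensional so that the dimension count makes sense, but in all applications in this paper $G$ is finite and $M$ is finite-dimensional, so $H^j(N,M)$ is finite-dimensional and $H^i(G/N,-)$ of a finite-dimensional module over a finite group is again finite-dimensional. If one wants to avoid even invoking the machinery of spectral sequences, the case $q \le 2$ that is actually used can alternatively be extracted from the five-term exact sequence together with $\dim H^2(G,M) \le \dim H^2(G/N,M^N) + \dim H^1(G/N,H^1(N,M)) + \dim H^0(G/N,H^2(N,M))$, but the spectral sequence argument above is cleaner and uniform in $q$.
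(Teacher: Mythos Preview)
Your argument is correct and is exactly the approach the paper has in mind: the paper does not give a proof at all but simply states that the lemma is an easy consequence of the Hochschild--Serre spectral sequence, citing \cite[p.~337]{maclane} and \cite{holt1}. Your write-up just spells out that easy consequence.
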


We single out the previous lemma for the cases $q = 1,2$.
See \cite[pp. 354--355]{maclane} or \cite[Lemma 2.1]{holt}.

\begin{lemma} \label{usual} Let $N$ be a normal subgroup
of $H$ and let $M$ be an $FH$-module.  Then
$$
\begin{array}{llcl}
(1) &\dim H^1(H,M) &\le &\dim H^1(H/N,M^N) + \dim H^1(N,M)^H, \ and \\ 
(2) &\dim H^2(H,M) &\le& \dim H^2(H/N,M^N) + \dim H^2(N,M)^H + \\
 &&& \dim H^1(H/N,H^1(N,M)).
\end{array}
$$
\end{lemma}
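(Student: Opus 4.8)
The plan is to derive both inequalities from the Hochschild–Serre (Lyndon) spectral sequence $E_2^{i,j} = H^i(H/N, H^j(N,M)) \Rightarrow H^{i+j}(H,M)$, which is exactly the setup of Lemma \ref{spectral}; the point here is to obtain the sharper bounds in low degree by being careful about which terms can actually survive to $E_\infty$ and which of them contribute to the associated graded of $H^1$ and $H^2$. For $q=1$ this is essentially the five-term exact sequence: the filtration on $H^1(H,M)$ has just two layers, $E_\infty^{1,0}$ and $E_\infty^{0,1}$. Now $E_\infty^{1,0} = E_2^{1,0} = H^1(H/N, M^N)$ since no differentials touch the bottom row in this range, while $E_\infty^{0,1}$ is a subgroup of $E_2^{0,1} = H^0(H/N, H^1(N,M)) = H^1(N,M)^H$ (the invariants under the natural action of $H/N$). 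Hence $\dim H^1(H,M) = \dim E_\infty^{1,0} + \dim E_\infty^{0,1} \le \dim H^1(H/N, M^N) + \dim H^1(N,M)^H$, which is (1).

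For $q=2$ the filtration on $H^2(H,M)$ has three layers: $E_\infty^{2,0}$, $E_\infty^{1,1}$, and $E_\infty^{0,2}$. We bound each:
\begin{enumerate}
\item $E_\infty^{2,0}$ is a subquotient (in fact a quotient, modulo the image of $d_2 \colon E_2^{0,1} \to E_2^{2,0}$) of $E_2^{2,0} = H^2(H/N, M^N)$, so $\dim E_\infty^{2,0} \le \dim H^2(H/N, M^N)$.
\item $E_\infty^{1,1}$ is a subquotient of $E_2^{1,1} = H^1(H/N, H^1(N,M))$, so $\dim E_\infty^{1,1} \le \dim H^1(H/N, H^1(N,M))$.
\item $E_\infty^{0,2}$ is a subgroup of $E_2^{0,2} = H^0(H/N, H^2(N,M)) = H^2(N,M)^H$, so $\dim E_\infty^{0,2} \le \dim H^2(N,M)^H$.
\end{enumerate}
Summing these three gives $\dim H^2(H,M) \le \dim H^2(H/N, M^N) + \dim H^1(H/N, H^1(N,M)) + \dim H^2(N,M)^H$, which is (2).

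The only genuinely substantive point — and thus the main thing to get right rather than a real obstacle — is the identification of the edge terms: that $E_2^{0,j} = H^0(H/N, H^j(N,M))$ equals $H^j(N,M)^H$ with the correct $H/N$-action, and that on the bottom edge $E_2^{i,0} = H^i(H/N, M^N)$. Both are standard (see \cite[pp.\ 354--355]{maclane} or \cite[Lemma 2.1]{holt}), and since we only want inequalities we never need to control the differentials precisely, only to know that passing from $E_2$ to $E_\infty$ can only shrink dimensions of subquotients. One small check to record: in the $q=1$ case there is no $E_\infty^{2,-1}$ or similar nonsense because negative cohomology vanishes, so the two-step filtration is genuinely all there is. Given all this, the lemma follows immediately, and indeed one could simply cite the spectral sequence and read off (1) and (2) as the low-degree consequences, exactly as the references do.
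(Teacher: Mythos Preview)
Your proof is correct and follows the same approach as the paper: the paper does not prove this lemma directly but states it as the special cases $q=1,2$ of the preceding Lemma~\ref{spectral} (the Hochschild--Serre bound), citing \cite[pp.~354--355]{maclane} and \cite[Lemma~2.1]{holt}. Your argument simply unpacks those references by reading off the $E_\infty$-terms as subquotients of the $E_2$-page, which is exactly the intended justification.
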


We shall use the following well known statements without comment.

\begin{lemma} \label{perfect}  If $G$ is perfect, then $H^1(G,\F_p)=0$ 
and $\dim H^2(G,\F_p)$ is the $p$-rank of
the Schur multiplier of $G$.
\end{lemma}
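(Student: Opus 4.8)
The plan is to treat the two assertions separately, both via elementary facts about group cohomology with trivial coefficients. First I would recall that, since $\mathbb{F}_p$ is a trivial $G$-module, $H^1(G,\mathbb{F}_p) = \Hom(G,\mathbb{F}_p) = \Hom(G^{\mathrm{ab}}, \mathbb{F}_p)$, the group of homomorphisms from $G$ to the additive group of $\mathbb{F}_p$. If $G$ is perfect then $G^{\mathrm{ab}} = 1$, so there are no nontrivial such homomorphisms and $H^1(G,\mathbb{F}_p) = 0$. This disposes of the first claim with essentially no work.

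For the second claim I would use the standard interpretation of $H^2(G,\mathbb{F}_p)$ in terms of central extensions. Elements of $H^2(G,\mathbb{F}_p)$ correspond to equivalence classes of central extensions of $G$ by $\mathbb{F}_p$ (regarded as the additive group $C_p$). When $G$ is perfect, the universal coefficient theorem gives an exact sequence relating $H^2(G,\mathbb{F}_p)$ to $\Hom(H_2(G,\mathbb{Z}),\mathbb{F}_p)$ and $\mathrm{Ext}^1(H_1(G,\mathbb{Z}),\mathbb{F}_p)$; since $G$ is perfect, $H_1(G,\mathbb{Z}) = G^{\mathrm{ab}} = 0$, so the $\mathrm{Ext}$ term vanishes and one gets $H^2(G,\mathbb{F}_p) \cong \Hom(H_2(G,\mathbb{Z}),\mathbb{F}_p) = \Hom(M(G),\mathbb{F}_p)$, where $M(G) = H_2(G,\mathbb{Z})$ is the Schur multiplier. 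Since $M(G)$ is a finite abelian group, $\dim_{\mathbb{F}_p}\Hom(M(G),\mathbb{F}_p)$ equals the number of cyclic factors of $M(G)$ whose order is divisible by $p$, i.e. the $p$-rank of $M(G)$, which is precisely the $p$-rank of $M(G)\otimes \mathbb{Z}_p$ or equivalently of the $p$-primary part of $M(G)$.

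The only mildly delicate point — and the one I would be most careful about — is the vanishing of the $\mathrm{Ext}$ contribution: this is exactly where perfectness of $G$ is used, and without it $H^2(G,\mathbb{F}_p)$ would also see the $p$-torsion of $G^{\mathrm{ab}}$. Everything else is bookkeeping: the universal coefficient sequence for cohomology, the identification of $H_2$ with the Schur multiplier, and the elementary computation of $\dim \Hom(A,\mathbb{F}_p)$ for a finite abelian group $A$. Since the lemma is flagged as well known and is to be used without comment, I would keep the write-up to a sentence or two, citing \cite{brown} or \cite{maclane} for the universal coefficient theorem and the Hopf formula identification of the Schur multiplier.
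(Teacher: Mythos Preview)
Your argument is correct and is exactly the standard one. Note that the paper itself does not give a proof of this lemma at all: it is simply listed among ``well known statements'' to be used without comment, so there is no approach to compare against beyond the implicit appeal to the same classical facts (identification $H^1(G,\F_p)=\Hom(G^{\mathrm{ab}},\F_p)$ and the universal coefficient theorem / Schur multiplier interpretation) that you spell out.
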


We also use the following consequence of the K\"unneth formula.

\begin{lemma}  \label{kunneth}  Let $F$ be  a field 
and let $H=H_1 \times \cdots \times H_t$ with
the $H_i$ finite groups.  Let $M_i$ be an irreducible $FH_i$-module for each $i$
and set $M= \otimes_{i=1}^t M_i$,
an irreducible $FH$-module.  Then
\begin{enumerate}
\item  $ H^r(H,M) = \oplus_{(e_i)} H^{e_1}(H_1,M_1) \otimes \cdots \otimes 
                                  H^{e_t}(H_t,M_t)$, 
where the sum is over all $(e_i)$ with with the $e_i$ non-negative
integers and $\sum e_i=r$.
\item If $H_i$ acts nontrivially on $M_i$ for each $i$, 
then $H^r(H,M)=0$ for
$r < t$ and  $\dim H^t(H,M) = \prod \dim H^1(H_i,M_i)$.
\item   If each $H_i$ is quasisimple and each $M_i$ is nontrivial, 
then $\dim H^t(H,M) \le \dim M/2^t$.
\item If the $H_j$ are perfect for $j >1$, $M_1$ is nontrivial and $M_j$
is trivial for $j >1$, then $H^2(H,M) \cong H^2(H_1,M_1)$.
\end{enumerate}
\end{lemma}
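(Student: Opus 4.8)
The plan is to prove part (1) --- which is just the K\"unneth formula for group cohomology over a field --- and then to obtain parts (2), (3) and (4) as formal consequences of it, each time exploiting the vanishing of $H^0$ or $H^1$ of the relevant tensor factors.

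For (1), first I would reduce to $t=2$ by an easy induction, since $H_1\times\cdots\times H_{t-1}$ is again a finite group and $M_1\otimes\cdots\otimes M_{t-1}$ an $F(H_1\times\cdots\times H_{t-1})$-module. For $t=2$, choose finitely generated projective resolutions $P_\bullet\to F$ over $FH_1$ and $Q_\bullet\to F$ over $FH_2$ (possible since each $FH_i$ is a finite-dimensional $F$-algebra). Then $P_\bullet\otimes_F Q_\bullet\to F$ is a projective resolution of $F$ over $F[H_1\times H_2]\cong FH_1\otimes_F FH_2$, so $H^*(H_1\times H_2,\,M_1\otimes_F M_2)$ is the cohomology of $\Hom_{F[H_1\times H_2]}(P_\bullet\otimes_F Q_\bullet,\,M_1\otimes_F M_2)$. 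Since $P_\bullet$ is finitely generated (and in any case irreducible modules for a finite group over a field are finite-dimensional), this cochain complex is canonically isomorphic to $\Hom_{FH_1}(P_\bullet,M_1)\otimes_F\Hom_{FH_2}(Q_\bullet,M_2)$, and the algebraic K\"unneth theorem over the field $F$ --- where the $\mathrm{Tor}$ correction term is zero --- identifies its $r$th cohomology with $\bigoplus_{e_1+e_2=r}H^{e_1}(H_1,M_1)\otimes_F H^{e_2}(H_2,M_2)$. Induction then gives the stated decomposition; a standard reference is \cite{brown}.

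Part (2) drops out immediately: $H^0(H_i,M_i)=M_i^{H_i}$ is a submodule of the irreducible $M_i$ and is proper because $H_i$ acts nontrivially, hence it is $0$; so in (1) every summand whose tuple $(e_i)$ has an entry equal to $0$ vanishes, forcing all $e_i\ge 1$ and hence $r\ge t$ on any nonzero summand, and when $r=t$ the only surviving tuple is $(1,\dots,1)$, giving $\dim H^t(H,M)=\prod_i\dim H^1(H_i,M_i)$ (dimensions of tensor products of $F$-spaces multiply). For (3): a quasisimple group is perfect and so has no nontrivial $1$-dimensional module, whence a nontrivial irreducible $M_i$ is acted on nontrivially and (2) applies; now (1.2) of \cite{gurhoff} gives $\dim H^1(H_i,M_i)\le\tfrac12\dim M_i$ --- this is stated for $\F_pH_i$-modules, but the ratio $\dim H^1/\dim M$ is unaffected by change of scalars (the $H^1$-analogue of Lemma \ref{extension of scalars}), so it applies over $F$ as well --- and multiplying over $i$, together with $\dim M=\prod_i\dim M_i$, yields $\dim H^t(H,M)\le\prod_i\tfrac12\dim M_i=\dim M/2^t$.

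For (4), each $M_j$ with $j>1$ is the trivial module $F$, and I would sift the summands of (1) at $r=2$. Any summand whose tuple has $e_j=1$ for some $j>1$ carries the factor $H^1(H_j,F)$, which is $0$ because $H_j$ is perfect (Lemma \ref{perfect}). The remaining summands have $e_j\in\{0,2\}$ for every $j>1$, and since $\sum e_i=2$ either $e_1=2$ and all $e_j=0$ ($j>1$) --- the factor is $H^2(H_1,M_1)$ --- or $e_1=0$ and $e_j=2$ for exactly one $j>1$ --- and then the factor $H^0(H_1,M_1)=M_1^{H_1}$ is $0$ since $M_1$ is nontrivial irreducible. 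Hence only the tuple $(2,0,\dots,0)$ survives, and $H^2(H,M)\cong H^2(H_1,M_1)\otimes_F\bigotimes_{j>1}H^0(H_j,F)\cong H^2(H_1,M_1)$. I do not anticipate a real obstacle: the whole lemma rests on (1), which is classical, and the only points needing a moment's care are the finite-generation (equivalently finite-dimensionality) hypothesis that lets $\Hom$ commute with $\otimes_F$ in the proof of (1), and the base-change remark used to invoke (1.2) over an arbitrary field in (3).
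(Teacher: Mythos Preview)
Your proposal is correct and follows essentially the same approach as the paper: (1) is the K\"unneth formula (the paper simply cites \cite[3.5.6]{benson}, while you spell out the standard projective-resolution argument), and (2)--(4) are deduced exactly as you indicate, from the vanishing of $H^0(H_i,M_i)$ for nontrivial irreducible $M_i$, the bound (1.2), and the vanishing of $H^1(H_j,F)$ for perfect $H_j$ (Lemma~\ref{perfect}). Your write-up is more detailed than the paper's, but the route is identical.
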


\begin{proof}  The first statement is just the K\"unneth formula as given
in \cite[3.5.6]{benson}, and the second statement follows immediately since
$H^0(H_i,M_i)=0$.   If $H_i$ is quasisimple, then (2) and   (1.2)
imply (3).  Finally (4) follows from (1) and the fact that, by Lemma \ref{perfect},
$H^1(H_j,M_j)=0$ for $j > 1$.
\end{proof}
 
Note that there are quite a number of terms involved in $H^r(H,M)$
in the lemma above.  Fortunately, when $r$ is relatively small, most
terms will be $0$.

See \cite[35.6]{baba} for the next
result.

\begin{lemma} \label{inflres2}  Assume that $N$ is normal in
$H$ and $H^{r-1}(N,M)=0$.  Then there is an exact sequence
$$
0 \rightarrow H^r(H/N,M^N) \rightarrow H^r(H,M) \rightarrow H^r(N,M)^H.
$$
\end{lemma}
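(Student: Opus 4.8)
The plan is to extract the sequence directly from the Hochschild--Serre spectral sequence of $1 \to N \to H \to H/N \to 1$ with coefficients in $M$, i.e. $E_2^{p,q} = H^p(H/N, H^q(N,M)) \Rightarrow H^{p+q}(H,M)$ --- the same object already used in Lemma \ref{spectral}. Let $0 = F^{r+1} \subseteq F^r \subseteq \cdots \subseteq F^0 = H^r(H,M)$ be its decreasing filtration, so $F^p/F^{p+1} \cong E_\infty^{p,r-p}$. The two maps in the statement are the edge homomorphisms: inflation is the composite $H^r(H/N, M^N) = E_2^{r,0} \twoheadrightarrow E_\infty^{r,0} = F^r \hookrightarrow H^r(H,M)$, and restriction is the composite $H^r(H,M) \twoheadrightarrow H^r(H,M)/F^1 = E_\infty^{0,r} \hookrightarrow E_2^{0,r} = H^r(N,M)^{H/N} = H^r(N,M)^H$ (the $H$-action on $H^r(N,M)$ factors through $H/N$). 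Since $r \ge 1$ we have $F^r \subseteq F^1$, so restriction annihilates the image of inflation; thus we already have a complex, and only exactness at the two displayed spots needs to be proved.

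Next I would feed in the hypothesis $H^{r-1}(N,M) = 0$, which makes the entire row $q = r-1$ of the $E_2$-page vanish. This has two consequences. First, $F^1/F^2 = E_\infty^{1,r-1}$ is a subquotient of $E_2^{1,r-1} = 0$, so $F^1 = F^2$. Second, the transgression-type differential that could land in the corner term $E_\bullet^{r,0}$ and cut $E_\infty^{r,0}$ below $E_2^{r,0}$ emanates from a subquotient of $E_2^{0,r-1} = H^{r-1}(N,M)^{H/N} = 0$, so $E_\infty^{r,0} = E_2^{r,0}$ and inflation is injective. For $r = 2$ --- the only case used in the sequel, via Lemma \ref{usual} --- this finishes the proof, since then $F^1 = F^2 = F^r$ equals both the image of inflation and the kernel of restriction; the general $r$ is treated in the same way, the vanishing hypothesis again being precisely what forces the edge maps into the required positions.

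A more hands-on alternative avoids the spectral sequence: embed $M$ into the coinduced $H$-module $P := \mathrm{Coind}_1^H M$, which is cohomologically trivial for $H$ and for $N$, and for which $P^N = \mathrm{Coind}_1^{H/N}(M)$ is cohomologically trivial for $H/N$. Writing $M' := P/M$, the long exact sequences shift degrees: $H^r(H,M) \cong H^{r-1}(H,M')$ and $H^r(N,M) \cong H^{r-1}(N,M')$ for $r \ge 2$, while $H^{r-2}(N,M') \cong H^{r-1}(N,M) = 0$ and --- using $H^1(N,M) = 0$ so that $0 \to M^N \to P^N \to M'^N \to 0$ is exact --- also $H^r(H/N, M^N) \cong H^{r-1}(H/N, M'^N)$. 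This reduces the lemma for $(r, M)$ to the case $(r-1, M')$, bottoming out at $r = 1$, which is the left-hand end of the classical five-term inflation--restriction sequence.

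The main obstacle is not conceptual but a matter of care: in either route one must verify that the abstract maps produced really coincide with inflation and restriction, and --- in the dimension-shifting route --- that taking $N$-fixed points is exact on the relevant short exact sequence, which is exactly where the hypothesis on $H^{r-1}(N,M)$ is consumed. Because every application in this paper has $r = 2$, the hypothesis reads $H^1(N,M) = 0$, and then all the auxiliary terms in sight ($E_2^{0,1}$, $E_2^{1,1}$, the cokernel of $P^N \to M'^N$) collapse at once, so no estimates are needed.
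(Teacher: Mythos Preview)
The paper does not actually prove this lemma; it simply cites \cite[35.6]{baba}. Your spectral-sequence argument is correct and complete for $r=2$, which --- as you yourself note --- is the only case the paper ever uses.

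For $r\ge 3$, however, your argument has a genuine gap. You assert that the only differential that can land in the corner $E_\bullet^{r,0}$ emanates from (a subquotient of) $E_2^{0,r-1}$, but in fact every $d_k\colon E_k^{r-k,k-1}\to E_k^{r,0}$ with $2\le k\le r$ is potentially nonzero, and only the one with $k=r$ has its source in the row $q=r-1$. The hypothesis $H^{r-1}(N,M)=0$ kills that last differential and nothing else; the sources $H^{r-k}(H/N,H^{k-1}(N,M))$ for $2\le k\le r-1$ need not vanish, so $E_\infty^{r,0}$ may be a proper quotient of $E_2^{r,0}$ and inflation need not be injective. The same problem hits exactness in the middle: you need $F^1=F^r$, i.e.\ $E_\infty^{p,r-p}=0$ for every $1\le p\le r-1$, and the hypothesis delivers this only for $p=1$. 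Your dimension-shifting alternative has the identical gap in disguise: to get $0\to M^N\to P^N\to (M')^N\to 0$ exact you explicitly invoke $H^1(N,M)=0$, which is \emph{not} part of the hypothesis when $r\ge 3$.

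The standard form of this inflation--restriction result (Hochschild--Serre, and presumably the cited passage in \cite{baba}) assumes $H^i(N,M)=0$ for \emph{all} $0<i<r$; under that stronger hypothesis both of your arguments go through verbatim. Since every application in the paper --- Corollary~\ref{coprime} and the uses in \S\ref{covering}, \S\ref{faithful section} and \S\ref{low rank} --- either has $r=2$ or in fact satisfies this stronger vanishing, nothing downstream is affected.
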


We single out a special case of the previous lemma.

\begin{corollary} \label{coprime} Let $H$ be a finite group
with a normal subgroup $N$.  Let $M$ be an $FH$-module.  
\begin{enumerate}
\item If $M^N= H^{j-1}(N,M)=0$, then the restriction map  $H^j(H,M) \rightarrow
H^j(N,M)$ is injective. 
\item If $N$ has order that is not a multiple of the
 characteristic of $M$ and $M^N=0$, then $H^j(H,M)=0$ for all $j$.
\item If $N$ has order that is not a multiple of the
 characteristic of $M$ and $M^N=M$, then $H^j(H/N,M) \cong H^j(H,M)$ for all $j$.
\end{enumerate}
\end{corollary}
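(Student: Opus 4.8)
The plan is to derive all three parts from Lemma \ref{inflres2} together with the elementary vanishing of cohomology of a group whose order is invertible in the coefficient module.

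\medskip

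For part (1): I would simply apply Lemma \ref{inflres2} with $r=j$. The hypothesis there is that $H^{r-1}(N,M)=0$, which is exactly $H^{j-1}(N,M)=0$, one of the assumptions here. The lemma then gives an exact sequence
$$
0 \rightarrow H^j(H/N,M^N) \rightarrow H^j(H,M) \rightarrow H^j(N,M).
$$
Since $M^N=0$, the first term vanishes, so the map $H^j(H,M)\rightarrow H^j(N,M)$ is injective. This is immediate; no obstacle.

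\medskip

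For part (2): the key input is that if $|N|$ is invertible in $F$ (equivalently $|N|$ is prime to $\mathrm{char}\,F$), then $H^i(N,W)=0$ for all $i\geq 1$ and any $FN$-module $W$ — this is the standard averaging/transfer argument (multiplication by $|N|$ is simultaneously an isomorphism and zero on $H^i(N,W)$ for $i\geq 1$), and it is one of the facts listed as used without comment. I would argue by induction on $j$. For $j=0$, $H^0(H,M)=M^H\subseteq M^N=0$. For $j\geq 1$, I invoke part (1): its hypotheses are $M^N=0$ (given) and $H^{j-1}(N,M)=0$. The latter holds when $j-1\geq 1$ by the vanishing just recalled, and when $j-1=0$ it is $M^N=0$, again given. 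So part (1) applies and $H^j(H,M)$ embeds in $H^j(N,M)$, which is $0$ for $j\geq 1$. Hence $H^j(H,M)=0$. (One can avoid the induction entirely since the hypothesis of (1) is available for every $j$, but framing it as induction makes the role of each case transparent.)

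\medskip

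For part (3): here $N$ acts trivially on $M$ (since $M^N=M$) and $|N|$ is prime to $\mathrm{char}\,F$, so $H^i(N,M)=0$ for all $i\geq 1$ and $H^0(N,M)=M^N=M$. Taking $r=1$ in Lemma \ref{inflres2} already gives $H^1(N,M)=0$, but in fact I want the inflation maps to be isomorphisms in all degrees. I would get this from Lemma \ref{inflres2} applied with $M^N=M$: for each $r\geq 1$ the hypothesis $H^{r-1}(N,M)=0$ holds (it is $M=M$ trivially when $r=1$, and the coprime vanishing when $r\geq 2$), yielding an exact sequence $0\rightarrow H^r(H/N,M)\rightarrow H^r(H,M)\rightarrow H^r(N,M)=0$, so inflation $H^r(H/N,M)\rightarrow H^r(H,M)$ is an isomorphism for $r\geq 1$; for $r=0$ both sides equal $M^H=(M^N)^{H/N}=M^{H/N}$. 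Alternatively, and perhaps more cleanly, this is the classical statement that when $H^i(N,M)=0$ for all $i>0$ the Hochschild--Serre spectral sequence collapses to the edge isomorphism $H^*(H/N,M^N)\cong H^*(H,M)$. Either route works.

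\medskip

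I do not expect a genuine obstacle: all three parts are formal consequences of Lemma \ref{inflres2} and the coprime vanishing of group cohomology. The only point requiring a little care is making sure the degree-$(j-1)=0$ case of the hypothesis in parts (1) and (2) is covered by the assumption $M^N=0$ rather than by the coprimality vanishing — the statement is deliberately arranged so that $M^N=0$ handles exactly that boundary case. \qed
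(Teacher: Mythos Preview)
Your argument is correct and essentially identical to the paper's proof, which likewise derives (1) directly from Lemma~\ref{inflres2}, obtains (2) from (1) by induction using the coprime vanishing $H^i(N,M)=0$ for $i\ge 1$, and declares (3) a special case of Lemma~\ref{inflres2}. One small slip to fix in your write-up of (3): for $r=1$ the hypothesis $H^{r-1}(N,M)=0$ of Lemma~\ref{inflres2} does \emph{not} hold (you would need $M^N=0$, whereas here $M^N=M$), so your parenthetical ``it is $M=M$ trivially'' is off; however, the inflation--restriction sequence in degree $1$ is valid unconditionally (it is the five-term exact sequence), and your spectral-sequence alternative covers all degrees cleanly, so the conclusion stands.
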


\begin{proof}   (1) is an immediate consequence of the previous lemma.
Under the assumptions of (2), $M$ is a projective $FN$-module and so
$H^j(N,M)=0$ for all $j > 0$ and $H^0(N,M)=0$ by hypothesis.  Thus (2)
follows by induction on $j$ and (1).  Note that (3) is a special
case of Lemma \ref{inflres2}.   
\end{proof}
 
The previous corollary in particular illustrates the well known result
that all higher cohomology groups for finite groups vanish in characteristic
$0$.  So we will always assume our fields have positive characteristic
in what follows.

It is also convenient to mention a special case of Lemma \ref{inflres2}  for $H^1$.

\begin{lemma} \label{h1 nonfaithful}   Let $G$ be a finite group with
$p$ a prime.  Let $N$ be a normal $p$-subgroup of $G$ and $V$ an $\F_pG$-module
with $N$ acting trivially on $V$.  Then
$\dim H^1(G,V) \le \dim H^1(G/N,V) + \dim \hom_G (N,V)$.
\end{lemma}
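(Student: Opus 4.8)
The plan is to use the Hochschild--Serre (Lyndon) spectral sequence for the normal subgroup $N$ of $G$, just as in Lemma \ref{inflres2} and Lemma \ref{usual}, but now specialized to the situation where $N$ is a $p$-group acting trivially on $V$. The low-degree exact sequence (the five-term exact sequence) reads
$$
0 \to H^1(G/N, V^N) \to H^1(G,V) \to H^1(N,V)^{G/N} \to H^2(G/N, V^N).
$$
Since $N$ acts trivially on $V$ by hypothesis, we have $V^N = V$, so the first term is $H^1(G/N,V)$ and the edge map identifies $H^1(G,V)$ with an extension of a submodule of $H^1(N,V)^{G/N}$ by $H^1(G/N,V)$. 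Taking dimensions immediately gives
$$
\dim H^1(G,V) \le \dim H^1(G/N, V) + \dim H^1(N,V)^{G/N}.
$$

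The only remaining point is to identify $H^1(N,V)^{G/N}$ with $\hom_G(N,V)$. Here I would argue as follows: because $N$ is a $p$-group, $V$ is an $\F_p$-module, and $N$ acts trivially on $V$, the coboundaries for the $N$-action vanish, so $H^1(N,V) = Z^1(N,V) = \hom(N,V)$ is literally the group of homomorphisms from $N$ (equivalently from $N/[N,N]N^p = N \otimes \F_p$, though we don't even need to pass to the abelianization for the statement) to the abelian group $V$. The residual $G/N$-action on $H^1(N,V)$ is the usual one: $G$ acts on $N$ by conjugation and on $V$ by its given module structure, and these actions descend to $G/N$ on $\hom(N,V)$. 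A homomorphism $\varphi\colon N \to V$ is fixed by this $G/N$-action precisely when $\varphi(gng^{-1}) = g\cdot\varphi(n)$ for all $g \in G$, $n \in N$, i.e. precisely when $\varphi$ is $G$-equivariant. Hence $H^1(N,V)^{G/N} = \hom_G(N,V)$, and substituting into the displayed inequality yields the claim.

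I do not expect any genuine obstacle here; this is a routine unwinding of the inflation--restriction sequence, and the paper has already set up every ingredient (the spectral sequence via Lemma \ref{spectral}/Lemma \ref{inflres2}, and the observation that in characteristic $p$ one works with $\F_p$-modules). The one place to be slightly careful is the identification of the residual action on $H^1(N,V)$ with the conjugation-plus-module action on $\hom(N,V)$ --- this is standard but worth stating, since it is exactly what converts ``$G/N$-invariant cocycle'' into ``$G$-equivariant homomorphism.'' If one prefers to avoid invoking the spectral sequence explicitly, the same bound follows from Lemma \ref{inflres2} applied with $r=1$: since $H^0(N,V) = V^N = V$ need not vanish we cannot use that lemma directly, so the cleanest route really is the five-term sequence, or equivalently Lemma \ref{usual}(1), which already gives $\dim H^1(G,V) \le \dim H^1(G/N, V^N) + \dim H^1(N,V)^G$; then only the identification $H^1(N,V)^G = \hom_G(N,V)$ remains, as above.
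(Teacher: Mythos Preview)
Your proof is correct and is exactly what the paper intends: the paper gives no separate argument, merely introducing the lemma as ``a special case of Lemma~\ref{inflres2} for $H^1$,'' i.e.\ the five-term inflation--restriction sequence (equivalently Lemma~\ref{usual}(1)) together with the identification $H^1(N,V)^G=\hom_G(N,V)$ when $N$ acts trivially. Your observation that Lemma~\ref{inflres2} as literally stated requires $H^0(N,V)=0$, so that one should really invoke the unconditional five-term sequence (or Lemma~\ref{usual}(1)), is a fair clarification of the paper's slightly loose phrasing.
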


\begin{lemma} \label{direct product relations}  Let $A$ and $B$
be quasisimple groups with trivial Schur multipliers, and let
$G= A \times B$.
Then $d(G)=2$ and  $\hat{r}(G) = \max\{\hat{r}(A), \hat{r}(B)\}$.
\end{lemma}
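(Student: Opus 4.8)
The plan is to use formula (1.1) for $\hat{r}$ together with the K\"unneth decomposition from Lemma \ref{kunneth}. First I would record the easy facts about generators and Schur multipliers: since $A$ and $B$ are quasisimple with trivial Schur multipliers, each is in fact simple, and by \cite[Theorem B]{ag} each is $2$-generated; a standard argument (a direct product of two nonabelian simple groups with no common composition factor structure obstruction) gives $d(A\times B)=2$. For the Schur multiplier, $H^2(G,\F_p)$ is the $p$-rank of the Schur multiplier of $G=A\times B$, and by the K\"unneth formula (applied with trivial coefficients) this multiplier is $M(A)\times M(B)\times (A^{\mathrm{ab}}\otimes B^{\mathrm{ab}})$; all three factors vanish since $A,B$ are perfect with trivial multipliers. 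Hence $\dim H^2(G,\F_p)=0$ for every $p$, so the trivial-module contribution to (1.1) is $\lceil 0/1\rceil + 2 - 0 = 2$, which is dominated by the $2$-generation lower bound already present in (1.4).

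Next I would analyze the nontrivial irreducible $\F_pG$-modules $M$. Over $\overline{\F_p}$ every irreducible $G=A\times B$ module is $M_A\otimes M_B$ with $M_A$ irreducible for $A$ and $M_B$ irreducible for $B$ (and by Lemma \ref{extension of scalars} it suffices to bound the ratio $(\dim H^2 - \dim H^1)/\dim M$ over such modules). There are three cases. If both $M_A,M_B$ are nontrivial, then by Lemma \ref{kunneth}(2) we have $H^0(A,M_A)=H^0(B,M_B)=0$, so $H^1(G,M)=0$ while $H^2(G,M) = H^2(A,M_A)\otimes H^0(B,M_B) \oplus H^1(A,M_A)\otimes H^1(B,M_B) \oplus H^0(A,M_A)\otimes H^2(B,M_B) = H^1(A,M_A)\otimes H^1(B,M_B)$, whose dimension is at most $\tfrac14 \dim M_A \dim M_B = \tfrac14 \dim M$ by (1.2); here the term of (1.1) is at most $\lceil \tfrac14 \rceil + 2 - 1 = 3$. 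If $M_A$ is nontrivial and $M_B$ trivial, then by Lemma \ref{kunneth}(4) (using that $B$ is perfect) $H^2(G,M)\cong H^2(A,M_A)$ and $\dim M = \dim M_A$, so the term of (1.1) is $\lceil (\dim H^2(A,M_A) - \dim H^1(A,M_A))/\dim M_A\rceil + 2 - 1 \le \hat{r}(A) - d(A) + 1 + 1 \le \hat{r}(A)$ once one checks the bookkeeping against the definition of $\hat r(A)$ via (1.1) for $A$; symmetrically the $M_A$-trivial, $M_B$-nontrivial case gives $\le \hat{r}(B)$.

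Putting these together, $\hat{r}(G) = \max$ over all these contributions $= \max\{2, 3, \hat{r}(A), \hat{r}(B)\}$; since $\hat r(A),\hat r(B)\ge 3$ for any finite quasisimple group (the $2$-generation bound plus the fact that every such group has a nontrivial irreducible with $\dim H^2 \ge 1$ forces $\hat r \ge 3$ — or, more simply, one notes the constant $3$ is absorbed because $\hat r(A)\ge 2$ and a direct check in the mixed case is cleaner), the bound collapses to $\hat{r}(G)=\max\{\hat{r}(A),\hat{r}(B)\}$. The reverse inequality $\hat{r}(G)\ge \max\{\hat{r}(A),\hat{r}(B)\}$ follows because a profinite presentation of $G$ restricts (by killing the other factor) to profinite presentations of $A$ and of $B$ with no more relations. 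The step I expect to be the main obstacle is the careful comparison in the mixed case: one must translate the $A$-term of (1.1) — which involves $d(A)=2$ and the $\xi$-correction — into $\hat{r}(A)$ exactly, handling the $\lceil\cdot\rceil$ and the $+1$ versus $+2$ correctly so that the spurious constant $3$ from the both-nontrivial case does not leak into the final formula, i.e. verifying that $3 \le \max\{\hat r(A),\hat r(B)\}$ always holds, which needs the lower bound in (1.4) applied to $A$ or $B$ and a small argument that a nonabelian simple group always has $\hat r \ge 3$ (equivalently, some nontrivial irreducible module $M_A$ with $\lceil (\dim H^2 - \dim H^1)/\dim M_A\rceil \ge 2$, or else one simply absorbs $3$ by noting $\hat r(A)\ge 2$ suffices if one re-examines whether the both-nontrivial term is really $3$ or can be sharpened to $2$).
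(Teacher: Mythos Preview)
Your approach is exactly the paper's: apply K\"unneth (Lemma~\ref{kunneth}) to compute $H^i(G,M)$ for each irreducible $M=M_A\otimes M_B$ and feed the results into (1.1). Two small corrections remove the obstacle you flag. First, a quasisimple group with trivial Schur multiplier need not be simple (e.g.\ $\SL(2,5)$); fortunately nothing in your argument uses simplicity, only perfection, $2$-generation, and $H^2(\,\cdot\,,\F_p)=0$. Second, in the both-nontrivial case your arithmetic slips: $\lceil 1/4\rceil + 2 - 1 = 2$, not $3$. Since $\hat r(A),\hat r(B)\ge d(A)=2$ trivially, that term is absorbed and no lower bound $\hat r(A)\ge 3$ is needed. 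For the reverse inequality, your ``kill the other factor'' argument does not obviously preserve the number of relations; instead note that in your mixed case the $(1.1)$-term for $G$ with module $M_A\otimes\F_p$ is \emph{equal} to the $(1.1)$-term for $A$ with module $M_A$ (you also need $H^1(G,M)\cong H^1(A,M_A)$, which follows from K\"unneth since $B$ is perfect), so the supremum defining $\hat r(G)$ already dominates every term in the supremum defining $\hat r(A)$.
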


\begin{proof}  Since $d(A)=d(B)=2$, it follows  that $d(G) = 2$  unless 
possibly $A \cong B$.  In that case $d(G)=2$ follows from the fact
that the set of generating
pairs of a finite simple group are not a single orbit under the
automorphism group (e.g., use the main result of \cite{gurkan}).
The last statement now  follows by Lemma \ref{kunneth} and (1.1).
\end{proof}  

Since (1.1) does not give an explicit presentation, we cannot give
one in the previous result.   It would be interesting to do so.

The next result is an interesting way of giving profinite presentations
with fewer relations than one might expect by giving presentations
with more generators than the minimum required. Recall that a profinite
presentation for a finite group $G$ is a 
 free profinite group $F$ and a finite subset $U$ of $F$ 
  such that if $R$ is the closed normal subgroup
generated by  $U$, $G \cong F/R$.

We show in the next result that
if $G$ has a profinite presentation with $d(G) + c$ generations
and $e$ relations, then it has a profinite presentation with
$d(G)$ generators and $e - c$ relations.  Often, we will give
profinite presentations with more than the minimum number of generators
required and so we deduce the existence of another profinite
presentation with $d(G)$ generators and fewer relations. We do not
know how to make this explict and if this is true for discrete presentations.
  Indeed, the best result we know
is that if $G$ has a (discrete) presentation with $r$ relations,
then it is has a (discrete) presentation with $d(G)$ generators and
$r + d(G)$ relations
(see \cite[Lemma 2.1]{pap1}).  

If $M$ is an $\F_pG$-module,
let $d_G(M)$ be the minimum size of a generating set for
$M$ as an $\F_pG$-module.  The key result is in
\cite[Theorem 0.2]{lub2}, which asserts that if $G=F/R$ is a finite
group, $F$ is a free profinite group and
$R$ is a closed normal subgroup of $F$, then the minimal
number of elements needed to generate $R$ as a closed normal 
subgroup of $F$ is equal to $\max_p \{d_G(M(p)\}$, where
 $M(p)$ is the $G$-module $R/[R,R]R^p$ and $p$ ranges over all
 primes.  Moreover, by
\cite[2.4]{relmod} the structure of $M(p)$
depends only on the rank of $F$.

\begin{lemma}  \label{saving} Let $G$ be a finite 
group.  
Consider a profinite presentation $G=F/R$ where $F$
is the free profinite group on $d(G) + c$ generators.
Let $e$ to be the minimal number of elements required 
to generate  $R$ as a closed
normal subgroup of $F$.  Then $\hat r(G) = e - c$. 
 In particular, 
the minimal number of relations
occurs  when the   number of generators is minimal, and only in that
case.  
\end{lemma}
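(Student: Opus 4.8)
The plan is to compute $\hat r(G)$ directly from the characterization of minimal relation number for profinite presentations, using the cited result of \cite[Theorem 0.2]{lub2} and the structural fact \cite[2.4]{relmod}. Write $F$ for the free profinite group on $d = d(G) + c$ generators, and $R$ for the kernel of the given surjection $F \to G$. By \cite[Theorem 0.2]{lub2}, the minimal number of generators $e$ of $R$ as a closed normal subgroup of $F$ equals $\max_p d_G(M(p))$, where $M(p) = R/[R,R]R^p$ viewed as an $\F_pG$-module. The first step is to recall from \cite[2.4]{relmod} the precise structure of $M(p)$: as an $\F_pG$-module it is the direct sum of a free $\F_pG$-module of rank equal to the number of generators of $F$ — namely $d(G)+c$ copies of the regular module — together with a "defect" summand that depends only on $G$ (indeed it is $H^{-1}$ of a certain sort, controlled by the relation module); the point is that changing the number of generators of the free group from $d(G)$ to $d(G)+c$ only adds $c$ free direct summands.

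Given this, the key computation is the minimal number of generators of a module of the form $\F_pG^{(k)} \oplus D$, where $D$ is a fixed module and $k$ varies. Since $\F_pG$ is free as a module over itself, adjoining one extra copy of the regular representation raises $d_G$ by exactly $1$: more precisely, for a finitely generated $\F_pG$-module $N$ one has $d_G(N \oplus \F_pG) = d_G(N) + 1$ (this is a standard fact — surjecting minimally onto $N \oplus \F_pG$ forces one generator to project onto a generator of the free summand, and the complement then surjects onto $N$). Applying this $c$ times, $d_G(M(p)) = d_G(M_0(p)) + c$ where $M_0(p)$ is the module arising from a presentation on the minimum $d(G)$ generators. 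Taking the maximum over $p$, we get $e = \hat r(G)_{\min} + c$, where $\hat r(G)_{\min}$ is the minimal-generator value. But by the very formula (1.1) (or equivalently another application of \cite[Theorem 0.2]{lub2} in the minimal-generator case), $\hat r(G)$ is by definition this minimal value $\hat r(G)_{\min}$. Hence $\hat r(G) = e - c$, which is the asserted identity.

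For the "in particular" clause: the displayed identity $\hat r(G) = e - c$ says that the number of relations $e$ needed for a presentation on $d(G)+c$ generators is $\hat r(G) + c$, which is strictly increasing in $c \ge 0$. Therefore the minimal number of relations over all profinite presentations is attained exactly when $c = 0$, i.e. when the generating set has the minimum size $d(G)$, and it equals $\hat r(G)$. This is immediate once the main identity is in hand.

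The main obstacle is pinning down the structure of $M(p)$ precisely enough to justify the "one extra generator gives one extra free summand" reduction. One must be careful that \cite[2.4]{relmod} really gives $M(p) \cong \F_pG^{(c)} \oplus M_0(p)$ (with the same $M_0(p)$ for all presentations on $d(G)$ generators up to the ambiguity that does not affect $d_G$), rather than merely "depends on the rank of $F$." A secondary, lesser point is to make sure the module-theoretic lemma $d_G(N \oplus \F_pG) = d_G(N)+1$ is stated and applied correctly over the possibly non-semisimple ring $\F_pG$; here one uses that $\F_pG$ is a free, hence projective, cyclic module, so a minimal generating set of the direct sum splits off a generator of the free summand. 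Once these two ingredients are secured, the rest is bookkeeping with (1.1).
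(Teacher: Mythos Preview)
Your approach is essentially the same as the paper's: reduce to computing $\max_p d_G(M(p))$ via \cite{lub2}, use \cite[2.4]{relmod} to compare the $p$-relation modules for different numbers of generators, and then invoke the module-theoretic fact $d_G(Y \oplus \F_pG) = d_G(Y) + 1$.

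Two places where the paper is cleaner than your sketch. First, rather than asserting an abstract decomposition of $M(p)$ (your description as ``free of rank $d(G)+c$ plus a fixed defect summand'' is not quite what \cite[2.4]{relmod} delivers), the paper simply observes that since $M(p)$ depends only on the rank of $F$, one may choose the particular presentation in which the $c$ extra generators all map to $1$; then visibly $M(p) \cong N(p) \oplus (\F_pG)^c$ where $N(p)$ is the $p$-relation module of a minimal presentation. Second, your projectivity argument for $d_G(Y \oplus \F_pG) = d_G(Y)+1$ has a gap: splitting the composite surjection $(\F_pG)^s \to \F_pG$ does give a decomposition $(\F_pG)^s \cong \F_pG \oplus K$, but the complement $K$ need not surject onto $Y$ --- its image is only $Y$ modulo a cyclic submodule, which does not immediately bound $d_G(Y)$ by $s-1$. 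The paper instead applies Nakayama's Lemma to pass to the semisimple quotient $A/\rad A$, and then reduces to $A$ simple, where every module is a direct sum of copies of the unique simple and the claim is transparent.
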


\begin{proof}  
 Set $M=R/[R,R]$ and $M(p)=M/pM$ for $p$ a prime.  So $M$
is the relation module for $G$ in this presentation
and $M(p)$ is an $\F_pG$-module.

As noted above,  $R$ is normally generated (as a closed subgroup)
in $F$ by $e$ elements, where $e = \max_p \{d_G(M(p)\}$.
 Also as noted above the structure of $M(p)$ only depends on
the number of generators for $F$ and not on the particular
presentation.  So we may assume that all but $d(G)$ generators
in the presentation are sent to $1$, whence we see that
$M(p)= N(p) \oplus X_p$ where $X_p$ is a free $\F_pG$-module
of rank $c$ and $N(p)$ is the $p$-quotient of the relation
module for a minimal presentation.   

Now the first statement follows from  the elementary fact that,
if  an $\F_pG$-module $Y$ can be generated
by $s$ elements   but no fewer, then the   $\F_pG$-module
  $Y \oplus \F_pG$ is generated by   $s+1$ elements  but  no fewer. 
Indeed, this holds for any finite dimensional algebra $A$ over a field
-- for by Nakayama's Lemma, we may assume that $A$ is semisimple
and so reduce to the case that $A$ is a simple algebra, where the
result is clear.

The last statement is now an immediate consequence.
\end{proof}

\begin{lemma}  \label{H2 for abelian} Let $G$ be a finite group with a normal  
abelian $p$-subgroup $L$.    Let $V$ be an irreducible $\F_pG$-module. 
\begin{enumerate}
\item There is an exact sequence of $G$-modules,
$$
0 \rightarrow \mathrm{Ext}_{\mathbb{Z}}(L,V) \rightarrow H^2(L,V) \rightarrow 
\wedge^2( L^*) \otimes V \rightarrow 0.
$$
\item 
$\dim H^2(L,V)^G \le \dim ((L/pL)^* \otimes V)^G 
+ \dim_F (\wedge^2 (L/pL)^* \otimes V)^G$.
\item If $G=L$, then $\dim H^2(G,\F_p) =  d(d+1)/2$ where $d=d(G)$.
\end{enumerate}
\end{lemma}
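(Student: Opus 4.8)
The plan is to prove the three parts in the order listed, using the first as the engine for the second and the general-to-abelian reductions already available. For part (1), I would produce the standard universal-coefficient style exact sequence for the cohomology of an abelian group with coefficients in a module on which it acts trivially (note $L$ acts trivially on the irreducible $\F_pG$-module $V$ unless $V$ is trivial and $L\neq 1$, but here $L\le G$ is normal and $V$ is an $\F_pG$-module, so $L$ acts trivially on $V$ exactly when $V$ is the trivial module; if $V$ is nontrivial as an $L$-module then $L$ acting nontrivially forces — wait, $L$ is a normal $p$-subgroup, so $[L,V]\subsetneq V$ and by irreducibility $[L,V]=0$, i.e.\ $L$ always acts trivially on $V$). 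Thus $H^*(L,V)=H^*(L,\F_p)\otimes_{\F_p}V$ as $G$-modules, with $G$ acting diagonally (via the conjugation action on $H^*(L,\F_p)$ and the given action on $V$). It then suffices to identify $H^2(L,\F_p)$ for an abelian $p$-group $L$: the Künneth/universal-coefficient computation gives the short exact sequence $0\to \mathrm{Ext}_{\Z}(L,\F_p)\to H^2(L,\F_p)\to \wedge^2(L^*)\to 0$ where $L^*=\Hom(L,\F_p)$, functorially in $L$ and hence $G$-equivariantly. Tensoring over $\F_p$ with $V$ (an exact functor) gives the claimed sequence, identifying $\mathrm{Ext}_{\Z}(L,V)=\mathrm{Ext}_{\Z}(L,\F_p)\otimes V$ and $\wedge^2(L^*)\otimes V$.

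For part (2), I would take $G$-fixed points in the sequence from (1). Left-exactness of $(-)^G$ gives
$$
\dim H^2(L,V)^G \le \dim \big(\mathrm{Ext}_{\Z}(L,V)\big)^G + \dim \big(\wedge^2(L^*)\otimes V\big)^G .
$$
Now I would identify $\mathrm{Ext}_{\Z}(L,V)$: since $V$ is killed by $p$, $\mathrm{Ext}_{\Z}(L,V)\cong \mathrm{Ext}_{\Z}(L/pL,V)\cong (L/pL)^*\otimes V$ as $G$-modules (the first iso because $pV=0$ makes the $\mathrm{Ext}$ depend only on $L/pL$; the second by the standard computation $\mathrm{Ext}_{\Z}(\Z/p,V)\cong V$ for $pV=0$, assembled over a basis of $L/pL$ compatibly with the $G$-action). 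Similarly $\wedge^2(L^*) \cong \wedge^2((L/pL)^*)$ since $L^*=\Hom(L,\F_p)$ already factors through $L/pL$. Substituting gives exactly the stated bound $\dim H^2(L,V)^G \le \dim((L/pL)^*\otimes V)^G + \dim_F(\wedge^2(L/pL)^*\otimes V)^G$.

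For part (3), set $G=L$ abelian of rank $d=d(G)$, so $L/pL\cong \F_p^d$, $V=\F_p$ trivial, and the action of $G$ on everything is trivial. Then $(L/pL)^*\otimes \F_p$ has dimension $d$ and $\wedge^2(L/pL)^*$ has dimension $\binom{d}{2}$, so part (2) gives $\dim H^2(G,\F_p)\le d+\binom{d}{2}=d(d+1)/2$. For the reverse inequality I would argue that the sequence in (1) is actually a short exact sequence with trivial $G=L$-action on all terms, so $\dim H^2(L,\F_p)=\dim\mathrm{Ext}_{\Z}(L,\F_p)+\dim\wedge^2(L^*)=d+\binom d2$ exactly, giving equality $\dim H^2(G,\F_p)=d(d+1)/2$.

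The main obstacle is bookkeeping the $G$-equivariance, i.e.\ making sure every isomorphism used ($H^*(L,V)\cong H^*(L,\F_p)\otimes V$, the universal-coefficient sequence, $\mathrm{Ext}_{\Z}(L,V)\cong (L/pL)^*\otimes V$, $\wedge^2(L^*)\cong\wedge^2((L/pL)^*)$) is natural in $L$ and therefore respects the conjugation action of $G$, so that passing to $G$-fixed points in part (2) is legitimate; the underlying abelian-group cohomology computations themselves are standard and I would cite \cite{brown} or \cite{benson} rather than reprove them.
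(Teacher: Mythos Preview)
Your proposal is correct and follows essentially the same route as the paper: observe that $L$ acts trivially on $V$ (since $L$ is a normal $p$-subgroup and $V$ is an irreducible $\F_pG$-module), invoke the standard universal-coefficient type sequence for $H^2$ of an abelian group with trivial coefficients, note naturality gives $G$-equivariance, identify $\mathrm{Ext}_{\Z}(L,V)\cong (L/pL)^*\otimes V$ and $\wedge^2(L^*)\cong\wedge^2((L/pL)^*)$, then take $G$-fixed points for (2) and specialize to $G=L$, $V=\F_p$ for (3). The only cosmetic difference is that the paper states the sequence directly for $V$ (and remarks it is split), whereas you first pass through $H^*(L,\F_p)\otimes V$ and then tensor; these are the same argument.
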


\begin{proof}  Since $G$ acts irreducibly on $V$, it follows that $L$
acts trivially on $V$.

It is well known (cf. \cite[p. 127]{brown} or \cite{bell}) that when $L$ is abelian
and acts trivially on $V$, 
there is a (split) short exact sequence as in (1) in the category
of abelian groups. Here $\mathrm{Ext}_{\mathbb{Z}} (L,V)$ is the subspace 
of $H^2(L,V)$ corresponding to
abelian extensions of $L$ by $V$.
 The natural maps are  $G$-equivariant,  giving (1).
   Note that $\mathrm{Ext}_{\mathbb{Z}}(L,V)
\cong \mathrm{Hom} (L/pL,V) \cong (L/pL)^* \otimes V$ even as $G$-modules.
Also,  $ \wedge^2(L^*) \otimes V \cong \wedge^2 (L/pL)^* \otimes V$
since $V$ is elementary abelian.

Taking $G$-fixed points gives (2), and taking $G=L$ and $V=\F_p$ gives (3).
\end{proof}

\begin{lemma} \label{multiplicity free}
Let $T$ be a finite cyclic group of order $(q-1)/d$
acting faithfully on the irreducible $\F_pT$-module
$X$ of order $q=p^e$.  
Set $Y=\wedge^2(X)$.
Assume either that $d < p$ or that both $d=3$ and $q > 4$.
Then
\begin{enumerate}
\item   $Y$ is multiplicity free as a $T$-module; and
\item   $X$ is not isomorphic to a submodule of $Y$.
\end{enumerate}
\end{lemma}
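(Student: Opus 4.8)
The first step is to make the $T$-module structure of $X$ completely explicit. Since $T$ is cyclic of order $(q-1)/d$ and acts faithfully on the $e$-dimensional $\F_pT$-module $X$ of order $q=p^e$, the field $\F_q=\mathrm{End}_{\F_pT}(X)$ is the natural coefficient field, and $X\cong\F_q$ with $T$ acting by multiplication by a generator $\zeta$ of the unique subgroup of order $(q-1)/d$ in $\F_q^\times$. Over the algebraic closure (equivalently, after $\otimes_{\F_p}\overline{\F_p}$), $X$ decomposes as a sum of $1$-dimensional eigenspaces on which $T$ acts by the Galois conjugates $\chi,\chi^p,\chi^{p^2},\dots,\chi^{p^{e-1}}$ of a fixed character $\chi\colon T\to\overline{\F_p}^\times$ of order exactly $(q-1)/d$; these $e$ characters are distinct because $X$ is irreducible over $\F_p$ and $\mathrm{End}$ is $\F_q$. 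Then $Y=\wedge^2 X$ has, over the algebraic closure, the weights $\chi^{p^i+p^j}$ for $0\le i<j\le e-1$, that is, $\binom e2$ of them.

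For part (1), I would show these $\binom e2$ characters $\chi^{p^i+p^j}$ are pairwise distinct; multiplicity-freeness of $Y$ as a $T$-module is exactly this statement, since $T$ is cyclic and an eigenvalue appearing twice is the only way a multiplicity can occur (and distinctness over $\overline{\F_p}$ descends to a multiplicity-free $\F_p$-structure). Writing $\chi$ as having order $N:=(q-1)/d$, two such characters coincide iff $p^i+p^j\equiv p^k+p^\ell\pmod N$. Multiplying through by a suitable power of $p$ (a unit mod $N$ since $\gcd(p,q-1)=1$) I can normalize and reduce to a short case analysis: the relation $p^a+p^b\equiv 1+p^c\pmod N$ with $0\le a<b\le e-1$, $0\le c\le e-1$ forces, using $1\le p^a+p^b\le 2p^{e-1}<$ (a small multiple of) $N$ and the hypothesis $d<p$ (so $N>(q-1)/p=p^{e-1}-1/p$, i.e. $N\ge p^{e-1}$, in fact $N>p^{e-1}$ since $d<p$ implies $N>q/p\ge p^{e-1}$), that the two multisets $\{p^a,p^b\}$ and $\{1,p^c\}$ coincide as integers. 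The bound $d<p$ is what squeezes $p^i+p^j$ into a window of length $<2N$ containing at most the obvious representatives, killing the wrap-around solutions. The exceptional case $d=3$, $q>4$ (so $q\in\{8,16,25,27,\dots\}$, but note $q>4$ with $3\mid q-1$ means $q\ge 7$, in fact $q\ge 13$ or $q=7$—I should double-check which prime powers satisfy $3\mid q-1$) has $N=(q-1)/3$, which can be as small as $q/3$, slightly below $p^{e-1}$ only in small cases; here I expect to handle $q$ by a direct finite check that $p^i+p^j\pmod N$ are distinct, the condition $q>4$ excluding exactly the degenerate $q=4$ ($N=1$, $Y$ trivial but the statement would be vacuous or false).

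For part (2), $X\hookrightarrow Y$ as $\F_pT$-modules would force, over $\overline{\F_p}$, the weight set $\{\chi^{p^i}\}$ to be contained in $\{\chi^{p^i+p^j}\}$; in particular $\chi=\chi^{p^0}$ (up to a Frobenius twist, which I can absorb by relabeling) equals some $\chi^{p^i+p^j}$, i.e. $1\equiv p^i+p^j\pmod N$ with $i<j$. Since $2\le p^i+p^j$ and $p^i+p^j\le p^{e-2}+p^{e-1}<2p^{e-1}\le 2N$ (using $d<p$, or the explicit bound in the $d=3$ case), the only way to have $p^i+p^j\equiv 1\pmod N$ is $p^i+p^j=N+1=(q-1)/d+1$; I then need to rule this out. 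For $d<p$ this reads $p^i+p^j=(p^e-1)/d+1$, so $d(p^i+p^j-1)=p^e-1$, forcing $d\mid p^e-1$ (true) and a divisibility/size contradiction: the left side is at most $d(2p^{e-1}-1)<2dp^{e-1}\le p^e\cdot(2d/p)<p^e$ when $2d<p$, and for $d=p-1$ or so one checks the residue mod $p$ ($d(p^i+p^j-1)\equiv -d\pmod p$ if $i,j\ge 1$, versus $-1$) to get a contradiction unless $i=0$, then $p^j=(q-1)/d+1-1=(q-1)/d$, impossible as the right side is not a pure power of $p$ unless $q-1=dp^j$, again excluded by the residue $q-1\equiv-1\pmod p$. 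The exceptional $d=3$, $q>4$ case: $p^i+p^j=(q-1)/3+1=(q+2)/3$; reducing mod $p$ gives $p^i\equiv (q+2)/3\pmod p$ (if $j\ge 1$), and $(q+2)/3\equiv 2/3\pmod p$ since $p\mid q$, which is $\not\equiv 0,1\pmod p$ for $p>3$, so $i=0$ forced but then $1+p^j=(q+2)/3$ gives $3p^j=q-1+p^j$... I'd finish this small computation directly, and the condition $q>4$ is exactly what's needed to avoid $q=4$ where $(q+2)/3=2=1+1$ (but $i<j$ fails there anyway).

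**Main obstacle.** The routine part is the explicit weight computation; the real work is the elementary-number-theory case analysis showing $\{p^i+p^j\bmod N\}$ is a set of distinct residues and omits $\{p^i\bmod N\}$, and in particular pinning down exactly why $d<p$ (respectively $d=3$, $q>4$) is the precise hypothesis that makes the wrap-around collisions impossible. I expect the clean uniform argument to be: $p$ is a primitive-root-like unit mod $N$, the $e$ numbers $p^0,\dots,p^{e-1}$ lie in $[1,p^{e-1}]$, their pairwise sums lie in $[2,2p^{e-1}]$, and $2p^{e-1}<2N$ (from $d<p$), so each sum has a unique representative in a fundamental domain unless two sums are congruent, which by the base-$p$ digit structure (a sum $p^i+p^j$ with $i<j$ has base-$p$ digits being two $1$'s, unless $i=j$) forces equality of the unordered pairs; the exceptional $d=3$ needs $N$ possibly as small as $\approx p^{e-1}$, where I lose a factor of $2$ in the window and must check the handful of genuinely ambiguous sums by hand, which is exactly where the hypothesis $q>4$ gets used to exclude the one bad small case.
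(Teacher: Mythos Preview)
Your approach is exactly the paper's: diagonalize $X$ over the algebraic closure to get eigenvalues $\lambda^{p^i}$ ($0\le i\le e-1$) for a generator of $T$, observe that $Y$ has eigenvalues $\lambda^{p^i+p^j}$ ($i<j$), and reduce both claims to the non-existence of certain congruences modulo $N=(q-1)/d$. For the case $d<p$ your size estimate is correct and in fact cleaner than you suggest: since $d\le p-1$ one has $N\ge (p^e-1)/(p-1)=1+p+\cdots+p^{e-1}\ge p^{e-2}+p^{e-1}$, so every sum $p^i+p^j$ with $0\le i<j\le e-1$ lies in $[1+p,N]$, and two such sums are congruent mod $N$ iff they are equal as integers, which base-$p$ uniqueness rules out. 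The same bound disposes of $p^k\equiv p^i+p^j\pmod N$. So your plan for $d<p$ works.

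However, there is a genuine problem with the $d=3$ clause that your ``direct finite check'' will uncover rather than confirm. The only prime not covered by $d<p$ with $3\mid q-1$ is $p=2$ (since $3\nmid 3^e-1$), so $q=2^e$ with $e$ even. Take $q=16$, $N=5$: the eigenvalues of a generator on $X$ are $\zeta,\zeta^2,\zeta^4,\zeta^3$ (all primitive $5$th roots), and on $Y=\wedge^2 X$ the six eigenvalues are $\zeta^3,\zeta^4,1,1,\zeta,\zeta^2$. Thus $Y$ is \emph{not} multiplicity free (the trivial eigenvalue occurs twice), and moreover $Y\cong \F_2\oplus\F_2\oplus X$, so $X$ \emph{does} embed in $Y$. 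Both conclusions fail. The paper's proof asserts the congruence $d(p^i+p^j)\equiv d(p^{i'}+p^{j'})\pmod{p^e-1}$ is impossible for distinct pairs, but here $3(1+4)=15\equiv 0\equiv 30=3(2+8)\pmod{15}$. (For $q=64$, $N=21$, one similarly finds $1+2\equiv 8+16\pmod{21}$, so (1) fails there too, though (2) happens to hold.) So as literally stated the $d=3$, $q>4$ case is false, and neither your sketch nor the paper's completes it; the lemma is only used in the paper for $d\le 2$, where your $d<p$ argument suffices.
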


\begin{proof}   Let $x \in T$ be a generator.  Thus,
$x$ acts 
on $V$ with eigenvalue $\lambda \in \F_q$ of order
$(q-1)/d$.    It is straightforward to see that
$Y \otimes_{\F_p} \F_q$ is a direct sum of submodules
on which $x$ acts via $\lambda^{(p^i + p^j)}$ where
$1 \le i < j < e$.   These submodules are all nonisomorphic
(if not, then $d(p^i+p^j) \equiv d(p^{i'} + p^{j'})$ modulo $p^e-1$
for some distinct pairs $\{i,j\}$ and $\{i', j' \}$)
and similarly are not isomorphic to $X$.
\end{proof}

\begin{lemma} \label{exterior square}  Let $G$ be a finite group.
Let $V$ be an irreducible $\F_pG$-module of dimension $e$.
Then $\wedge^2(V)$ can be generated by $e-1$ elements as an
$\F_pG$-module.  In particular, $\dim \mathrm{Hom}_G(\wedge^2(V),W)
\le (e-1) \dim W$ for any $\F_pG$-module $W$.
\end{lemma}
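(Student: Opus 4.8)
The plan is to exhibit an explicit generating set of size $e-1$. Fix any nonzero vector $v\in V$ and consider the subspace $v\wedge V:=\{v\wedge w : w\in V\}$ of $\wedge^2(V)$. Since the map $w\mapsto v\wedge w$ has kernel exactly $\langle v\rangle$, this subspace has dimension $e-1$, hence is spanned by $e-1$ vectors (say $v\wedge w_1,\dots,v\wedge w_{e-1}$, where $v,w_1,\dots,w_{e-1}$ is a basis of $V$). I will show that the $\F_pG$-submodule $N$ generated by $v\wedge V$ is all of $\wedge^2(V)$; this proves the first assertion.

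To see $N=\wedge^2(V)$, invoke irreducibility of $V$: the $\F_pG$-submodule generated by $v$ is nonzero, hence equals $V$, so $\{gv : g\in G\}$ spans $V$. Choose $g_1,\dots,g_e\in G$ with $v_i:=g_iv$ a basis of $V$. For each $i$, the submodule $N$ contains $g_i(v\wedge V)=(g_iv)\wedge(g_iV)=v_i\wedge V$, using $g_iV=V$. In particular $N$ contains every $v_i\wedge v_j$ with $i<j$, and these form a basis of $\wedge^2(V)$; hence $N=\wedge^2(V)$. (When $e=1$ everything is vacuous: $\wedge^2(V)=0$ is generated by $0$ elements.)

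For the ``in particular'' statement, let $x_1,\dots,x_{e-1}$ generate $\wedge^2(V)$ as an $\F_pG$-module. Any $\phi\in\mathrm{Hom}_G(\wedge^2(V),W)$ is determined by the images $\phi(x_1),\dots,\phi(x_{e-1})$, so $\phi\mapsto(\phi(x_1),\dots,\phi(x_{e-1}))$ is an injective $\F_p$-linear map $\mathrm{Hom}_G(\wedge^2(V),W)\hookrightarrow W^{\,e-1}$, giving $\dim\mathrm{Hom}_G(\wedge^2(V),W)\le(e-1)\dim W$.

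There is essentially no obstacle here: the only point requiring thought is that one should generate $\wedge^2(V)$ from the codimension-one subspace $v\wedge V$ rather than from individual decomposable vectors; once that subspace is identified, everything is a one-line consequence of the irreducibility of $V$ and elementary linear algebra. The argument is characteristic-free, using only $v\wedge v=0$ and that $\{v_i\wedge v_j\}_{i<j}$ is a basis of $\wedge^2(V)$ whenever $\{v_i\}$ is a basis of $V$.
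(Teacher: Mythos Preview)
Your proof is correct and follows essentially the same approach as the paper: both take the $e-1$ elements $v\wedge v_2,\dots,v\wedge v_e$ (where $v=v_1,\dots,v_e$ is a basis of $V$) as generators, and the second statement is deduced trivially from the first. The paper simply asserts that these generate $\wedge^2(V)$ as a $G$-module without further justification, whereas you have supplied the natural argument using irreducibility of $V$ to produce a basis of $G$-translates of $v$.
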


\begin{proof}   Choose a basis $v=v_1, \ldots, v_e$ for $V$.
It is clear that $v_1 \wedge v_j, 2 \le j \le e$, is a generating
set for $\wedge^2(V)$ as a $G$-module, which proves the first
statement.  The second statement is a trivial consequence of
the first.
\end{proof}

We will use the following elementary result to bound
the number of trivial composition factors in a module.

\begin{lemma} \label{trivial cf}
Let $G$ be a finite group and $F$ a field of characteristic $p$.
Let $M$ be an $FG$-module and let $J$ be a subgroup of $G$.
\begin{enumerate}
\item If $M^G=0$ and 
 $G$ can be generated by $2$ conjugates of $J$, then
$\dim M^J  \le (1/2) \dim M$.
\item If $|J|$ is a not a multiple of $p$, then the number
of trivial $FG$ composition factors is at most $\dim M^J$.
\end{enumerate}
\end{lemma}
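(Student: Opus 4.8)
The plan is to handle the two parts separately, as they rely on different elementary facts. For part (1), suppose $G = \langle J^{g_1}, J^{g_2}\rangle$ for some $g_1, g_2 \in G$. The key observation is that $M^{J^{g_i}}$ is a subspace of $M$ of the same dimension as $M^J$ (since conjugation by $g_i$ carries $M^J$ isomorphically onto $M^{J^{g_i}}$), and that $M^{J^{g_1}} \cap M^{J^{g_2}} = M^{\langle J^{g_1}, J^{g_2}\rangle} = M^G = 0$. Therefore $\dim M^{J^{g_1}} + \dim M^{J^{g_2}} = \dim(M^{J^{g_1}} \oplus M^{J^{g_2}}) \le \dim M$, and since both summands have dimension $\dim M^J$, we get $2\dim M^J \le \dim M$, which is the claim. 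One should be slightly careful: it is cleanest to reduce first to the case $M^G = 0$ exactly as hypothesized, and note nothing more is needed.

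For part (2), assume $|J|$ is coprime to $p$, so that $FJ$ is semisimple and $M|_J$ decomposes as a direct sum of irreducible $FJ$-modules; the multiplicity of the trivial $FJ$-module in this decomposition is exactly $\dim M^J$. Now proceed by induction on the composition length of $M$ as an $FG$-module. If $0 \to X \to M \to Z \to 0$ is a short exact sequence of $FG$-modules with $X$ irreducible, then (since $J$ has order prime to $p$) restriction to $J$ splits, so $M|_J \cong X|_J \oplus Z|_J$ and hence $\dim M^J = \dim X^J + \dim Z^J$. The number of trivial $FG$-composition factors of $M$ is the number in $Z$ plus $1$ if $X$ is trivial and $0$ otherwise; in either case this is at most $(\text{number of trivial composition factors of }Z) + \dim X^J$, since $\dim X^J \ge 1$ when $X$ is the trivial module. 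By induction the number of trivial composition factors of $Z$ is at most $\dim Z^J$, so the total is at most $\dim X^J + \dim Z^J = \dim M^J$, completing the induction.

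Neither part presents a real obstacle; the only point requiring any care is the bookkeeping in part (2), namely recognizing that a trivial $FG$-composition factor $X$ satisfies $\dim X^J = 1$ (not merely $\ge 0$), which is what makes the induction close exactly rather than with slack. The use of semisimplicity of $FJ$ when $p \nmid |J|$ is standard (Maschke's theorem).
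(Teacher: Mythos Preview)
Your proof is correct and follows essentially the same approach as the paper's. For part (1) you argue exactly as the paper does; for part (2) the paper reaches the conclusion in one line by writing $M = M^J \oplus V$ (Maschke) with $V$ having no trivial $J$-composition factors, so that the number of $J$-trivial composition factors equals $\dim M^J$ and hence bounds the number of $G$-trivial ones---your explicit induction on composition length simply unfolds this same observation.
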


\begin{proof}   If $G= \langle J,K \rangle$ for some conjugate
$K$ of $J$, then
$M^J \cap M^K=M^G= 0$, whence (1) holds.  In (2),  since $J$ has order
coprime to the characteristic of $F$, $M=M^J \oplus V$
where $J$ has no trivial composition factors on $V$.
Thus, the number of $J$-trivial composition factors
is at most $\dim M^J$ and so this is also an upper bound
for the number of $G$-trivial composition factors.
\end{proof}

\section{Covering Groups}\label{covering}

We will also switch between the simple group and a covering group.
Recall that a group $G$ is quasisimple if it is perfect and $G/Z(G)$
is a nonabelian simple group.  Recall also the definition of $h'(G)$ from (1.3).
 
If $N$ is a normal of $G$ and $M$ is a $G$-module with
$M^N=M$, then we may and do view $M$ as a $G/N$-module.

\begin{lemma} \label{covering lemma}  Let $G$ be a finite quasisimple group.
Let $r$ be prime and let   $Z$ be a central $r$-subgroup of $G$.
Let $M$ be a nontrivial irreducible $FG$-module with $F$ a field of 
characteristic $p$.   
\begin{enumerate}
\item If $M^Z = M$, then $H^1(G/Z,M) \cong H^1(G,M)$.
\item If $r \ne p$, then either $Z$ acts nontrivially and $H^2(G,M)=0$, or 
$Z$ acts trivially
and $H^2(G/Z,M) \cong H^2(G,M)$.
\item If $r=p$, then $Z$ acts trivially on $M$, and  
$$
\dim H^2(G/Z,M) \le
\dim H^2(G,M) \le \dim H^2(G/Z,M) + c \dim H^1(G/Z,M),
$$
where $c$ is the rank of $Z$.  In particular, 
$$ 
\dim H^2(G,M) \le \dim H^2(G/Z,M) + \dim M
$$
\item $h'(G/Z) \le h'(G) \le h'(G/Z) + 1$.
\end{enumerate}
\end{lemma}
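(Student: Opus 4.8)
The plan is to run the central extension $1\to Z\to G\to G/Z\to 1$ through inflation--restriction together with the cohomology lemmas of Section \ref{prelim1}, dividing into the cases $r\neq p$ and $r=p$; part (4) then drops out formally from (1)--(3). Two facts are used repeatedly. First, since $M$ is nontrivial and irreducible, $M^{G}=0$; and since $Z$ is central, $M^{Z}$ is a $G$-submodule, hence equals $0$ or $M$. In particular, when $r=p$ a $p$-group has nonzero fixed points on the nonzero $\F_{p}$-space $M$, so $M^{Z}=M$ and $Z$ acts trivially. Second, as $G$ centralizes $Z$ it acts trivially on $Z$, on $Z^{*}$, and on $\wedge^{2}(Z^{*})$; thus $G$ (equivalently $G/Z$) acts on $\Hom(Z,M)\cong(Z/pZ)^{*}\otimes M$ only through the factor $M$, and $\Hom(Z,M)^{G}\cong(Z/pZ)^{*}\otimes M^{G}=0$.

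For (1), with $M^{Z}=M$: the five-term inflation--restriction sequence gives $0\to H^{1}(G/Z,M)\to H^{1}(G,M)\to H^{1}(Z,M)^{G/Z}$, and $H^{1}(Z,M)^{G/Z}=\Hom(Z,M)^{G/Z}=0$, so the inflation map is an isomorphism. (For $r=p$ one could instead combine Lemma \ref{usual}(1), which bounds $\dim H^{1}(G,M)$ by $\dim H^{1}(G/Z,M)+\dim\Hom(Z,M)^{G}=\dim H^{1}(G/Z,M)$, with injectivity of inflation on $H^{1}$.) For (2), with $r\neq p$: if $Z$ acts nontrivially then $M^{Z}=0$ and $|Z|$ is prime to $p$, so $H^{j}(G,M)=0$ for all $j$ by Corollary \ref{coprime}(2); if $Z$ acts trivially then $M^{Z}=M$ and Corollary \ref{coprime}(3) gives $H^{j}(G/Z,M)\cong H^{j}(G,M)$ for all $j$.

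For (3), with $r=p$ (so $M^{Z}=M$): the lower bound $\dim H^{2}(G/Z,M)\le\dim H^{2}(G,M)$ again comes from the five-term sequence, since the kernel of inflation $H^{2}(G/Z,M)\to H^{2}(G,M)$ is the image of transgression from $H^{1}(Z,M)^{G}=\Hom(Z,M)^{G}=0$. For the upper bound I would invoke Lemma \ref{usual}(2) with $N=Z$: $\dim H^{2}(G,M)\le\dim H^{2}(G/Z,M)+\dim H^{2}(Z,M)^{G}+\dim H^{1}(G/Z,H^{1}(Z,M))$. In the $G$-equivariant exact sequence $0\to\mathrm{Ext}_{\mathbb{Z}}(Z,M)\to H^{2}(Z,M)\to\wedge^{2}(Z^{*})\otimes M\to 0$ of Lemma \ref{H2 for abelian}(1), both end terms have zero $G$-fixed points (the $G$-action is trivial on $Z^{*}$ and $\wedge^{2}(Z^{*})$, while $M^{G}=0$), so $H^{2}(Z,M)^{G}=0$. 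Finally $H^{1}(Z,M)\cong(Z/pZ)^{*}\otimes M$ with trivial action on the first factor, so $H^{1}(G/Z,H^{1}(Z,M))\cong(Z/pZ)^{*}\otimes H^{1}(G/Z,M)$ has dimension $c\cdot\dim H^{1}(G/Z,M)$ with $c=\mathrm{rank}(Z)$, yielding the displayed inequality. The ``in particular'' follows from (1.2) (applicable since $G/Z$ is quasisimple) together with $c\le 2$, as $Z$ is a central $p$-subgroup of a finite quasisimple group.

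For (4): a nontrivial irreducible $\F_{p}(G/Z)$-module inflates to a nontrivial irreducible $\F_{p}G$-module on which $Z$ acts trivially, so (2)--(3) give $\dim H^{2}(G/Z,M)\le\dim H^{2}(G,M)$; dividing by $\dim M$ and passing to suprema gives $h'(G/Z)\le h'(G)$. Conversely, for a nontrivial irreducible $\F_{p}G$-module $M$, either $r\neq p$ and $Z$ acts nontrivially, in which case $H^{2}(G,M)=0$, or $Z$ acts trivially, in which case $M$ is a nontrivial irreducible $\F_{p}(G/Z)$-module and (2)--(3) give $\dim H^{2}(G,M)\le\dim H^{2}(G/Z,M)+\dim M$; in either case $\dim H^{2}(G,M)/\dim M\le h'_{p}(G/Z)+1$, and taking suprema yields $h'(G)\le h'(G/Z)+1$. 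The one place where anything beyond bookkeeping happens is the upper bound in (3) -- specifically the vanishing $H^{2}(Z,M)^{G}=0$ and the identification of $H^{1}(G/Z,H^{1}(Z,M))$ -- and this rests squarely on $Z$ being \emph{central}, so that $G$ acts trivially on $Z$, $Z^{*}$, and $\wedge^{2}(Z^{*})$, combined with $M^{G}=0$.
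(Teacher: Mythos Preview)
Your proof is correct. For parts (1), (2), the upper bound in (3), and (4), your argument is essentially the same as the paper's (invoking Lemma \ref{usual}, Corollary \ref{coprime}, Lemma \ref{H2 for abelian}, and (1.2) in the same places).

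The one genuine difference is the \emph{lower bound} $\dim H^{2}(G/Z,M)\le\dim H^{2}(G,M)$ in (3). You obtain it directly from the five-term inflation--restriction sequence: since $H^{1}(Z,M)^{G}=\Hom(Z,M)^{G}=0$, transgression vanishes and inflation on $H^{2}$ is injective. The paper instead argues via relation modules: writing $G=F/R$ and $G/Z=F/S$ with $F$ free of rank $d(G)$, it compares the $p$-relation modules $R(p)$ and $S(p)$, observes there is a $G$-map $R(p)\to S(p)$ whose cokernel has trivial $G$-action, and uses that the multiplicity of a nontrivial irreducible $M$ in the top of the relation module equals $\dim H^{2}-\dim H^{1}$, together with part (1). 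Your argument is shorter and more self-contained for this particular inequality; the paper's route ties into its broader relation-module framework and directly yields the sharper-looking statement $\dim H^{2}(G,M)-\dim H^{1}(G,M)\ge\dim H^{2}(G/Z,M)-\dim H^{1}(G/Z,M)$, though by (1) the two conclusions are equivalent here.
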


\begin{proof} 
The first statement follows by Lemma \ref{usual}.  (2) is included
in Lemma \ref{coprime}.

So assume that $r = p$. 

 We use the inequality from Lemma \ref{usual}(2):
$$
\dim H^2(G,M) \le H^2(G/Z,M^Z) + \dim H^2(Z,M)^G + \dim H^1(G/Z, H^1(Z,M)).
$$

By Lemma \ref{H2 for abelian},
$\dim H^2(Z,M)^G \le \dim \mathrm{Hom}_G(Z,M) + \dim \mathrm{Hom}_G(\wedge^2(Z),M)=0$
since $M^G=0$.  So the
middle term of the right hand side above is $0$.  
Now $H^1(Z,M) \cong \mathrm{Hom}(Z/pZ,M)$.  Since $Z/pZ$ is a direct sum of $c$ copies
of the trivial $\F_pG$-module, where $c$ is at most the rank of $Z$,
   $\mathrm{Hom}(Z/pZ,M)$ is isomorphic to $c$
copies of $M$
(as a $G$-module).   Thus, $\dim H^1(G/Z, H^1(Z,M)) \le c \dim H^1(G/Z,M)$
 and so the second inequality in (3) holds.  Since $c \le 2$ \cite[pp. 313--314]{gls3},
and $\dim H^1(G/Z,M) \le (\dim M)/2$, the last part of (3) follows.

Finally we show that $\dim H^2(G/Z, M) \le \dim H^2(G,M)$.
 We use relation modules for
this purpose.   Write $G=F/R$ where $F$ is free of rank $d(G)$.  Let $S/R$
be the central subgroup of $F/R$ corresponding to $Z$.
Let $R(p)=R/[R,R]R^p$ be the $p$-relation module
for $G$ and $S(p)=S/[S,S]S^p$ the $p$-relation module for $G/Z$.   Clearly, 
there
is a $G$-map $ \gamma: R(p) \rightarrow S(p)$ with $S(p)/\gamma(R(p))$ having trivial $G$-action.
Thus, the multiplicity of an irreducible nontrivial $G$-module $M$ 
in $S(p)/\mathrm{Rad}(S(p))$
is at most the multiplicity of $M$ in $R(p)/\mathrm{Rad}(R(p))$.   
Since these multiplicities
are $\dim H^2(G,M) - \dim H^1(G,M)$ and 
$\dim H^2(G/Z,M)-\dim H^1(G/Z,M)$, and since,  by (1),
$\dim H^1(G/Z,M)=\dim H^1(G,M)$, the inequality follows.

Now (4) follows  from (1), (2), (3) and (1.2).
\end{proof}

We can interpret this for profinite presentations.  Recall
that $\hat{r}(G)$ is the minimal number of relations required
among all profinite presentations of the finite group $G$.

\begin{corollary} \label{covering relations}
Let $G$ be a quasisimple group with a central subgroup $Z$.  
\begin{enumerate}
\item  $\hat{r}(G/Z) \le \max \{ \hat{r}(G), 2 + \mathrm{rank}(J)\}
\le \max\{ \hat{r}(G), 4 \}$,
where $J = H^2(G/Z, \mathbb{C}^*)$ is the Schur multiplier of $G/Z$.
\item  $\hat{r}(G) \le  \hat{r}(G/Z) + 1$.
\end{enumerate}
\end{corollary}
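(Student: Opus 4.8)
The plan is to read off both inequalities from Lubotzky's formula (1.1), comparing, for each prime $p$ and each irreducible $\F_p$-module $M$ that is relevant, the quantity
\[
\delta_p(H,M):=\Big\lceil \frac{\dim H^2(H,M)-\dim H^1(H,M)}{\dim M}\Big\rceil + d(H)-\xi_M
\]
for $H=G$ and for $H=G/Z$; recall that $\hat r(H)=\sup_{p,M}\delta_p(H,M)$ and that, since $G$ and $G/Z$ are quasisimple, $d(G)=d(G/Z)=2$. A preliminary reduction lets us assume that $Z$ is a $p$-group whenever we work with a fixed $p$: writing $Z=Z_p\times Z_{p'}$ with $Z_{p'}$ the (central, hence normal) Hall $p'$-subgroup, Corollary \ref{coprime}(3) gives $H^j(G,M)\cong H^j(G/Z_{p'},M)$ whenever $Z_{p'}$ acts trivially on $M$, so for such $M$ we may replace $G$ by the quasisimple group $G/Z_{p'}$ and $Z$ by the central $p$-group $Z_p$ — which is exactly the hypothesis of Lemma \ref{covering lemma}. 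The remaining ingredients are Lemma \ref{perfect}, Corollary \ref{coprime}(2), the bound $\dim H^2(\cdot,\F_p)\le 2$ for quasisimple groups, and the fact that passing to a central quotient cannot shrink the Schur multiplier — equivalently, $G$ and $G/Z$ have the same universal covering group, inside which $H^2(G,\mathbb{C}^*)$ sits as a subgroup of $J=H^2(G/Z,\mathbb{C}^*)$, so that $\dim H^2(G,\F_p)\le\dim H^2(G/Z,\F_p)$ for every $p$ (since $p$-rank is monotone on subgroups of a finite abelian group).

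For (1), the modules $M$ that appear for $G/Z$ in (1.1) are the irreducible $\F_pG$-modules on which $Z$ acts trivially. If $M$ is nontrivial, the reduction above together with Lemma \ref{covering lemma}(1) and (3) gives $\dim H^1(G/Z,M)=\dim H^1(G,M)$ and $\dim H^2(G/Z,M)\le\dim H^2(G,M)$, whence $\delta_p(G/Z,M)\le\delta_p(G,M)\le\hat r(G)$. If $M=\F_p$ is trivial, then $H^1(G/Z,\F_p)=0$ because $G/Z$ is perfect (Lemma \ref{perfect}), so $\delta_p(G/Z,\F_p)=\dim H^2(G/Z,\F_p)+2\le \mathrm{rank}(J)+2$, and $\mathrm{rank}(J)=\max_p\dim H^2(G/Z,\F_p)\le 2$. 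Taking the supremum over $p$ and $M$ yields $\hat r(G/Z)\le\max\{\hat r(G),\,2+\mathrm{rank}(J)\}\le\max\{\hat r(G),4\}$.

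For (2) we range over all irreducible $\F_pG$-modules $M$. If $M$ is nontrivial and $Z$ acts trivially on it, the reduction plus the last inequality of Lemma \ref{covering lemma}(3) gives $\dim H^2(G,M)\le\dim H^2(G/Z,M)+\dim M$, while $\dim H^1(G,M)=\dim H^1(G/Z,M)$; since $\lceil x+1\rceil=\lceil x\rceil+1$, this gives $\delta_p(G,M)\le\delta_p(G/Z,M)+1\le\hat r(G/Z)+1$, which is the case that forces the ``$+1$''. If $M$ is nontrivial and $Z$ acts nontrivially on it, then $Z$ acts through a character of order prime to $p$ (a central element acts isotypically, and $\bar\F_p^\times$ has no $p$-torsion), so its Hall $p'$-subgroup $N$ acts nontrivially; hence the $G$-submodule $M^N$ is proper, i.e.\ $M^N=0$, and Corollary \ref{coprime}(2) gives $H^2(G,M)=0$, so $\delta_p(G,M)\le d(G)-1=1$. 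For $M=\F_p$, $\delta_p(G,\F_p)=\dim H^2(G,\F_p)+2\le\dim H^2(G/Z,\F_p)+2=\delta_p(G/Z,\F_p)\le\hat r(G/Z)$ by the multiplier fact and Lemma \ref{perfect}. Taking the supremum gives $\hat r(G)\le\hat r(G/Z)+1$.

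The only real difficulties are bookkeeping: separating the three kinds of modules, and making sure the reduction to $Z$ a $p$-group is valid so that Lemma \ref{covering lemma} (stated only for prime-power $Z$) may be invoked. The one point where a genuinely extra input is required is the trivial-module case of (2): Lemma \ref{covering lemma} says nothing about $\F_p$, and the crude bound $\dim H^2(G,\F_p)\le 2$ is not sharp enough (it would only give $\hat r(G)\le \hat r(G/Z)+2$), so one really needs that the Schur multiplier of $G$ embeds in that of $G/Z$. I expect this to be the step most worth spelling out carefully.
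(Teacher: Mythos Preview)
Your proof is correct and follows essentially the same route as the paper's: apply formula~(1.1) module by module, using Lemma~\ref{covering lemma} for the nontrivial modules and the Schur-multiplier bound for the trivial one. You are more careful than the paper in two places: you spell out the reduction to $Z$ a $p$-group (the paper invokes Lemma~\ref{covering lemma}(3) directly, leaving the iteration over Sylow factors of $Z$ implicit), and you justify the inequality $\dim H^2(G,\F_p)\le\dim H^2(G/Z,\F_p)$ via the common universal cover, where the paper simply asserts it. These are exactly the right points to flag, and your handling of them is sound.
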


\begin{proof}  We first prove (1).
Let $M$ be an irreducible $G/Z$-module.  We may view $M$
as a $G$-module.  First suppose that $M$ is trivial.
Then $\dim H^2(G/Z,M) \le \mathrm{rank}(J) $. 
 
 Now assume that $M$ is nontrivial. Then by Lemma \ref{covering lemma}(3), 
 $$
 \dim H^2(G,M) - \dim H^1(G,M)  \ge \dim H^2(G/Z,M) - \dim H^1(G/Z,M).
 $$
It follows by (1.1) that either $\hat{r}(G/Z) = 2 + \mathrm{rank}(J) \le 4$
or $\hat{r}(G/Z) \le \hat{r}(G)$, whence the result holds.

We now prove (2). Note that $d(G) = d(G/Z)$.
  Let $M$ be an irreducible $\F_pG$-module which
achieves the maximum $\hat{r}(G)$ in (1.1).  If
$M$ is trivial, then $\dim H^2(G/Z,M) \ge \dim H^2(G,M)$
and so $\hat{r}(G) \le \hat{r}(G/Z)$ in this case.

Suppose that $M$ is nontrivial.  If $Z$ acts nontrivially on 
$M$, then $H^j(G,M)=0$ for all $j$, a contradiction.
So we may assume that $Z$ is trivial on $M$. Then  by Lemma \ref{covering lemma}(4),
$$
\frac{\dim H^2(G,M)}{\dim M} \le \frac{\dim H^2(G,M)}{\dim M} + 1,
$$ 
As noted in the previous proof, $\dim H^1(G,M)=\dim H^1(G/Z,M)$.
 Now apply (1.1).
\end{proof}

The previous two results allow us to work with covering groups rather than 
simple groups.   So if we prove that the universal central extension $G$
of a simple group $S$ can be presented profinitely with $r \ge 4$ relations, the
same is true for any quotient of $G$ (and in particular for $S$).  Conversely,
if a finite simple group $S$ can be presented with $r$ profinite relations, then any quasisimple
group with central quotient $S$ can be presented with $r+1$ profinite relations.

\section{Faithful Irreducible Modules and Theorem C} \label{faithful section}

In this section we show that a bound for  $\dim H^2(G,M)/ \dim M$ with $G$ simple 
and $M$ a nontrivial irreducible $\F_pG$-module implies
a related bound for arbitrary finite groups and
irreducible faithful modules.   In particular, this shows how Theorem B
implies Theorem C. 

 It is much easier to prove
that $\dim H^1(G,M) \le \dim H^1(L,M)$ if $M$ is an irreducible faithful
$FG$-module and $L$ is any component of $G$.  See \cite{gur} and 
    Lemma \ref{faithful lemma} (5) below for a stronger result.
   
   For $H^2$, the reduction to simple groups is 
    more involved,  and it is not clear that the constant one
obtains for simple groups is the same   constant 
for irreducible faithful modules.  
   Holt \cite{holt} used a similar reduction   for a   weaker
   result, but it is not sufficient to appeal to his results.
 
If $L$ is a nonabelian
simple group, let  
$$h_i(L) = \max\{ \dim H^i(L,M)/\dim M\}, $$
where  the maximum is taken over all nontrivial irreducible $\F_pL$-modules
and all $p$.
So $h_2(L)=h'(L)$ as defined in (1.3). 
Let $o_p(L)$ denote the maximal dimension of any section of $\mathrm{Out}(L)$ that
is an elementary abelian
$p$-group (this is called the sectional $p$-rank of $\mathrm{Out}(L)$).
Let $o(L) = \max_p \{o_p(L)\}$.  
We record some well known facts about this.  See \cite[Chapter 4]{gls3}.

\begin{lemma} \label{out}  Let $L$ be a nonabelian finite simple group.
Then $o_p(L) \le 2$ for $p$ odd,  and $o(L) \le 3$.   
\begin{enumerate}
\item   If $L = A_n, n \ne 6$ or $L$ is sporadic, then
$\mathrm{Out}(L)$  has order at most $2$,  and $o(L) \le 1$.  
\item  $\mathrm{Out}(A_6)$ is elementary abelian of order $4$.
\item  Assume that $L$ is of Lie type.    Then  $o_2(L) \le 2$ unless    
$L \cong \PSL(d,q)$ with $q$ odd and  $d > 2$ even, 
or $L \cong P\Omega^+(4m,q)$ with $q$ odd.  
\end{enumerate}
\end{lemma}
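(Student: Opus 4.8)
The plan is a verification over the families of finite simple groups (so the classification is used), reading the structure of $\mathrm{Out}(L)$ from \cite[Chapter~4]{gls3}. For the alternating groups $\mathrm{Out}(A_n)\cong C_2$ when $n\ne 6$ (induced from $S_n$) and $\mathrm{Out}(A_6)\cong C_2\times C_2$ (e.g.\ from $A_6\cong\PSL(2,9)$); for the sporadic groups one reads off the tables that $|\mathrm{Out}(L)|\le 2$. In all of these cases $\mathrm{Out}(L)$ has order at most $4$, so (1), (2), and the bounds $o_p(L)\le 2$ and $o(L)\le 3$ follow at once.

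The substance is the groups of Lie type, where I would use $\mathrm{Out}(L)=\mathrm{Outdiag}(L)\rtimes(\Phi_L\times\Gamma_L)$, with $\Phi_L$ (field automorphisms) cyclic and $\Gamma_L$ (graph automorphisms) of order at most $2$ except $\Gamma_{D_4(q)}\cong S_3$. Two facts control the ranks: (a) $\Phi_L\times\Gamma_L$ has sectional rank $\le 2$ at every prime, since each factor has cyclic Sylow subgroups; and (b) $\mathrm{Outdiag}(L)$ is cyclic — hence of sectional rank $\le 1$ — except for type $D_n$ with $q$ odd, where it is $C_2\times C_2$ ($n$ even) or $C_4$ ($n$ odd, $q\equiv 1\bmod 4$). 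Since the sectional rank of a group is at most the sum of the sectional ranks of a normal subgroup and of the quotient, (a) and (b) give $o(L)\le 1+2=3$ whenever $\mathrm{Outdiag}(L)$ is cyclic; and in the remaining case $\mathrm{Outdiag}(L)\cong C_2\times C_2$ one uses that $\Gamma_L$ acts on it as a transposition ($D_n$, $n\ge 6$) or as $\mathrm{GL}(2,2)$ ($D_4$), so $\mathrm{Outdiag}(L)\rtimes\Gamma_L$ is $D_8$ resp.\ $S_4$ — sectional rank $2$ — and $\Phi_L$ adds at most one more, again $o(L)\le 3$. For odd $p$ the $p$-part of $\mathrm{Outdiag}(L)$ is cyclic (trivial for $C_2\times C_2$), the $p$-part of $\Phi_L$ is cyclic, and the $p$-part of $\Gamma_L$ is cyclic (trivial unless $p=3$, $L=D_4(q)$), so the $p$-part of $\mathrm{Out}(L)$ is metacyclic; hence $o_p(L)\le 2$.

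For (3) I would isolate when the sectional $2$-rank actually reaches $3$. When $\Gamma_L=1$ — the twisted types, and the untwisted types other than $\PSL(d,q)$ ($d\ge 3$), $D_n(q)$ ($n\ge 4$) and $E_6(q)$, apart from the characteristic-$2$ and $3$ graph-automorphism cases of $B_2,C_2,F_4,G_2$ — the group $\mathrm{Out}(L)$ is cyclic-by-cyclic, hence metacyclic, hence of sectional $2$-rank $\le 2$; in particular the unitary groups and $E_7(q)$ drop out. If $\mathrm{Outdiag}(L)\cong C_2\times C_2$ then $L$ is $P\Omega^+$ over a field of odd order (the orthogonal family of (3), up to the exact rank). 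Otherwise $\Gamma_L\ne 1$ and $\mathrm{Outdiag}(L)$ is cyclic; if moreover $|\mathrm{Outdiag}(L)|$ is odd (as for $E_6(q)$, where it divides $3$) or trivial (the special-characteristic cases), the metacyclic bound again gives rank $\le 2$, leaving only $\PSL(d,q)$ with $\gcd(d,q-1)$ even, i.e.\ $d>2$ even and $q$ odd — the linear family of (3). Outside these families $\mathrm{Out}(L)$ has sectional $2$-rank $\le 2$.

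The step I expect to be the most delicate is the bookkeeping in type $D$: recording $\mathrm{Outdiag}(D_n(q))$ for $q$ odd correctly — whether $C_2\times C_2$ or $C_4$, which depends on the parity of $n$ and on $q$ modulo $4$ — and the action of the graph and field automorphisms on it, since it is precisely this interaction, together with the analogous one for the cyclic even-order diagonal group of $\PSL(d,q)$, that determines the list of exceptional families; everything else is routine given the structure theorem for $\mathrm{Out}(L)$.
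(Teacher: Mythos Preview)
The paper does not prove this lemma; it records it as well known and refers to \cite[Chapter~4]{gls3}. Your plan --- read off the structure of $\mathrm{Out}(L)$ from the classification and bound sectional $p$-ranks via subadditivity along the normal series with successive quotients $\mathrm{Outdiag}(L)$, $\Phi_L$, $\Gamma_L$ --- is exactly what such a citation unpacks to, and your arguments for (1), (2), the bound $o_p(L)\le 2$ for $p$ odd, and $o(L)\le 3$ are correct.

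There is, however, a gap in your case analysis for (3), in precisely the place you flag as delicate. In your third case ($\Gamma_L\ne 1$ and $\mathrm{Outdiag}(L)$ cyclic) you dispose of the subcase where $|\mathrm{Outdiag}(L)|$ is odd or trivial and then assert that only $\PSL(d,q)$ with $d$ even and $q$ odd remains. But $D_n(q)$ with $n\ge 5$ odd and $q$ odd also lands here: $\mathrm{Outdiag}\cong C_{\gcd(4,q-1)}$ is cyclic of even order and $\Gamma_L\cong C_2$. You do not show $o_2\le 2$ for this family, and in fact it can fail. For $L=D_5(25)=P\Omega^+_{10}(25)$ one has $\mathrm{Outdiag}\cong C_4$; the field automorphism $\varphi$ of order $2$ acts on it by the $5$th power, hence trivially, while the graph automorphism $\gamma$ inverts it, so
\[
\mathrm{Out}(L)\ \cong\ (C_4\rtimes\langle\gamma\rangle)\times\langle\varphi\rangle\ \cong\ D_8\times C_2,
\]
which has sectional $2$-rank $3$. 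So the exception list in (3) apparently should also contain $P\Omega^+(2n,q)$ with $n$ odd and $q$ odd; the lemma as printed seems slightly imprecise here. This does not affect how the lemma is used in the paper: the only consequence drawn later is that $o(L)=3$ forces $L$ to be of Lie type in odd characteristic with the offending rank-$3$ section a $2$-group, and $D_n(q)$ with $q$ odd certainly satisfies that.
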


\begin{lemma} \label{faithful lemma}  Let $F$ be a field of characteristic $p$, 
$G$ a finite
group and $M$ an irreducible $FG$-module that is faithful for $G$.
Assume that $H^k(G,M) \ne 0$ for some $k > 0$
(and so in particular, $p > 0$).
\begin{enumerate}  
\item  $O_p(G)=O_{p'}(G)=1$;  in particular $G$ is not solvable.
\item  Let $N=\FF^*(G)$.  For some $t \ge  1$, 
$N$ is a direct product of $t$ nonabelian simple groups.
\item  $G$ has at most $k$ minimal normal subgroups.
\item  If $W$ is an irreducible $FN$-submodule of $M$,  and  if two distinct
components of $G$ act nontrivially on $W$, then $H^1(G,M)=0$ and
 $\dim H^2(G,M) \le (\dim M)/4$.  In particular, this is the case
 if $G$ does not have a unique minimal normal subgroup. 
\item Suppose that $N$ is the unique minimal normal subgroup of $G$
and $L$ is a component of $G$.
\begin{enumerate} 
\item   $\dim H^1(G,M) \le \dim H^1(L,W)$ for $W$  any  
irreducible $L$-submodule of $M$ with $W^L=0$.
\item  $\dim H^2(G,M) \le (h_1(L)(o(L) +1/2) + h_2(L)/t) \dim M$.
\item If  $N=L$, then $\dim H^2(G,M) \le (h_2(L)+1) \dim M$.
 \end{enumerate}
 \end{enumerate}
\end{lemma}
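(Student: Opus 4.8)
The plan is to prove the five parts in order, since each uses the previous ones, and at the outset to reduce (via Lemma~\ref{extension of scalars}) to the case that $F$ is algebraically closed. Two facts are used throughout: Clifford's theorem (so $M|_N$ is semisimple whenever $N \trianglelefteq G$), and the K\"unneth decomposition (Lemma~\ref{kunneth}) for cohomology of direct products, together with $H^1(L,\F_p)=0$ for $L$ perfect. For (1): if $O_{p'}(G)\ne 1$, then this nontrivial normal subgroup cannot act trivially on the faithful irreducible $M$, so $M^{O_{p'}(G)}=0$ and Corollary~\ref{coprime}(2) forces $H^j(G,M)=0$ for all $j$, a contradiction; if $O_p(G)\ne 1$, then as a normal $p$-subgroup in characteristic $p$ it has nonzero fixed points on $M$, hence acts trivially on the irreducible $M$, again contradicting faithfulness. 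Thus $\FF(G)=1$, and a group with trivial Fitting subgroup is non-solvable. For (2): $\FF(G)=1$ gives $\FF^*(G)=\EE(G)$; moreover $Z(\EE(G))$, being a normal abelian (hence nilpotent) subgroup of $G$, lies in $\FF(G)=1$, so every component of $G$ is centerless, i.e.\ simple, and $\EE(G)$ is their direct product --- nonempty since $\FF^*(G)\ne 1$ always.

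For (3): every minimal normal subgroup of $G$ lies in $\FF^*(G)$ (it cannot be abelian, else it would lie in $\FF(G)=1$) and is the product of one $G$-orbit of components; these orbits partition the components, so $N:=\FF^*(G)=N_1\times\cdots\times N_s$ for the minimal normal subgroups $N_i$. Writing an irreducible $FN$-constituent of $M$ as $U_1\otimes\cdots\otimes U_s$, faithfulness makes each $U_i$ nontrivial (a factor trivial on one constituent is trivial on all of $M$, the constituents being $G$-conjugate), so Lemma~\ref{kunneth}(2) gives $H^r(N,W)=0$ on every constituent $W$ for $r<s$; hence $H^r(N,M)=0$ for $r<s$, and Lemma~\ref{spectral} then forces $k\ge s$. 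For (4): here $M^N=0$, and every $N$-constituent, being $G$-conjugate to $W$, also has at least two components acting nontrivially; by Lemma~\ref{kunneth}(1)--(2), $H^1$ of each such constituent vanishes (every term of the K\"unneth sum has an active slot in degree $0$) while its $H^2$ has dimension at most a quarter of its own (exactly two active slots contribute an $H^1$, each of dimension $\le\tfrac12$ by~(1.2), the rest contributing $H^0$). So $H^1(N,M)=0$ and $\dim H^2(N,M)\le\tfrac14\dim M$, whence Lemma~\ref{usual} yields $H^1(G,M)=0$ and $\dim H^2(G,M)\le\dim H^2(N,M)^G\le\tfrac14\dim M$. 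The ``in particular'' clause follows because two distinct minimal normal subgroups each contain a component acting nontrivially on $W$, and these are distinct.

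For (5), $N$ is now the unique minimal normal subgroup, so $N=L_1\times\cdots\times L_t$ with all $L_i\cong L$, $G$ transitive on $\{L_i\}$, and $C_G(N)=1$. Part (a) is the reduction of \cite{gur}: from Lemma~\ref{usual}(1) and $M^N=0$ we get $\dim H^1(G,M)\le\dim H^1(N,M)^G$; by Lemma~\ref{kunneth} only constituents on which a \emph{single} $L_i$ acts nontrivially contribute to $H^1(N,M)$, and grouping those by the active index exhibits $H^1(N,M)$ as a $G/N$-module induced from the stabilizer of $L_1$, whose $G$-invariants --- computed by Shapiro's Lemma (Lemma~\ref{shapiro}) --- are $H^1$ of that stabilizer acting on an $L_1$-submodule of $M$, bounded by $\dim H^1(L,W)$. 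Part (c) is the case $t=1$: then $N=L$ and $G/L\hookrightarrow\Out(L)$, and Lemma~\ref{usual}(2) gives $\dim H^2(G,M)\le\dim H^2(L,M)+\dim H^1(G/L,H^1(L,M))$; the first summand is $\le h_2(L)\dim M$, and the second is $\le\dim M$ by~(1.2) together with a bound on $\dim H^1(A,-)$ for $A\le\Out(L)$ coming from the structure of $\Out(L)$ (Lemma~\ref{out}). Part (b) is the general $t$: apply Lemma~\ref{usual}(2) with $N=\FF^*(G)$, and split $\dim H^2(N,M)^G$ by K\"unneth into the contribution of $N$-constituents with one active component --- which, by $G$-transitivity over the $t$ components and passage to invariants, is $\le\tfrac1t\,h_2(L)\dim M$ --- and of those with two active components --- $\le h_1(L)^2\dim M\le\tfrac12h_1(L)\dim M$ by~(1.2); the remaining term $\dim H^1(G/N,H^1(N,M))$ is handled by reducing via Shapiro, and the triviality of inner automorphisms on cohomology, to an estimate governed by $\Out(L)$, giving $\le o(L)\,h_1(L)\dim M$. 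Summing yields $\dim H^2(G,M)\le\bigl(h_1(L)(o(L)+\tfrac12)+h_2(L)/t\bigr)\dim M$.

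The main obstacle is part (5), and especially (b). Parts (1)--(4) are essentially formal bookkeeping with Clifford's theorem, K\"unneth, the estimate~(1.2) and the spectral-sequence inequality. In (5) one must instead carefully track the $G/N$-action on $H^\ast(N,M)$: it is the transitivity of $G$ on the $t$ components, together with the fact that inner automorphisms act trivially on cohomology, that forces the diagonal terms to behave like $H^1(L,-)$ rather than $H^2(L,-)$ and that produces the factors $\tfrac1t$; and the factor $o(L)$ requires an auxiliary bound on $H^1$ of sections of outer automorphism groups of simple groups, resting on the detailed structure recorded in Lemma~\ref{out} and \cite[Chapter 4]{gls3}. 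Extracting exactly the stated constants, rather than bounds of the same shape, is the delicate point.
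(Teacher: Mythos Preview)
Your treatment of (1)--(4) is correct and matches the paper's argument. The real gap is in (5b), where your accounting for the constant $h_1(L)(o(L)+\tfrac12)$ does not hold up.

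First, your ``split'' of $H^2(N,M)^G$ into contributions of one-active and two-active $N$-constituents is illusory: all irreducible $FN$-summands of $M$ are $G$-conjugate, hence have the \emph{same} number of components acting nontrivially. One is always in exactly one of the two situations. In the two-active case $H^1(N,M)=0$ and the whole of $\dim H^2(G,M)$ is bounded by $h_1(L)^2\dim M\le \tfrac12 h_1(L)\dim M$, which is fine. The substantive case is the one-active case, and there the two-active contribution you invoke is simply zero.

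In that one-active case the crux is the bound on $\dim H^1(G/N,H^1(N,M))$. Shapiro reduces this to $\dim H^1(N_G(L)/N, H^1(L,M_1))$, but $N_G(L)/N$ is \emph{not} a subgroup of $\Out(L)$: it also records how $N_G(L)$ permutes $L_2,\ldots,L_t$, and elements of $C_G(L)$ act on $H^1(L,M_1)$ through $L$-module automorphisms of $M_1$, not through inner automorphisms of $L$. So ``triviality of inner automorphisms on cohomology'' does not reduce the problem to $\Out(L)$, and your claimed bound $o(L)\,h_1(L)\dim M$ is unjustified. The paper instead bounds the number of generators of a Sylow $p$-subgroup $P$ of $N_G(L)/N$ directly: setting $K=\bigcap_i N_P(L_i)$, one has $d(KN/N)\le o(L)t$ and, by \cite[Theorem~2.3]{ag}, $d(P/K)\le\lfloor t/2\rfloor$; hence $d(PN/N)\le (o(L)+\tfrac12)t$. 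Combined with $\dim M_1=(\dim M)/t$ this is what produces the $+\tfrac12$ in the stated bound --- it does not come from two-active constituents.

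Two smaller points. In (5c) you need $o(L)h_1(L)\le 1$; for $o(L)\le 2$ this follows from (1.2), but Lemma~\ref{out} allows $o(L)=3$, and there the paper must invoke the sharper bound $h_1(L)\le\tfrac13$ from \cite{hoffman} for the relevant families. And your sketch of (5a) via $H^1(N,M)^G$ reaches at best $\dim H^1(L,M_1)$ with $M_1$ typically reducible; passing to an irreducible $W$ requires the almost-simple reduction and the induction on $[I:L]$ that the paper (following \cite{gur}) carries out.
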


\begin{proof}  The hypotheses imply that $M$ is not projective
and so $p > 0$.  Since $M$ is faithful and irreducible, $O_p(G)=1$.
If $O_{p'}(G) \ne 1$, then by 
Corollary \ref{coprime}, $H^d(G,M)=0$ for all $d$.
So (1) and (2) hold.    

We may assume that $F$ is algebraically closed
(see Lemma \ref{extension of scalars}).
Write $N=N_1 \times \cdots \times N_e$ where the $N_i$
are the minimal normal subgroups of $G$.  Let $W$ be an irreducible
$FN$-submodule.  Then $W=W_1 \otimes \cdots \otimes W_e$ 
is a tensor product of irreducible $FN_i$-modules. 
 Since $M$ is faithful and irreducible, $M^{N_i} = 0$.
 In particular, each $N_i$ is faithful on $W$, whence $W_i$ is nontrivial
 for each $i$.

It follows by Lemma \ref{kunneth}, that $H^j(N,W)=0$ for $j < e$.   
So $H^j(N,gW)=0$ for any $g \in G$ with $j < e$. 
Since $M$ is a direct sum of irreducible $N$-modules of the form $gW$,
$g \in G$, $H^j(N,M)=0$ for $j < e$.
It follows by   Lemma \ref{inflres2} that $H^j(G,M)$ embeds in
$H^j(N,M)=0$ for $j < e$, whence (3) follows (see also \cite{willems}).

The same argument shows that $H^j(G,M)=0$ if there is an irreducible
$FN$-submodule $W$ of $G$ in which at least $j+1$ components act
nontrivially.  If precisely $j$ components act nontrivially, the
argument shows that $\dim H^j(G,M) \le (\dim M)/2^j$.   
Since $N_i$ has no fixed
points on $M$, it follows that at least $e$ components act nontrivially on
any irreducible $FN$-submodule, whence (4) holds.

So assume that $N$ is the unique minimal normal subgroup of $G$.
Write $N=L_1 \times \cdots \times L_t$
with the $L_i$ isomorphic nonabelian simple groups.   
Set $L=L_1$ and $h_1=h_1(L)$.

Let $W$ be an irreducible $FN$-submodule with $W^L =0 $.   Suppose first that
$L_j$ acts nontrivially on $W$ for some $j > 1$.  Arguing as above and using 
Lemma \ref{kunneth} and Lemma \ref{usual}   shows that $H^1(G,M)=H^1(N,M)=0$.
Similarly, we see that  $\dim H^2(N,W) \le h_1^2 \dim W$ and
 $\dim H^2(G,M) \le \dim H^2(N,M) \le h_1^2 \dim M$. Using (1.2) shows
 that (5b) follows in this case.  
  
   So to complete the proof of all parts of (5), 
we may assume that $L_j$ is trivial on $W$ for $j > 1$ (for case (5c),
there is only one component).
 
We now prove the first part of (5).
Let $U$ be the largest $L$-homogeneous submodule of $M$ containing $W$
(i.e.  $U$ is the $L$-submodule generated by the $L$-submodules isomorphic
to $W$).

Let $I$ be the stabilizer of $U$ in $G$.  Note that $I \le N_G(L)$
(since for $j \ne 1$, $L_j$ acts trivially on $U$).  
Since $M$ is irreducible, $U$ is an irreducible $I$-module.
Let $R=LC_I(L)$.
By Lemma \ref{shapiro},    $H^k(G,M) \cong H^k(I,U)$.
By Lemma \ref{inflres2},
$\dim H^1(I,U) \le \dim H^1(R,U)$. Since $R = L \times C_I(L)$, 
$U$ is a direct sum of modules of
the form $W \otimes X$ where 
each $X$ is an irreducible $C_I(L)$-module.   Since $W$ is irreducible,
it follows that either all $X$ are trivial $C_I(L)$-modules or none
are.  In the latter case, $H^1(R,W)=0$ by Lemma \ref{kunneth}, and
so $H^1(G,M)=0$.  So $C_I(L)$ acts trivially on $U$.  Set $J=I/C_I(L)$.  

By Lemma \ref{usual},  $\dim H^1(I,U) \le \dim H^1(J,U) + \dim H^1(C_I(L),U)^I$.
Since $C_I(L)$ is trivial on $W$, $H^1(C_I(L),W)^I$ is the set of $I$-
homomorphisms from $C_I(L)$ to $U$.  Since $L$ acts trivially on $C_I(L)$ and 
$U^L=0$, $H^1(C_I(L),U)^I=0$. 
Thus $ \dim H^2(I,U) \le \dim  H^2(J,U)$.
Note that $J$ is almost simple with socle $L$ and that $J$ acts irreducibly
on $U$.   So we have reduced the problem to the case $t=1$, $C_G(L)=1$ and $G=I$.
Now use the fact that $G/L$ is solvable (which depends on the classification
of finite simple groups) and let $D/L$ be a maximal
normal subgroup of $I/L$.   So $I/D$ is cyclic of prime order $s$.
If $D$ does not act homogeneously, then $U$ is induced and we can
apply Lemma \ref{shapiro}.  So we may assume that $D$ acts homogeneously.
It follows by Clifford theory and the fact that $I/D$ is cyclic that
$D$ acts irreducibly on $U$.  By Lemma \ref{inflres2},
$\dim H^1(I,U) \le \dim H^1(D,U)^I \le \dim H^1(D,U)$, and so
by induction (on $|I:L|$), $\dim H^1(D,U) \le \dim H^1(L,W)$,  as required.

Now consider $H^2(G,M)$ in (5).   Let $M_i=[L_i,M]$.  So $M$ is the direct
sum of the $M_i$.
Another application of Lemma \ref{usual},
together with the fact that $M^N=0$,  shows that
$$
\dim H^2(G,M) \le \dim H^2(N,M)^G + \dim H^1(G/N,H^1(N,M)).
$$
Now $H^2(N,M)$ is the direct sum of the $H^2(N,M_i) \cong H^2(L_i,M_i)$ 
(by Lemma \ref{kunneth}), and
$G$ permutes these terms transitively.  Thus $\dim H^2(N,M)^G \le \dim H^2(L,M_1) \le 
(h_2(L)/t) \dim M$.   

Similarly, $H^1(N,M)$ is the direct sum of the $H^1(L_i,M_i)$, 
and $G/N$ permutes these.  Thus, $H^1(N,M)$ is an induced 
$G/N$-module,  and so by Shapiro's Lemma (Lemma \ref{shapiro})
$H^1(G/N,H^1(N,M)) \cong H^1(N_G(L)/N,H^1(L, M_1))$.  Note that $M_1$ is an 
irreducible
$N_G(L)$-module and is a faithful $L$-module (since $M$ is irreducible for $G$).  

Let $P$ be a Sylow $p$-subgroup of $N_G(L)$.  Let $K = \cap_i N_P(L_i)$
and note that $K$ is normal in $P$.  
   Then $KN/N$ can be generated by $o(L)t$ elements
(by induction on $t$).    By \cite[Theorem 2.3]{ag}, $P/K$ can be generated
by  $\lfloor t/2 \rfloor$ elements, whence $PN/N$ can be generated by
at most $(o(L) + 1/2)t$ elements.  
Since $\dim H^1(L,M_1) \le h_1(L)(\dim M_1)$, it follows
that 
$$
\begin{array}{cl}
\dim H^1(N_G(L)/N,H^1(L, M_1)) & \le  \dim H^1(PN/N, H^1(L,M_1)) \\
                               & \le  h_1(L)(o(L) + 1/2)t  \dim M_1 \\ 
                               & = h_1(L)(o(L) +1/2)\dim M.
\end{array}
$$
 Thus,
$$
\frac{\dim H^2(G,M)}{\dim M} \le h_1(L)(o(L) +1/2) + h_2(L)/t.
$$
This gives (5b).

We now prove (5c).  So assume that $t=1$.
Then 
 $PN/N$ can be generated by $o(L)$ elements and so we get the bound
 $\dim H^2(G,M) / \dim M \le h_2(L) +o(L)h_1(L)$.

    By  (1.2), $h_2(L) +o(L)h_1(L) \le h_2(L)+1$ unless $o(L) > 2$.  
    However, we have already
 noted that  in the cases where  $o(L) = 3$,  $L$ must be a group
 of Lie type over a field of odd characteristic and the $p$-subgroup
 of $\mathrm{Out}(L)$ requiring $3$ generators must be a $2$-subgroup.  It follows
 by \cite{hoffman} that in all these cases $\dim H^1(L,M) \le (\dim M)/3$,
 whence the result holds.
\end{proof}

 \begin{theorem} \label{faithful}  Let  $F$ be a field, 
 $G$ be a finite group with $M$ a faithful
 irreducible $FG$-module.  If $H^2(G,M) \ne 0$, then $G$ has a component $L$
 and 
 $$\frac{\dim H^2(G,M)}{ \dim M}  \le \max \{7/4, h_2(L) + 1 \}.$$
 \end{theorem}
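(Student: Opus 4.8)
The plan is to reduce to the situation of Lemma \ref{faithful lemma}(5), where $N = \FF^*(G)$ is the unique minimal normal subgroup, using part (4) of that lemma to dispose of the case of several minimal normal subgroups. First I would note that by the hypothesis $H^2(G,M) \neq 0$ together with Lemma \ref{faithful lemma}(1)--(2), $N = \FF^*(G)$ is a direct product of $t \ge 1$ nonabelian simple groups, and $G$ has a component $L$. If $G$ does not have a unique minimal normal subgroup, or more generally if two distinct components act nontrivially on an irreducible $FN$-submodule of $M$, then Lemma \ref{faithful lemma}(4) gives $\dim H^2(G,M) \le (\dim M)/4 \le (7/4)\dim M$, and we are done. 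So the remaining case is that $N$ is the unique minimal normal subgroup of $G$, and here I would invoke Lemma \ref{faithful lemma}(5b): $\dim H^2(G,M)/\dim M \le h_1(L)(o(L)+1/2) + h_2(L)/t$.

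The main work is then to show $h_1(L)(o(L)+1/2) + h_2(L)/t \le \max\{7/4, h_2(L)+1\}$. The key inputs are the $H^1$ bound (1.2), which gives $h_1(L) \le 1/2$ for $L$ quasisimple (hence for $L$ simple), and Lemma \ref{out}, which gives $o(L) \le 3$ in general and $o(L) \le 2$ except when $L$ is of Lie type over a field of odd characteristic with a specific structure (namely $\PSL(d,q)$, $d$ even, or $P\Omega^+(4m,q)$, $q$ odd), in which case the part of $\Out(L)$ needing three generators is a $2$-group. When $o(L) \le 2$, we get $h_1(L)(o(L)+1/2) \le (1/2)(5/2) = 5/4$, so the bound is at most $5/4 + h_2(L)/t \le 5/4 + h_2(L)$; to land inside $\max\{7/4, h_2(L)+1\}$ I would split on whether $t \ge 2$ (then $h_2(L)/t \le h_2(L)/2 \le 1/4$ by Lemma \ref{kunneth}(3) applied to $L$ itself, giving total $\le 5/4 + 1/4 = 3/2 < 7/4$) or $t = 1$ (then use Lemma \ref{faithful lemma}(5c) directly, which already gives $h_2(L)+1$). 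When $o(L) = 3$, the exceptional bound $\dim H^1(L,M) \le (\dim M)/3$ from \cite{hoffman} replaces (1.2), so $h_1(L) \le 1/3$ and $h_1(L)(o(L)+1/2) \le (1/3)(7/2) = 7/6$, again giving total $\le 7/6 + h_2(L)/t$, which is handled as before.

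I expect the main obstacle to be bookkeeping rather than any deep point: one has to be careful that $h_i(L)$ is defined via simple $L$ and that (1.2) (stated for quasisimple groups) and Lemma \ref{kunneth}(3) (stated for quasisimple groups) do apply to the simple group $L$, and one must correctly interpolate the $t=1$ case (where (5c) is sharp and yields $h_2(L)+1$) against the $t \ge 2$ case (where the $1/t$ factor on $h_2(L)$ buys enough room to absorb everything into $7/4$). The appeal to \cite{hoffman} in the $o(L)=3$ case is exactly the point already flagged in the proof of Lemma \ref{faithful lemma}(5c), so it should be cited in the same way. Assembling these cases gives the claimed bound $\dim H^2(G,M)/\dim M \le \max\{7/4, h_2(L)+1\}$.
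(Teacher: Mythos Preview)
Your overall structure matches the paper's proof: reduce via Lemma \ref{faithful lemma}(4) to the case of a unique minimal normal subgroup, then invoke (5b) for $t\ge 2$ and (5c) for $t=1$, splitting on whether $o(L)\le 2$ or $o(L)=3$ and using \cite{hoffman} (or \cite{gurhoff}) for the sharper $H^1$ bound in the latter case.

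However, there is a genuine gap in your $t\ge 2$ case. You write that ``$h_2(L)/t \le h_2(L)/2 \le 1/4$ by Lemma \ref{kunneth}(3) applied to $L$ itself.'' This is not correct: Lemma \ref{kunneth}(3) bounds $\dim H^t(H,M)$ for a product of $t$ quasisimple factors; applied with a single factor $L$ it recovers only $h_1(L)\le 1/2$, not any bound on $h_2(L)$. There is no universal inequality $h_2(L)\le 1/2$ for simple $L$ --- indeed Theorem B only guarantees $h_2(L)\le 17.5$, and the whole point of the $\max\{7/4,\,h_2(L)+1\}$ in the statement is that $h_2(L)$ may be large.

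The fix, which is exactly what the paper does, is not to force the $t\ge 2$ bound under $7/4$ but to observe the elementary inequality
\[
\tfrac{5}{4} + \tfrac{h_2(L)}{2} \;\le\; \max\{\tfrac{7}{4},\, h_2(L)+1\}.
\]
Indeed, if $h_2(L)\le 1$ the left side is at most $7/4$, while if $h_2(L)\ge 1/2$ the left side is at most $h_2(L)+1$; these ranges overlap and cover all cases. The same observation handles your $o(L)=3$ case, where the first term is even smaller ($\le 7/6$). With this correction your argument goes through and coincides with the paper's.
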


 \begin{proof}  
    Since
 $H^2(G,M) \ne 0$, the previous lemma applies. If $G$ has more than
 one minimal normal subgroup, then $\dim H^2(G,M) \le (\dim M)/4$
 by Lemma \ref{faithful lemma}(4),  and
 the result holds.  So we may assume that $G$ has a unique minimal
 normal subgroup $N$, that $L$ is a component of $G$ and that $N$ is a direct
 product of $t$ conjugates of $L$.
 Now the bound in (5b) of the previous lemma
 applies.

 As we have noted above,  $o(L) \le 3$ with equality implying that $G$ is
 a finite group of Lie type  $A$ of rank at least $3$ or of type
 $D$ of rank at least $4$.  If $o(L) \le 2$, it follows from (1.2) 
 that $h_1(L)(o(L) +1/2) \le 7/4$.  If $o(L)=3$, 
  it follows from \cite{gurhoff} that $h_1(L) \le 1/3$, 
 whence $h_1(L)(o(L) +1/2) \le 5/4$.  
 
 Let $t$ be the number of components of $G$.  If  $t=1$, the result
 follows by (5c) of the previous lemma.
  So assume that $t > 1$.
 By (5b) of the previous result,  $\dim H^2(G,M) / \dim M 
 \le h_1(L)(o(L) +1/2) + h_2(L)/2$. The right hand side
 is bounded above by $(5/4) + h_2(L)/2 \le \max\{7/4,h_2(L)+1\}$,
 and the result follows. \end{proof}
  
  An immediate consequence of the previous result is:
  
  \begin{corollary}  Theorem B implies Theorem C. 
  \end{corollary}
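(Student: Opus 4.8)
The plan is to read off Theorem C from Theorem~\ref{faithful} with essentially no extra work, the real content having already been absorbed into Theorem~\ref{faithful} and Lemma~\ref{faithful lemma}. Let $G$ be a finite group, $F$ a field, and $M$ a faithful irreducible $FG$-module. If $H^2(G,M)=0$ then the desired inequality $\dim H^2(G,M)\le (18.5)\dim M$ is trivial, so I would assume $H^2(G,M)\ne 0$ and apply Theorem~\ref{faithful}: it produces a component $L$ of $G$ — which, as recorded in Lemma~\ref{faithful lemma}(2), is a nonabelian simple group — together with
\[
\frac{\dim H^2(G,M)}{\dim M}\ \le\ \max\{7/4,\ h_2(L)+1\}.
\]

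Next I would bound $h_2(L)$ using Theorem B. By definition, $h_2(L)=h'(L)$ is the supremum of $\dim H^2(L,N)/\dim N$ over all primes $p$ and all nontrivial irreducible $\F_pL$-modules $N$. A nonabelian simple group is in particular quasisimple, so Theorem B applies with $G$ replaced by $L$ and gives $\dim H^2(L,N)\le (17.5)\dim N$ for every such $N$; hence $h_2(L)\le 17.5$ and $h_2(L)+1\le 18.5$. Since $7/4<18.5$, the displayed inequality yields $\dim H^2(G,M)\le (18.5)\dim M$, which is precisely the assertion of Theorem C.

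There is no genuine obstacle here: the corollary is a one-line deduction once Theorem~\ref{faithful} is available. The only point worth making explicit is the (immediate) observation that the component $L$ furnished by Theorem~\ref{faithful} is simple, hence quasisimple, so that the quasisimple hypothesis of Theorem B indeed covers it; everything else is just the arithmetic of comparing constants.
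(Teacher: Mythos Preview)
Your argument is correct and is precisely the intended one: the paper states the corollary as ``an immediate consequence of the previous result'' (Theorem~\ref{faithful}), and you have simply spelled out that immediate deduction, including the observation that the component $L$ is simple hence quasisimple so Theorem~B applies to give $h_2(L)\le 17.5$.
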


\section{Alternating and Symmetric Groups} \label{alternating}

We will need the following better bound
 for $H^1$ for alternating groups given in  \cite{gurkim}.
 
\begin{theorem} \label{H1alt} 
Let $G=A_n, n > 4$.  If $F$ is a field
and $M$ is an irreducible $FG$-module, then   
\begin{enumerate}
\item 
$\dim H^1(G,M) \le (\dim M)/(f-1)$ where $f$ is the largest
prime such that $f \le n - 2$;
\item $\dim H^1(G,M) \le (2/n) \dim M$ for $n > 8$; 
\item $\dim H^1(A_8, M) \le (\dim M)/6$; and 
\item if $F$ has characteristic $p$, then 
$\dim H^1(G,M) \le (\dim M)/(p-2)$.
\end{enumerate}
\end{theorem}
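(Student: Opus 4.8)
The plan is first to reduce to essentials: in characteristic $0$ all higher cohomology vanishes (Corollary \ref{coprime}), and if $A_n$ ($n>4$) acts non-faithfully on an irreducible module it acts trivially and then $H^1(A_n,M)=0$ since $A_n$ is perfect; so I assume $\mathrm{char}\,F=p>0$ and $M$ faithful, and by Lemma \ref{extension of scalars} that $F$ is algebraically closed. When $p$ is odd it is convenient to pass from $A_n$ to $S_n$: every irreducible $FA_n$-module is a summand of the restriction of an irreducible $FS_n$-module $D$, and Shapiro's Lemma (Lemma \ref{shapiro}) together with $\mathrm{Ind}_{A_n}^{S_n}\mathrm{Res}\,D\cong D\oplus(D\otimes\mathrm{sgn})$ gives $H^1(A_n,D|_{A_n})\cong H^1(S_n,D)\oplus H^1(S_n,D\otimes\mathrm{sgn})$, reducing the four inequalities to statements about $\dim H^1(S_n,D)$ and $\dim D$; the case $p=2$ (where (4) is vacuous as written) is handled directly inside $A_n$.

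Two kinds of tools then drive the estimates. The first is the modular representation theory of symmetric groups: I would use the known classification of which irreducible $D=D^\lambda$ have $H^1(S_n,D^\lambda)\neq0$ — these are forced to lie low in the dominance order, being composition factors of permutation modules $M^\mu$ with $\mu$ near the top, such as $(n-1,1),(n-2,2),(n-2,1,1)$ and, for small $p$, spin-type modules — together with the explicit values of $H^1$ (typically $0$, $1$, or occasionally $2$) and the fact that $\dim D^\lambda$ grows at least linearly in $n$ for these $\lambda$; this reduces each of (1)--(4) to a finite check once $n$ is large enough, supplemented by induction on $n$ via restriction to the point stabiliser $S_{n-1}$ and the inflation--restriction sequence (Lemma \ref{inflres2}), and by the Guralnick--Hoffman bound $\dim H^1\le\tfrac12\dim M$ of (1.2) as a fallback. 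The second tool is an elementary ``generation by conjugates'' estimate, a relative of Lemma \ref{trivial cf}(1): if $G=\langle H,H^x\rangle$ is generated by two conjugates of a subgroup $H$ of order prime to $p$ and $M^G=0$, then normalising a cocycle to vanish on the first copy of $H$, observing that its restriction to $H^x$ is then a coboundary for $H^x$ parametrised by a class in $M/C_M(H)$, and accounting for the remaining coboundaries gives
$$
\dim H^1(G,M)\ \le\ \dim M-2\dim C_M(H).
$$
Applied with $H$ a $p'$-subgroup of $A_n$ generating the group with a suitable conjugate — for (1) the cyclic group $C_f$ generated by an $f$-cycle, $f$ the largest prime $\le n-2$ (here $f\ge 3$ for $n>4$, so $f$-cycles are even; one arranges $f\neq p$, the case $f=p$ falling under (4)) — this disposes of the modules not on the short list, where $C_M(H)$ is comparatively large.

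The crux, and the main obstacle, is not the cohomological bookkeeping but obtaining the representation-theoretic input in a form uniform in $n$: identifying precisely the modules with $H^1(A_n,M)\neq0$, pinning down $\dim H^1$ and $\dim M$ for each, and — for the ``generic'' part — showing that the number of trivial $H$-composition factors of $M$, i.e.\ $\dim C_M(H)$, is as large as the inequalities demand, since a cycle of prime length $f$ fixes in general only about $(\dim M)/f$ of $M$. This is where the theory of Specht and Young modules and the modular branching rules (James, Kleshchev and others) must enter, and where the sharp constants in (2) and (3) are extracted. The small alternating groups below the range of the generic argument — in particular $A_8$, whence the constant $1/6$ in part (3) — would be finished off by direct computation from explicit lists of their irreducible modules and the corresponding first cohomology.
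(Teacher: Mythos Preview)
The paper does not prove Theorem \ref{H1alt} at all: it is quoted verbatim from \cite{gurkim} (Guralnick--Kimmerle), with the sentence ``We will need the following better bound for $H^1$ for alternating groups given in \cite{gurkim}'' and no further argument. So there is no proof in the paper to compare your proposal against.

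That said, your instinct to use a ``generation by two conjugates of a $p'$-subgroup'' estimate is exactly the mechanism behind the Guralnick--Kimmerle result --- indeed the paper itself invokes \cite[Lemma 4]{gurkim} in the proof of Theorem \ref{h1 for sl}, and that lemma is the sharp form of the idea you are reaching for. The difficulty is that the bound you actually wrote down,
\[
\dim H^1(G,M)\ \le\ \dim M - 2\dim C_M(H),
\]
is much too weak for the theorem, and you already see this: with $H=\langle y\rangle$ for $y$ an $f$-cycle one typically has $\dim C_M(H)\approx(\dim M)/f$, which yields only $(1-2/f)\dim M$ rather than $(\dim M)/(f-1)$. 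The lemma in \cite{gurkim} is finer: in the form used in the paper it gives $\dim H^1(G,M)\le\dim\,[T,M]^{\langle y\rangle}$ for a suitable $p'$-subgroup $T$ normalised by an element $y$ with $C_T(y)=1$ and with $G$ generated by two conjugates of $N_G(T)$; the point is that $y$ permutes the nontrivial $T$-eigenspaces freely, collapsing the bound by the order of $y$. Your crude estimate loses precisely this averaging. So the detour through the modular representation theory of $S_n$ (classification of $D^\lambda$ with nonzero $H^1$, branching rules, etc.) that you propose as the ``crux'' is not how \cite{gurkim} proceeds and is not needed once the correct lemma is in hand.
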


The goal of this section is to prove the following results:

\begin{theorem} \label{H2alt}
Let $G=A_n$ or $S_n, n > 4$, and let $p$ be a prime.  Let $F$
be a field of characteristic $p$ and $M$ an $FG$-module.
\begin{enumerate}
\item If $p > 3$, then  
$\dim H^2(G,M) \le (\dim M)/(p-2) \le  (\dim M)/3$.
\item If $p=3$, then $\dim H^2(G,M) \le \dim M$ with equality
if and only if $n=6$ or $7$ and $M$ is the trivial module.
\item If $p=2$, then $\dim H^2(A_n,M) \le (35/12) \dim M$.
\item If $p=2$, then $\dim H^2(S_n, M) < 3\dim M$.
\end{enumerate}
\end{theorem}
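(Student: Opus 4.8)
The plan is to reduce all four bounds to a statement about a single irreducible module and then run an induction on $n$ by restricting to a subgroup that contains a Sylow $p$-subgroup. By Lemma~\ref{cohomology sequence} applied along a composition series of $M$ it suffices to bound $\dim H^2(G,S)/\dim S$ for $S$ irreducible (for the equality clause in~(2) and the strict inequality in~(4) one tracks the connecting maps of the long exact sequence to see these refinements are still governed by the irreducible case), and by Lemma~\ref{extension of scalars} we may take $F=\overline{\F}_p$ with $S$ absolutely irreducible. The trivial module is dealt with directly: Lemma~\ref{perfect} and the known Schur multipliers give $\dim H^2(A_n,\F_p)\le1$, with equality only if $p=2$ and $n\ge5$ or $p=3$ and $n\in\{6,7\}$; $\dim H^2(S_n,\F_2)=2$; and for $p$ odd a Sylow $p$-subgroup of $S_n$ lies in $A_n$, so Lemma~\ref{restriction} gives $\dim H^2(S_n,\F_p)\le\dim H^2(A_n,\F_p)$ — the same remark reduces parts~(1) and~(2) for $S_n$ to $A_n$. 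So it remains to bound $\dim H^2(G,M)$ for $M$ a nontrivial absolutely irreducible module, with $G=A_n$ for $p$ odd and $G\in\{A_n,S_n\}$ for $p=2$.

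For the inductive step I would write $n$ in base $p$, set $v=v_p(n)$, and restrict to the Young-type subgroup $H=(S_{n-p^{v}}\times S_{p^{v}})\cap G$ (or $H=S_{n-1}\cap G$ when $p\nmid n$); by the base-$p$ carry count (Kummer's theorem) this peeling produces no carries, so $H$ contains a Sylow $p$-subgroup of $G$ and $H^2(G,M)\hookrightarrow H^2(H,M)$ by Lemma~\ref{restriction}. One then bounds $\dim H^2(H,M)$ by running over a composition series of $M|_H$ and applying the K\"unneth formula (Lemma~\ref{kunneth}) together with Lemma~\ref{usual}: each composition factor contributes a bounded combination of $\dim H^2$ of a strictly smaller alternating or symmetric group (fed to the induction), products $\dim H^1\cdot\dim H^1$, and $\dim H^2$ of a prime-power block $S_{p^{v}}$ or $A_{p^{v}}$. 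The $H^1$-terms are controlled by Theorem~\ref{H1alt} — indeed Theorem~\ref{H1alt}(4) is exactly the $1/(p-2)$ bound targeted in~(1) — and the prime-power blocks are handled by a sub-induction on the wreath decomposition $S_{p^{v-1}}\wr C_p$ via Lemma~\ref{usual}, with Lemma~\ref{cyclic} at the bottom of the tower. For $p\ge5$ all surviving contributions already obey the $1/(p-2)$ bound and the arithmetic closes, giving~(1); for $p=3$ the same bookkeeping yields the coarser bound $\le1$, and the finitely many base cases of the induction pin the equality to $n\in\{6,7\}$ and the trivial-module computation above, giving~(2).

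The case $p=2$ — parts~(3) and~(4) — is where the real work lies. Two features make it harder: a Sylow $2$-subgroup of $A_n$ has index $2$ in that of $S_n$, so $S_n$ cannot be reduced to $A_n$ as above and must be carried through the whole induction; and for small $n$ the only available $H^1$-bound is the generic $\dim H^1\le\tfrac12\dim M$ of~(1.2), so the naive recursion $h_2(n)\le h_2(\text{block})+\tfrac14+\cdots$ threatens to exceed the target. The plan is to run the same restriction/K\"unneth induction while exploiting the sharper estimates $\dim H^1(A_n,M)\le(2/n)\dim M$ for $n>8$ and $\dim H^1(A_8,M)\le\tfrac16\dim M$ from Theorem~\ref{H1alt}(2)--(3), so that the $H^1$-contributions decay with $n$, and then to settle the finitely many small cases ($n\le9$ or so, for both $A_n$ and $S_n$) by hand, using the exceptional isomorphisms $A_5\cong\SL(2,4)$, $A_8\cong\SL(4,2)$ and direct computation of the relevant $2$-modular cohomology. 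The main obstacle is precisely this endgame: verifying that the extremal ratios occur for small $n$ and are at most $35/12$ for $A_n$ (hence that constant) and strictly below $3$ for $S_n$, and — for the strict inequality in~(4) — excluding simultaneous equality in all the inductive estimates, which comes down to those base cases together with the computation $\dim H^2(S_n,\F_2)=2<3$.
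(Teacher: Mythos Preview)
Your overall architecture — reduce to an absolutely irreducible module, then induct on $n$ by restricting to a Young-type subgroup containing a Sylow $p$-subgroup and unfolding via K\"unneth, Shapiro, and Lemma~\ref{usual}, with a wreath-product sub-induction for the pure prime-power blocks — is exactly the paper's, and for $p>3$ your sketch is essentially complete (trivial $H$-composition factors contribute nothing, since the Schur multiplier has no $p$-part, so the arithmetic really does close at $1/(p-2)$).

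The genuine gap is for $p\le 3$: you never say how to control the \emph{trivial} composition factors of $M|_H$, and without this the $p=2$ induction does not close at all. Concretely, in the wreath step $n=2^{a+1}$ with $H$ the normalizer of $A_{2^a}\times A_{2^a}$, one has $\dim H^2(H,\F_2)=4$; so if $\alpha$ of the $H$-composition factors of $M$ are trivial, the naive estimate gives $\dim H^2(H,M)\ge 4\alpha - O(1)$, and nothing in your outline prevents $\alpha$ from being close to $\dim M$. The paper's device is Lemma~\ref{trivial cf}: exhibit a $p'$-subgroup $J\le H$ such that two $G$-conjugates of $J$ generate $G$, whence $\alpha\le\dim M^{J}\le\tfrac12\dim M$. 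This halving is what produces the specific constants you see — for instance $35/12=\tfrac12\cdot 3+\tfrac12\cdot\tfrac{17}{12}$, balancing the trivial-factor cost $3$ against the nontrivial-factor cost $17/12$ — and there is no route to any finite bound in (3)--(4) without it or an equivalent. For $p=3$ the same omission is less fatal (you can still limp to $\le 1$), but you need it to pin down the strict inequality in the equality clause of (2).

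A secondary point: your base range $n\le 9$ is too small. The paper's induction requires $n\le 12$ for $p=3$ and $n\le 14$ for $p=2$, and those cases are disposed of by machine computation \cite{luxstudent}; the exceptional isomorphisms do not reach $A_{11}$, $A_{13}$, etc., and a genuine ``by hand'' verification here would itself be a substantial computation.
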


These results are likely quite far from best possible.  
By (1.1) and Corollary \ref{covering relations}, this   gives:

\begin{corollary} \label {an relations}
\begin{enumerate}
\item  $\hat{r}(A_n) \le 4$,
\item $\hat{r}(S_n) \le 4$, and
\item if $G$ is any quasisimple group with $G/Z(G) = A_n$, then
$\hat{r}(G) \le 5$.
\end{enumerate}
\end{corollary}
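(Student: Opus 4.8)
The plan is to derive Corollary \ref{an relations} directly from Theorem \ref{H2alt} using formula (1.1) together with the covering-group results of Section \ref{covering}. First, recall from (1.1) that
$$
\hat{r}(G) = \sup_p \sup_M \Big( \Big\lceil \frac{\dim H^2(G,M) - \dim H^1(G,M)}{\dim M} \Big\rceil + d(G) - \xi_M \Big),
$$
where $M$ runs over irreducible $\F_pG$-modules and $\xi_M = 0$ exactly when $M$ is trivial. For $G = A_n$ or $S_n$ we have $d(G) = 2$, so each summand is at most $\lceil h(G) \rceil + 2$ when $M$ is nontrivial, and at most $\lceil \dim H^2(G,\F_p) \rceil + 1$ when $M$ is trivial (using $H^1(G,\F_p) = 0$ for $G$ perfect, i.e.\ $G = A_n$; for $S_n$ one notes $H^1(S_n,\F_2)$ is $1$-dimensional so one must treat the trivial module with slightly more care, subtracting the $H^1$ contribution).

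The nontrivial-module bound is the main point. By Theorem \ref{H2alt}, for $G = A_n$ or $S_n$ with $n > 4$ we have $h(G) < 3$ in every characteristic: when $p > 3$ the ratio is at most $1/3$; when $p = 3$ it is at most $1$ for nontrivial $M$ (the equality case is only the trivial module); when $p = 2$ it is at most $35/12 < 3$ for $A_n$ and strictly less than $3$ for $S_n$. Hence $\lceil h(G) \rceil \le 3$, and for nontrivial $M$ the summand in (1.1) is at most $3 + 2 - 1 = 4$. For the trivial module, the $p$-rank of the Schur multiplier of $A_n$ (hence of $S_n$) is at most $1$ for $n \ne 6, 7$ and at most $2$ for $n = 6, 7$; in all cases $\lceil \dim H^2(G,\F_p) \rceil + d(G) - \xi_M \le 2 + 2 - 1 = 3 \le 4$ (and for $S_n$, $n = 6,7$, one uses $\dim H^2(S_n,\F_2) - \dim H^1(S_n,\F_2)$ in the numerator). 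Also one must check the small cases $n \le 4$ separately: $A_5 \cong \PSL(2,4)$ and $A_n$ for $n \le 4$ are either solvable or handled by low-rank results, giving $\hat{r} \le 4$ there too. This proves parts (1) and (2).

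Part (3) follows immediately from Corollary \ref{covering relations}(2): if $G$ is quasisimple with $G/Z(G) = A_n$, then $Z(G)$ is central and $\hat{r}(G) \le \hat{r}(A_n) + 1 \le 5$. (Here $Z(G)$ is a quotient of the Schur multiplier of $A_n$, so this covers the $2$-fold covers for all $n \ge 5$, the exceptional $6$-fold covers for $n = 6, 7$, and the case $n \le 4$ where all relevant quasisimple groups are of low Lie rank.) The main obstacle in the write-up is the bookkeeping around the trivial module for $S_n$ in characteristic $2$: one needs $\dim H^2(S_n, \F_2) - \dim H^1(S_n, \F_2)$ rather than $\dim H^2$ alone, since $H^1(S_n, \F_2) \ne 0$; but since $d(S_n) = 2$ this still yields a value at most $4$, and the small-$n$ cases require invoking the low-rank results of Section \ref{low rank} or direct verification.
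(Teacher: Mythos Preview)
Your approach is essentially the same as the paper's: apply (1.1) together with the bounds of Theorem \ref{H2alt} to get parts (1) and (2), and then invoke Corollary \ref{covering relations}(2) for part (3). A couple of minor bookkeeping slips to fix: for the trivial module $\xi_M = 0$ (not $1$), so the trivial-module contribution is at most $2 + 2 - 0 = 4$ rather than $3$; the $p$-rank of the Schur multiplier of $A_n$ is at most $1$ even for $n = 6,7$ (the multiplier is $C_6$, cyclic); and the cases $n \le 4$ are outside the scope of the corollary, which inherits the hypothesis $n > 4$ from Theorem \ref{H2alt}.
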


Almost certainly, it is the case that $\hat{r}(A_n)=\hat{r}(S_n)=3$
for $n > 4$. 
Since the Schur multipliers of $A_n$ and $S_n$ are nontrivial 
for $n > 4$, $\hat{r}(A_n)$ and $\hat{r}(S_n)$ are both 
at least $3$.  
The proof we give says very little about finding specific relations.
It would be quite interesting to pursue this further.

The main idea is to pass to a subgroup containing a Sylow $p$-subgroup of $G$
and having a normal subgroup that is a direct product of
alternating groups.  We then use induction together with the results
of Section \ref{prelim1}.

We do this first for $p > 3$, then for $p=3$ and finally for $p=2$.
If $p > 3$, each of these smaller alternating groups  is simple and
has Schur multiplier prime to $p$.  If $p=3$, there may be an $A_3$
factor.  Also,    $A_6$ and $A_7$ have Schur multipliers of order $6$.
For $p=2$, there may be solvable factors, all Schur multipliers
have even order and there are  further complications as well.

\subsection{{$p > 3$}} \hfil\break

For this subsection, let $F$ be an algebraically closed field
of characteristic $p > 3$.   Our goal is to prove
the following result,  which
includes  Theorem \ref{H2alt} for $p > 3$.

\begin{theorem} \label{alt p > 3}
Let $p > 3$ be a prime.  Let $G=A_n$ and $F$ a field of characteristic
$p$.   If $M$ is an $FG$-module, then $\dim H^j(G,M) \le (\dim M)/(p-2)$
for $j = 1,2$.
\end{theorem}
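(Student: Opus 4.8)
The plan is to induct on $n$, using the reduction machinery of Section~\ref{prelim1}. The case $j=1$ is already covered by Theorem~\ref{H1alt}(4), which gives $\dim H^1(A_n,M)\le(\dim M)/(p-2)$, so the real content is $j=2$, and by Lemma~\ref{cohomology sequence} we may assume $M$ is irreducible. Since $p>3$, the Schur multiplier of $A_n$ has order prime to $p$ except that we must be a little careful with small cases; by Lemma~\ref{perfect} and the structure of $H^2(A_n,\F_p)$ we know $H^2(A_n,\F_p)=0$ for $p>3$, so the trivial module contributes nothing and we may assume $M$ is nontrivial. For small $n$ (say $n\le p$, or a handful of explicitly bounded cases) a Sylow $p$-subgroup is cyclic, so Lemma~\ref{cyclic} gives $\dim H^2(A_n,M)\le 1\le (\dim M)/(p-2)$ as soon as $\dim M\ge p-2$; the finitely many genuinely small modules can be checked directly. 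This anchors the induction.

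For the inductive step with $n$ large, I would choose a subgroup $H\le A_n$ containing a Sylow $p$-subgroup $P$ of $A_n$ and having a normal subgroup $N$ which is (up to index prime to $p$) a direct product of alternating groups of degree $<n$ — concretely, take $N$ built from the natural $A_p\times A_p\times\cdots$ block structure on which $P$ sits, and let $H=N_{A_n}(N)$ or a suitable subgroup thereof. By Lemma~\ref{restriction}, $H^2(A_n,M)\hookrightarrow H^2(H,M)$, so it suffices to bound $\dim H^2(H,M)/\dim M$. Restrict $M$ to $H$ and, via Lemma~\ref{cohomology sequence}, reduce to an irreducible $FH$-module; then apply Lemma~\ref{usual}(2) with the normal subgroup $N$:
\begin{equation*}
\dim H^2(H,M)\le \dim H^2(H/N,M^N)+\dim H^2(N,M)^H+\dim H^1(H/N,H^1(N,M)).
\end{equation*}
The group $H/N$ has order prime to $p$ (it is a subgroup of the relevant wreath-product quotient, a $p'$-group since $P\le N$), so by Corollary~\ref{coprime} the first term vanishes unless $N$ acts trivially on $M$, and the last two terms are computed by pushing through $H^1(N,-)$ and $H^2(N,-)$ via the K\"unneth formula (Lemma~\ref{kunneth}) applied to the direct-product structure of $N$, together with the inductive bounds $\dim H^i(A_m,W)\le(\dim W)/(p-2)$ for $m<n$ and the $p'$-index observations. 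The case where $N$ is trivial on $M$ is handled separately: then $M$ is an irreducible module for the $p'$-group $H/N$... but $H$ contains a Sylow $p$-subgroup of a perfect group, so this forces $M$ trivial, already excluded.

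The main obstacle I expect is bookkeeping the $H^1(H/N,H^1(N,M))$ term: since $H/N$ is a $p'$-group one gets $\dim H^1(H/N,H^1(N,M))\le \dim H^1(N,M)^H$ by Corollary~\ref{coprime}, and by the K\"unneth formula (Lemma~\ref{kunneth}(2)) $H^1(N,M)$ is concentrated in the degree equal to the number of tensor factors of $M|_N$ on which the components act nontrivially — so if two or more components act nontrivially the $H^1$ term vanishes and only $H^2(N,M)^H$ survives, giving a bound well below $(\dim M)/(p-2)$ by Lemma~\ref{kunneth}(3) and $p>3$; if exactly one component (say the first alternating factor $A_m$, $m<n$) acts nontrivially and the rest trivially, then by Lemma~\ref{kunneth}(4) both $H^2(N,M)^H$ and the $H^1(H/N,H^1(N,M))$ term reduce to the corresponding quantities for $A_m$ acting on $M_1$, and the inductive hypothesis plus the $p'$-order of the quotient $H/N$ — which must fix $M_1$ up to the small amount of outer automorphism it can induce — keeps the total ratio at most $(\dim M)/(p-2)$. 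Making the "exactly one nontrivial component" analysis airtight, in particular verifying that the action of $H/N$ does not inflate the ratio and that the induced-module case (when $N$ does not act homogeneously, handled by Shapiro's Lemma~\ref{shapiro}) genuinely drops $n$, is where the care is needed; but none of it requires more than the lemmas of Section~\ref{prelim1} together with Theorem~\ref{H1alt}.
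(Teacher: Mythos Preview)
Your overall strategy---induct on $n$, restrict to a subgroup $H$ containing a Sylow $p$-subgroup with a normal direct-product-of-alternating-groups piece $N$, then apply Lemma~\ref{usual} and K\"unneth---matches the paper. But there is a genuine gap: your claim that $H/N$ is a $p'$-group (because ``$P\le N$'') is false in the crucial case. When $n=p^{a+1}$, the natural choice is $H=A_{p^a}\wr A_p$ with $N=L_1\times\cdots\times L_p$, $L_i\cong A_{p^a}$; then $H/N\cong A_p$ has order divisible by $p$, and the Sylow $p$-subgroup of $A_n$ is an iterated wreath product which does \emph{not} sit inside $N$. (More generally, no choice of $N$ as a direct product of smaller alternating groups can absorb all of $P$ when $n$ is a prime power.) Consequently your use of Corollary~\ref{coprime} to kill the $H^2(H/N,V^N)$ term and to identify $H^1(H/N,H^1(N,V))$ with fixed points does not apply.

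The paper repairs this as follows. First it separates $n$ into two subcases: if $p\mid n$ but $n\ne p^a$, write $n=p^a+n'$ and take $H=A_{p^a}\times A_{n'}$ directly (no wreath, no quotient issue). If $n=p^{a+1}>p$, take $H=A_{p^a}\wr A_p$ and face $H/N\cong A_p$ head-on. When $V^N=V$, the last two terms of Lemma~\ref{usual} vanish and one invokes the already-established case $n=p$ for $H/N$. When $V^N=0$ and exactly one factor $L_1$ acts nontrivially on an irreducible $N$-submodule, the module $V$ is induced from the fixed space $M_1$ of $L_2\times\cdots\times L_p$, and Shapiro gives $H^2(H,V)\cong H^2(N_H(L_1),M_1)$; now $[N_H(L_1):N]=|A_{p-1}|$ \emph{is} prime to $p$, so Corollary~\ref{coprime} and K\"unneth finish by induction. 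Your proposal gestures at Shapiro here but does not isolate $N_H(L_1)$, and without that the $p'$-index you need is simply not available at the level of $H/N$.
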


\begin{proof}   
We induct on $n$.   If $p < n$, all $FG$-modules are projective
and so $H^j(G,M)=0$ for $j > 0$.

If $p$ does not divide $n$,
then the restriction map from $H^j(A_n,M)$ to $H^j(A_{n-1},M)$ is injective 
by Lemma \ref{restriction}  and the result
follows.  So we may assume that $p|n$.  Since $G$ is perfect,  
Lemma \ref{perfect}
implies that 
$H^1(G,F)=0$,  and since $p$ does not divide the order of the
Schur multiplier of $G$, $H^2(G,F)=0$.
 
As usual, we may assume that $M$ is an irreducible $FG$-module.

If $n=p$, then by Lemma \ref{cyclic}, $\dim H^j(G,M) \le 1$.
 Thus, the result follows by noting that
$\dim M \ge p-2$ for every nontrivial irreducible $G$-module.

Suppose that $n \ne p^a$ for any $a$.  Write $n = p^a + n'$
where $p^a$ is the largest power of $p$ less than $n$.
Set $H=A_{p^a} \times A_{n'} < G$.   Since $H$ contains a
Sylow $p$-subgroup of $G$, it suffices to show by Lemma \ref{restriction} that
$H$ satisfies the conclusion of the theorem.  So let $V=V_1 \otimes V_2$
be an irreducible $FH$-module.  If $V$ is a trivial $H$-module, then
$H^j(H,V)=0$ for $j = 1, 2$ (by Lemma \ref{perfect}).   Otherwise,  the result
follows by Lemma \ref{kunneth} and induction.

Finally suppose that $n=p^{a+1}> p$.  Let $H=A_{p^a} \wr A_p < G$.
Then $H$ contains a Sylow $p$-subgroup of $G$ and again it suffices
to show that the conclusion holds for $H$.   
Let $V$ be an irreducible $FH$-module. Let $N$ be the normal subgroup of $H$
with $H/N \cong A_p$.  So $N=L_1 \times \cdots \times L_p$
where $L_i \cong A_{p^a}$.  The result is straightforward and easier for 
$H^1$, and
we just give the argument for $H^2$.
By Lemma \ref{usual},
$$
\dim H^2(H,V) \le \dim H^2(H/N,V^N) + \dim H^2(N,V)^H + \dim H^1(H/N,H^1(N,V)).
$$
If $N$ is trivial on $V$, then the last two terms are $0$ and the result
holds since we already know the theorem for $n=p$.   So suppose that $V^N=0$.
Let $W$ be an irreducible $FN$-submodule of $V$.  
So $W=W_1 \otimes \cdots \otimes W_p$,
where $W_i$ is an irreducible $FL_i$-module.   By Lemma \ref{kunneth},
$H^1(N,W)=0$ unless (after reordering if necessary) $W_1$ is nontrivial and
$W_j$ is trivial for each $j > 1$.   Suppose that for some $j >1$, 
$W_j,$ is nontrivial. Thus, by Lemma \ref{kunneth},
 $H^1(N,W) \cong H^1(N,gW)= 0$ for every
$g \in G$  Since $W$ is a direct sum of $N$-submodules of
the form $gW, g \in G$, this implies that $H^1(N,V)=0$.   
By Lemma \ref{inflres2}
and induction,  $\dim H^2(H,V) \le \dim H^2(N,V)^H \le (\dim V)/(p-2)$.

Now suppose that $W_1$ is nontrivial and $W_j$ is trivial for all $j > 1$.
Let $W_1 \le M_1$ be the set of fixed points of $L_2 \times \cdots \times L_p$
on $V$.
The stabilizer of $M_1$ is clearly $N_H(L_1)$, 
and so $V$ is induced from $M_1$.
By Shapiro's Lemma (Lemma \ref{shapiro}), $H^2(H,M) \cong H^2(N_H(L_1), M_1)$.
Since $[N_H(L_1):N]$ is prime to $p$, it follows by Lemma \ref{coprime} that 
$\dim H^2(H,M) \le \dim H^2(N,M_1)$.  By Lemma \ref{kunneth} and the fact
that $H^1(L_j,M_1)=0$ for $j > 0$,  $H^2(N,M_1)=H^2(L_1,M_1)$
and the result follows.
\end{proof}

\subsection{ $p=3$.} \hfil\break

\begin{theorem}  Let $G=A_n, n > 2$ and $F$
a field of characteristic $p=3$. Let $M$ be an irreducible $FG$-module.
\begin{enumerate}  
\item If $M$ is trivial, then $H^2(G,M)=0$ unless $n=3,4,6$ or $7$,
in which case $H^2(G,M)$ is $1$-dimensional.
\item If $n = 3^a > 3$, then $\dim H^2(G,M) \le (3/5)\dim M$.
\item If $M$ is nontrivial, then $\dim H^2(G,M) \le  (21/25)\dim  M$.  
\end{enumerate}
\end{theorem}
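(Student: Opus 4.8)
The plan is to follow the proof of Theorem~\ref{alt p > 3} (the case $p>3$), inducting on $n$, but keeping track of the three features special to $p=3$: the $A_3\cong C_3$ direct factors that arise in the product/wreath subgroups, the cyclic Sylow $3$-subgroups of $A_4$ and $A_5$, and the groups $A_6,A_7$, whose Schur multipliers are $C_6$ and hence have $3$-torsion. As always, Lemma~\ref{cohomology sequence} reduces us to $M$ irreducible and Lemma~\ref{extension of scalars} lets us assume $F$ is algebraically closed and $M$ absolutely irreducible; note also that $A_3$ has no nontrivial irreducible $\F_3$-module, so (2) and (3) are vacuous for $n=3$.

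For part (1): when $n\ge 5$ the group $A_n$ is perfect, so by Lemma~\ref{perfect} $\dim H^2(A_n,\F_3)$ is the $3$-rank of the Schur multiplier, which by \cite{gls3} is $0$ unless $n\in\{6,7\}$, where the multiplier is $C_6$ and the rank is $1$. For $n=3$, $\dim H^2(C_3,\F_3)=1$; for $n=4$, writing $A_4=V_4\rtimes C_3$ and applying Corollary~\ref{coprime}(3) gives $H^2(A_4,\F_3)\cong H^2(C_3,\F_3)$, again $1$-dimensional. This proves (1).

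For (2) and (3) take $M$ nontrivial irreducible and induct on $n$. The small cases $n\in\{4,5,6,7,8,9\}$ are handled directly: for $A_4,A_5$ the Sylow $3$-subgroup is cyclic, so Lemma~\ref{cyclic} gives $\dim H^2(A_n,M)\le 1<\tfrac{21}{25}\dim M$, since every nontrivial irreducible has dimension $\ge 3$; for $A_6\cong\PSL(2,9)$, $A_7$, $A_8\cong\SL(4,2)$ and $A_9$ one uses the explicit $3$-modular irreducible modules and their second cohomology (with Shapiro's Lemma and the usual Clifford reductions whenever a module is not itself small) — this is where the constants $\tfrac35$ and $\tfrac{21}{25}$ are pinned down. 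For the inductive step with larger $n$: if $3\nmid n$, then $A_{n-1}$ contains a Sylow $3$-subgroup of $A_n$, so $H^2(A_n,M)\hookrightarrow H^2(A_{n-1},M)$ by Lemma~\ref{restriction}; decompose the restriction of $M$ into composition factors and apply the inductive bound to the nontrivial ones and part (1) to the trivial ones (the latter contribute only when $n-1\in\{6,7\}$, which lies inside the already-settled range). If $3\mid n$ but $n$ is not a power of $3$, write $n=3^a+n'$ with $3^a$ the largest power of $3$ at most $n$; exactly as in Theorem~\ref{alt p > 3}, $A_{3^a}\times A_{n'}$ contains a Sylow $3$-subgroup, so we restrict, break the (tensor-product) irreducible $A_{3^a}\times A_{n'}$-modules apart with Lemma~\ref{kunneth}, and finish by induction, using Lemma~\ref{kunneth}(4) when one tensor factor is trivial. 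If $n=3^a$ with $a\ge 3$, restrict instead to $H=A_{3^{a-1}}\wr A_3$, which still contains a Sylow $3$-subgroup; with $N=A_{3^{a-1}}^{3}\trianglelefteq H$ and $H/N\cong C_3$, Lemma~\ref{usual}(2) gives
$$
\dim H^2(H,V)\le\dim H^2(C_3,V^N)+\dim H^2(N,V)^H+\dim H^1(C_3,H^1(N,V)),
$$
where the K\"unneth formula (Lemma~\ref{kunneth}) controls $H^2(N,V)$ and $H^1(N,V)$ through the inductive bound for $A_{3^{a-1}}$ and the $H^1$-estimate of Theorem~\ref{H1alt}, Shapiro's Lemma (Lemma~\ref{shapiro}) handles the case where $V$ is induced, and the remaining $V^N\ne V$ analysis runs as in Theorem~\ref{alt p > 3}; the output is the bound $\tfrac35$ inherited from $A_9$.

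The hard part will be the block of small-group computations for $A_6,A_7,A_8,A_9$: these are precisely the groups on which the structured subgroups degenerate — for $A_6$ and $A_9$ the relevant product and wreath subgroups are actual $3$-groups, on which $H^2$ with the (necessarily trivial) coefficient module is far larger than $\tfrac{21}{25}\dim M$ — so the inductive machinery is unavailable and one must fall back on exceptional isomorphisms and explicit modular representation theory, which is also what forces the numerically sharp constants. A secondary nuisance is bounding the number of trivial composition factors when restricting into a subgroup with $A_6$ or $A_7$ among its composition factors (where $H^2(-,\F_3)\ne0$); this is handled, when it comes up, by the "generated by two conjugates" estimates of Lemma~\ref{trivial cf}, but it requires the inductive bounds to be kept slack enough to absorb the resulting error terms.
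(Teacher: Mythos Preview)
Your plan mirrors the paper's: induct on $n$, restrict to $A_{n-1}$ when $3\nmid n$, to a direct product $A_{3^a}\times A_{n'}$ when $3\mid n$ but $n\ne 3^a$, and to the wreath product when $n=3^a$, controlling everything via K\"unneth, Theorem~\ref{H1alt}, and Lemma~\ref{trivial cf}. The paper does the same, except that it explicitly splits off the sub-cases $n=3^a+3$ and $n=3^a+6$ (your ``secondary nuisance''), because those are exactly where the constant $21/25$ is determined.

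There is, however, a real gap: your base-case range $n\le 9$ is too small. The paper takes $n\le 12$ (via the computer calculations of \cite{luxstudent}), and this is not slack. Consider $n=12=3^2+3$: the product subgroup containing a Sylow $3$-subgroup is $H=A_9\times A_3$. Since $A_3$ is a $3$-group, every irreducible $FH$-module has $A_3$ acting trivially, so Lemma~\ref{kunneth}(4) is unavailable ($A_3$ is not perfect) and the full K\"unneth formula gives, for $V$ nontrivial on $A_9$,
\[
\dim H^2(H,V)=\dim H^2(A_9,V)+\dim H^1(A_9,V)\le\bigl(\tfrac35+\tfrac16\bigr)\dim V=\tfrac{23}{30}\dim V,
\]
while $\dim H^2(H,\F_3)=1$. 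Bounding the number of trivial composition factors by $(\dim M)/2$ via Lemma~\ref{trivial cf} then yields only $\dim H^2(G,M)\le\tfrac12\bigl(1+\tfrac{23}{30}\bigr)\dim M=\tfrac{53}{60}\dim M$, which exceeds $\tfrac{21}{25}\dim M$. The paper sidesteps this by absorbing $n=12$ into the base cases; its Case~3 then treats $n=3^a+3>12$, so that $3^a\ge 27$ and Theorem~\ref{H1alt} gives $\dim H^1(A_{3^a},V)$ of order roughly $\tfrac{1}{22}\dim V$ --- small enough that the identical computation lands at $\tfrac{21}{25}$ exactly. So either extend your base cases to $n\le 12$, or find an alternative argument for $n=12$ that does not pass through $A_9\times A_3$.
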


\begin{proof}  The proof proceeds as in the previous result.
However, note that if $M$ is the trivial module, then
$H^2(G,M)$ is $1$-dimensional for $n=3,4,6$ and $7$ and
$0$ otherwise (for $n \ge 5$, see \cite[p. 314]{gls3} and
for $n=3$ or $4$, a Sylow $3$-subgroup is cyclic). 
We use Lemma \ref{restriction} extensively and sometimes
without comment.

So assume that $M$ is irreducible and nontrivial.
There is no loss in assuming that $F$ is algebraically closed.
  
If $n \le 12$, the result is in  \cite{luxstudent}.
By induction, using Lemma \ref{restriction},
we may assume that $n$ is divisible by $3$
(since $n > 12$ and (1) implies that trivial modules are
not an exception to the bound). \\

Case 1.  $n=3^{a+1} >  9$.\\

Let   $N:=H_1 \times H_2 \times H_3$
where each $H_i = A_{3^a}$.  Let $H$ be a subgroup
of the normalizer of $N$ with $H/N \cong A_3$.   Note that $H$ contains
a Sylow $3$-subgroup of $G$ and so Lemma \ref{restriction} applies.

Let $V$ be an absolutely irreducible $FH$-module.
If $V^N=V$, then by Lemma \ref{usual}, 
$\dim H^2(H,V) \le \dim H^2(H/N,V) + \dim H^2(N,V)^H + \dim H^1(H/N,H^1(N,V))$.
Since $3^a > 3$, the Schur multiplier of $N$ is a $2$-group
and so the middle term above is $0$.  Since $N$ is perfect,
the last term above is $0$, and so $\dim H^2(H,V) \le 1$ by Lemma \ref{cyclic}.

Suppose that $V^N=0$.  Let $W$ be an irreducible $FN$-submodule of $V$.
Write $W=W_1 \otimes W_2 \otimes W_3$ where $W_i$ is an $FH_i$ 
irreducible module.  
We may assume that $W_1$ is nontrivial.
By Lemma \ref{faithful}, either $\dim H^2(H,V) \le \dim H^2(N,V) \le
\dim V/4$ or $W_2$ and $W_3$ are trivial.    Thus, $V=X_N^H$ where 
$X$ is the fixed space of $H_2 \times H_3$.  By Lemma \ref{shapiro},
it follows that $H^2(H,V) \cong H^2(N,X)$.  By Lemma \ref{kunneth}, 
$H^2(N,X) \cong H^2(H_1,X)$.   By induction,   $\dim H^2(H,V) \le (3/5)\dim X
= (1/5) \dim V$.

We claim that the number $d$ of trivial composition factors of $N$ on $M$
is at most $(\dim M)/2$ (in fact, it is usually much less).  
Let $T$ be a Sylow $2$-subgroup of $N$.  It is easy to see that some pair of  
conjugates
of $T$ generate $G$.     So we see by Lemma \ref{trivial cf} that 
$d \le \dim M^T \le (\dim M)/2$.
The previous paragraphs show that
$\dim H^2(H,M) \le d + (\dim M - d)/5$.  Since $d \le (\dim M)/2$, 
this implies
that $\dim H^2(H,M) \le (\dim M)/2 + (\dim M)/10 = (3/5)\dim M$ as required.\\

Case 2.  $n$ is not of the form $3^a + 3$ or $3^a + 6$.  \\

We may assume also that $n \ne 3^a$ (by case 1).  So $n = n_1 + n_2$
where $n_1=3^a > n/3$ and $n_2 \ge 9$.  
Set $H=H_1 \times H_2 < G$ where the $H_i$ are alternating groups
of degree $n_i$.
Note that $[G:H]$ has index prime to $3$.  By induction and
Lemma \ref{kunneth}, the result follows. \\

Case 3. $n=3^a + 3 >  12$.  \\

Let $H=H_1 \times H_2 < G$ with $H_1 = A_3$ and $H_2= A_{3^a}$.
Let $V$ be an irreducible $FH$-module.  Then 
$V$ is trivial for $H_1$ (since it is a normal $3$-subgroup of $H$)
and irreducible for $H_2$. 
If $V$ is nontrivial then, by Lemma \ref{kunneth},
$H^2(H,V) \cong H^1(H_2,V) \oplus H^2(H_2,V)$.  By Theorem \ref{H1alt}
and the fact that $n_1 \ge 27$,
the first term has dimension at most $(1/23) \dim V$ and
the second has dimension at most $(3/5) \dim V$ by induction.  
Thus, $\dim H^2(H,V) < (17/25)\dim M$.

If $V$ is trivial, then $\dim H^2(H,V)=1$.  Arguing as above,
we see that the number of $H$-trivial composition factors in
$M$ is at most $(\dim M)/2$ and so 
$$\frac{\dim H^2(G,M)}{\dim M} \le \frac{1 + 17/25}{2}= \frac{21}{25}.$$

Case 4.  $n=3^a + 6 \ge 15$.   \\

The proof is quite similar to the previous case.

Let  $H=H_1 \times H_2$ where $H_1 \cong A_{3^a}$ and $H_2 \cong A_6$.
Then $H$ contains a Sylow $3$-subgroup of $G$.  So it suffices
to prove the bound for $H$.  Let $V$ be an irreducible
 $FH$-module.  So $V = V_1 \otimes V_2$ where $V_i$ is an irreducible
$FH_i$-module for $i=1,2$.  If both $V_i$ are nontrivial,
then $\dim H^2(H,V) = \dim H^1(H_1,V_1) \cdot \dim H^1(H_2,V_2)$.
The first term is at most $(1/7) \dim V_1$ by Theorem \ref{H1alt}
 and the second is
at most $(1/2) \dim V_2$,  and so $\dim H^2(H,V) \le (1/14)\dim M$.
If $V_1$ nontrivial and $V_2$ is trivial, then by Lemma \ref{kunneth},
$H^2(H,V) \cong H^2(H_1,V_1) \le (3/5) \dim M$.  
If $V_1$ is trivial, then $\dim H^2(A_6,V_2) \le \dim V_2$.
As in the previous case, the number of trivial composition factors for
$H_1$ is at most $(\dim M)/2$,  and the result follows as in the previous
case.   
\end{proof}

\subsection{ $p=2$.} \hfil\break

Let $F$ be an algebraically closed field of characteristic
$2$. In this section, all modules are over $F$.
Let $n \ge 5$ be a positive integer. Write
$n = \sum_{i=1}^r  2^{a_i}$, where the $a_i=a_i(n)$ are decreasing
positive integers.

The next result follows since the $2$-part of the Schur multiplier
for $A_n$ has order $2$ \cite[p. 312]{gls3}.

\begin{lemma} Let $M$ be the trivial module. 
\begin{enumerate}
\item $\dim H^2(A_n,M)=1$; and 
\item $\dim H^2(S_n,M)=2$.
\end{enumerate}
\end{lemma}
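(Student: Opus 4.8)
The statement to prove is the lemma asserting $\dim H^2(A_n, M) = 1$ and $\dim H^2(S_n, M) = 2$ when $M = F$ is the trivial module over an algebraically closed field of characteristic $2$. The plan is to invoke the structure of the Schur multipliers of $A_n$ and $S_n$ together with Lemma \ref{perfect}, and then handle the non-perfect case $S_n$ separately via the Hochschild--Serre machinery already set up.

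For $A_n$: since $n \ge 5$, the group $A_n$ is perfect, so by Lemma \ref{perfect} we have $H^1(A_n, F) = 0$ and $\dim H^2(A_n, F)$ equals the $2$-rank of the Schur multiplier of $A_n$. The Schur multiplier of $A_n$ is cyclic of order $2$ for $n \ge 8$ (and also for $n = 5, 7$), and has order $6$ for $n = 6, 7$; in all cases its $2$-part is cyclic of order $2$, so its $2$-rank is $1$. (The relevant citation is \cite[p. 312]{gls3}, as noted in the sentence preceding the lemma.) Hence $\dim H^2(A_n, F) = 1$, giving (1).

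For $S_n$: here $S_n$ is not perfect, so I cannot apply Lemma \ref{perfect} directly. Instead I would use the normal subgroup $A_n \trianglelefteq S_n$ with $S_n / A_n \cong C_2$ and apply Lemma \ref{usual}(2) with $N = A_n$, $H = S_n$, $M = F$ (trivial, so $M^N = F$):
$$
\dim H^2(S_n, F) \le \dim H^2(C_2, F) + \dim H^2(A_n, F)^{C_2} + \dim H^1(C_2, H^1(A_n, F)).
$$
Since $H^1(A_n, F) = 0$ the last term vanishes; $\dim H^2(C_2, F) = 1$ over a field of characteristic $2$ (the Sylow $2$-subgroup of $C_2$ being cyclic, or by direct computation, $H^*(C_2, \mathbb{F}_2) = \mathbb{F}_2[t]$); and $\dim H^2(A_n, F)^{C_2} \le \dim H^2(A_n, F) = 1$, so the upper bound is $2$. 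For the matching lower bound I would exhibit two independent central extensions of $S_n$ by $C_2$ that are inequivalent even modulo the diagonal: the two double covers $2.S_n^+$ and $2.S_n^-$ together with the non-split extension $C_2 \times C_2 \to C_4 \to C_2$ pulled back along $S_n \to C_2$, or more cleanly argue that the restriction map to a Sylow $2$-subgroup together with the known structure of the Schur cover of $S_n$ (Schur: the Schur multiplier of $S_n$ is $C_2$ for $n \ge 4$, but $H^2(S_n, \mathbb{F}_2)$ also picks up the contribution from $\mathrm{Hom}(S_n, \mathbb{F}_2) \ne 0$) forces dimension exactly $2$. The cleanest route is the five-term exact (inflation--restriction) sequence in low degree, which here reads
$$
0 \to H^2(C_2, F) \to H^2(S_n, F) \to H^2(A_n, F)^{C_2} \to 0,
$$
exactness on the right following because $H^1(A_n,F)=0$ kills the transgression target; since both outer terms are $1$-dimensional, $\dim H^2(S_n, F) = 2$.

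The main obstacle is the $S_n$ lower bound: establishing that the above inflation--restriction sequence is right-exact (equivalently that the map $H^2(S_n,F) \to H^2(A_n,F)^{C_2}$ is surjective), which amounts to checking that the unique nontrivial class in $H^2(A_n,F)$ is $S_n$-invariant and lifts — this is where one genuinely uses that the double cover $2.A_n$ extends to a double cover of $S_n$, a classical fact of Schur that I would cite from \cite[p. 312]{gls3}. Everything else is bookkeeping with the lemmas already proved.
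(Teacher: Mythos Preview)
Your proposal is correct. The paper itself does not actually give a proof of this lemma: it simply asserts that the result ``follows since the $2$-part of the Schur multiplier for $A_n$ has order $2$,'' citing \cite[p.~312]{gls3}. Your argument for (1) is exactly this, filtered through Lemma~\ref{perfect}. For (2) the paper offers no argument at all; your use of Lemma~\ref{inflres2} (applicable since $H^1(A_n,F)=0$) together with the classical existence of a double cover $2.S_n$ restricting to $2.A_n$ is a clean way to pin down the exact value. One small simplification: over $\F_2$ the $C_2$-action on the $1$-dimensional space $H^2(A_n,F)$ is automatically trivial (since $\GL_1(\F_2)=1$), so the invariance step needs no separate justification. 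An alternative route for (2), perhaps closer in spirit to what the paper intends, is the universal coefficient theorem: since $H_1(S_n)\cong C_2$ and $H_2(S_n)\cong C_2$ (Schur), one reads off $\dim_{\F_2} H^2(S_n,\F_2) = \dim \mathrm{Ext}^1(C_2,\F_2) + \dim \mathrm{Hom}(C_2,\F_2) = 1+1=2$ immediately.
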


\begin{lemma} \label{antosn} Let $M$ be an irreducible nontrivial $FS_n$-module for 
$n \ge 8$.  Then
\begin{enumerate}
\item 
$ \dim H^2(S_n,M) \le \dim H^2(A_n,M) + \dim M/2^{a_1(n)-1}$.
\item $\dim H^2(S_n,M) \le \dim H^2(A_n,M) + \dim M/13$
for $n \ge 12$.   
\end{enumerate}
\end{lemma}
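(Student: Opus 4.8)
The plan is to relate $H^2(S_n,M)$ to $H^2(A_n,M)$ via the normal subgroup $A_n \trianglelefteq S_n$ with $S_n/A_n \cong C_2$, and to deal with the fact that $C_2$ has characteristic $2$ (so its cohomology need not vanish) by passing instead to a subgroup of $S_n$ that contains a Sylow $2$-subgroup and has a direct product of alternating groups as a ``large'' normal subgroup with $2'$-index complement information. Concretely, I would write $n = \sum_{i=1}^r 2^{a_i}$ and set $N = A_{2^{a_1}} \times A_{2^{a_2}} \times \cdots$, viewed inside $A_n$; then take $H = N.\,2$ or an appropriate subgroup of $S_n$ containing a full Sylow $2$-subgroup, so that by Lemma~\ref{restriction} it suffices to bound $\dim H^2(S_n,M) - \dim H^2(A_n,M)$ in terms of data visible inside such subgroups. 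The cleaner route, and the one I would try first, is to apply Lemma~\ref{usual}(2) directly to $A_n \trianglelefteq S_n$:
\[
\dim H^2(S_n,M) \le \dim H^2(C_2, M^{A_n}) + \dim H^2(A_n,M)^{C_2} + \dim H^1(C_2, H^1(A_n,M)).
\]
Since $M$ is irreducible and nontrivial for $S_n$, if $M^{A_n} \ne 0$ then $M$ is a sum of two $A_n$-conjugate irreducibles and one handles that case separately; generically $M^{A_n}=0$, killing the first term, and the third term is controlled by $\dim H^1(A_n,M)$, which by Theorem~\ref{H1alt} is at most $(\dim M)/(f-1)$ with $f$ the largest prime $\le n-2$, and is at most $(2/n)\dim M$ for $n>8$, comfortably below $\dim M/13$ once $n \ge 12$ (since then $f \ge 11$, giving $1/(f-1) \le 1/10$; a sharper prime count pushes this to $1/13$ for $n$ large, with the finitely many intermediate $n$ checked by hand or via \cite{luxstudent}-type data).

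The first statement, with the weaker bound $\dim M/2^{a_1(n)-1}$, I expect to follow from the Sylow/wreath-product analysis rather than the two-term extension argument: a Sylow $2$-subgroup of $S_n$ is $\prod_i (C_2 \wr \cdots \wr C_2)$ of depth $a_i$, and the relevant ``extra'' cohomology of $S_n$ over $A_n$ is detected on the top copy of $C_2$ sitting above $A_{2^{a_1}}$; restricting to $H = (C_2 \wr A_{2^{a_1-1}} \wr \cdots)$ and using Lemma~\ref{kunneth} together with the factor $2^{a_1-1}$ coming from the dimension of an irreducible module for the depth-$(a_1-1)$ iterated wreath product gives the stated denominator. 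I would carry this out by: (i) disposing of the trivial-module and $M^{A_n}\ne 0$ cases; (ii) choosing $H \le S_n$ containing a Sylow $2$-subgroup with $H \cap A_n$ normal of index $2$ in $H$ and $H\cap A_n$ a product of alternating groups; (iii) applying Lemma~\ref{usual}(2) to $H\cap A_n \trianglelefteq H$ and Lemma~\ref{kunneth} to split the cohomology of the product, reducing the ``new'' contribution to $H^1$ of the alternating factors; (iv) plugging in Theorem~\ref{H1alt} and bookkeeping the $2^{a_1-1}$ or $13$ denominators; and finally (v) using Lemma~\ref{restriction} to transfer the bound from $H$ back to $S_n$ (and comparing with the corresponding reduction for $A_n$).

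The main obstacle will be step (iii)–(iv): making the comparison between the reduction carried out inside $S_n$ and the reduction inside $A_n$ genuinely term-by-term, so that the difference $\dim H^2(S_n,M) - \dim H^2(A_n,M)$ — not just $\dim H^2(S_n,M)$ alone — is what gets bounded by $\dim M/2^{a_1-1}$ (resp.\ $\dim M/13$). The subtlety is that the same irreducible $M$ may restrict quite differently to $A_n$ and to the wreath subgroups, and that $H^1(A_n,M)$ can genuinely be nonzero, so one must track the $C_2$-action on $H^1(A_n,M)$ (equivalently, on $H^1$ of the top alternating factor) and verify that $\dim H^1(C_2,H^1(A_n,M)) \le \dim H^1(A_n,M)$ with the stated further bound. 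I would handle this by noting $H^1(C_2, W) $ has dimension at most $\dim W$ for any $\F_2 C_2$-module $W$ (and at most $\dim W^{C_2}$), then invoking Theorem~\ref{H1alt} parts (1) and (2) and a short argument that for $n \ge 12$ the largest prime $\le n-2$ is at least $13$ or else $n$ lies in a short explicitly-checkable range; the few small cases $8 \le n \le 11$ in part (1) are verified directly, e.g.\ via Theorem~\ref{H1alt}(3) for $A_8$ and known module data.
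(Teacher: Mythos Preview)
Your ``cleaner route'' is exactly the paper's proof, and it handles \emph{both} parts of the lemma; everything after that in your proposal is unnecessary. Apply Lemma~\ref{usual}(2) to $A_n \trianglelefteq S_n$: the term $\dim H^2(C_2,M^{A_n})$ vanishes because $M^{A_n}=0$ \emph{always}, not just generically. Indeed, $M^{A_n}$ is an $S_n$-submodule of the irreducible module $M$, so it is $0$ or $M$; if it were $M$ then $M$ would be an irreducible $C_2$-module in characteristic~$2$, hence trivial, contrary to hypothesis. (Your remark that $M^{A_n}\neq 0$ would force $M$ to split as two $A_n$-conjugates confuses $M^{A_n}$ with the Clifford-theoretic decomposition of $M|_{A_n}$.) The remaining term satisfies $\dim H^1(C_2,H^1(A_n,M)) \le \dim H^1(A_n,M) \le (\dim M)/(f-1)$ by Theorem~\ref{H1alt}(1), where $f$ is the largest prime with $f \le n-2$.

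For part~(1) you do not need any wreath-product or Sylow analysis. Since $2^{a_1(n)} \le n$, we have $2^{a_1(n)-1} \le n/2$, and Bertrand's postulate gives a prime $f$ with $n/2 \le f-1 \le n-2$, so $f-1 \ge 2^{a_1(n)-1}$. That is the entire argument. For part~(2), once $n \ge 15$ the prime $13$ satisfies $13 \le n-2$, and the finitely many cases $12 \le n \le 14$ are handled by the computations in \cite{luxstudent}. Your worry about a ``term-by-term comparison'' between the $S_n$ and $A_n$ reductions is also unfounded: the inequality from Lemma~\ref{usual} already has $\dim H^2(A_n,M)^{S_n} \le \dim H^2(A_n,M)$ sitting on the right-hand side, so the difference $\dim H^2(S_n,M) - \dim H^2(A_n,M)$ is bounded directly.
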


\begin{proof}   
By Lemma \ref{usual}, we have
$$
\dim H^2(S_n,M) \le \dim H^2(A_n,M)^{S_n} + \dim H^1(S_n/A_n, H^1(A_n,M)).
$$
By Theorem \ref{H1alt}, the right hand term is at most $\dim H^1(A_n,M) 
\le \dim M/(f-1)$ where $f$ is the largest prime with $f \le n -2$.  By Bertrand's
postulate, 
there is a prime $f$ with $ a_1(n) \le n/2 \le f -1 \le n-2$, whence (1) holds.
Now (2) holds by the same argument for $n \ge 15$ (since  $13$ is  prime
and $13 \le n-2$),   and by computation
for $n \le 14$ \cite{luxstudent}.  
\end{proof}

\begin{lemma}  Let $n=2^{a+1} \ge 4$.  Let $G=A_n$ or $S_n$.  Let
$F$ be an algebraically closed field of characteristic $2$.
Let $M$ be an irreducible nontrivial $FG$-module.
\begin{enumerate}
\item If $n=4$ and $G=A_n$, then $\dim H^2(G,M)  \le 1$.
\item If $n=4$ and $G=S_n$, then $\dim H^2(G,M)  \le 1$.
\item If $n \ge 8$ and $G=A_n$, then $\dim H^2(G,M) \le (65/24)\dim M$.
\item If $n  \ge 8$ and $G=S_n$, then $\dim H^2(G,M) < (17/6) \dim M$.
\end{enumerate}
\end{lemma}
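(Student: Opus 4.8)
The plan is to handle the case $n = 2^{a+1}$ by passing to the subgroup $H = A_{2^a} \wr C_2$ (inside $G = A_n$) or its analogue $S_{2^a} \wr C_2$ (inside $G = S_n$), which contains a Sylow $2$-subgroup of $G$, so that by Lemma \ref{restriction} it suffices to bound $\dim H^2(H,M)$ for $M$ an irreducible $FH$-module. The tiny cases $n=4$ (parts (1) and (2)) are handled directly: a Sylow $2$-subgroup of $A_4$ is the Klein four group and that of $S_4$ is dihedral of order $8$, and in either case one computes $\dim H^2(G,M) \le 1$ for $M$ nontrivial irreducible (either by direct computation or by invoking \cite{luxstudent}). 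So assume $n = 2^{a+1} \ge 8$.

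First I would set $N = L_1 \times L_2$, the base group of the wreath product, where each $L_i \cong A_{2^a}$ in the $A_n$ case and $L_i \cong S_{2^a}$ in the $S_n$ case, so that $H/N \cong C_2$ acts by swapping the two factors. Given an irreducible $FH$-module $M$, apply Lemma \ref{usual}(2):
$$
\dim H^2(H,M) \le \dim H^2(H/N, M^N) + \dim H^2(N,M)^H + \dim H^1(H/N, H^1(N,M)).
$$
If $M^N = M$ (i.e. $N$ acts trivially), then $M$ is a module for $H/N \cong C_2$, which has cyclic Sylow $2$-subgroup, so by Lemma \ref{cyclic} $\dim H^2(H,M) \le 1$, and since $M$ is nontrivial $\dim M \ge 1$ suffices. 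Otherwise $M^N = 0$; then the first term vanishes, and I would analyze the remaining two terms via the tensor-product structure of an irreducible $FN$-submodule $W = W_1 \otimes W_2$ using Lemma \ref{kunneth}. If both $W_1, W_2$ are nontrivial, then $H^1(N,W) = 0$ for all $G$-conjugates, forcing $H^1(N,M) = 0$ (killing the third term), and $\dim H^2(N,M)^H \le \dim H^2(N,M_1) = \dim H^2(L_1,M_1) \le h_2(A_{2^a}) \dim M_1$; inductively (using part (3) for the $A$-case and Lemma \ref{antosn} plus induction for the $S$-case) this is comfortably within the claimed bound. If one factor, say $W_2$, is trivial, then $M$ is induced from the fixed space $M_1$ of $L_2$, and Shapiro's Lemma reduces $H^2(H,M)$ to $H^2(N_H(L_1), M_1)$, which (since $[N_H(L_1):N] = 1$ after the swap is accounted for, or has controlled structure) reduces to $H^2(L_1, M_1)$, again handled by induction.

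The main obstacle is the third term, $\dim H^1(H/N, H^1(N,M))$, in the case where one tensor factor is trivial but we cannot simply induce — more precisely, controlling the interaction between $H^1(A_{2^a}, \cdot)$ (or $H^1(S_{2^a}, \cdot)$) and the $C_2$-action, together with keeping track of the number of $N$-trivial composition factors in $M$, which one bounds using Lemma \ref{trivial cf} (some pair of conjugates of a Sylow $2$-subgroup of $N$ generates $G$, so the trivial-composition-factor count is at most $(\dim M)/2$). The arithmetic of combining the constant $65/24$ (resp.\ $17/6$) with the $H^1$-bounds from Theorem \ref{H1alt} and the induced-module contributions is where the precise fractions $65/24$ and $17/6$ come from; I expect this bookkeeping, rather than any conceptual difficulty, to be the delicate part, and it is exactly where the base cases $n \le 14$ from \cite{luxstudent} are needed to start the induction cleanly.
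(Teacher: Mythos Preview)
There is a genuine gap in your choice of subgroup for the $A_n$ case. The wreath product $A_{2^a}\wr C_2$ does \emph{not} contain a Sylow $2$-subgroup of $A_{2^{a+1}}$: its $2$-part is $2^{2(2^a-2)+1}=2^{2^{a+1}-3}$, whereas $|A_{2^{a+1}}|_2=2^{2^{a+1}-2}$. You are missing a factor of $2$, and Lemma~\ref{restriction} does not apply. The paper instead takes $H$ to be the full normalizer in $A_n$ of $N=A_{2^a}\times A_{2^a}$, so that $H/N$ has order $4$ (the extra involution being a simultaneous odd permutation in each block). This changes the trivial-module analysis substantially: one gets $\dim H^2(H,F)\le 4$ rather than $1$.

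Relatedly, your handling of the case $M^N=M$ is incorrect even for your $H$: from ``$H/N$ has cyclic Sylow $2$-subgroup'' you may conclude only $\dim H^2(H/N,M)\le 1$, not $\dim H^2(H,M)\le 1$. The other two terms in Lemma~\ref{usual}(2) do not vanish in general when $N$ acts trivially (the Schur multiplier of $N$ contributes).

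Finally, the paper's argument has a structural feature your outline misses: part (4) is reduced to part (3) directly via Lemma~\ref{antosn}, rather than by a parallel wreath-product argument for $S_n$. Conversely, inside the $A_n$ argument, when $V$ is induced from an inertia subgroup $J$ with $|J/N|=2$, one has $J/H_2\cong S_{2^a}$, and the bound $17/6$ for $S_{2^a}$ (i.e.\ part (4) at the previous stage) is invoked. So parts (3) and (4) are proved by a \emph{mutual} induction, and the constant $65/24$ arises precisely as $\tfrac{1}{2}\cdot 4+\tfrac{1}{2}\cdot\tfrac{17}{12}$ from combining the trivial-factor contribution with the induced-module contribution. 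Your sketch does not set up this interplay.
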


\begin{proof} If $n \le 8$, this is done by a computer computation in
  \cite{luxstudent}.  So assume that $n \ge 16$. We induct on $n$.
By (2) of the previous lemma, it suffices to prove (3).   
As usual, we will compute cohomology for subgroups which
contain a Sylow $2$-subgroup. So we may use
Lemma \ref{restriction}.

Let $N = A_{2^a} \times A_{2^a}=H_1 \times H_2$ and
$H$ be the normalizer in $G$ of $N$.  
Let $V$ be an irreducible $FH$-module.   
If $V$ is trivial, it follows by Lemma \ref{usual} that
$\dim H^2(H,V) \le \dim H^2(H/N,V) + \dim H^2(N,V)^H + \dim H^1(H/N,H^1(N,V))$.
Since $N$ is perfect, the last term is $0$.  Since $H/N$ has order $4$,
the first term on the right side of the inequality is $3$.   Since the 
Schur multiplier of each factor of $N$ has order $2$,  
$H^2(N,V)$ is $2$-dimensional and $H$ acts nontrivially on this, whence
the middle term has dimension $1$.  Thus,  $\dim H^2(H,V) \le 4$.

Suppose that $V$ is nontrivial.  Then $V^N = 0$. 
 Let $W$ be an irreducible $N$-submodule of $V$.
Write $W=W_1 \otimes W_2$.  Note that $V$ is a direct sum of $N$-modules
of the form $gW$,    $g \in H$.  If both $W_1$ and $W_2$ are nontrivial,
then by Lemma \ref{kunneth}, $H^2(N,W) = H^1(H_1,W_1) \otimes H^1(H_2,W_2)$ and $H^1(N,V)=0$.
 By Theorem \ref{H1alt} and Lemma \ref{usual},  
  $\dim H^2(H,V) \le \dim H^2(N,V)^H \le (\dim V)/36$.

If  $W_2$ is trivial, then $V$ is an induced module -- so
we may write $V=U_J^H$ where $J=N$ or $H/J$ has order $2$.  
Thus, by Lemma \ref{shapiro}, $H^2(H,V) \cong H^2(J,U)$.   If $J=N$, this implies
that $\dim H^2(J,U)=\dim H^2(H_1,U)$.  So by induction,  
$\dim H^2(J,U) \le (65/24) \dim U$, whence $\dim H^2(H,V) \le (65/96) \dim V$.
If $J/N$ has order $2$,  then we apply Lemma \ref{usual}
to conclude that
$$
\dim H^2(J,U) \le \dim H^2(J/N_2,U) + \dim H^2(H_2,U)^J + \dim H^1(J/H_2,H^1(N_2,U)).
$$
Since $H_2$ is perfect and $U$ is trivial for $H_2$, the last term is $0$.
Since $H_2$ is trivial on $U$ and $H^2(N_2,F)$ is $1$-dimensional, we
see that $H^2(N_2,U) \cong U$ as a $J$-module and so $J$ has no fixed points
on the module, whence the middle term is $0$.  Noting that $J/H_2 \cong S_{2^a}$
and using Lemma \ref{antosn} and induction, we conclude that 
$\dim H^2(J,U) \le (17/6) \dim U$. Since $\dim U = (\dim V)/2$, 
we obtain the desired inequality that  $\dim H^2(H,V) \le (17/12) \dim V$.

Note $H$ contains an element $h$ that is the product of two disjoint cycles
of length $2^a-1$.  It is easy to see that $A_n$ can be generated by
two conjugates of $h$.  Setting $J=\langle h \rangle$ and using
Lemma \ref{trivial cf}, we 
see that the number $\alpha$ of trivial $H$-composition factors in $M$
is at most $(\dim M)/2$.  Thus, 
$\dim H^2(H,M) \le 4 \alpha  + (17/12)(\dim M - \alpha)$.  Since $\alpha \le (\dim M)/2$,
this implies that $\dim H^2(H,M) \le (65/24)\dim M$.
 \end{proof}

\begin{theorem}  Let $n > 4$ be a positive integer and 
$F$ an algebraically closed field of characteristic $2$.
Let $M$ be an $FG$-module with $G=A_n$ or $S_n$.
\begin{enumerate}
\item $\dim H^2(A_n,M) \le (35/12) \dim M$; and
\item  $\dim H^2(S_n,M) < 3 \dim M$.
\end{enumerate}
\end{theorem}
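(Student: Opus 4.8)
The plan is to mimic the inductive scheme of the preceding lemma. By Lemma \ref{cohomology sequence}, applied to a composition series, it suffices to prove (1) and (2) for $M$ irreducible, and for the trivial module the estimates are exactly the content of the lemma above computing $H^2(A_n,F)$ and $H^2(S_n,F)$. I would then argue by induction on $n$, proving both statements together: small values of $n$ are disposed of by the machine computations of \cite{luxstudent}, and when $n$ is a power of $2$ the preceding lemma gives the stronger bounds $\dim H^2(A_n,M)\le(65/24)\dim M<(35/12)\dim M$ and $\dim H^2(S_n,M)<(17/6)\dim M<3\dim M$. So assume $M$ is a nontrivial irreducible $FG$-module and $n$ is not a power of $2$ (and beyond the range of \cite{luxstudent}).

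For such $n$, part (2) follows from part (1): by Lemma \ref{antosn}(2), $\dim H^2(S_n,M)\le\dim H^2(A_n,M)+\dim M/13$ once $n\ge 12$, and since $M|_{A_n}$ is a sum of at most two irreducible $FA_n$-modules of total dimension $\dim M$, part (1) gives $\dim H^2(A_n,M)\le(35/12)\dim M$, whence $\dim H^2(S_n,M)\le(35/12+1/13)\dim M<3\dim M$; the remaining finitely many $n$ are covered by \cite{luxstudent}. Thus the real content is part (1).

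To prove (1), let $m$ be the largest power of $2$ with $m\le n$ (so $m\ge 8$) and write $n=m+n'$ with $0<n'<m$. Set $H=(S_m\times S_{n'})\cap A_n$ and $N=A_m\times A_{n'}\trianglelefteq H$. Because the binary expansions of $m$ and $n'$ are disjoint, $[A_n:H]=\binom{n}{m}$ is odd, so $H$ contains a Sylow $2$-subgroup of $A_n$ and by Lemma \ref{restriction} it is enough to bound $\dim H^2(H,M)$. If $n'\le 2$ then $H$ is $A_m$ or isomorphic to $S_m$ and the bound is immediate from the preceding lemma, so assume $n'\ge 3$; then $H/N\cong C_2$. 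I would split the composition factors of $M|_H$ into three classes according to the action of the ``large'' factor $A_m$: (I) the trivial module; (II) nontrivial factors with $A_m$ in the kernel; (III) nontrivial factors on which $A_m$ acts nontrivially. For (I), Lemma \ref{usual} together with the K\"unneth formula (Lemma \ref{kunneth}), perfectness of $A_m$, and the fact that the $2$-parts of the Schur multipliers of $A_m$ and $A_{n'}$ have rank at most $1$, gives $\dim H^2(H,F)\le 3$. For (II), such a factor $S$ is an irreducible module for $H/A_m\cong S_{n'}$; since $H^1(A_m,S)=0$ and $H^2(A_m,S)\cong S$ as $H$-modules (the one-dimensional space $H^2(A_m,F)$ carrying trivial $H$-action, as it factors through $\mathrm{Out}(A_m)$, a $2$-group), Lemma \ref{inflres2} gives $\dim H^2(H,S)\le\dim H^2(S_{n'},S)<3\dim S$ by the inductive hypothesis. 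For (III), Clifford theory for $N\trianglelefteq H$ with $[H:N]=2$ shows $S|_N$ is either irreducible or induced from an $N$-constituent $W=W_1\otimes W_2$ with $W_1$ a nontrivial $A_m$-module; the K\"unneth formula annihilates the mixed terms and reduces $\dim H^2(H,S)$ to one of $\dim H^2(A_m,W_1)$, $\dim H^2(A_{n'},W_2)$ (with $\dim W_i=\tfrac12\dim S$ when $S$ is induced), or $\dim H^2(S_m,S)$ (when $A_{n'}$ acts trivially on $S$), plus an $H^1\otimes H^1$ contribution controlled by Theorem \ref{H1alt} and (1.2); by the preceding lemma and induction each of these is at most $(17/6)\dim S$. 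Finally let $\gamma$ be the total dimension of the factors of types (I) and (II) --- exactly those annihilated by $A_m$. On restriction to $A_m$ each such factor $S$ contributes $\dim S$ trivial composition factors, so $\gamma$ is at most the number of trivial $A_m$-composition factors of $M|_{A_m}$; applying Lemma \ref{trivial cf} with $J=\langle h\rangle$ for $h\in A_m$ of odd order chosen so that two $A_n$-conjugates of $h$ generate $A_n$, gives $\gamma\le\dim M^{h}\le\tfrac12\dim M$. Summing the three contributions, $\dim H^2(H,M)\le 3\gamma+(17/6)(\dim M-\gamma)=(17/6)\dim M+\tfrac16\gamma\le(35/12)\dim M$, as required.

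The step I expect to be the real obstacle is the asymmetry of the decomposition $n=m+n'$ with $m\ne n'$. In the preceding power-of-$2$ lemma the two blocks had equal size and the block-swapping involution forced every composition factor of $M|_H$ to be nontrivial on each half; with unequal blocks the type-(II) factors --- those killed by the big factor $A_m$ --- genuinely occur, and for them only the constant $3$ (still being proved for $S_{n'}$) is available, which is not by itself enough to close the induction at $35/12$. Making the accounting sharp --- proving that these factors occupy total dimension at most $\tfrac12\dim M$, so that their worse constant is averaged against the $17/6$ coming from the remaining factors --- is the delicate point, and it is exactly what the trivial-composition-factor estimate of Lemma \ref{trivial cf} supplies. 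The remaining technical chores are the generation statements ($A_n$ generated by two conjugates of $A_m$, respectively of $\langle h\rangle$) and the small degrees $n'\in\{3,4\}$, where $A_{n'}$ is solvable; these are handled by appeal to \cite{luxstudent}.
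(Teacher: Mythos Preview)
Your proposal is correct and follows essentially the same inductive scheme as the paper: the same odd-index subgroup $H=(S_m\times S_{n'})\cap A_n$, the same three-way split of $H$-composition factors according to the $A_m$-action, the same appeal to Lemma~\ref{trivial cf} to bound the total dimension of $A_m$-trivial factors by $\tfrac12\dim M$, and the identical final arithmetic $3\gamma+(17/6)(\dim M-\gamma)\le(35/12)\dim M$. Your local variants --- not first reducing to $n$ even, handling type~(II) via $H/A_m\cong S_{n'}$ rather than via $N$, and handling the $A_{n'}$-trivial subcase of~(III) via $H/A_{n'}\cong S_m$ --- are all valid and in places slightly cleaner than the paper's route; the only blemish is the stray mention of $\dim H^2(A_{n'},W_2)$ in the type~(III) list, which cannot arise there since $W_1$ is nontrivial.
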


\begin{proof}  If $M$ is trivial, then
$\dim H^2(A_n,M)=1$ since a Sylow $2$-subgroup of the
Schur multiplier has order $2$.  Since $S_n/A_n$ has order
$2$, it follows by Lemma \ref{usual} that $\dim H^2(S_n,M) \le 2$, whence
the result holds in this case.  So we may assume that  $M$ is a nontrivial
irreducible $FG$-module.  If $n \le 14$, then (1) and (2)
follow  by  computation \cite{luxstudent}.  So we assume
that $n > 14$.

By Lemma \ref{antosn}, it suffices to prove
the results for $A_n$.  So assume that $G=A_n$ and $M$
is a nontrivial  irreducible $FG$-module.  

The result holds for $n$ a power of $2$ by the previous lemma.
So assume that this is not the case.  As usual, we will obtain
bounds for a subgroup  of odd index and then apply Lemma \ref{restriction}.
 We may also
assume that $n$ is even (since if not, $A_{n-1}$ has odd index).
   Thus, we may write 
$n=n_1 + n_2 $ where $n_1 =2^a$ is the largest power of $2$ less
than $n$ and $n_2 \ge 2$. 

First suppose that $n_2=2$.  Then $S_{2^a}$ contains a Sylow $2$-subgroup
of $G$ and so by the previous lemma, $\dim H^2(G,M) \le (17/6) \dim M$.  
So we assume that $n_2 \ge 4$.  

Let $N=N_1 \times N_2$ where $N_i \cong A_{n_i}$ and let $H$
be the normalizer of $N$.  Then $H/N$ has order $2$ and $H$
contains a Sylow $2$-subgroup of $G$.

Let $V$ be an irreducible $FH$-module.  If $V$ is trivial,
then by Lemma \ref{usual},
$ \dim H^2(H,V) \le \dim H^2(H/N,V) + \dim H^2(N,V)^H + \dim H^1(H/N,H^1(N,V)).$
The first term on the right hand side is $1$.  Since $N=N_1 \times N_2$, 
$H^2(N,V)$ is $2$-dimensional.  If  $n_2 > 4$, then $N$ is perfect,
and $H^1(N,V)=0$.  If $n_2 = 4$, then $H^1(N,V) = \hom(N,V) = 0$.
Thus, $\dim H^2(H,V) \le 3$.

Suppose that $V$ is nontrivial.  Let $W$ be an irreducible
$FN$-submodule.  Write $W=W_1 \otimes W_2$ with the $W_i$ irreducible
$N_i$-modules.   If both of
the $W_i$ are nontrivial, then $H^1(N,W)=0$
by Lemma \ref{kunneth} and so $H^1(N,V)=0$ (since $V$ is a direct
sum of submodules of the form $gW, g \in H$).  
Also by Lemma \ref{kunneth},  $ H^2(N,W) = H^1(N_1,W_1) \otimes H^1(N_2,W_2)$
has dimension at most $(\dim W)/4$ (we leave it to the reader to verify
this  when $n_2=4$).   It follows by
Lemma \ref{usual} that $\dim H^2(H,V) \le (\dim V)/4$.

If $W_1$ is nontrivial, but $W_2$ is trivial, then
 $\dim H^2(N,W) \le \dim H^2(N_1,W) \le (65/24) \dim W$.
By Lemma \ref{usual}, 
$$
\dim H^2(H,V) \le \dim H^2(H/N,V^N) + \dim H^2(N,V)^H + \dim H^1(H/N,H^1(N,V)).
$$
Note that $V^N=0$ and as noted above the middle term is at most $(65/24) (\dim V)$.
Finally, observe that since $H^1(N_2,F)=0$, $\dim H^1(N,V) = \dim H^1(N_1,V)
< \dim V/8$ (this last inequality follows by Theorem \ref{H1alt} for $2^a > 8$).
This gives  $\dim H^2(H,V) \le (17/6) \dim V$.
  
Finally, suppose that $N_1$ is trivial on $V$, but $N_2$ is not.  We
consider the inequality above.  The first  term on the right side of
the inequality is $0$.   If $n_2 > 8$, then the middle term 
is at most $\dim H^2(N_2,V) \le (65/24)\dim V$ and $\dim H^1(N,V)=\dim H^1(N_2,V)
\le (1/13) \dim V$.  Thus $\dim H^2(H,V) \le  ((65/24) + (1/13))(\dim V)< (17/6)\dim V$.
If $n_2 < 8$, it follows by \cite{luxstudent} that
$\dim H^2(N_2,V) + \dim H^1(N_2,V) \le \dim V$ for all nontrivial irreducible
modules.  In particular,  it follows that
$\dim H^2(H,V) \le (17/6)(\dim V)$ for all values of $n_2$.

Arguing as usual, we see that the number of trivial composition factors
of $N_1$ on $M$ is at most $(\dim M)/2$.  Thus, by the previous arguments,
$\dim H^2(H,M) \le (3/2 + 17/12) \dim M = (35/12) \dim M$.
\end{proof}

\section{$\SL$: Low Rank} \label{SLlow}

In this section, we consider the  groups $\SL(d,q), d \le 4$.   We use a gluing
argument and the bounds for $\SL(4,q)$ and $S_d$ to get bounds for
all $\SL(d,q)$ in the next section.

We start with an improvement of the bound given in (1.2) for $H^1$ in the natural 
characteristic.
There are much better bounds for cross characteristic representations
\cite{hoffman}.   We will use Lemma \ref{restriction} without comment below.

\begin{theorem} \label{h1 for sl}  Let $G=\SL(d,q)$ be quasisimple
and $F$ an algebraically closed field of 
characteristic $p$ with $q=p^e$.  Let $M$ be an irreducible $FG$-module.
\begin{enumerate}  
\item If $d=2$, then $\dim H^1(G,M) \le (\dim M)/3$  unless either
$p=2$ and $\dim M = 2$ or $p=3$ and $G=\SL(2,9)$ with $\dim M=4$.
\item $\dim H^1(G,M) \le (\dim M)/d'$,  where $d'$
is the largest prime with $d' \le d$.  
\end{enumerate}
 \end{theorem}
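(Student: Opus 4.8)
The plan is to reduce everything to the two base cases $\SL(2,q)$ and, via restriction to Levi-type subgroups, to products of smaller special linear groups, exploiting Lemma \ref{restriction} (passing to a subgroup containing a Sylow $p$-subgroup) and Lemma \ref{kunneth}. First, for part (1) I would invoke the known determination of $H^1(\SL(2,q),M)$ for irreducible modules in the defining characteristic. Writing an irreducible $FG$-module as a twisted tensor product $M = M_0 \otimes M_1^{(p)} \otimes \cdots$ of Frobenius twists of restricted modules (Steinberg's tensor product theorem), one has by the standard analysis (cf. Cline--Parshall--Scott and the refinements used in \cite{gurhoff}) that $\dim H^1(\SL(2,q),M)$ is controlled by the number of twist-components and the dimension of each restricted factor; a direct bookkeeping shows $\dim H^1 \le (\dim M)/3$ except precisely for the natural $2$-dimensional module when $p=2$ (where $H^1$ is $1$-dimensional, giving ratio $1/2$) and for the $4$-dimensional module of $\SL(2,9)$ when $p=3$ (where the ratio exceeds $1/3$). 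These two genuine exceptions are small and can be confirmed by the tables in \cite{luxstudent} or by hand.

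For part (2), I would induct on $d$. If $d$ is not a prime and $d = a+b$ with $1 < a \le b < d$, then the Levi subgroup $\SL(a,q)\times\SL(b,q)$ sits inside a parabolic whose complement is a $p$-group, so $H=(\SL(a,q)\times\SL(b,q))\cdot U$ (with $U=O_p$) contains a Sylow $p$-subgroup of $G$; by Lemma \ref{restriction} it suffices to bound $H^1(H,M)$. Using Lemma \ref{h1 nonfaithful} to peel off the normal $p$-subgroup $U$ and then Lemma \ref{usual} and Lemma \ref{kunneth} on the product $N=\SL(a,q)\times\SL(b,q)$, a nontrivial irreducible $N$-submodule $W=W_1\otimes W_2$ has $H^1(N,W)$ nonzero only when one factor is trivial, in which case $\dim H^1(N,W)\le (\dim W_j)/d_j'$ for the nontrivial factor, with $d_j'$ the largest prime $\le a$ or $\le b$; since $\max\{d_a',d_b'\}\ge d'$ would fail only when the largest prime $\le d$ strictly exceeds both, one must also treat the case where $d$ itself is prime separately, and when $d$ equals a prime $d'$ one can take a decomposition $d=(d-1)+1$ and use the bound $(\dim M)/(d-1)'$ — but this is weaker, so instead for prime $d$ one uses Lemma \ref{restriction} with the subgroup $\SL(d,q)$ viewed inside $\GL(d,q)$ acting on a larger ambient group, or more directly quotes that the defining-characteristic bound for $\SL(d,q)$ with $d$ prime already satisfies $\dim H^1 \le (\dim M)/d$ by the results of \cite{gurhoff} refined to this case. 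The contribution from the unipotent radical, i.e. $\dim\hom_H(U,M)$, is bounded using that $U/[U,U]$ as an $N$-module is a sum of (tensor products of) natural modules and their duals, so the number of trivial-on-a-Levi-factor composition factors is small; Lemma \ref{trivial cf} caps $\dim M^J$ for a suitable torus-element $J$ of order prime to $p$, keeping this term below the required fraction of $\dim M$.

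The main obstacle I expect is the $U$-contribution and the prime-degree cases: when $d$ is prime there is no proper Levi decomposition into nontrivial factors, so the inductive step breaks and one must fall back on the sharp base estimate for $H^1$ of $\SL(d,q)$ in its own characteristic (this is where \cite{gurhoff} and the twisted tensor product analysis do the real work), and when $d$ is composite one must check that the unipotent-radical term $\dim\hom_H(U,M)$ together with the transitive-permutation averaging in Lemma \ref{usual} does not push the total past $(\dim M)/d'$ — the worst case being $d=4$, $d'=3$, where the margin is tightest and the detailed module structure of $U$ for $\SL(4,q)$ must be used. I would organize the argument so that the small cases ($d\le 4$, small $q$) are cited from \cite{luxstudent} and the general bound is assembled from the $d\le 2$ case plus the inductive gluing, exactly as the section header ("we use ... the bounds for $\SL(4,q)$ and $S_d$") indicates.
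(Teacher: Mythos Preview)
Your proposal has a genuine gap in the case where $d$ is an odd prime. You acknowledge that the Levi decomposition fails here and then hand-wave toward ``the results of \cite{gurhoff} refined to this case,'' but \cite{gurhoff} only supplies the bound $\dim H^1 \le (\dim M)/2$; there is no refinement there giving $(\dim M)/d$. The paper's argument for prime $d$ is entirely different and is the heart of the theorem: pass to $H=\PSL(d,q)$, take a maximal irreducible (Singer) torus $T$ of order $(q^d-1)/((q-1)\gcd(d,q-1))$, and show that $d \nmid |T|$ and that an element $y$ of order $d$ in $N_H(T)$ satisfies $C_T(y)=1$. Hence $y$ permutes the nontrivial characters of $T$ in free orbits of size $d$, so $\dim [T,M]^{\langle y\rangle} = (\dim [T,M])/d$. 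One then checks (via \cite{ber}, \cite{gpps}) that two conjugates of $N_H(T)$ generate $H$, and invokes \cite[Lemma~4]{gurkim} to conclude $\dim H^1(H,M) \le \dim [T,M]^{\langle y\rangle} \le (\dim M)/d$. None of this machinery appears in your sketch.

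For composite $d$ your plan is also more awkward than necessary. Splitting $d=a+b$ with $1<a\le b<d$ forces you to worry that $\max(d_a',d_b')$ may fall short of $d'$ (e.g.\ $d=6$, $a=b=3$), and you never resolve this cleanly. The paper avoids the issue by restricting instead to the maximal parabolic $P=LQ$ stabilizing a $1$-space, with Levi $L\cong\GL(d-1,q)$ and unipotent radical $Q$ the natural $L$-module. Since $d$ is not prime, $d'\le d-1$, so induction on the $\SL(d-1,q)$ factor already gives the right denominator. The unipotent contribution is then a single clean dichotomy: for an irreducible $FP$-module $V$, either $V\not\cong Q$ as $\F_pL$-modules, in which case $\hom_L(Q,V)=0$ and $H^1(P,V)=H^1(L,V)$, or $V\cong Q$, in which case $H^1(L,Q)=0$ by \cite{bell} (barring $(d,q)=(4,2)$), so $\dim H^1(P,V)=1\le (\dim V)/(d-1)$. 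Your approach via $\SL(a,q)\times\SL(b,q)$ and Lemma~\ref{kunneth} could perhaps be made to work, but you would need to choose $b=d-1$ anyway to get the denominator right, at which point it collapses to the paper's argument.
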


\begin{proof}   If $d=2$, all first cohomology groups have
been computed \cite{hjl}, and the result holds by inspection.

If the center of $G$ acts nontrivially on $M$, then $H^1(G,M)=0$
by Lemma \ref{coprime}.    
So we may view $M$ as an $H$-module with $H=\PSL(d,q)$.
Note that by Lemma \ref{usual},   $H^1(G,M) \cong H^1(H,M)$.

We now prove (2) when $d$ is an odd prime.
Let $T$ be a maximal (irreducible) torus
of size $(q^d-1)/((q-1)\gcd(d,q-1))$.  Let $N=N_H(T)$ and note that
$N/T$ is cyclic of order $d$.  We claim that $d$ does not divide
$|T|$.  If $d$ does not divide $q-1$, this follows
from the fact that $q^d \equiv q \ \mathrm{mod} \ d$.  If $d|(q-1)$,
then write $q = 1 + d^js$ where $d$ does not divide $s$.
Then since $d$ is odd, $q^d = 1 + d^{j+1}s'$ where $d$ does not 
divide $s'$ and this proves the claim.

Let $y \in N$ be of order $d$.   We claim  that $C_T(y)=1$. Since
$\gcd (|T|, d)=1$, we can  lift $T$ to an isomorphic subgroup of $\SL(d,q)$ and
it suffices to prove $C_T(y)=1$  in $\SL(d,q)$.  
Then we can identify $T$ with the norm $1$ elements in $\F_{q^d}^*$
of order prime to $d$
and $y$ induces the $q$-Frobenius automorphism of this field, whence
its fixed point set is $\F_q^*$.   This     has  trivial intersection
with $T$, whence the claim follows.
This also implies that 
$y$ permutes all nontrivial characters of $T$ in orbits of size $d$.
Thus $\dim [T,M]^{\langle y \rangle} = (\dim [T,M])/d$.

Note that up to conjugacy, $y$ is a $d$-cycle in $S_d \le \PSL(d,q)$.
So
$y$ is conjugate to $y^{-1}$ in $S_d$  and so also in $H$.
So choose  $z \in H$ that inverts $y$.  Then $z$ does not normalize
$T$ (since $y$ is not conjugate to $y^{-1}$ in $N$).     
It now follows by the main result of \cite{ber} (based on \cite{gpps}) that 
$H= \langle N, zNz^{-1} \rangle$.  Apply \cite[Lemma 4]{gurkim} to conclude
that $\dim H^1(H,M) \le \dim [T,M]^{\langle y \rangle} \le (\dim M)/d$.

We now complete the proof of  (2) by induction.  We have 
proved the result
for $d$ any odd prime.   It is more convenient work with
$\F_qG$-modules.
Suppose $d$ is not prime -- in particular, $d' \le d-1$ in (2).
Let $P$ be a maximal parabolic stabilizing a $1$-space.  So $P=LQ$
where $Q$ is the unipotent radical of $P$, $L$ is a Levi subgroup with
$L \cong \GL({d}{-1},q)$ and $Q$ is the natural module for 
$J=\SL({d}{-1},q) \le L$.
Since $P$ contains a Sylow $p$-subgroup
of $G$, it suffices to prove the bound for $H^1(P,V)$ with $V$
an irreducible $L$-module.   If $V$ is not isomorphic to $Q$
as $\F_pL$-modules (equivalently, if $V$ is not isomorphic to a Galois
twist of $Q$ as $\F_qL$-modules),  
then $H^1(P,V)=H^1(L,V)$
(by Lemma \ref{usual}) and induction gives the result.   If $V \cong Q$,
then $H^1(L,Q)=0$ unless $q=2$ and $d=4$  (cf. \cite{bell}). This implies
that $\dim H^1(P,V) = 1 \le  (\dim V)/(d-1) \le (\dim V)/d'$. 
If $q=2$ and $d=4$, then $G=A_8$,  and the result is in
\cite{luxstudent}.
\end{proof}

\subsection{$\SL(2)$} \hfil\break

 \begin{theorem} \label{h2forsl2odd}  Let $G=\SL(2,p^e)$ with $p$ odd, 
$p^e \ge 5$
 and $F$ an algebraically
closed field of characteristic $r > 0$. Let $M$ be an irreducible
$FG$-module. Set $q=p^e$.
\begin{enumerate}
\item If $r \ne 2$ and $r \ne p$, then $\dim H^2(G,M) \le 2 (\dim M)/(q-1)$.
\item If $p \ne r=2$, then $\dim H^2(G,M)  \le  \dim M$.
\item If $r=p$, then $\dim H^2(G,M) \le (\dim M)/2$.
\end{enumerate}
\end{theorem}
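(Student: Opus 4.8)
The plan is to prove the three parts by treating the defining characteristic and cross characteristics separately, always reducing to a subgroup containing a Sylow $r$-subgroup via Lemma \ref{restriction} and then exploiting the very restricted structure of $\SL(2,q)$.

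\textbf{Case $r \ne p$, $r \ne 2$ (part (1)).} Here a Sylow $r$-subgroup of $G$ is cyclic: for odd $q$ the group $G = \SL(2,q)$ has cyclic Sylow subgroups for every prime not dividing $q$, since such a subgroup lies in a maximal torus, which is cyclic of order $q-1$ or $q+1$, or (for $r \mid q+1$ after passing through the center) close to it. Concretely, one picks the cyclic maximal torus $T$ of order $q\pm1$ whose order is divisible by $r$; a Sylow $r$-subgroup $R$ of $G$ lies inside $T$ and is cyclic. Then Lemma \ref{cyclic} applies: for $M$ irreducible, restricting to $R$ gives $\dim H^2(G,M) \le \dim H^2(R,M)$, and $H^2(R,M)$ is a sum of at most (number of nonprojective indecomposable summands of $M|_R$) one-dimensional pieces; more efficiently one restricts to $N = N_G(T)$ (a dihedral-type group, $N/T$ of order $2$, order prime to $r$ once we take $T$ of the right parity) and uses that $H^2(N,M) \hookrightarrow H^2(T,M)$ with $H^2(T,M)$ computed by Lemma \ref{cyclic}: the number of nontrivial $T$-eigenspaces appearing is at most $\dim M$, but they pair up under the action of $r$-regular... — the cleaner route is: $\dim H^2(T,M) \le \dim[T,M]$ divided by the size of the relevant orbit. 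Since the smallest nontrivial irreducible module has dimension at least $(q-1)/2$ in cross characteristic while $\dim H^2 \le 1$ on each block, and $T$ has index prime to $r$ in $N$ which generates together with a conjugate, one gets the factor $2/(q-1)$: explicitly $\dim H^2(G,M) \le \dim H^2(N,M) \le \dim H^2(T,M)^{N/T}$, and a generator of $N/T$ together with $T$ controls this. The bound $2(\dim M)/(q-1)$ then follows because any nontrivial irreducible has dimension $\ge (q-1)/2$ and any such cohomology is at most $1$-dimensional per relevant piece; the trivial module contributes $\dim H^2(G,\F_r)$ which is $0$ here since the $r$-part of the Schur multiplier of $\PSL(2,q)$ is trivial for $r$ odd, $r\neq p$.

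\textbf{Case $r = 2 \ne p$ (part (2)).} Now a Sylow $2$-subgroup of $\SL(2,q)$ is generalized quaternion (it is the Sylow $2$-subgroup of a cyclic torus extended by an inverting involution). I would pass to $N = N_G(T)$ for the torus $T$ of order $q \pm 1$ with $2$-part matching, so $N \ge$ a Sylow $2$-subgroup and $N/T$ has order $2$. Using Lemma \ref{usual} with the normal cyclic subgroup $T$: $\dim H^2(N,M) \le \dim H^2(N/T, M^T) + \dim H^2(T,M)^N + \dim H^1(N/T, H^1(T,M))$. The first term is controlled by $H^*$ of a group of order $2$ acting on $M^T$ and is bounded by $\dim M^T$; $\dim H^2(T,M)^N \le \dim H^2(T,M)$ is at most the number of nonfree indecomposable summands, $\le \dim M /2$ roughly; and $H^1(T,M)$ with the order-$2$ quotient similarly contributes $\le \dim M$. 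Combining crudely gives $\dim H^2(G,M) \le \dim H^2(N,M) \le \dim M$. One must double-check the trivial module: $\dim H^2(\SL(2,q), \F_2) \le 1$ (the $2$-part of the multiplier of $\PSL(2,q)$ is at most $\Z/2$ for $q$ odd, $q\neq 9$, and one absorbs the exceptional $q=9$ by direct check) which is $\le \dim M = 1$, fine.

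\textbf{Case $r = p$ (part (3)).} This is the hard case and I expect it to be the main obstacle. Here I would use the Borel subgroup $B = UT$ with $U$ the unipotent radical (order $q$, elementary abelian) and $T$ the split torus of order $q-1$; $B$ contains a Sylow $p$-subgroup, so $\dim H^2(G,M) \le \dim H^2(B,M)$. Apply Lemma \ref{usual} with $N = U \triangleleft B$: $\dim H^2(B,M) \le \dim H^2(B/U, M^U) + \dim H^2(U,M)^B + \dim H^1(B/U, H^1(U,M))$. Since $B/U \cong T$ has order prime to $p$, the first term vanishes (Corollary \ref{coprime}, as $M^U$ is a $T$-module in characteristic $p$ with $p \nmid |T|$, so $H^{\ge 1}=0$) and the last term also vanishes for the same reason. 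So $\dim H^2(B,M) \le \dim H^2(U,M)^T$, and by Lemma \ref{H2 for abelian}(2), $\dim H^2(U,M)^T \le \dim(U^* \otimes M)^T + \dim(\wedge^2 U^* \otimes M)^T = \dim \Hom_T(U,M) + \dim \Hom_T(\wedge^2 U, M)$. Now $U \cong \F_q$ as a module for $T$, and the key input is Lemma \ref{multiplicity free} applied with $d = \gcd(2,q-1)$... actually here $d=1$ or $2$; for $G = \SL(2,q)$ the torus has order $q-1$ acting on $U = \F_q$ as the full multiplicative group (or index-$2$ subgroup), so $\wedge^2 U = \wedge^2 \F_q$ decomposes multiplicity-freely and $U$ is not a submodule of it (Lemma \ref{multiplicity free}, whose hypothesis $d < p$ holds since $d \le 2 < p$ for odd $p$, or $d=3, q>4$ is not our case). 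Therefore $\dim\Hom_T(\wedge^2 U, M) \le$ (number of distinct $T$-characters in $\wedge^2 U$ that appear in $M$) — but we need a bound in terms of $\dim M$. The clean bound: $\dim\Hom_T(U,M) + \dim\Hom_T(\wedge^2 U, M) \le \dim M^{U'}$-type estimate; more precisely, since $U$ and $\wedge^2 U$ together, as $T$-modules, have all composition factors distinct (multiplicity-free, by Lemma \ref{multiplicity free}) and none repeated between them (the second part of that lemma), $\Hom_T(U \oplus \wedge^2 U, M)$ has dimension at most the multiplicity count which is $\le \dim M / 2$... the precise accounting giving exactly $1/2$ is where I would need to be careful — one compares $\dim U + \dim \wedge^2 U = e + \binom{e}{2}$ against the dimension of a nontrivial irreducible and uses that distinct $T$-eigenvalues force the Hom-space dimension to be at most $\lfloor \dim M/2 \rfloor$ when $M$ restricted to $T$ has its eigenvalues in $r$-regular orbits of size... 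Honestly, the cleanest phrasing: $\dim H^2(U,M)^T$ counts pairs, and since any nontrivial irreducible $FG$-module has a $U$-fixed space of dimension exactly the number of restricted composition factors, a Weyl-module / Steinberg-tensor-product argument shows this is at most $(\dim M)/2$. The trivial-module subcase: $\dim H^2(\SL(2,q),\F_p)$ — the $p$-part of the Schur multiplier of $\PSL(2,q)$ is trivial for $q \ge 5$ odd, so this is $0 \le 1/2$, and for the quasisimple cover one invokes Corollary \ref{covering relations}/Lemma \ref{covering lemma} but the statement already restricts $M$ irreducible for $G=\SL(2,q)$. The main obstacle is pinning down the constant $1/2$ in the $r=p$ case from the multiplicity-free structure of $U \oplus \wedge^2 U$ together with control of how $M|_T$ can overlap it; everything else is structural bookkeeping with Sylow-containing subgroups and Lemma \ref{usual}.
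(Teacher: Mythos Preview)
Your outline for part (1) is essentially the paper's argument, though you obscure it: the paper simply says that a Sylow $r$-subgroup is cyclic, so Lemma~\ref{cyclic} gives $\dim H^2(G,M)\le 1$, and every nontrivial irreducible in cross characteristic has dimension at least $(q-1)/2$. Your detour through $N_G(T)$ and eigenspace counting is unnecessary.

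Part (2) has a genuine gap. Your three estimates from Lemma~\ref{usual} are $\dim M^T$, ``$\dim M/2$ roughly'', and $\dim M$, which sum to far more than $\dim M$; and the middle one is unjustified, since the number of non-free indecomposable summands of $M|_{T_2}$ can be as large as $\dim M$. The approach via the torus normalizer is too coarse here. The paper's argument is completely different: after disposing of $q\le 11$ by direct computation, it first bounds $\dim H^2(\PSL(2,q),M)$ using the explicit classification of $2$-modular irreducibles from \cite{burk} and the character tables in \cite{dornhoff}. Modules of dimension $q+1$ are induced from a nontrivial character of a Borel and have vanishing $H^2$ by Shapiro; for modules of dimension $q-1$ the paper computes the Jordan block structure of an involution (via the eigenspace decomposition under the split torus and the inverting element) to bound cohomology over a cyclic $2$-subgroup, then assembles these in the normalizer of a nonsplit torus; the two Weil modules of dimension $(q-1)/2$ are handled similarly. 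Only after obtaining $\dim H^2(\PSL(2,q),M) \le (\dim M)/2$ for $M$ nontrivial does the paper lift to $\SL(2,q)$ via Lemma~\ref{usual} with $N=Z(G)$, picking up at most an extra $(\dim M)/2$ from the $H^1$ term.

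Part (3) has the right skeleton but you miss the key step for the constant $1/2$. You correctly reduce to $\dim H^2(U,M)^T$ and observe via Lemmas~\ref{H2 for abelian} and~\ref{multiplicity free} that this is $\le \dim W$ for each irreducible $B$-constituent $W$ of $M$, and is nonzero only when $W$ is (the Galois orbit of) $U$ or a constituent of $\wedge^2 U$; this already gives $\le \dim M$. (A small correction: $T$ acts on $U$ via squaring, so through a quotient of order $(q-1)/2$, not the full $\F_q^\times$; this is why the paper invokes Lemma~\ref{multiplicity free} with $d=2<p$.) The missing ingredient for the factor $1/2$ is self-duality: the Weyl group element of $\SL(2,q)$ inverts $T$, so in any $G$-module the $T$-constituents $W$ and $W^*$ occur with equal multiplicity. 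Since the ``bad'' set of $T$-characters (those appearing in $U\oplus\wedge^2 U$) is disjoint from its set of inverses, at most half of the $T$-composition factors of $M$ can contribute, yielding $\dim H^2(B,M)\le(\dim M)/2$. Your Weyl-module/Steinberg-tensor remark does not capture this.
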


\begin{proof}  If $q \le 11$, these results all follow
by direct computation \cite{luxstudent}.   So assume that $q > 11$.

In (1), a Sylow $r$-subgroup of $G$ is cyclic,
whence Lemma \ref{cyclic} implies that $\dim H^2(G,M) \le 1$.  
   Since
the smallest nontrivial representation of $\SL(2,q)$ in any
characteristic other than $p$
has dimension $(q-1)/2$, (1) holds.

Now consider (2).  We first bound $\dim H^2(\PSL(2,q), M)$.
Set $H=\PSL(2,q)$.   We refer the reader to \cite{burk} for basic facts about
the $2$-modular representations of $H$.  In particular,  
 every irreducible representation is the reduction of
an irreducible representation in characteristic zero.  The
characters of these representations are described in \cite[Theorem 38.1]{dornhoff}.
 Let $B=TU$ be a Borel subgroup
of $H$ (of order $q(q-1)/2$) with $T$ a torus of order $(q-1)/2$.   Note
that $B$ has a normal subgroup $UT_0$ of index a power of $r$.

It follows from \cite{burk} and \cite[38.1]{dornhoff} that
either $M$ is one of two  Weil modules of dimension $(q-1)/2$  or  has dimension
$q \pm 1$.  Moreover, it follows that the modules of dimension  $q +1$
are all of the form $\lambda_B^H$ with $\lambda$ a nontrivial
$1$-dimensional character of $B$,  and so by Shapiro's Lemma,
$H^2(H,M) \cong H^2(B,\lambda)$.   Since $\lambda$ is nontrivial,
it is nontrivial on $UT_0$ and so by Lemma \ref{coprime},
$H^2(B,\lambda)=0$.     

Suppose that $\dim M=q-1$. If $4|(q-1)$, then $M$ is projective
(cf. \cite[62.3, 62.5]{dornhoff})
and so $H^2(H,M)=0$.   
So suppose that $4|(q+1)$.   By inspection of the character tables
in \cite[38.1]{dornhoff},  $M$ is multiplicity free as a $U$-module and so 
every nontrivial character of $U$ occurs precisely once in $M$.
 Since $T$ acts semiregularly on the nontrivial characters of $U$
(and so all the  $U$-eigenspaces as well),  $M$ is a free rank $2$ module
for the split torus $T$.   Let $x$ be an involution inverting $T$.   Note that
$|T|=(q-1)/2$ is odd.   Thus, the only $T$-eigenspace that
is $x$-invariant is the trivial eigenspace. Write $M=[T,M] \oplus M^T$.  
So all Jordan blocks of $x$ on $[T,M]$ are of size $2$. Since
$\dim M^T = 2$, 
$x$ has either $0$ or $2$ Jordan blocks of size $1$.  Since there
is a unique class of involutions in $H$, this implies that if
$Y$ is any cyclic $2$-subgroup of $H$, then$Y$ has at most
$2$ Jordan blocks of less than maximal size on $M$.  Since Jordan blocks
of maximal size correspond to projective modules, this implies that
$\dim H^k(Y,M) \le 2$ for $k > 0$. 

Let $J$ be a nonsplit torus of order $(q+1)/2$. There is a unique
class of involutions in $H$ and so by conjugating we may
assume that  $x$ is the unique involution in $J$. Write
$J=J_1 \times J_2$ where $J_1$ has odd order and $J_2$
is the Sylow $2$-subgroup of $J$.  Set $s=|J_2|$.
We see from \cite{burk} and \cite{dornhoff} that $\dim M^{J_1} =2s$ or $2s-2$
and so from our observations about the Jordan structure of the involution
in $J$,   $J$ has at most $3$ Jordan blocks with trivial character 
 and at most $2$ of those have size less than $s$.  
By Lemma \ref{coprime}, $H^j(J, [J_1,M])=0$.  So $H^j(J,M)=H^j(J_2,M^{J_1})$.
Thus,  $\dim H^j(J,M) \le 2$ for $j > 0$ and $\dim M^J \le  3$. 
Let $L=N_H(J)$ and note that
$[H:L]$ is odd.   By Lemma \ref{usual}, 
$$
\dim H^2(L,M) \le \dim H^2(L/J,M^J) + \dim H^2(J,M) + \dim H^1(L/J,H^1(J,M)).
$$
The first term on the right is at most $3$, the middle term at most $2$
and the last term at most $2$.  Thus,  $\dim H^2(G,M) \le 7$.  Since $q > 11$, 
this implies that $\dim H^2(G,M) < (7/18) \dim M < (\dim M)/2$.   

Finally, consider the case that $M$ is a Weil module.  In this case
(by \cite[Theorem 38.1]{dornhoff}), $Q$ has $(q-1)/2$ distinct characters
on $M$ that are freely permuted by $T$, whence 
$M \cong FT$ as an $FT$-module.
   If $4|(q-1)$, then the normalizer of $T$
has odd index, and arguing as above,  we see that $\dim H^2(H,M) \le 1$.
If $4$ does not divide $q-1$, then as above, we see that an involution has
precisely one trivial Jordan block, and so if $i > 0$, $\dim H^i(J,M) \le 1$.   In all cases
 $\dim H^2(H,M) \le 3$.   Thus, $\dim H^2(H,M) < (\dim M)/2$ (since $q \ge 11$).

Let $Z=Z(G)$ and note that $|Z|=2$. 
In all cases,
$$
\dim H^2(G,M) \le \dim H^2(H,M) + \dim H^2(Z,M)^G + \dim H^1(H,H^1(Z,M)).
$$
If $M$ is trivial, then $\dim H^2(G,M)=1$.   Otherwise, the middle term on the right is
$0$.   The first term on the right is at most $\dim M / 2$.  The 
last term is at most $(\dim M)/2$.   Thus, $\dim H^2(G,M) \le \dim M$.

Finally, consider (3).  It is more convenient to work over
$F=\F_p$ in this case. Let $B=TU$ be a Borel subgroup
with $|U|=q$.  

Let $W$ be an irreducible $FB$-module.   Then by Lemma \ref{usual} and
the fact that $T$ has order coprime to $p$,  
$$
\dim H^2(B,W) \le \dim H^2(U,W)^T.
$$

Note that $U$ is a $T$-module by conjugation.  
As we have seen (Lemma \ref{H2 for abelian}), 
$H^2(U,W)= \wedge^2(U^*) \otimes W \oplus U^* \otimes W$.
So $T$ has fixed points if and only if either $U \cong W$  or $W$ is a homomorphic
image of  $\wedge^2(U)$.  Note we are taking exterior powers over $F$ and so
$\wedge^2(U)$ has dimension $e(e-1)/2$.  Suppose that $\alpha$ of order
$(q-1)/2$ is an eigenvalue on $U$ for a generator $t$ of $T$.
Then the eigenvalues of $t$ on $U$ (over the algebraic closure) are just the 
$e$ Galois conjugates of $\alpha$. 
So the eigenvalues of $t$ on $\wedge^2(U)$ are all Galois conjugates of
$\alpha^{1 + p^j}$ for some $0< j < e$.  Note that this is never a Galois conjugate
of $\alpha$ and also $T$ has no multiple eigenvalues on $\wedge^2(U)$
(by Lemma \ref{multiplicity free}).

So we have seen that $H^2(B,W)=0$ unless 
$W \cong U$ 
or $W \cong U \otimes_{\F_q} U^{p^j}$ for some $j$ with $1 < j < e$
as $\F_pT$-modules. 
Moreover, as noted above,  $\wedge^2(U)$ is multiplicity free, and so,
using Lemma \ref{H2 for abelian}, $\dim H^2(B,W)^T \le \dim W$.

This already gives the inequality  $\dim H^2(G,M) \le \dim H^2(B,M) \le \dim M$.
If $M$ is trivial, then $H^2(G,M)=0$.  If $M$ is irreducible and nontrivial, then
if $W$ occurs as a $T$-submodule, so does 
$W^*$. Thus, in fact, $\dim H^2(B,M) \le (\dim M)/2$.~\end{proof}

\begin{theorem} \label{h2forsl2eve}  Let $G=\SL(2,q)$ with $q=2^e \ge 4$, 
 and let $F$ be an algebraically
closed field of characteristic $r > 0$. Let $M$ be an irreducible
$FG$-module.
\begin{enumerate}
\item If $r \ne 2$, then $\dim H^2(G,M) \le ( \dim M)/(q-1)$.
\item If $r=2$, then $\dim H^2(G,M) \le (\dim M)/2$ unless $2^e=4$
and $M$ is the trivial module.  
\end{enumerate}
\end{theorem}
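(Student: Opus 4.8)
The approach is to follow the proof of Theorem~\ref{h2forsl2odd}, which is in fact easier here since $\SL(2,2^e)=\PSL(2,2^e)$ is centreless, so no passage to a covering group occurs. Part~(1) is immediate: for $r$ odd a Sylow $r$-subgroup of $G$ lies in a cyclic torus of order $q-1$ or $q+1$, hence is cyclic, so Lemma~\ref{cyclic} gives $\dim H^2(G,M)\le 1$ for every indecomposable $M$, in particular every irreducible one. Since a nontrivial irreducible $FG$-module in characteristic $\ne p$ has dimension at least $q-1$, and $H^2(G,F)=0$ for the trivial module (the Schur multiplier of $\PSL(2,2^e)$ is a $2$-group, indeed trivial unless $q=4$), part~(1) follows. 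So assume from now on that $r=p=2$; by Lemma~\ref{cohomology sequence} we may take $M$ irreducible, by Lemma~\ref{extension of scalars} we may take $F$ algebraically closed, and for $q\le 16$ the bound is confirmed by the computations of \cite{luxstudent}. Thus assume $q\ge 32$.

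Let $B=TU$ be a Borel subgroup of $G$, where $U$ is a Sylow $2$-subgroup --- elementary abelian of order $q$ --- and $T$ is cyclic of the odd order $q-1$. By Lemma~\ref{restriction}, $H^2(G,M)$ embeds in $H^2(B,M)$; since $B/U=T$ has order prime to $2$, Lemma~\ref{usual}(2) gives $\dim H^2(G,M)\le\dim H^2(U,M)^T$. An irreducible $FB$-module $W$ is one-dimensional (as $T$ is abelian of order prime to $2$) with $U=O_2(B)$ acting trivially, so Lemma~\ref{H2 for abelian} yields $\dim H^2(U,W)^T\le\dim(U^{*}\otimes W)^T+\dim\bigl(\wedge^{2}(U^{*})\otimes W\bigr)^T$. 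By Lemma~\ref{multiplicity free}, applicable because $d=\gcd(2,q-1)=1<p$, the $T$-modules $U$ and $\wedge^{2}(U)$ are multiplicity free and $U$ is not a constituent of $\wedge^{2}(U)$; hence $\dim H^2(B,W)\le 1$, with equality only when the character $W$ of $T$ lies in the set $\mathcal B$ of $T$-weights of $U$ together with those of $\wedge^{2}(U)$. Running Lemma~\ref{cohomology sequence} along a composition series of $M|_B$ --- whose factors are one-dimensional, with $U$ trivial and $T$ acting through the weights of $M$ --- now gives $\dim H^2(G,M)\le\dim H^2(B,M)\le\sum_{\chi\in\mathcal B}m_{\chi}(M)$, where $m_{\chi}(M)$ is the multiplicity of $\chi$ as a $T$-weight of $M$.

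It remains to upgrade this to $(\dim M)/2$. Every $\SL(2)$-module is self-dual, so the multiset of $T$-weights of $M$ is stable under $\chi\mapsto\chi^{-1}$, with $m_{\chi}(M)=m_{\chi^{-1}}(M)$. Identifying characters of $T$ with residues modulo $q-1=2^e-1$, the set $\mathcal B$ consists of the residues whose binary expansion has weight $1$ or $2$, while $\mathcal B^{-1}$ consists of those of weight $e-1$ or $e-2$; for $q\ge 32$, i.e.\ $e\ge 5$, these two sets are disjoint and $\mathcal B$ contains no self-dual (that is, trivial) character. Therefore, pairing the nontrivial $T$-weights of $M$ as $\{\chi,\chi^{-1}\}$, at most one member of each pair lies in $\mathcal B$, and the trivial weight contributes nothing, so $\sum_{\chi\in\mathcal B}m_{\chi}(M)\le\tfrac12\bigl(\dim M-m_{\mathbf 1}(M)\bigr)\le\tfrac12\dim M$. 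If $M$ is the trivial module then $\dim H^2(G,F)$ equals the $2$-rank of the Schur multiplier of $\PSL(2,2^e)$, which is $0$ for $q>4$ and $1$ for $q=4$ --- exactly the exception in the statement.

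The step I expect to be the real obstacle is the small-$q$ range $q\in\{4,8,16\}$, where the Borel restriction is genuinely too weak: for $q=8$ one has $\mathcal B=\mathcal B^{-1}$ equal to the whole set of nontrivial characters, for $q=16$ the intersection $\mathcal B\cap\mathcal B^{-1}$ is the nonempty set of residues of binary weight~$2$, and, e.g., for the two-dimensional natural module $V_1$ of $\SL(2,8)$ restriction to $B$ only yields $\dim H^2(G,V_1)\le 2$ whereas the theorem asserts $\dim H^2(G,V_1)\le 1$. I would settle these by the explicit cohomology computations of \cite{luxstudent}, after checking that their range does cover $q\le 16$; the remaining point, that $\mathcal B\cap\mathcal B^{-1}=\emptyset$ and $\mathbf 1\notin\mathcal B$ precisely for $q\ge 32$, is a routine binary-arithmetic verification.
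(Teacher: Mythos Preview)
Your proof is correct and follows essentially the same route as the paper: cyclic Sylow subgroups for odd $r$, and for $r=2$ the restriction to a Borel subgroup together with Lemma~\ref{multiplicity free} and the self-duality of irreducible $\SL(2)$-modules. You are in fact more careful than the paper about the small-$q$ range: the paper simply asserts that the $r=2$ argument is identical to the odd-characteristic case for all $q>4$, whereas you correctly note that the disjointness $\mathcal{B}\cap\mathcal{B}^{-1}=\emptyset$ genuinely requires $e\ge 5$ and appeal to \cite{luxstudent} for $q\le 16$.
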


\begin{proof}  If $r$ is odd, then a Sylow $r$-subgroup is cyclic, whence
$\dim H^2(G,M) \le 1$ by Lemma \ref{cyclic}.  If $M$ is trivial, $H^2(G,M)=0$. 
  It is obvious that the smallest faithful representation
for a Borel subgroup has dimension $q-1$, whence also for $G$. 

Let $r=2$.  If $q > 4$, the proof is identical to the proof in
the previous lemma  when $p=r$.  
 If $q=4$, then $\wedge^2(U)$
is the trivial module, which explains why $H^2(\SL(2,4),F) \ne 0$.
For $q=4$, the result follows by \cite{luxstudent}.   
\end{proof}

\subsection{ $\SL(3)$} \hfil\break

\begin{theorem} \label{sl3 lemma}  Let $p$ and $r$ be primes.  Let $G=\SL(3,q),q=p^e$, $F=\F_r$
and $M$ an irreducible $FG$-module.  Then either $\dim H^2(G,M) \le  \dim M$
or $3=r \ne p$ and   $ \dim H^2(G,M) \le (3/2)\dim M$.
\end{theorem}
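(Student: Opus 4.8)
The plan is to follow the scheme already used for $\SL(2,q)$: replace $G$ by a subgroup containing a Sylow $r$-subgroup (using Lemma~\ref{restriction} without further comment), and organise the argument by whether $r=p$ and by whether a Sylow $r$-subgroup is cyclic. By Lemma~\ref{extension of scalars} I may assume $F$ is algebraically closed and $M$ absolutely irreducible, and I treat $\SL(3,q)$ with $q$ small by the computations of~\cite{luxstudent}, so that henceforth $q$ is large. The first step is to pass from $G=\SL(3,q)$ to $H=\PSL(3,q)$. Let $Z=Z(G)$, which is cyclic of order $\gcd(3,q-1)$, hence a central $3$-group, and for $q$ large the Schur multiplier of $G$ is trivial, so the trivial module contributes $H^2(G,\F_r)=0$. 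If $r\neq 3$, Lemma~\ref{covering lemma}(2) gives that either $Z$ acts nontrivially on $M$ and $H^2(G,M)=0$, or $Z$ acts trivially and $\dim H^2(G,M)=\dim H^2(H,M)$; and if $r=3$ with $q\not\equiv 1\pmod 3$ then $G=H$. In the one remaining case, $r=3$ and $q\equiv 1\pmod 3$ (so $p\neq 3$), Lemma~\ref{covering lemma}(3) together with~(1.2) gives
$$
\dim H^2(G,M)\ \le\ \dim H^2(H,M)+\dim H^1(H,M)\ \le\ \dim H^2(H,M)+\tfrac12\dim M ,
$$
and this is exactly where the extra $\tfrac12\dim M$, hence the exceptional bound $\tfrac32\dim M$ when $3=r\neq p$, is produced. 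So it suffices to prove $\dim H^2(H,M)\le\dim M$ for every nontrivial irreducible $FH$-module $M$, with $H=\PSL(3,q)$.

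I would bound $\dim H^2(H,M)$ by a three-way case split. \emph{(i) Cyclic Sylow.} If a Sylow $r$-subgroup of $H$ is cyclic --- which for $r\neq p$ fails only when $r\mid q^2-1$ --- then Lemma~\ref{cyclic} gives $\dim H^2(H,M)\le 1<\dim M$, since $H$ is perfect and so has no nontrivial $1$-dimensional module. \emph{(ii) Cross characteristic, $r\mid q^2-1$.} Here I restrict $M$ to a subgroup of $H$ that contains a Sylow $r$-subgroup and is built from $\GL(2,q)$: the stabilizer of a decomposition $\F_q^3=\F_q^2\oplus\F_q$ (isomorphic to $\GL(2,q)$) when $r\neq 3$, and the normalizer $N$ of a split maximal torus $T\cong C_{q-1}\times C_{q-1}$, with $N/T\cong S_3$, when $r=3\mid q-1$ (a comparison of $r$-parts shows these choices work). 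Decomposing the restriction into composition factors by Lemma~\ref{cohomology sequence} and peeling off $T$ (or $\SL(2,q)$) and the remaining quotient by Lemma~\ref{usual}, one reduces to the $\SL(2,q)$-bounds of Theorems~\ref{h2forsl2odd} and~\ref{h2forsl2eve} and to the sharp cross-characteristic estimates for $H^1$ of~\cite{hoffman}; Lemma~\ref{trivial cf} controls the number of trivial composition factors that occur, which is what makes the resulting weighted sum come out at most $\dim M$. \emph{(iii) Defining characteristic, $r=p$.} I restrict $M$ to a maximal parabolic $P=Q\rtimes L$ with $L\cong\GL(2,q)$ and $Q$ the abelian unipotent radical; $P$ contains a Sylow $p$-subgroup and $Q=O_p(P)$ acts trivially on every irreducible $P$-constituent $W$, so Lemma~\ref{usual} bounds $\dim H^2(P,W)$ by $\dim H^2(L,W)$ --- which is $\le\tfrac12\dim W$ by the $\SL(2,q)$ results, since $[L:\SL(2,q)]$ is prime to $p$ --- plus $\dim H^2(Q,W)^{P}$, bounded by $\dim W$ using Lemma~\ref{H2 for abelian}, Lemma~\ref{multiplicity free} and the fact that $Q$ is a sum of Frobenius twists of the natural $\GL(2,q)$-module (so each relevant $\Hom$-space has dimension $\le 1$), plus $\dim H^1\!\big(L,H^1(Q,W)\big)$, which I would control via Theorem~\ref{h1 for sl} and~\cite{hoffman}; again Lemma~\ref{trivial cf} is used to average over the constituents of $M|_P$.

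The step I expect to be the main obstacle is the last one of each of cases~(ii) and~(iii). In case~(iii) the delicate term is $\dim H^1(L,H^1(Q,W))$ for the small-dimensional $W$, because the $\tfrac12\dim W$ from $H^2(\SL(2,q),W)$ together with the $H^2(Q,W)^P$ contribution can by itself already reach $\dim W$; pushing the total below $\dim M$ appears to require combining the averaging of Lemma~\ref{trivial cf} (as in the alternating-group arguments) with precise information on $\mathrm{Ext}^1$ between small $\SL(2,q)$-modules. In case~(ii) the awkward prime is $r=2$ with $q$ odd, where the Sylow $2$-subgroup of $\PSL(3,q)$ is of wreathed type and $\SL(2,q)$ has generalized quaternion Sylow $2$-subgroups, so the crude triangle inequalities lose too much; here one must exploit that $\dim H^2(\PSL(2,q),-)$ is in fact bounded by a small constant (as in the proof of Theorem~\ref{h2forsl2odd}) rather than merely by $\tfrac12\dim M$, together with the cross-characteristic $H^1$-estimates of~\cite{hoffman}.
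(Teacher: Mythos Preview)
Your overall case split (cyclic Sylow / cross characteristic / defining characteristic) matches the paper's, but two of your key reductions are not the ones the paper uses, and each loses exactly the margin you need.

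First, the detour through $\PSL(3,q)$ does not buy what you claim. You assert that the extra $\tfrac12\dim M$ in the $3=r\neq p$ case ``is produced'' by the covering map and that it therefore suffices to prove $\dim H^2(\PSL(3,q),M)\le\dim M$. But the paper works directly with $\SL(3,q)$, and the $(3/2)\dim M$ bound in the case $3\mid q-1$ comes from the torus--normalizer calculation itself: for each trivial $T$-composition factor $V$ one gets $\dim H^2(H,V)\le 3$, and Lemma~\ref{trivial cf} shows at most $(\dim M)/2$ such factors, giving $3\cdot\tfrac12\dim M$. That computation is identical in $\PSL(3,q)$ (the quotient $N/T$ is still $S_3$, the rank of $T$ is still $2$, and the three contributions from Lemma~\ref{usual} are still each at most $1$), so your sketched argument for $\PSL$ yields $(3/2)\dim M$, not $\dim M$. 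The covering reduction then gives nothing.

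Second, in the non-cyclic cross-characteristic case $3\neq r\mid q-1$ you restrict to the Levi $\GL(2,q)$ (the stabilizer of a \emph{decomposition}), whereas the paper restricts to a maximal parabolic $P=LQ$ and uses the freedom to choose between $P$ and its opposite so that $\dim M^Q\le(\dim M)/2$. This halving is essential: the paper's estimate on the Levi is $\dim H^2(L,V)\le\dim H^2(\SL(2,q),V)+\dim H^1(L/J,H^1(J,V))\le(3/2)\dim V$, and only after multiplying by $\dim M^Q/\dim M\le 1/2$ does one land at $(3/4)\dim M<\dim M$. Restricting straight to $\GL(2,q)$ as you propose, you are stuck at roughly $(3/2)\dim M$, and appeals to \cite{hoffman} or averaging via Lemma~\ref{trivial cf} will not recover the missing factor uniformly (in particular for $r=2$ with $q$ odd). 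In the defining-characteristic case your outline is closer to the paper's, but note that the paper does \emph{not} average over $P$-constituents; it shows $\dim H^2(P,V)\le\dim V$ for every irreducible $P$-module $V$ by a careful case split on whether $Z(L)$ acts trivially on $V$, which is what makes the three terms from Lemma~\ref{usual} never simultaneously large.
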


\begin{proof}  If $q \le 4$, the result follows by a direct computation
\cite{luxstudent}. So assume that $q > 4$.
  
First consider the case that $r \ne p$.

If $r$ does not divide $q-1$, then 
a Sylow $r$-subgroup of $G$ is cyclic, whence the result holds
by Lemma \ref{cyclic}.

Next suppose that $3 \ne r|(q-1)$.
Then  a Sylow $r$-subgroup fixes a $1$-space and a complementary
$2$-space in the natural representation.
   Let $P$ be the full
stabilizer of one of these subspaces with unipotent radical $Q$.  
Then for one of the choices for $P$, 
$\dim M^Q \le (\dim M)/2$.   Note that $P=LQ$ with $L=\GL(2,q)$.
Let $J=\SL(2,q) < L$. 
So $\dim H^2(G,M) \le \dim H^2(P,M) \le \dim H^2(L,M^Q)$
(here we are using Lemma \ref{usual} and the fact that $r$ 
does not divide $|Q|$).
Let $V$ be an irreducible $L$-module. 
If $V$ is the trivial module, then $\dim H^2(L,V)=1$
(since $J$ has trivial Schur multiplier and $L/J$ is cyclic of
order a multiple of $r$).
Otherwise, by Lemma \ref{usual}, 
$$
\dim H^2(L,V) \le \dim H^2(J,V) + 
\dim H^1(L/J, H^1(J,V)).
$$
By the results of the previous subsection,
$\dim H^2(J,V) \le \dim V$, and by (1.2)
$\dim H^1(J,V) \le (\dim V)/2$. 
   Thus,
$\dim H^2(G,M) \le  (1.5)\dim M^Q \le (3/4) \dim M$.

Suppose that $r=3$ does divide
$q-1$.   
Then a Sylow $3$-subgroup is contained in the normalizer $H$ of a
split torus $T$ and $H/T \cong S_3$.  

Let $V$ be an irreducible $FH$-module.  If $T$ is nontrivial on $V$,
then by Corollary \ref{coprime},  $H^2(H,V)=0$.  So we may assume
that $\dim V=1$ and $H$ either acts trivially on $V$
 or via the sign representation
for $S_3$.  
Now we use Lemma \ref{usual} to
see that 
$$
\dim H^2(H,V) \le \dim H^2(S_3,V) + \dim H^2(T,V)^H + \dim H^1(S_3,H^1(T,V)).
$$
Note that $H^1(T,V) \cong \hom(T,V)$ is an indecomposable $2$-dimensional $S_3$ module.
Thus, by Lemma \ref{cyclic},  the left and right hand terms of the right side
of the above inequality are each at most $1$.      
Finally, by Lemma \ref{H2 for abelian}, 
there is an exact sequence
$$
0 \rightarrow \Hom_H(T,V) \rightarrow H^2(T,V)^H \rightarrow \Hom_H(\wedge^2(T),V).
$$
Note that the only $H$-simple homomorphic image of $T$ is the sign representation
for $S_3$ while $\wedge^2(T)$ only surjects onto the trivial module.
Thus, $\dim H^2(T,V)^H \le 1$ and so $\dim H^2(H,V) \le 3 \dim V$. 
  Let $T_0$ denote the Hall $3'$-subgroup
of $T$.   Then $M=[T_0,M] \oplus M^{T_0}$.   Since $q > 4$,  $T_0$
is nontrivial (indeed it is either a Klein group of order $4$ or contains
a regular semisimple element).  Considering the maximal subgroups of 
$\SL(3,q)$ \cite{mitchell} and \cite{hartley},  
there are two conjugates of $T_0$ which generate $G$, whence
$\dim M^{T_0} \le (\dim M)/2$ by Lemma \ref{trivial cf}.  Since $H^2(H,[T_0,M])=0$ by Lemma 
\ref{coprime}, the computation above shows that
$\dim H^2(H,M) \le 3 \dim M^{T_0} \le (3/2) (\dim M)$.
 
Now consider the case $p=r$.   
Let $P=LQ$ be the stabilizer
of a $1$-space or a hyperplane where $Q$ is the unipotent radical of
$P$ and $L \cong \GL(2,q)$ is a Levi complement.
Let $Z=Z(L)$ and note that $Z$ is cyclic of order $q-1$. 
Let $T$ be a split torus containing $Z$ (of order $(q-1)^2$).
 
Let $V$ be an irreducible $FP$-module.
It suffices to prove that $\dim H^2(P,V) \le \dim V$.  
By Lemma \ref{usual},  
$$
\dim H^2(P,V) \le \dim H^2(L,V) + \dim H^2(Q,V)^L + \dim H^1(L,H^1(Q,V)).
$$
Consider the middle term on the right.  Using Lemma \ref{H2 for abelian}
and arguing as in the proof of 
Lemma \ref{multiplicity free},
$\wedge^2 Q$ has distinct composition factors
as an $L$-module (and no composition factor is  isomorphic to $Q$ as an $L$-module), 
whence the second term has dimension at most
$(\dim V)/2$ (since the dimension of $V$ is at least $2$ over $\mathrm{End}_G(V)$
unless $[L,L]$ acts trivially on $V$ but then $V$ is not a homomorphic image
of $Q$ or $\wedge^2(Q)$).

Suppose that $Z$ is trivial on $V$.  Since  $q > 4$, $Z$ is nontrivial
on $Q$ and every composition factor of $\wedge^2 Q$, whence the middle
term on the right is $0$.  Similarly, $Z$ acts without fixed points on $H^1(Q,V)$
and so by Corollary \ref{coprime},
$H^1(L,H^1(Q,V))=0$ as well.   Thus,  $\dim H^2(P,V) \le \dim H^2(L,V)$
and this is at most $(\dim V)/2$ by the result for $\SL(2)$.

Now suppose that $Z$ is nontrivial on $V$.  By Lemma \ref{coprime}, $H^2(L,V)=0$.  
Note that $W:=H^1(Q,V) \cong Q^* \otimes V$ as an $L$-module (however, the tensor
product is over $\F_p$).    Since $V$ is $L$-irreducible, it must
be $Z$-homogeneous.  
By Lemma \ref{coprime}, $H^1(L,W)=H^1(L,W^Z)$ and this is either $0$
unless $V$ and $Q$ involve the same $\F_pZ$-irreducible module.
If that is the case,  then $\dim W^Z = 2 \dim V$.   Thus
by (1.2),  $\dim H^1(L,W) \le \dim V$.   In this case,  $\mathrm{Hom}_Z(\wedge^2 Q,V)=0$
and so $\dim H^2(P,V) \le \dim V$ unless perhaps $Q \cong V$.  
We still obtain this inequality since we compute in this case
that $\dim H^1(L,W) < \dim V$.

If $Z$ is nontrivial on $V$, but $V$ and $Q$ do not involve the
same irreducible $\F_pZ$-module, then $H^1(L,W)=0$ and so in
this case $\dim H^2(P,V) \le (\dim V)/2$.    
 \end{proof}

\subsection{ $\SL(4)$} \hfil\break

\begin{theorem} \label{sl4 natural}
Let $G=\SL(4,q)$, $q = p^e$.  Let $F=\F_p$.  If $M$ is an irreducible
$\F_pG$-module, then $\dim H^2(G,M) < 2 \dim M$.
\end{theorem}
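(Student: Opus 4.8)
The plan is to follow exactly the pattern used for $\SL(3,q)$ in Theorem~\ref{sl3 lemma}: pass to a subgroup containing a Sylow $p$-subgroup that has a Levi-type factor isomorphic (essentially) to $\GL(3,q)$, so that Theorem~\ref{sl3 lemma} and Theorem~\ref{h1 for sl} feed the induction. As usual by Lemmas~\ref{extension of scalars} and \ref{cohomology sequence} we may assume $F=\F_p$ and $M$ irreducible; the case $q\le 4$ is handled by the computations in \cite{luxstudent}, so assume $q>4$. Since $Z=Z(G)$ acts trivially or semisimply, and $H^2$ vanishes when a coprime central subgroup acts nontrivially (Corollary~\ref{coprime}), we may as well work inside $\PSL(4,q)$, or keep $Z$ and absorb a $\dim M$ term as in Lemma~\ref{covering lemma}(3) at the end.

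First I would take $P=LQ$, the stabilizer of a $1$-space, with $Q$ the unipotent radical (the natural module for $J=\SL(3,q)\le L$) and $L\cong\GL(3,q)$; $P$ contains a Sylow $p$-subgroup, so $\dim H^2(G,M)\le\dim H^2(P,M)$ by Lemma~\ref{restriction}. Reduce to an irreducible $FP$-module $V$ and apply Lemma~\ref{usual}(2):
$$
\dim H^2(P,V)\le \dim H^2(L,V^Q)+\dim H^2(Q,V)^L+\dim H^1(L,H^1(Q,V)).
$$
The middle term is controlled by Lemma~\ref{H2 for abelian}(2) together with Lemmas~\ref{multiplicity free}/\ref{exterior square}: $H^2(Q,V)^L$ embeds in $\mathrm{Hom}_L(Q,V)\oplus\mathrm{Hom}_L(\wedge^2Q,V)$, and since $Z(L)$ is cyclic of order $q-1>3$ it acts nontrivially on $Q$ and on every composition factor of $\wedge^2Q$; so this term vanishes unless $V$ involves the same central character, in which case it is at most $\tfrac12\dim V$ (exterior square has distinct composition factors, none isomorphic to $Q$). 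For the last term, $H^1(Q,V)\cong Q^*\otimes V$ as $L$-module, and one splits according to whether $Z(L)$ fixes points on it; Theorem~\ref{h1 for sl}(2) bounds $\dim H^1(J,-)\le \tfrac13(\cdot)$, and a multiplicity count of how often $V$ and $Q$ share a $\F_pZ(L)$-module gives a bound of the shape $(\text{const})\dim V$. The first term, $\dim H^2(L,V^Q)$: if $Z(L)$ is nontrivial on $V$ then $H^2(L,V)=0$ (Corollary~\ref{coprime}); otherwise $V$ is effectively an $\SL(3,q)$-module (times a cyclic quotient $L/J$ of order dividing $q-1$, handled by Lemma~\ref{usual} plus Lemma~\ref{cyclic}), and Theorem~\ref{sl3 lemma} gives $\dim H^2(J,V)\le\dim V$ except in the single exceptional case $3\ne p=r$ with bound $\tfrac32\dim V$ — here $r=p$ so that exception does not occur and we get $\le\dim V$.

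Adding up, the three terms are each a bounded multiple of $\dim V$ with the sum strictly below $2$: the $H^2(L,-)$ term $\le 1$, the $\wedge^2$-term $\le\tfrac12$, and the $H^1$-term $\le\tfrac12$ (via the $1/2$ bound (1.2), or the sharper $1/3$ from Theorem~\ref{h1 for sl}), but one must be careful that the worst cases for the three terms do not all occur for the same $V$: a trivial composition-factor/conjugacy argument as in the proof of Theorem~\ref{sl3 lemma} (using Lemma~\ref{trivial cf} with two conjugates of a suitable torus generating $G$) shrinks the contribution of trivial $L$-sections of $M$ to at most $\tfrac12\dim M$, which is where the strict inequality $<2$ comes from. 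The main obstacle I anticipate is the bookkeeping in the case $Z(L)$ trivial on $V$ but $V\cong Q$ (or a Galois twist), where the "same central character" collisions make all three terms simultaneously nonzero; there one has to compute $H^1(L,Q^*\otimes Q)$ more precisely — as in the $\SL(3)$ argument, it turns out to be strictly smaller than the naive $\dim V$ bound — and combine with the fact that $\dim H^2(L,Q)$ is small, to keep the total below $2\dim V$. The other mild nuisance is the $q=2^e$ low cases and $\SL(3,2)=\PSL(2,7)$, $\SL(4,2)=A_8$ appearing as sections, all covered by \cite{luxstudent}.
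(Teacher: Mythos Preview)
Your plan---restrict to $P=LQ$ with $L\cong\GL(3,q)$ and apply Lemma~\ref{usual}---is exactly the paper's. But your bound $\dim H^1(L,H^1(Q,V))\le\tfrac12\dim V$ is not justified: with $T=Z(L)$ one has $H^1(L,H^1(Q,V))\cong H^1(L,(Q^*\otimes V)^T)$, and since $Q|_T$ is three copies of a single irreducible $\F_pT$-module, when $V$ carries that same $T$-character one gets $\dim(Q^*\otimes V)^T=3\dim V$. The $\tfrac13$ bound from Theorem~\ref{h1 for sl} then yields only $\le\dim V$ for this term. Your totals become $1+\tfrac12+1>2$, and the Lemma~\ref{trivial cf} argument you sketch (counting trivial $L$-sections of $M$) is not the right repair.

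The paper sidesteps this with a trick you missed: $Q$ and the radical $Q^-$ of the opposite parabolic generate $G$, so after possibly replacing $P$ by its opposite one has $\dim M^Q\le\tfrac12\dim M$, and Theorem~\ref{sl3 lemma} applied directly to the $L$-module $M^Q$ (not to a $P$-composition factor) gives $\dim H^2(L,M^Q)\le\tfrac12\dim M$ outright. With the middle term $\le\tfrac13\dim M$ (each relevant $V$ has dimension at least $3$ over $\mathrm{End}_L(V)$) and the last term $\le\dim M$ as above, the sum $\tfrac12+\tfrac13+1<2$ falls out with no case analysis. Your ``worst cases do not coincide'' intuition is in fact correct---when $T$ acts trivially on $V$ the last two terms vanish, while when $T$ acts nontrivially the first term vanishes by Corollary~\ref{coprime}---but this is a dichotomy on the $T$-action, not a trivial-composition-factor count, and the opposite-parabolic trick is cleaner anyway. (The detour through $\PSL(4,q)$ and Lemma~\ref{covering lemma} is unnecessary here.)
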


\begin{proof}  If $q \le 3$,
see  \cite{luxstudent}.  So assume that $q > 3$.
 Let $P$ be the stabilizer of a $1$-space.  
Since $P$ contains a Sylow $p$-subgroup, it suffices to 
bound $\dim H^2(P,M)$.   Write $P=LQ$ where $Q$ is the unipotent
radical of $P$ and $L \cong \GL(3,q)$ is the Levi complement.

  By Lemma \ref{usual}
$$
\dim H^2(P,M) \le \dim H^2(L,M^Q)+ \dim H^2(Q,M)^L + \dim H^1(L,H^1(Q,M)).
$$

Since $G$ is generated by $Q$ and the radical of the opposite parabolic,
by replacing $P$ by its opposite, we may assume that $\dim M^Q \le (1/2) (\dim M)$,
and so by the result for $\SL(3)$, $ \dim H^2(L,M^Q) \le (1/2) \dim M$.  

Consider $H^2(Q,M)$ as an $L$-module.   By taking a $P$-composition series
for $M$, it suffices to bound $\dim H^2(Q,V)^L$ where $V$ is an irreducible
$FP$-module.   By Lemma \ref{H2 for abelian}
we have the  exact  sequence, 
$$
0 \rightarrow \hom(Q,V)^L \rightarrow H^2(Q,V)^L \rightarrow \hom(\wedge^2(Q),V)^L
$$

Since $Q$ is an irreducible $FL$-module,  the second term either is zero
or is isomorphic to $\mathrm{End}_{L}(Q) \cong \F_q$ and so has dimension  $(\dim V)/3$.   
Next consider $\wedge^2(Q)$ over $\F_p$.  
Note that $Q \otimes_{\F_p} \F_q$ is 
 is the sum of Galois twists $Q_i, 1 \le i \le e$ as an $\F_qL$-module.
The exterior
square will be the sum  of all $Q_i \otimes Q_j, i < j$ plus the sum of all
$\wedge^2 Q_i$.  We note that these are all irreducible and nonisomorphic
as $\F_qL$-modules.  Since none of them is isomorphic to $Q$, it follows
that if $V = Q$, then $\dim H^2(Q,V)^L \le (\dim V)/3$. 
So assume this is not the case.
   Thus, $\hom(\wedge^2(Q),V)^L=0$
unless $V$ is isomorphic to one of
 $Q_i \otimes Q_j$ or $\wedge^2(Q_i)$   If $V$ is one of these modules,
then $\hom(\wedge^2(Q),V)^L \cong \mathrm{End}_L(V)$ and so has dimension
at most $(\dim V)/3$.      Thus, $H^2(Q,V)^L$ has dimension at most $(\dim V)/3$.

Finally, consider the far right term of the sequence
above.   Again, we can take a $P$-composition
series for $M$ and consider $H^1(L,H^1(Q,V))$ for $V$ an irreducible $FP$-module.
Now $H^1(Q,V)=\Hom(Q,V)$ (over $\F_p$).   Let $T=Z(L)$.  By Corollary \ref{coprime},
$H^1(L,H^1(Q,V)) = H^1(L,H^1(Q,V)^T)$.   Then $\dim H^1(Q,V)^T \le 3 \dim V$.
So applying Theorem \ref{h1 for sl}, we  see that 
$\dim H^1(L,H^1(Q,V)) \le \dim V$.    Thus,
$\dim H^2(G,M)/\dim M < 1/2   + 1/3 + 1 < 2$
as required.
\end{proof}

\begin{theorem} \label{sl4 cross}  Let $G=\SL(4,q),
q = p^e$.  Let $F=\F_r$ for $r$ a prime.   If $M$ is an irreducible
$FG$-module, then $\dim H^2(G,M)  < 2 \dim M$.
\end{theorem}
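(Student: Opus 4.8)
The plan is to imitate the arguments for $\SL(3,q)$ (Theorem \ref{sl3 lemma}) and for the natural characteristic case (Theorem \ref{sl4 natural}): reduce to an irreducible module, pass by Lemma \ref{restriction} to a subgroup $H$ containing a Sylow $r$-subgroup of $G$, and then peel off normal subgroups of $H$ with Lemma \ref{usual}, feeding in the bounds already proved for $\SL(2,q)$, $\SL(3,q)$ and for $H^1$. First, $r=p$ is exactly Theorem \ref{sl4 natural}, so we may assume $r\ne p$; small $q$ is handled by the computations in \cite{luxstudent}, so we assume $q$ large. By Lemmas \ref{cohomology sequence} and \ref{extension of scalars} we may take $F$ algebraically closed and $M$ absolutely irreducible; if $M$ is trivial then $\dim H^2(G,M)$ is controlled by the Schur multiplier of $\SL(4,q)$, which is trivial for $q$ large, so $M$ is nontrivial and $\dim M$ is at least the minimal degree of a nontrivial cross-characteristic representation of $\SL(4,q)$, which tends to infinity with $q$. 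Finally, whenever a Sylow $r$-subgroup of $G$ is cyclic --- an elementary order count shows this is exactly when $r\nmid|G|$, or $r\mid q^2+1$, or $r\mid q^2+q+1$ with $r\ne 3$ --- Lemma \ref{cyclic} gives $\dim H^2(G,M)\le 1<2\dim M$. This leaves three cases: $r\mid q-1$; $r\mid q+1$ with $r$ odd; and $r=2$ (so $p$ is odd).

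For $r\mid q-1$ (including $r=3$) the split maximal torus lies in a Borel subgroup and a Sylow $3$-subgroup of the Weyl group $S_4$ fixes a point of $\{1,2,3,4\}$, so a Sylow $r$-subgroup of $G$ lies in a point-stabilizer parabolic $P=L\ltimes Q$ with $L\cong\GL(3,q)$ and $Q$ now an $r'$-group. Since $H^j(Q,-)=0$ for $j>0$, Lemma \ref{inflres2} gives $H^2(P,M)\cong H^2(L,M^Q)$; replacing $P$ by its opposite if necessary (whose Levi is again $\GL(3,q)$, and $G$ is generated by $Q$ and the opposite unipotent radical) we may assume $\dim M^Q\le\tfrac12\dim M$. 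Applying Lemma \ref{usual} to $\SL(3,q)\trianglelefteq\GL(3,q)$, with its cyclic quotient $C_{q-1}$, and using Theorem \ref{sl3 lemma}, Theorem \ref{h1 for sl}, and the fact that $H^1(C_{q-1},-)$ does not increase dimension, each composition factor $W$ of $M^Q$ contributes at most $\tfrac{11}{6}\dim W$ to $H^2(L,M^Q)$; hence $\dim H^2(G,M)\le\dim H^2(L,M^Q)\le\tfrac{11}{6}\dim M^Q\le\dim M<2\dim M$.

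For $r\mid q+1$ with $r$ odd (again including $r=3$), writing $\F_q^4$ as a sum of two $2$-dimensional spaces each carrying a nonsplit torus produces a block-diagonal subgroup $K\cong\SL(2,q)\times\SL(2,q)$ of $\SL(4,q)$; since $r\nmid q-1$ one checks $|K|_r=|G|_r$, so $K$ contains a Sylow $r$-subgroup of $G$. Now the K\"unneth formula (Lemma \ref{kunneth}), together with the (tiny) cross-characteristic bounds for $H^2$ of $\SL(2,q)$ in Theorems \ref{h2forsl2odd}(1) and \ref{h2forsl2eve}(1) and the bound $\dim H^1(\SL(2,q),-)\le\tfrac13\dim$ of Theorem \ref{h1 for sl}(1), shows that every composition factor of $M|_K$ contributes at most $\tfrac14\dim$ to $H^2(K,-)$; so $\dim H^2(G,M)\le\dim H^2(K,M)\le\tfrac14\dim M$, with room to spare.

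The case $r=2$ (so $p$ is odd) is the main obstacle. Here a Sylow $2$-subgroup need not lie in any parabolic subgroup, but it does lie in $H:=\bigl((\GL(2,q)\times\GL(2,q))\rtimes C_2\bigr)\cap\SL(4,q)$, the stabilizer of a decomposition $\F_q^4=U_1\oplus U_2$ into $2$-spaces, and an order count gives $|H|_2=|G|_2$. The group $H$ has a normal series $K\trianglelefteq L\trianglelefteq H$ with $K=\SL(2,q)\times\SL(2,q)$, $L/K\cong C_{q-1}$ and $H/L\cong C_2$ acting on $C_{q-1}$ by inversion (so $H/K$ is dihedral of order $2(q-1)$). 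I would bound $\dim H^2(H,M)$ by iterating Lemma \ref{usual} down this series: the $H^2(K,-)$ contribution is treated by K\"unneth and the $\SL(2,q)$ results --- now in the harder case $r=2\ne p$, where only $\dim H^2(\SL(2,q),W)\le\tfrac12\dim W+O(1)$ is available from Theorem \ref{h2forsl2odd}(2) --- the $C_{q-1}$-layer contributes ordinary cyclic-group cohomology, and the dihedral layer costs a factor at most $3$ on $M^K$ (its mod-$2$ cohomology). Crude versions of these estimates overshoot $2\dim M$, so the real work --- and where I expect the difficulty --- is to control the ``nearly trivial'' composition factors: those on which $K$, or one of its two $\SL(2,q)$ factors, acts trivially, and the $H/K$-trivial factors inside $M^K$. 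These are bounded using Lemma \ref{trivial cf} together with the fact that $G=\SL(4,q)$ is generated by two conjugates of the relevant subgroups (so each such subspace has dimension at most $\tfrac12\dim M$, and in practice far less), and the observation that the element of $H$ exchanging $U_1$ and $U_2$ pairs up the ``one-sided'' contributions when one passes to $H$-invariants. Assembling these estimates, and using that the target $2\dim M$ is generous while the various constant error terms are absorbed by $\dim M$ being large, should give $\dim H^2(G,M)<2\dim M$ in this final case as well.
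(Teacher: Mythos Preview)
Your strategy is essentially the paper's: the same case split (cyclic Sylow; $r\mid q-1$; $r\mid q+1$ odd; $r=2$), the same subgroups containing a Sylow $r$-subgroup, and the same use of Lemma \ref{usual} with the low-rank bounds. Your handling of the first two non-cyclic cases is a correct minor variation --- the paper uses the monomial subgroup $T.S_4$ for $r>3$ with $r\mid q-1$ and the crude rank-$2$ abelian bound for $r\mid q+1$, while you uniformly use the point parabolic and K\"unneth on $\SL(2,q)^2$; your alternatives are valid and give equal or better constants.

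The gap is the $r=2$ case, which you leave as a plan (``should give''). Your outline is exactly the paper's argument, but you stop short of the computation, and one input is misquoted: Theorem \ref{h2forsl2odd}(2) gives $\dim H^2(\SL(2,q),W)\le\dim W$, not $\tfrac12\dim W+O(1)$. To finish, take $H$ the stabilizer of an unordered pair of complementary $2$-spaces and $K=\SL(2,q)\times\SL(2,q)\trianglelefteq H$ with $H/K$ dihedral of order $2(q-1)$. For an irreducible $FH$-module $V$: if both factors of $K$ act nontrivially, K\"unneth and (1.2) give $\dim H^2(H,V)\le\tfrac14\dim V$; if exactly one factor is nontrivial, $V$ is induced from an index-$2$ subgroup and Shapiro plus Lemma \ref{usual} yield $\dim H^2(H,V)\le\tfrac34\dim V$ (here the $\SL(2,q)$ bound $\le\dim W$ enters, halved by the induction); if $K$ is trivial on $V$, then the odd part of $H/K$ forces $V$ trivial and $\dim H^2(H/K,\F_2)=3$. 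Now apply your own observation: by Lemma \ref{trivial cf}, $G$ is generated by two conjugates of an odd-order subgroup of $K$, so the $K$-trivial part of $M$ has dimension at most $\tfrac12\dim M$. Hence
\[
\dim H^2(H,M)\ \le\ 3\cdot\tfrac12\dim M\ +\ \tfrac34\cdot\tfrac12\dim M\ =\ \tfrac{15}{8}\dim M\ <\ 2\dim M.
\]
So your sketch is correct, but it is not a proof until this final estimate is actually written down.
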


\begin{proof}  By the previous result we may assume that $r \ne p$.

 Let $R$ be a Sylow $r$-subgroup.  We consider
various cases.

First suppose that $r > 3$, whence $R$ is abelian.
   If $r|(q-1)$, then $R \le J$, the 
monomial group $J:=T.S_4$ where $T$ is a split torus.   
Since $r$ does not divide $|S_4|$, Lemma \ref{usual} and Corollary \ref{coprime}
imply that 
$\dim H^2(J,M) \le \dim H^2(T,M)^{S_4}$.  It suffices to prove
the inequality for $W$ irreducible for $J$.  If $T$ is not trivial
on $W$, then $H^2(J,W)=0$.  If $T$ is trivial on $W$, then
by Lemma \ref{H2 for abelian}, 
$\dim H^2(T,W) \le \dim \hom_G (T,W) + \dim \hom_G(\wedge^2(T),W)$.
Note that  the only irreducible quotient of $T$ is the $3$-dimensional
summand of the permutation module.  Similarly, the only irreducible quotient
of $\wedge^2(T)$ is the same module.   So if $W$ is not that module, 
$H^2(J,W)=0$.  If $W$ is that module, then each term on the right
in the above inequality is $1$ and so $\dim H^2(J,W) \le 2 < 3 = \dim W$.

If $r >3$ and does not divide $q-1$,   then $R$ has rank at most $2$.
If $R$ is cyclic, the result holds by Lemma \ref{cyclic}.   If not, then
$R$ is contained in the stabilizer of a $2$-space.   
The radical $Q$ of this parabolic has fixed space of dimension at most $(\dim M)/2$
(since $Q$ and its opposite generate $G$).
Lemma \ref{usual}  together with the fact that $\dim H^2(R,F) =3$ gives
  $\dim H^2(R,M) \le (3/2) (\dim M)$,  and the result holds.

 Suppose $r=3$.  If $3$ does not divide $q-1$, then $R$ is abelian
 and stabilizes a $2$-space and the argument above applies. 
   So suppose that $3|(q-1)$.  Then $R$ fixes
a $1$-space and $3$-space,  and so is contained in
the corresponding parabolic $P=QL$.   We may assume
that $\dim M^Q \le  (\dim M)/2$.   Thus,
$\dim H^2(P,M) \le \dim H^2(L,M^Q)$.  

By the result for $\SL(3,q)$ (Theorem \ref{sl3 lemma}),  we see that 
$H^2(J,M^Q)$ has dimension at most $ (3/2) (\dim M^Q)$ where
$J$ is the derived subgroup of $L$.   So
$$
\dim H^2(L,M^Q) \le \dim H^2(L/J,M^{QL}) + \dim H^2(J,M^Q) + \dim H^1(L/J,H^1(J,M^Q)).
$$
The terms on the right are  bounded by $(\dim M)/2$, $(3/4) \dim M$ 
and $(\dim M)/4$, whence $\dim H^2(G,M) < 2 \dim M$.

Finally, consider the case $r=2$.  If $q=3$, see  \cite{luxstudent}.  So assume that $q \ge 5$. 
We work over an algebraically closed field.  If $M$ is the trivial
module, then $\dim H^2(G,F)= 0$ (since the Schur multiplier of $\SL(4,q)$ is
trivial).  So assume that $M$ is not trivial.
By computing orders, we see that 
$R$ is contained in $H$, the stabilizer of a pair of complementary $2$-spaces.
Let   $L = L_1 \times L_2 = \SL(2,q) \times \SL(2,q)$.  Note that
$L$ is normal in $H$,  and $H/L$ is a dihedral group of order
$2(q-1)$.  

Let $V$ be an irreducible $H$-module.  If $V^L = 0$, let $W$ be an
irreducible $L$-submodule of $V$.  So $W=W_1 \otimes W_2$ with
$W_i$ an irreducible $L_i$-module.    If
each $W_i$ is nontrivial, then by Lemma \ref{kunneth}, 
$H^1(L,V)=0$, and so  by Lemma \ref{inflres2},  it follows that
$\dim H^2(H,V) \le \dim H^2(L,V) \le (\dim V)/4$.  
If $W_1$ is nontrivial and $W_2$ is trivial, then $V$ is an induced
module, and so $H^2(H,V) \cong H^2(X,D)$ where $X/L$ is cyclic and
$L_2$ is trivial on $D$.   Then by Lemma \ref{usual}, 
$$
\dim H^2(X,D) \le \dim H^2(L,D) + \dim H^1(X/L,H^1(L,D)).
$$
Then $\dim H^2(L,D) = \dim H^2(L_1,D) \le \dim D \le (\dim V)/2$.
Furthermore, we know that 
 $\dim H^1(X/L,H^1(L,D)) \le \dim H^1(L,D) = \dim H^1(L_1,D)$.  
 By (1.2),  $\dim H^1(L_1,D)  \le (\dim D)/2 
 \le  (\dim V)/4$.  Thus, $\dim H^2(H,V) \le (3/4) (\dim V)$.  

If $V$ is trivial for $L$, then by Lemma \ref{usual},
$\dim H^2(H,V) \le \dim H^2(H/L,V) + \dim H^2(L,V) + \dim H^1(H/L,H^1(L,V))$.
Since $L$ is perfect and since the Schur multiplier of $L$ is trivial, it follows
by Lemma \ref{perfect} that $\dim H^2(H,V) \le \dim H^2(H/L,V)$.
Since $H/L$ is dihedral,   by Corollary \ref{coprime} it follows
that either $H^2(H/L,V)=0$ or
$V$ is trivial.   It is easy to see that $H^2(H/L, \F_2)$ is $3$-dimensional.
It is straightforward to see that $G$ can be generated by two conjugates
of an odd order subgroup of $L$, whence $H$ can have at most $(\dim M)/2$
trivial composition factors by Lemma \ref{trivial cf}. Thus,  
$\dim H^2(H,M)/\dim M \le (3/2) + (3/8) < 2$. 
\end{proof}

\section{$\SL$:  The General Case}  \label{SLgen}

We handle $\SL(n)$ by means of a gluing argument.    
This is a variation of the presentations given in \cite{pap1} and \cite{pap3}.
Note also that the proposition below applies in either 
the profinite or discrete
categories.  The key idea is that it suffices to check relations on
subgroups generated by pairs of simple root subgroups -- this is a
consequence of the Curtis-Steinberg-Tits presentation (see \cite{curtis}).
We will also use this method to deduce the result for groups of rank at
least $3$ from the results on alternating groups and  on rank $2$ groups.

We state the Curtis-Steinberg-Tits result in the following form:

\begin{lemma} \label{curtis}  Let $G$
be the universal Chevalley group of a given type
of rank at least $2$ over a given field.  Let
$\Pi$ be the set of simple positive roots of the
corresponding Dynkin diagram and let $L_{\delta}$
be the rank one subgroup of $G$ generated by the root subgroups
$U_{\delta}$ and $U_{-\delta}$ for $\delta \in \Pi$.
    Let $X$ be a group generated
by subgroups $X_{\delta}, \delta \in \Pi$.    
Suppose that $\pi: X \rightarrow G$
is a homomorphism such that $\pi(X_{\delta})=L_{\delta}$ and
$\pi$ is injective on $\langle X_{\alpha}, X_{\beta} \rangle $ for each
$\alpha, \beta \in \Pi$.   Then
$\pi$ is an isomorphism.  
\end{lemma}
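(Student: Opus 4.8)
The idea is to read this off the Curtis--Steinberg--Tits presentation of the universal Chevalley group, which says exactly that $G$ is the universal completion of the amalgam consisting of the rank-one subgroups $L_\delta$ ($\delta\in\Pi$) together with the rank-two subgroups $L_{\alpha\beta}:=\langle L_\alpha,L_\beta\rangle$ ($\alpha,\beta\in\Pi$), glued along the inclusions $L_\delta\hookrightarrow L_{\alpha\beta}$ for $\delta\in\{\alpha,\beta\}$; see \cite{curtis}. Concretely, the universal property I will use is: given any group $Y$ and homomorphisms $\phi_{\alpha\beta}\colon L_{\alpha\beta}\to Y$ which agree on each common $L_\delta$, there is a homomorphism $\Phi\colon G\to Y$ with $\Phi|_{L_{\alpha\beta}}=\phi_{\alpha\beta}$ for all $\alpha,\beta$. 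I also record the standard fact that $G=\langle L_\delta:\delta\in\Pi\rangle$, since every root subgroup is conjugate to one for a simple root by a Weyl-group element, and Weyl-group representatives lie in $\langle L_\delta\rangle$.

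\textbf{Key steps.} First I would unpack the hypotheses. Because $X_\delta\le\langle X_\alpha,X_\beta\rangle$ whenever $\delta\in\{\alpha,\beta\}$, injectivity of $\pi$ on $\langle X_\alpha,X_\beta\rangle$ forces $\pi|_{X_\delta}\colon X_\delta\to L_\delta$ to be an isomorphism; similarly $\pi_{\alpha\beta}:=\pi|_{\langle X_\alpha,X_\beta\rangle}\colon\langle X_\alpha,X_\beta\rangle\to L_{\alpha\beta}$ is an isomorphism, being injective with image $\langle\pi(X_\alpha),\pi(X_\beta)\rangle=L_{\alpha\beta}$. Now set $\psi_{\alpha\beta}:=\pi_{\alpha\beta}^{-1}\colon L_{\alpha\beta}\to X$. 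For $\delta\in\{\alpha,\beta\}$ the restriction $\psi_{\alpha\beta}|_{L_\delta}$ equals $(\pi|_{X_\delta})^{-1}$, which does not depend on the chosen pair, so the $\psi_{\alpha\beta}$ form a compatible system. The universal property then supplies a homomorphism $\Psi\colon G\to X$ with $\Psi|_{L_{\alpha\beta}}=\psi_{\alpha\beta}$; in particular $\Psi(L_\delta)=X_\delta$, so $\Psi$ is surjective as the $X_\delta$ generate $X$. Finally, $\pi\circ\Psi\colon G\to G$ restricts to $\pi_{\alpha\beta}\circ\pi_{\alpha\beta}^{-1}=\mathrm{id}$ on each $L_{\alpha\beta}$, hence on each $L_\delta$, hence equals $\mathrm{id}_G$ because the $L_\delta$ generate $G$. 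Thus $\Psi$ is injective as well, so $\Psi$ is an isomorphism and $\pi=\Psi^{-1}$ is an isomorphism.

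\textbf{Main obstacle.} The only real input is the amalgam presentation itself (Curtis--Tits in the simply laced case, Steinberg's extension in general), which I am treating as a black box; everything after that is formal diagram-chasing. The subtle point worth stating carefully is that the correct gluing involves \emph{all} rank-two subsystems $\langle L_\alpha,L_\beta\rangle$ — for non-adjacent nodes of the Dynkin diagram this subgroup is just the (central) direct product $L_\alpha\times L_\beta$, so the associated relations are automatic, which is why in practice only adjacent pairs carry content. Since $\pi$ is assumed injective on $\langle X_\alpha,X_\beta\rangle$ for every pair, there is nothing extra to verify, and no small-field or rank-two exceptions intrude beyond those already built into the cited presentation.
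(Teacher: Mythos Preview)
Your argument is correct, and it is exactly the natural derivation of the stated form from the Curtis--Steinberg--Tits amalgam presentation. Note, however, that the paper does not actually prove this lemma: it is introduced with ``We state the Curtis--Steinberg--Tits result in the following form'' and simply cites \cite{curtis}, treating the lemma as a restatement of that known result rather than something to be proved. So there is no ``paper's own proof'' to compare against; what you have written is precisely the unpacking one would supply if asked to justify why the formulation in the lemma follows from the universal property of the amalgam in \cite{curtis}.
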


Let $X$ and $Y$ be two disjoint sets of size $2$.
Set $G=\SL(n,q)=\SL(V)$ for $n > 4$.  Let $e_1, \ldots, e_n$
be a basis for $V$.  Let 
$\langle X | R \rangle$ be a presentation for $A_n$ 
(acting on the set $\{1, \ldots, n\}$) and 
$\langle Y | S \rangle$  a presentation for $\SL(4,q)$
acting on a space $W$ that is the span of $e_1, \ldots, e_4$
(viewing these either as profinite presentations or discrete presentations).

  Let $G_1$ be the subgroup of
$G$ consisting of the elements which permute the elements of the basis as
even permutations.      Let $G_2$ be the subgroup of $G$ 
that acts trivially on
$e_j, j > 4$ and preserves the subspace generated by $e_1, \ldots, e_4$.
Let $L$ be the subgroup of $\SL(4,q)$ leaving the span of
$\{e_1, e_2\}$ invariant and acting trivially on $e_3$ and $e_4$.
So $L \cong \SL(2,q)$.  
Let $S \cong A_4$ be the subgroup of $\SL(4,q)$ consisting of the even
permutations of $e_1, \ldots, e_4$.   Pick generators 
$u,v$ of $S$ where $u = (e_1 \ e_2)(e_3 \ e_4)$ and $v = (e_1 \ e_2 \ e_3)$.   
Note that $u$ normalizes $L$.   Choose $a \in L$
such that $L = \langle a, a^u \rangle$ 
(e.g., we can take $a$ to be almost any element of order $q+1$).

Let $T \cong A_4$ be the subgroup of $A_n$ fixing all $j > 4$.
In $T$, let $u'=(12)(34)$ and $v'=(123)$.   Let $K \cong A_{n-2}$
be the subgroup of $A_n$ fixing the first two basis vectors.
Let $b$ and $c$ be any generators for $K$.

Let $J$ be the group generated by $X \cup Y$ with relations 
$R,S$,  $u=u'$,  $v=v'$, $[a,b]=[a,c]=1$.   Let $J_1 \le J$
be the subgroup generated by $X$,  and $J_2$ the subgroup
generated by $Y$.

There clearly is a homomorphism $\gamma:J \rightarrow G$ determined
by sending $J_i$ to $G_i$ for $i=1,2$ (where we send $X$ to 
the corresponding permutation matrices in $G$ and $Y$ to
the corresponding elements in $G_2$ -- all relations in $J$
are satisfied and so this gives the desired homomorphism).
In particular, this shows that $J_i \cong G_i$ for $i=1$ and $2$
and so we may identify $G_i$ and $J_i$.   In particular, $u$ and
$v$ are words in $Y$ and $u',v'$ are words in $X$.

\begin{proposition} $J \cong G$.  \end{proposition}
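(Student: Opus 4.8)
The plan is to apply the Curtis--Steinberg--Tits presentation (Lemma~\ref{curtis}) to the homomorphism $\gamma\colon J\to G$ constructed above. First, $\gamma$ is surjective: $\gamma(J_1)=G_1$, $\gamma(J_2)=G_2$, and $G=\langle G_1,G_2\rangle$, since conjugating $L$ (which lies in $G_2$) by the permutation matrices in $G_1$ produces the $\SL(2,q)$ acting on every pair of coordinate vectors, and these root subgroups generate $\SL(n,q)$. So it remains to show $\gamma$ is injective.

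Recall that $G=\SL(n,q)$ is the universal Chevalley group of type $A_{n-1}$, with Dynkin diagram the path $\alpha_1-\alpha_2-\cdots-\alpha_{n-1}$ and fundamental rank-one subgroups $L_{\alpha_i}$, where $L_{\alpha_i}$ is the $\SL(2,q)$ acting on $\langle e_i,e_{i+1}\rangle$ and trivially on the other basis vectors. I shall realize matching subgroups $X_{\alpha_i}\le J$. For $i\le 3$ take $X_{\alpha_i}$ to be the $i$th fundamental rank-one subgroup of $J_2\cong\SL(4,q)$; thus $X_{\alpha_1}=L$ and $\gamma(X_{\alpha_i})=L_{\alpha_i}$. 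For $4\le i\le n-1$ set $X_{\alpha_i}:=X_{\alpha_3}^{\,k_i}$, where $k_i\in K$ is chosen with $\gamma(k_i)$ a permutation matrix carrying $\{e_3,e_4\}$ to $\{e_i,e_{i+1}\}$ (possible because $K\cong A_{n-2}$ acts with enough transitivity on pairs from $\{3,\dots,n\}$), so that again $\gamma(X_{\alpha_i})=L_{\alpha_i}$. Put $J_0=\langle X_{\alpha_1},\dots,X_{\alpha_{n-1}}\rangle$.

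Two verifications remain. (a) $J_0=J$: since fundamental root subgroups generate a Chevalley group, $\langle X_{\alpha_1},X_{\alpha_2},X_{\alpha_3}\rangle=J_2$, so $J_0\supseteq J_2$; together with the relations $u=u'$, $v=v'$ this gives $T=S\le J_0$, and combining $T$ with the $X_{\alpha_i}$ recovers $K$ and hence $J_1=\langle T,K\rangle$, so $J_0=J$. (b) $\gamma$ is injective on $\langle X_{\alpha_i},X_{\alpha_j}\rangle$ for every pair $\{i,j\}$. If $\{i,j\}\subseteq\{1,2,3\}$ this is clear, since $\langle X_{\alpha_i},X_{\alpha_j}\rangle\le J_2$ and $\gamma|_{J_2}$ is an isomorphism. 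For a non-adjacent pair $\{\alpha_1,\alpha_j\}$ with $j\ge 3$: the relations $[a,b]=[a,c]=1$ force $L=X_{\alpha_1}$ to centralize $K$, while $X_{\alpha_1}$ centralizes $X_{\alpha_3}$ inside $J_2$; conjugating the identity $[X_{\alpha_1},X_{\alpha_3}]=1$ by $k_j\in K$ (which centralizes $X_{\alpha_1}$) yields $[X_{\alpha_1},X_{\alpha_j}]=1$, and since $\gamma$ is injective on $X_{\alpha_1}$ and $L_{\alpha_1}\cap L_{\alpha_j}=1$, the group $\langle X_{\alpha_1},X_{\alpha_j}\rangle$ is the internal direct product $X_{\alpha_1}\times X_{\alpha_j}$, mapped isomorphically onto $L_{\alpha_1}\times L_{\alpha_j}$. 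The remaining pairs --- adjacent pairs $\{\alpha_i,\alpha_{i+1}\}$ with $i\ge 3$ (copies of $\SL(3,q)$) and non-adjacent pairs avoiding $\alpha_1$ --- are first reduced by conjugating with elements of $K$ to a bounded list of base configurations, which are then checked directly from the relations, as in the gluing arguments of \cite[\S6.2]{pap1} and \cite{pap3}. Granting (a) and (b), Lemma~\ref{curtis} gives that $\gamma\colon J=J_0\to G$ is an isomorphism.

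The main obstacle is part (b) for the remaining pairs (and the part of (a) asserting $K\le J_0$): since $K$ does not normalize $J_2$, one cannot simply transport these subgroups into $J_2$, and one must instead chase the presentation to see that the meagre gluing data --- a common $A_4$ together with the single commuting relation $[L,K]=1$ --- already forces all the Steinberg relations among the fundamental root subgroups. This is the technical heart of the argument; the new feature here, compared with \cite{pap1,pap3}, is only how few relations this gluing needs. Throughout, some care is needed with conjugation conventions so that $\gamma(X_{\alpha_i})$ is exactly $L_{\alpha_i}$ rather than a torus twist of it.
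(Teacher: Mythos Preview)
Your outline has the right shape---apply Curtis--Steinberg--Tits by exhibiting rank-one subgroups $X_{\alpha_i}$ and checking injectivity on pairs---but the verification you defer in part (b) is precisely where the content lies, and your proposed reduction by $K$-conjugation cannot reach it. Concretely: for the adjacent pair $\{X_{\alpha_3},X_{\alpha_4}\}$ you need $\gamma$ injective on $\langle X_{\alpha_3},X_{\alpha_3}^{k_4}\rangle$, where the associated $2$-subsets are $\{3,4\}$ and $\{4,5\}$. Since $K$ fixes $1$ and $2$, no element of $K$ carries both of these $2$-subsets into $\{1,2,3,4\}$, so $K$-conjugation never transports this pair into $J_2$; there is no ``base configuration'' checkable inside $J_2$ by this route. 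The same obstruction makes your claim $K\le J_0$ circular (as you already flag), since the $X_{\alpha_i}$ for $i\ge4$ are themselves defined via elements of $K$. Appealing to \cite{pap1,pap3} does not close the gap: those arguments use a different gluing, and in any case the point of the present proposition is to verify that \emph{these} four extra relations suffice.

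The paper's proof fixes this by conjugating with all of $A_n$ rather than just $K$. After establishing $[L,K]=1$ (as you do) and observing that $u=u'$ normalizes both $L$ and $K$, one gets that $E:=\langle K,u'\rangle\cong S_{n-2}$ normalizes $L$; since $E$ is maximal in $A_n$ and $A_n$ does not normalize $L$ (look at images under $\gamma$), in fact $N_{A_n}(L)=E$. Hence the $A_n$-conjugates $L_{i,j}$ of $L$ in $J$ are parametrized by the $2$-subsets $\{i,j\}$ of $\{1,\dots,n\}$. Now $A_n$ acts as a rank~$3$ group on $2$-subsets, so \emph{every} pair $\{L_{i,j},L_{k,l}\}$ is $A_n$-conjugate to a pair inside the $A_4$-orbit; and the $A_4$-orbit of $L$ lies in $J_2$ (because $A_4=S=T\le J_2$), where $\gamma$ is already injective. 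Lemma~\ref{curtis} then gives $N:=\langle L^g:g\in A_n\rangle\cong G$, and a short normality argument ($J_2\le N$, $A_n$ normalizes $N$, $A_4\le A_n\cap N$, $A_n$ simple) yields $J=N$. The ingredient you are missing is exactly this: compute $N_{A_n}(L)$ inside $J$ and use the rank~$3$ action to move all pairs into $J_2$ at once, rather than trying to reduce by $K$ alone.
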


\begin{proof} As we noted above, there is a surjection $\gamma:J \rightarrow G$ that
sends $J_1$ to $G_1$ and $J_2$ to $G_2$.
It suffices to show that $\gamma$ is an isomorphism.
We also view $a,b$ and $c$ as elements of $J$, and $L$ as a subgroup of $J$.

We first show that $[K,L]=1$ in $J$.   By the relations, we have
that $[a,K]=1$.   Since $u'$ normalizes $K$ and  
  $u=u'$, we see that $1=[a^u, K^{u'}]=[a^u,K]$.  Since $L= \langle a, a^u \rangle$,
$[K,L]=1$.   Set
$E:=\langle K, u' \rangle \cong S_{n-2} \le A_n$.
  Since $u$ normalizes $L$, we see that
$E$ does as well.
Note that $E$ is precisely the stabilizer in $A_n$ of the subset
$\{1,2\}$.
This is a maximal subgroup of $A_n$, and since $A_n$ does not normalize
$L$ (since $\gamma(A_n)$ does not normalize $\gamma(L)$ in
$G$), it follows that $E=N_{A_n}(L)$ (in $J$).

Let $\Omega$ be the set of conjugates of $L$ under $A_n$ in $J$.
By the previous remarks, $|\Omega|=[A_n:S_{n-2}]=n(n-1)/2$ and moreover,
there is an identification between $\Omega$ and the subsets of size
$2$ of $\{1, \ldots, n\}$.  Let $L_{i,j}$ denote the conjugate
of $L$ corresponding to the subset $\{i,j\}$.   Note that
$\gamma(L_{i,j})$ is the subgroup of $G$ that preserves the
$2$-space $\{e_i, e_j\}$ and acts trivially on the other basis vectors
of $V$.   

Let $\Delta$ be the
orbit of $L$ under $A_4$.  Note that $|\Delta|=6$ and $\Delta$ corresponds to
the two element subsets of $\{1,2,3,4\}$.  Since $A_n$
is a rank $3$ permutation group on $\Omega$, any pair of distinct
conjugates of $L$ in $\Omega$  is conjugate to either the pair $\{L,L_{2,3}\}$ or
$\{L,L_{3,4}\}$.   

Suppose that $L_1$ and $L_2$ are two
of these conjugates.   By the above remarks, they are conjugate by
some element in the group to 
$L$ and $M=L^x$ for some $x \in A_4$.  In particular, we see that
$M$ is the subgroup of $\SL(4,q)$ fixing the $2$-space generated 
by $e_{x(1)}$ and $e_{x(2)}$ and and fixing the vectors
$e_{x(3)}$ and $e_{x(4)}$.   Since we are now inside $\SL(4,q)$, we see that
 either $[L,M]=1$ or $L$ and $M$ generate an $\SL(3,q) \le \SL(4,q)$.
Since $\gamma$ is injective on $\SL(4,q)$, $\gamma$ is injective
on $\langle L, M \rangle$ and so is injective on
the subgroup generated by $\langle L^{h_1}, L^{h_2} \rangle $ for 
any elements $h_1, h_2 \in A_n$.   

Thus, by the Curtis-Steinberg-Tits relations 
(Lemma \ref{curtis}),  $N= \langle \{L^g | g \in A_n \} \rangle \cong
G$,  and indeed $\gamma:N \rightarrow G$ is an isomorphism.   

It suffices to show that $J=N$.  Since $A_n$ normalizes $N$ and
since $\SL(4,q) \le N$ ($\SL(4,q)$ contains the $A_4$ conjugates of $L$
and these generate $\SL(4,q)$),
it follows that $N$ is normal in $J$.    Clearly, $\SL(4,q)$ is trivial
in $J/N$ and since $A_n \cap N \ge A_4$, it follows that
$A_n \le N$ as well.  Thus, $J=N$ and the proof is complete.
\end{proof}
 
Since $L$ and $K$ are $2$-generated, $J$ is presented
by $4$ generators and $|R|+|S| + 4$ relations.

By Theorem \ref{H2alt}, we 
have profinite presentations for $A_n$ with $4$ relations.
By  Theorems \ref{sl4 natural} and \ref{sl4 cross},  
$\SL(4,q)$ has a profinite presentation with $3$ relations. 
Thus we have a profinite presentation
for $\SL(n,q)$ with $4$ generators and $4 + 3 + 4=11$ relations.
Using Lemma \ref{saving}, we obtain:

\begin{corollary} \label{sln corollary} Let $G=\SL(n,q)$ with $n \ge 5$.
Then $G$ has a profinite presentation on $2$ generators and $9$ relations.
In particular, $\hat{r}(G) \le \ 9$.
\end{corollary}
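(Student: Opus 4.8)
The plan is to read off the generator and relation counts from the preceding Proposition and then compress them with Lemma \ref{saving}. The Proposition identifies $G=\SL(n,q)$ with the group $J$, which is presented (profinitely) on the $4$ generators $X\sqcup Y$, where $|X|=|Y|=2$, subject to: the relations $R$ of a chosen presentation $\langle X\mid R\rangle$ of $A_n$, the relations $S$ of a chosen presentation $\langle Y\mid S\rangle$ of $\SL(4,q)$, and the four ``bridge'' relations $u=u'$, $v=v'$, $[a,b]=1$, $[a,c]=1$. So any pair of two-generator profinite presentations of $A_n$ and of $\SL(4,q)$ glues to a four-generator profinite presentation of $G$ with $|R|+|S|+4$ relations.

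First I would fix, by Corollary \ref{an relations}(1), a profinite presentation $\langle X\mid R\rangle$ of $A_n$ on $d(A_n)=2$ generators with $|R|\le 4$, noting that the elements $u',v',b,c\in A_n$ needed by the Proposition are words in $X$ (any generating set works, since $A_n$ is a quotient of the free group on $X$). Next, Theorems \ref{sl4 natural} and \ref{sl4 cross} give $\dim H^2(\SL(4,q),M)<2\dim M$ for every irreducible module $M$ of $\SL(4,q)$ in every characteristic; taking $M$ trivial this gives in particular $\dim H^2(\SL(4,q),\F_\ell)\le 1$ for every prime $\ell$. Feeding these into the formula (1.1) bounds each term there by $3$ --- the trivial-module term equals $\dim H^2(\SL(4,q),\F_\ell)+2\le 3$ since $\SL(4,q)$ is perfect, while a nontrivial-module term is $\le 2+2-1=3$ --- so $\hat r(\SL(4,q))\le 3$. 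Fix a profinite presentation $\langle Y\mid S\rangle$ of $\SL(4,q)$ on $2$ generators with $|S|\le 3$ and express $u,v,a$ as words in $Y$. The Proposition now yields a profinite presentation of $G$ on $4$ generators with $|R|+|S|+4\le 4+3+4=11$ relations.

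It remains to reduce the number of generators to two. Since $\SL(n,q)$ is quasisimple, $d(G)=2$, so the presentation just built is exactly a presentation $G=F/R_0$ with $F$ free profinite of rank $d(G)+2$; Lemma \ref{saving} then applies with $c=2$ and gives $\hat r(G)=e-2$, where $e$ is the minimal number of normal generators of $R_0$. We have exhibited $11$ normal generators of $R_0$, so $e\le 11$ and hence $\hat r(G)\le 9$. By the definition of $\hat r(G)$ this says exactly that $G$ has a profinite presentation on $d(G)=2$ generators and $9$ relations.

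The real content lies in the Proposition, whose proof rests on the Curtis--Steinberg--Tits presentation (Lemma \ref{curtis}): that is what lets one verify the relations by checking only that $\gamma$ is injective on the rank-two subgroups $\langle L^{h_1},L^{h_2}\rangle$, which follows from its injectivity on $\SL(4,q)$. For the corollary itself the only delicate points are: (i) the bound $\hat r(\SL(4,q))\le 3$ genuinely uses the \emph{strict} inequality in Theorems \ref{sl4 natural} and \ref{sl4 cross} on the trivial module (a non-strict bound $\dim H^2\le 2\dim M$ would only give $\hat r(\SL(4,q))\le 4$, hence $\hat r(G)\le 10$); and (ii) the exact bookkeeping in Lemma \ref{saving} --- that $c$ superfluous generators cost precisely $c$ extra relations --- which is what turns $4$ generators and $11$ relations into $2$ generators and $9$ relations.
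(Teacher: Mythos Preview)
Your argument is correct and follows essentially the same route as the paper: glue the $2$-generator profinite presentations of $A_n$ (with $4$ relations, Corollary~\ref{an relations}) and $\SL(4,q)$ (with $3$ relations) via the Proposition to obtain $4$ generators and $11$ relations, then invoke Lemma~\ref{saving} with $c=2$. Your explicit verification via (1.1) that $\hat r(\SL(4,q))\le 3$, and your observation that the \emph{strict} inequality in Theorems~\ref{sl4 natural} and~\ref{sl4 cross} is what forces the trivial-module term down to $3$ rather than $4$, are exactly the points the paper leaves implicit.
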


\begin{theorem} \label{sln theorem}  Let $G$ be a quasisimple
group that surjects on $\PSL(n,q)$.  Let $F$ be a field. Then 
\begin{enumerate} 
\item $\hat{r}(G) \le 9$; and
\item $\dim H^2(G,M) \le 8.5 \dim M$ for any  $FG$-module $M$.
\end{enumerate}
\end{theorem}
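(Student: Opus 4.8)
The plan is to deduce both parts from the estimate $\hat r(G)\le 9$ together with formula (1.1) and the $H^1$-bound (1.2). First I would prove (1). The quasisimple groups surjecting on the simple group $\PSL(n,q)$ are precisely the central quotients $\widetilde G/Z$ of the universal cover $\widetilde G$ of $\PSL(n,q)$. For $n\ge 5$, the tables of Schur multipliers in \cite{gls3} show that $\PSL(n,q)$ has no exceptional multiplier, so $\widetilde G=\SL(n,q)$ and Corollary \ref{sln corollary} gives $\hat r(\widetilde G)\le 9$; then Corollary \ref{covering relations}(1) gives $\hat r(\widetilde G/Z)\le\max\{\hat r(\widetilde G),4\}\le 9$ for every central $Z$. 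For $2\le n\le 4$, Theorem E gives $h(\SL(n,q))\le 2$, hence $\hat r(\SL(n,q))\le 4$ by (1.4); since in each such case the exceptional part of the Schur multiplier is a central $r$-group of rank at most $2$, Corollary \ref{covering relations}(2) gives $\hat r(\widetilde G)\le\hat r(\SL(n,q))+1\le 5$, and Corollary \ref{covering relations}(1) again bounds every quotient by $\max\{5,4\}\le 9$. This establishes (1) for all quasisimple $G$ surjecting on $\PSL(n,q)$.

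For (2) I would run the reductions of Section \ref{strategy}. In characteristic $0$, $H^2(G,M)=0$ by Corollary \ref{coprime}, so assume $\mathrm{char}\,F=p>0$. By Lemma \ref{cohomology sequence} it suffices to treat $M$ irreducible, and by Lemma \ref{extension of scalars} (and the discussion following it) I may assume $M$ is an irreducible $\F_pG$-module, since this does not change the ratio $\dim H^2(G,M)/\dim M$. If $M$ is trivial then $\dim H^2(G,\F_p)\le 2\le 8.5$ by Lemma \ref{perfect} and \cite[pp.~312--313]{gls3}. If $M$ is nontrivial, then $d(G)=2$ and $\xi_M=1$, so for this $M$ the relevant term of (1.1) together with part (1) yields
$$
9\;\ge\;\hat r(G)\;\ge\;\left\lceil\frac{\dim H^2(G,M)-\dim H^1(G,M)}{\dim M}\right\rceil+1,
$$
whence $\dim H^2(G,M)-\dim H^1(G,M)\le 8\dim M$; combining this with (1.2) gives $\dim H^2(G,M)\le 8.5\dim M$, which is (2).

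The real content is in part (1). The two points needing care are: (i) Corollary \ref{sln corollary} applies only to $\SL(n,q)$ with $n\ge 5$, so the low-rank cases $n\le 4$ must be imported from Theorem E (and, if one prefers, the rank-two results); and (ii) the passage from $\SL(n,q)$ to an arbitrary quasisimple cover must be tracked so the relation count never exceeds $9$. Point (ii) is exactly where one invokes \cite{gls3}: that $\PSL(n,q)$ has no exceptional Schur multiplier for $n\ge 5$, and that the exceptional multipliers that do occur for $n\le 4$ have sectional rank at most $2$, so the cost charged in Corollary \ref{covering relations} stays within budget. Once $\hat r(G)\le 9$ is in hand, part (2) is a purely formal consequence of (1.1), (1.2), and the module reductions.
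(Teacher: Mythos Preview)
Your proof is correct and follows the same approach as the paper's own proof. The paper's argument is terser: it observes that when $\SL(n,q)$ has trivial Schur multiplier (i.e., outside the list $(n,q)\in\{(2,4),(2,9),(3,2),(3,4),(4,2)\}$), every quasisimple $G$ surjecting on $\PSL(n,q)$ is a quotient of $\SL(n,q)$, and then Corollary~\ref{covering relations}(1) together with $\hat r(\SL(n,q))\le 9$ gives (1); in the five exceptional cases it notes that $\hat r(\SL(n,q))$ is already much smaller, so Corollary~\ref{covering relations} again suffices. Part (2) is deduced from (1) by citing (1.4) directly. Your version makes the low-rank case $n\le 4$ and the passage through the universal cover more explicit, and unfolds (1.4) into its ingredients (1.1) and (1.2), but the logical skeleton is identical.
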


\begin{proof}   If $\SL(n,q)$ has trivial Schur multiplier, then (1)
follows by Corollary \ref{covering relations} and the previous result.
 This is the case
unless $(n,q)=(2,4),(2,9), (3,2),(3,4)$ or $(4,2)$ \cite[p. 313]{gls3}.
In those cases, we have a smaller value for $\hat{r}(\SL(n,q))$
and Corollary \ref{covering relations} gives (1).  Now (2) follows
from (1) by (1.4). 
\end{proof}

\section{Low Rank Groups} \label{low rank}

In this section, we consider the rank one and rank two finite groups
of Lie type.  We also consider some of the rank three groups which
are used for our gluing method.

The method for the low rank groups is fairly straightforward.  With more
work, one can obtain better bounds.    As usual, we will use
Lemma \ref{restriction} without comment.   We first consider the rank one
groups.

\begin{lemma}  \label{rank one}
Let $G$ be the universal cover of a rank one simple finite group of Lie type.
\begin{enumerate}
\item If $G=\SL(2,q)$, then $h(G) \le 1$.
\item  If $G=\Sz(q), q = 2^{2k+1} > 2$, then $h(G) \le 1$.
\item  If $G=\SU(3,q),  q > 2$, then $h(G) \le 2$.
\item  If $G=\Re(q), q=3^{2k+1} > 3$, then $h(G) \le 3$.
\end{enumerate}
\end{lemma}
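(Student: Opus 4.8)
The plan is to reduce at once to the case that $M$ is a nontrivial absolutely irreducible $FG$-module: the supremum defining $h(G)$ is attained on irreducibles by Lemma \ref{cohomology sequence}, one may take $F$ algebraically closed of characteristic $p$ by Lemma \ref{extension of scalars}, and when $M$ is trivial $\dim H^2(G,M)$ is the $p$-rank of the Schur multiplier of $G$ by Lemma \ref{perfect}, which vanishes because $G$ is a universal cover. (The few $q$ for which the relevant group is not quasisimple, such as $\SU(3,2)$ and $\Re(3)$, are excluded by the hypotheses, and the handful with exceptional Schur multipliers, such as $\Sz(8)$ and $\SU(3,3)$, fall under $q>2$ and are in any case available in \cite{luxstudent}.) Part (1) then needs nothing new: Theorems \ref{h2forsl2odd} and \ref{h2forsl2eve} together give $\dim H^2(\SL(2,q),M)\le\dim M$ in every characteristic, so $h(\SL(2,q))\le 1$.

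For parts (2)--(4) I would, as always, replace $G$ by a subgroup containing a Sylow $p$-subgroup (Lemma \ref{restriction}) and split on whether the characteristic $r$ of $F$ equals $p$. In the cross-characteristic case $r\ne p$ a Sylow $r$-subgroup $R$ lies, for $r$ odd, in a maximal torus $S$, hence in $H:=N_G(S)$, where $H/S$ is cyclic or, for $\SU(3,q)$ with $r\mid q+1$, is $S_3$. When $R$ is cyclic --- which holds for every odd $r\ne p$ when $G=\Sz(q)$ or $G=\Re(q)$ (the torus orders $q-1$ and $q\pm\sqrt{2q}+1$, resp.\ $q\pm1$ and $q\pm\sqrt{3q}+1$, are pairwise coprime away from $2$) and frequently for $\SU(3,q)$ --- Lemma \ref{cyclic} gives $\dim H^2(G,M)\le 1$ and one finishes by comparing with the known minimal dimension of a nontrivial irreducible $FG$-module. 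Otherwise I would feed $S\trianglelefteq H$ into Lemma \ref{usual}: its outer terms die unless $r\mid |H/S|$, Clifford theory forces the $r$-part $S_r$ of $S$ to act trivially on any irreducible $FH$-module (a $1$-dimensional $\bar{\F}_r$-character of an $r$-group is trivial), so that either $H^\ast(S,W)=0$ by Corollary \ref{coprime} or $W$ is an $FH/S$-module, and $\dim H^2(S,W)^H$ is controlled by Lemma \ref{H2 for abelian} applied to $S_r$ together with Lemma \ref{trivial cf} (two conjugates of a suitable $r'$-subgroup of $S$ generate $G$, so $M$ has at most $\tfrac12\dim M$ trivial $R$-composition factors). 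The cases $r=2$ with $p$ odd (where $R$ is a $2$-group of $2$-rank at most $2$, resp.\ the elementary abelian group of rank $3$ for $\Re(q)$, inside such an $H$) and the finitely many small $q$ go to \cite{luxstudent}; in every cross-characteristic instance the resulting bound is comfortably below the asserted constant.

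The real content is the defining-characteristic case $r=p$. Let $B=UT$ be a Borel subgroup, $U$ the Sylow $p$-subgroup, $T$ a maximal torus of $B$ (cyclic in each case: of order $q-1$ for $\Sz(q)$ and $\Re(q)$, of order dividing $q^2-1$ for $\SU(3,q)$). Since $\gcd(|T|,p)=1$, Lemma \ref{usual} applied to $U\trianglelefteq B$ gives $\dim H^2(B,W)\le\dim H^2(U,W)^T$ for every $FB$-module $W$; running over the composition factors $W$ of $M|_B$ --- each $1$-dimensional as a $T$-module with $U$ acting trivially, so that $H^2(U,W)\cong H^2(U,\F_p)\otimes W$ as $T$-modules and $\dim H^2(U,W)^T$ is just the multiplicity of the $T$-character $W^\ast$ in $H^2(U,\F_p)$ --- one deduces, via Lemma \ref{restriction} and Lemma \ref{cohomology sequence}, that $\dim H^2(G,M)\le m_T\bigl(H^2(U,\F_p)\bigr)\dim M$, where $m_T(X)$ is the largest multiplicity of any single $T$-character in the $T$-module $X$. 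Thus (2)--(4) reduce to the purely $p$-group statement that, as a $T$-module, $H^2(U,\F_p)$ has maximal multiplicity $\le 1$ for $\Sz(q)$, $\le 2$ for $\SU(3,q)$ and $\le 3$ for $\Re(q)$. For $U$ abelian this is precisely Lemma \ref{H2 for abelian} with Lemma \ref{multiplicity free}; for the nonabelian $U$ here I would filter $U$ along its (characteristic, hence $T$-stable) lower central series --- $Z(U)=U'$ of order $q$ for $\Sz$ and $\SU(3)$, class $3$ with successive layers of order $q$ for $\Re$ --- and run the Hochschild--Serre spectral sequence of $Z(U)\trianglelefteq U$, so that $H^2(U,\F_p)$ is assembled from the $T$-modules $H^i(U/Z(U),\F_p)\otimes H^j(Z(U),\F_p)$ with $i+j=2$ (plus their differentials); each is described by Lemma \ref{H2 for abelian} applied to the elementary abelian quotients, whose $T$-structures are, up to Frobenius twists, the natural $\F_q$- and $\F_{q^2}$-modules, and a multiplicity argument in the style of Lemma \ref{multiplicity free} bounds every constituent, the surviving multiplicity being governed by how many differentials fail to separate repeated characters. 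Small $q$ --- in particular $\Sz(8)$, $\SU(3,3)$, $\Re(27)$ --- come from \cite{luxstudent}.

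The main obstacle is exactly this last step for $\SU(3,q)$ and above all for the Ree groups $\Re(3^{2k+1})$, whose unipotent radical is a $3$-group of nilpotency class $3$ with a genuinely nonsplit lower central series and a subtle torus action: identifying enough of the $T$-module structure of $H^2(U,\F_3)$, including the relevant coboundary maps in the Hochschild--Serre sequence, to confirm that the maximal $T$-multiplicity is $3$ rather than larger is the delicate point. By contrast the cross-characteristic cases and $\SL(2,q)$ are essentially bookkeeping built on Lemmas \ref{cyclic}, \ref{usual}, \ref{H2 for abelian} and \ref{trivial cf}.
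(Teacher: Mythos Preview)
Your overall architecture---restrict to a subgroup containing a Sylow, split on $r=p$ versus $r\ne p$, and in defining characteristic reduce to the Borel and use Lemma~\ref{usual} with $T$ acting---matches the paper. Two points deserve comment.

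First, a genuine gap in cross characteristic. For $G=\Re(q)$ with $r=2$, the Sylow $2$-subgroup is elementary abelian of rank~$3$ and does \emph{not} sit inside the normalizer of a maximal torus in the way your framework assumes; ``go to \cite{luxstudent}'' is not available for this infinite family, and a raw application of Lemma~\ref{H2 for abelian} to a rank-$3$ abelian group gives only $\dim H^2 \le 6\dim M$, too weak. The paper instead restricts to the involution centralizer $H\cong C_2\times \PSL(2,q)$ and uses K\"unneth (Lemma~\ref{kunneth}) together with the $\SL(2)$ bound to get $\dim H^2(H,V)\le 2\dim V$. Similarly, for $\SU(3,q)$ with $r\ne p$ the paper does not force $R$ into a torus normalizer: when $r\ne 3$ it observes $R$ is cyclic or lies in $\GU(2,q)$ and invokes the $\SL(2)$ result via Lemma~\ref{usual}.

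Second, in defining characteristic your reduction to the maximal $T$-multiplicity in $H^2(U,\F_p)$ is correct, but the paper sidesteps the Hochschild--Serre bookkeeping you identify as ``the main obstacle''. Rather than analyzing $H^2(U,\F_p)$ globally, it chooses a single $T$-stable normal subgroup of $U$---namely $Z=Z(R)$ for $\SU(3,q)$ and the second layer $R_2$ (with $R_2$ elementary abelian and $R/R_2$ of order~$q$) for $\Re(q)$---and applies Lemma~\ref{usual} once to $B$ with that normal subgroup. Each of the three resulting terms is then bounded by $\dim V$ using Lemma~\ref{H2 for abelian} and the multiplicity-free statement Lemma~\ref{multiplicity free} on the abelian layers; one checks that at most two (resp.\ three) terms can be simultaneously nonzero. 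This yields $h\le 2$ and $h\le 3$ directly, with no spectral-sequence differentials to track. For $\Sz(q)$ the paper does not argue at all: it cites \cite{wilson}.
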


\begin{proof}  Let $R$ be 
a Sylow $r$-subgroup of $G$ for some prime $r$. 
Let $F=\F_r$. 
 
(1) is proved in the previous section and
(2) is proved in \cite{wilson}.

 Consider $G=\SU(3,q)$ with $q=p^e$.  First suppose
that $p \ne r$.  If $r  \ne  3$, then $R$
is either cyclic or stabilizes a nondegenerate
subspace and so embeds in $\GU(2,q)$.  
We use the result for $\SL(2,q)$  and Lemma \ref{usual} 
to deduce the result. 

If $3$ does not divide $q+1$, the above argument
applies to $r=3$.  Suppose that $r=3|(q+1)$.   
Then $R$ is contained in the stabilizer
of an orthonormal basis and we argue precisely
as  we did for $\SL(3,q)$ in Theorem \ref{sl3 lemma}.

So assume that $p=r$ and $R \le B$, a Borel subgroup.
Write $B=TR$ with $T$ cyclic of order $q^2-1$.
Let $Z=Z(R)$ of order $q$.  If $q=4$, one computes
directly that the bound holds.  So assume that $q > 4$.  Then   
$T$ acts irreducibly on $Z$ and on $R/Z$.   
By Lemma \ref{usual}, for $V$ an irreducible $\F_pB$-module (i.e. a $T$-module),
$$
\dim H^2(B,V) \le \dim H^2(B/Z,V) + \dim H^2(Z,V)^B + \dim H^1(B/Z,H^1(Z,V)).
$$
Similarly,  
$\dim H^2(B/Z,V) \le \dim H^2(R/Z,V)^T$.   Since $V$ is a trivial $R$-module,
$H^2(R/Z,V)^T=0$ unless $V \cong R/Z$ or $V$ is a constituent of 
$\wedge^2(R/Z)$.  We argue as usual to show that
$\wedge^2(R/Z)$ is multiplicity free (and does not surject onto $R/Z$).
It follows that   $\dim H^2(R/Z,V)^T \le \dim \mathrm{End}_T(V)
\cong V$ (as vector spaces).  The same argument shows that either $ H^2(Z,V)^B=0$
or $V \cong Z$ or $V$ is a constituent of $\wedge^2(Z)$, and in those cases
$ H^2(Z,V)^B \cong V$ (as vector spaces).   Finally, note that 
$H^1(B/Z,H^1(Z,V)) \cong H^1(B/Z, V^*)$, and so is either $0$ or has dimension
equal to $\dim V$ if $V^* \cong R/Z$.   So we see that each term is at most $\dim V$,
and at most two of them can be nonzero.  Thus, $\dim H^2(B,V) \le 2 \dim V$.

Finally, consider $G=\Re(3^{2k+1}), k > 1$.   
See \cite{suzuki1, suzuki2} for properties of $G$.

   If $r=2$, then $R$ is contained in
$H:=C_2 \times \PSL(2,q)$.  Let $V$ be a nontrivial irreducible $FH$-module.
By Lemma \ref{kunneth},  $H^2(H,V) \cong H^2(\PSL(2,q))$.
Similarly,  if $V$ is trivial, Lemma \ref{kunneth} implies that 
$H^2(H,V) \cong H^2(C_2,F) \oplus H^2(\PSL(2,q),F)$ and so is $2$-dimensional.
By Lemma \ref{h2forsl2eve}, it follows that $\dim H^2(H,V) \le 2 \dim V$.  
 If $r > 3$, then $R$ is cyclic
and the result holds by Lemma \ref{cyclic}.
 If $r=3$, then a Borel subgroup is $TR$ where
$T$ is cyclic of order $q-1$.  Moreover, there are normal $T$-invariant subgroups
$1 =R_0 < R_1 < R_2 < R_3=R$ such that $T$ acts irreducibly on each
successive quotient (acting faithfully on the first and last quotients and acting
via a group of order $(q-1)/2$ on the middle quotient).  Furthermore, $R_2$
is elementary abelian.    Let $V$ be an irreducible $B$-module.
Then,  by Lemma \ref{usual}, 
$$
\dim H^2(B,V) \le \dim H^2(B/R_2,V) + \dim H^2(R_2,V)^B + \dim H^1(B/R_2,H^1(R_2,V)),
$$
and 
$$
\dim H^2(B/R_2,V) \le \dim H^2(R_3/R_2,V)^B.
$$

By Lemma \ref{multiplicity free} and Lemma \ref{H2 for abelian}, 
it follows that $\dim H^2(R_3/R_2,V)^B \le \dim V$
and $\dim H^2(R_2,V)^B \le \dim V$.  Finally, consider the final term
on the right.   We can write $R_2=W(\alpha) \oplus W(\beta)$ as a direct
sum of the $T$-eigenspaces with characters  $\alpha$
and $\beta$ (of orders
$q-1$ and $(q-1)/2$).  Write $V=W(\gamma)$ as a $T$-module.
Then $H^1(R_2,V)$ is a direct sum of modules $W(\alpha^{-1}\gamma')$ and 
$W(\beta^{-1}\gamma')$
where $\gamma'$ is a Frobenius twist of $\gamma$.  Since $T$ has order
coprime to the characteristic, we see that
$\dim H^2(B/R_2,V)$ will be the multiplicity of $R_3/R_2 = W(\alpha)$
in $H^1(R_2,V)$.   The comments above show that this multiplicity is $0$
 unless  $\gamma$ is a 
product of two twists of $\alpha$ or a twist of $\alpha$ times a twist of $\beta$.
It follows that the multiplicity in these cases is $1$ and $\dim H^2(B,V) \le \dim V$.
Thus, 
$\dim H^2(B,V) \le 3 \dim V$ as required.
\end{proof}

We now consider the groups of rank $2$, subdividing them into
two classes.  The classical groups of rank $2$ will arise in the consideration
of higher rank groups and so we need better bounds.  The remaining cases
do not occur as Levi subgroups in higher rank groups and so do not
impact any of our gluing arguments.

\begin{lemma} \label{rank 2 classical}
\begin{enumerate}
\item If $G=\SL(3,q)$, then $h(G) \le  3/2$.
\item If $G=\SU(4,q)$, then $h(G) \le 9/4$.
\item If $G=\SU(5,q)$, then $h(G) \le 4  $.
\item If $G=\Sp_4(q)$, then $h(G) \le 3 $.
\end{enumerate}
\end{lemma}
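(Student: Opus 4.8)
The plan is to handle the four groups uniformly, using Lemma \ref{cohomology sequence} to reduce to irreducible modules, Lemma \ref{extension of scalars} to assume $F=\F_r$ (or algebraically closed, as convenient), and Lemma \ref{restriction} to pass freely to any overgroup of a Sylow $r$-subgroup. Part (1) is immediate from Theorem \ref{sl3 lemma}: for every prime $r$ and every irreducible $\F_r\SL(3,q)$-module $M$ one has $\dim H^2(\SL(3,q),M)\le(3/2)\dim M$, so $h(\SL(3,q))\le 3/2$. For parts (2)--(4) the small cases are settled by \cite{luxstudent}, so $q$ may be assumed large; then, since the exceptional Schur multipliers do not occur, when $M$ is trivial $\dim H^2(G,M)$ is the $r$-rank of the Schur multiplier of $G/Z(G)$, which is at most $2$, and we may assume $M$ is nontrivial and irreducible.

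Cross characteristic ($r\neq p$). If a Sylow $r$-subgroup $R$ is cyclic, Lemma \ref{cyclic} gives $\dim H^2(G,M)\le 1$, which beats the claimed bounds since the smallest nontrivial module in characteristic $r\neq p$ has dimension growing with $q$. If $R$ is noncyclic, a comparison of orders shows $R$ lies either in a proper parabolic $P=LQ$ with $r\nmid|Q|$ whose Levi factor $L$ is a central product of a torus with a rank $\le 1$ classical group together with $\GL$-factors (so $\SL(2,q)$, $\SL(2,q^2)$, $\SU(3,q)$, $\Sp_2(q)$), in which case Lemma \ref{usual}, Corollary \ref{coprime}, Lemma \ref{kunneth}, combined with Lemma \ref{rank one} and Theorems \ref{h2forsl2odd}--\ref{h2forsl2eve} (and \cite{hoffman} for cross-characteristic $H^1$), reduce the bound to those Levi factors; or else $r\in\{2,3\}$ divides $|W|$ and $R$ sits in the normalizer $N=N_G(T)$ of a maximal torus, where, writing $T=T_0\times T_r$ with $T_r$ the Sylow $r$-subgroup of $T$, one applies Lemma \ref{usual} with the normal $r'$-subgroup $T_0$ (via Corollary \ref{coprime}) and then analyzes $T_r.(N/T)$ by Lemma \ref{H2 for abelian}, exactly as in the $r=3$ part of Theorem \ref{sl3 lemma}. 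This last subcase is run through torus by torus, since the relevant torus orders depend on $q\bmod r$, but each instance is routine.

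Defining characteristic ($r=p$). Fix a maximal parabolic $P=LQ$: for $\Sp_4(q)$ take the Siegel parabolic, so $L\cong\GL(2,q)$ and $Q$ is abelian; for $\SU(4,q)$ and $\SU(5,q)$ take a parabolic whose Levi is one of $\GL(1,q^2)\times\SU(2,q)$, $\GL(2,q^2)$, $\GL(1,q^2)\times\SU(3,q)$, $\GL(2,q^2)\times\SU(1,q)$ as appropriate; in all cases Lemma \ref{usual}, Lemma \ref{kunneth} and the earlier results give $h(L)\le 2$. Since $P$ contains a Sylow $p$-subgroup, $\dim H^2(G,M)\le\dim H^2(P,M)$, and Lemma \ref{usual} gives $\dim H^2(P,M)\le\dim H^2(L,M^Q)+\dim H^2(Q,M)^L+\dim H^1(L,H^1(Q,M))$. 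As $G$ is generated by $Q$ and its opposite radical, after possibly swapping $P$ for its opposite parabolic we may assume $\dim M^Q\le(1/2)\dim M$, so the first term is at most $(1/2)h(L)\dim M$. For the other two terms we take an $L$-composition series of $M$ and reduce to $V$ irreducible, on which $Q$ acts trivially; when $Q$ is nonabelian we filter it by an $L$-invariant (or, restricting to a Borel, $T$-invariant) central series with elementary abelian factors $A$ and iterate Lemma \ref{usual}. On each factor, Lemma \ref{H2 for abelian} bounds the contribution by $\dim\hom_L(A,V)+\dim\hom_L(\wedge^2 A,V)$, and $H^1(A,V)\cong A^*\otimes V$ with $\dim H^1(L,A^*\otimes V)$ bounded by a small multiple of $\dim V$ via Corollary \ref{coprime}, Theorem \ref{h1 for sl} and (1.2). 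The decisive point is that, by a Frobenius-twist computation (Lemma \ref{multiplicity free}, and the arguments in Theorems \ref{sl3 lemma} and \ref{sl4 natural}), $\wedge^2 A$ is multiplicity free and involves neither $A$ nor $V$ generically, so each of these terms vanishes unless $V$ is $L$-isomorphic to one specific module, and then has $\F_p$-dimension at most $\dim V$. Hence for each $V$ only boundedly many layer-contributions survive; counting them (effectively three for $\Sp_4$, and the corresponding counts for $\SU(4,q)$ and $\SU(5,q)$) and adding the first term gives $\dim H^2(P,V)\le 3\dim V,\ (9/4)\dim V,\ 4\dim V$ respectively, and summing over the composition series of $M$ finishes the proof.

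The main obstacle will be precisely this last accounting in the defining-characteristic case: one must pin down, for every irreducible Levi- (or Borel-) module, which of the many terms produced by the filtration of $Q$ and the successive applications of Lemma \ref{usual} are nonzero, and check that their number is within what the constants $3$, $9/4$, $4$ permit; this is where the analysis of the torus action on root subgroups and their exterior squares, together with the multiplicity-free lemmas, does the real work (with the case $p=2$ for $\Sp_4(q)$, where the hypotheses of Lemma \ref{multiplicity free} must be replaced by a direct Frobenius-twist computation for the symmetric-square module, being slightly more delicate). The cross-characteristic torus-normalizer subcases for $r\in\{2,3\}$ are a secondary, purely mechanical, source of case division.
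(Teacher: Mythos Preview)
Your overall strategy matches the paper's: part (1) follows from Theorem~\ref{sl3 lemma}, and in defining characteristic you pass to a maximal parabolic $P=LQ$, apply Lemma~\ref{usual}, use the opposite-radical trick to get $\dim M^Q\le(1/2)\dim M$, and control the remaining terms via an $L$-filtration of $Q$ together with Lemma~\ref{H2 for abelian} and the multiplicity-free arguments. That part is essentially the paper's proof, though to reach $9/4$ for $\SU(4,q)$ you need the sharper defining-characteristic bound $h_p(\SL(2,q))\le 1/2$ from Theorems~\ref{h2forsl2odd}(3) and~\ref{h2forsl2eve}(2), not the blanket ``$h(L)\le 2$'' you quote.

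The genuine gap is in your cross-characteristic dichotomy. You claim that a noncyclic Sylow $r$-subgroup $R$ either lies in a proper parabolic (with $r\nmid|Q|$) or else $r\in\{2,3\}$ and $R$ lies in a torus normalizer. This is false. Take $G=\SU(5,q)$ with $r>3$ and $r\mid(q+1)$: then $|G|_r=(q+1)_r^4$ and $R$ is abelian of rank $4$, but the two maximal parabolics have Levi factors $\GL(1,q^2)\times\SU(3,q)$ and $\GL(2,q^2)$, whose $r$-parts are only $(q+1)_r^3$ and $(q+1)_r^2$; so $R$ is contained in neither parabolic, and $r\notin\{2,3\}$. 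The same failure occurs for $\Sp(4,q)$ and $\SU(4,q)$ when $r>3$ divides $q+1$. The paper treats these cases differently: for $\Sp(4,q)$ it simply notes $R$ is abelian of rank $\le 2$ and applies Lemma~\ref{H2 for abelian}; for $\SU(4,q)$ and $\SU(5,q)$ it passes to the stabilizer of an orthonormal basis (the monomial subgroup $T.S_n$, $n=4,5$) and runs an explicit analysis with Weyl quotient $S_4$ or $S_5$. Your template ``exactly as in the $r=3$ part of Theorem~\ref{sl3 lemma}'' (Weyl quotient $S_3$) does not cover $S_5$, and for $\SU(5,q)$ the crude rank-$4$ abelian bound $d(d+1)/2=10$ is far above the target $4$, so a real argument is needed there. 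You must drop the restriction $r\in\{2,3\}$ and carry out the torus-normalizer analysis for all $r\mid(q+1)$.
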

 
\begin{proof}

We handle the various groups separately proving somewhat
better results.   The result for $\SL(3,q)$ is a special case
of Theorem \ref{sl3 lemma}.   
  Let $\F_q$ be the field
of definition of the group with $q=p^e$.   Let $r$ be a prime, $R$
be a Sylow $r$-subgroup of $G$ and $F=\F_r$.  If $M$ is a trivial $FG$-module, the
result is clear (because we know the Schur multiplier \cite[pp. 312--313]{gls3}).
So  it suffices to consider nontrivial irreducible $\F G$-modules. \\

\noindent Case 1.  $G= \SU(4,q)$.  \\

If $r \ne p$ does not divide $q$,  then the argument is identical
to that given for $G=\SL(4,q)$.   Suppose that $r=p$.
Let $P$ be the stabilizer of a totally singular $2$-space.  So $P=LQ$
where $L=\GL(2, q^2)$ and $Q$ is an irreducible $\F_qL$-module of order $q^4$.
By Lemma \ref{usual}, 
$$
\dim H^2(G,M) \le \dim H^2(L,M^Q) + \dim H^2(Q,M)^L + \dim H^1(L,H^1(Q,M)).
$$
If $q=2, 3$, we apply \cite{luxstudent}.  So assume that $q > 3$.  
Let $Z=Z(L)$.   By Corollary \ref{coprime}, 
$H^1(L, H^1(Q,M))= H^1(L, H^1(Q,M)^Z) = H^1(L, \mathrm{Hom}_Z(Q,M))$.
Since $\dim H^1(\SL(2,q),W) \le (\dim W)/2$ by (1.2), one can see
that $\dim H^1(L,H^1(Q,M)) \le \dim M$.  By Theorems \ref{h2forsl2odd} and \ref{h2forsl2eve}, 
 $\dim H^2(SL(2,q),W) \le  (\dim W)/2 $ and so $\dim H^2(L, M^Q) \le (\dim M^Q)/2 \le (\dim M)/4$.      
Since $\wedge^2(Q)$ is multiplicity free
(arguing exactly as in Lemma \ref{multiplicity free}), the
middle term is certainly at most $\dim M$ and so
$\dim H^2(G,M) \le (9/4) (\dim M) $.  \\

\noindent Case 2.  $G= \SU(5,q)$.\\

First consider the case $r \ne p$ and $r > 2$.
If $r$ does not divide $q+1$, then either $R$ is cyclic
or $R$ embeds in $\SU(3,q)$ or $\SU(4,q)$ and the result
follows.

Suppose that $r|(q+1)$.   
Then $R$ is contained in $H$, the stabilizer of an orthonormal basis.
In particular, $H$ has a normal abelian subgroup $N$ that is homogeneous
of rank $4$ with $H/N = S_5$.  Let $V$ be an irreducible
$FH$-module.  By Lemma \ref{usual}, 
$$
\dim H^2(H,V) \le \dim H^2(S_5, V^N) + \dim H^2(N,V)^{S_5} + \dim H^1(S_5,H^1(N,V)).
$$
If $N$ acts nontrivially on $V$, then Lemma \ref{coprime} implies that
$H^2(H,V)=0$.  So assume that this is not the case.  

Since $S_5$ has a cyclic Sylow $r$-subgroup, the first term on the right is
at most $1$ by Lemma \ref{cyclic}.   Since $N$ does not have a $1$-dimensional
quotient (as an $S_5$-module), 
it follows that $\dim H^2(H,\F_r) \le 1$.  So we may
assume that $\dim V > 1$, and so $\dim V \ge 3$.

Recall that $\dim H^2(N,V)^{S_5}  \le \dim  \hom_{S_5}(N,V) + \dim \hom_{S_5}(\wedge^2 N,V)$.  
So if $V$ is a not a quotient of either $N$ or $\wedge^2(N)$, then
$\dim H^2(H,V) \le 1 \le (1/3)\dim V$.   So assume that $V$
is a quotient of either $N$ or $\wedge^2(N)$.

If $r=5$, the only quotients of $N$ and $\wedge^2(N)$ are $3$-dimensional.
Since $\dim N=4$ and $\dim \wedge^2(N)=6$, it follows that
$\dim H^2(H,V) \le 4 = (4/3)\dim V$.  

So assume that $r \ne 5$.   If $V$ is a quotient of $N$, then
$V$ is the irreducible summand of the permutation module for $S_5$.
Thus, $H^2(S_5,V)=0$ by Lemma \ref{shapiro}.  
By dimension, it is clear that $\dim  \hom_{S_5}(N,V) + \dim \hom_{S_5}(\wedge^2 N,V) \le 2$,
whence the result.  

If $V$ is a nontrivial  quotient of $\wedge^2(N)$ and is not a quotient of $N$,
then the same argument shows that $\dim H^2(H,V) \le 3$.  

$H^2(S_5,V)=0$ if $V$ is $1$-dimensional and $\dim V \ge 3$ otherwise, 
this implies that $\dim H^2(S_5,V^N) \le (\dim V)/3$.
This same argument shows that $\dim H^1(S_5,W) \le (\dim W)/3$
for any $FS_5$-module and so $ H^1(S_5,H^1(N,V))
\le (\dim N)(\dim V)/3 \le (4/3) (\dim V)$.   

Consider the case that $r=2 \ne p$.   Then $R \le H:=\GU(4,q)$.  We use
the result for $N:=\SU(4,q)$ and Lemma \ref{usual}.  So
$\dim H^2(G,M) \le \dim H^2(H/N, M^N) + \dim H^2(N,M)^H + \dim H^1(H/N,H^1(N,M))$.
This gives $\dim H^2(G,M) \le 4 \dim M$ as above.

Finally, consider the case that $r=p$. Let $P$ be the stabilizer of a totally singular
$2$-space.   Then $P=LU$ where $L$ is the Levi subgroup of $P$ and 
$U$ is the unipotent radical.  Let $J=\SL(2,q^2) \le L \cong \GL(2,q^2)$
and $Z=Z(L)$ cyclic of order $q^2-1$.
Also,  note  that  $W=[U,U]$ is irreducible of order
$q^4$ and $X:=U/W$ is an irreducible $2$-dimensional module
(over $F_{q^2})$ and that $W$ is an irreducible $4$-dimensional module
over $\F_q$ -- it is isomorphic to $X \otimes X^{(q)}$ (which is defined over
$\F_q$).  

Let $V$ be an irreducible $FP$-module.  
By Lemma \ref{usual},  
$$\dim H^2(P,V) \le \dim H^2(P/W ,V) + \dim H^2(W,V)^P + \dim H^1(P/W,H^1(W,V)).
$$

Consider the first term  on the right hand side of the inequality.  
By Lemma \ref{usual}, 
$\dim H^2(P/W,V) \le \dim H^2(L,V) + \dim H^2(U/W,V)^L + \dim H^1(L,H^1(U/W,V))$.
Note that $P/W$ is very similar to a maximal parabolic subgroup of $\SL(3,q^2)$.
Arguing precisely as in that case, we see that $\dim H^2(P/W,V) \le \dim V$.

Now consider the middle term.  It is straightforward to see
(arguing as in the proof of Lemma \ref{multiplicity free})  that $\wedge^2(W)$
is multiplicity free and has no composition factors isomorphic to $W$, whence
the middle term has dimension at most $\dim V$.   

Finally, consider the last term on the right. 
Set $Y=H^1(W,V) \cong W^* \otimes V$.  By Lemma \ref{h1 nonfaithful}, 
$\dim H^1(P/W,Y) \le \dim H^1(L,Y) + \dim \hom_L(U/W,Y)$.
By Corollary \ref{coprime},  $H^1(L,Y) \cong H^1(L,Y^Z)$.
Note that $\dim Y^Z \le 2 \dim V$ and so by (1.2), it follows
that $\dim H^1(L,Y) \le \dim H^1(J,Y^Z) \dim V$. 
Thus,  $\hom_L(U/W,Y) \cong \hom_L (U/W \otimes W, V)$.   

Let $\lambda$ be the fundamental dominant weight for $J$.   
So $U/W = X=X(\lambda)$ (the natural module over $\F_{q^2}$).
Note that $U$ is an $\F_qJ$-module satisfying 
$U \otimes_{\F_q} \otimes F_{q^2} \cong  X \otimes_{\F_{q^2}} X^{(q)}$.   
   It is straightforward to see that $U/W \otimes W$ 
modulo its radical is multiplicity free.    
Thus 
 $\hom_L(U/W,Y)$ is either $0$ or is isomorphic to $\mathrm{End}_L(V)$,
 and so has dimension at most $\dim V$.   It follows that $\dim H^2(P,V) \le 4 \dim V$.
 \\

\noindent Case 3.  $G=\Sp(4,q)$.  \\

If $r > 3$ and $r \ne p$, then $R$ is abelian of rank $2$, whence
$\dim H^2(G,M) \le 3 \dim M$ by Lemma \ref{H2 for abelian}. 

If $ 3 \ge r\ne p$, then $R$ is contained in $J$, the stabilizer of
a pair of orthogonal nondegenerate $2$-spaces.
If $r=3$, this implies that 
$$
\dim H^2(G,M) \le \dim H^2(\SL(2,q) \times \SL(2,q), M) \le  \dim M$$
 by \S \ref{SLlow} and Lemma \ref{kunneth}.
If $r=2$,  then this shows that $\dim H^2(J',M) \le \dim M$.
By Lemma \ref{usual}, 
$$
\dim H^2(J,M) \le \dim H^2(J/J', M^{J'}) + \dim H^2(J',M)^J
+ \dim H^1(J/J',H^1(J',M)),
$$
and so $\dim H^2(J,M) < 3 \dim M$.

If $p=r$, then $R \le P$, the stabilizer of a totally singular $2$-space.
Write $P=LQ$ where $L$ is a Levi subgroup and $Q$ the unipotent radical.
Note $Q$ is elementary abelian of order $q^3$. 
By Lemma \ref{usual}, 
$$
\dim H^2(P,M) \le \dim H^2(L,M^Q) + \dim H^2(Q,M)^L + \dim H^1(L,H^1(Q,M)).
$$
The first term on the right is at most $\dim M^Q$ (by the result for $\SL(2,q)$)
and is at most $(\dim M)/2$.   Arguing as for $\SL(3,q)$, 
$\dim H^1(L,H^1(Q,M)) \le (3/2)\dim M$.  
By Lemma \ref{H2 for abelian},  the middle term is at most $\dim M$,
 whence $\dim H^2(G,M) < 3 \dim M$.
\end{proof}

We now consider the remaining rank $2$ groups.  

\begin{lemma} \label{rank 2 exceptional}
Let $G$ be a quasisimple finite group of Lie type and rank $2$.  
Then $\hat{r}(G) \le 6$.
\end{lemma}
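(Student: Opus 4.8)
The plan is to exploit the equivalence (1.4) between profinite relations and $H^2$: since $\hat r(\tilde G)\le\max\{4,\lceil h'(\tilde G)+1\rceil\}$ for any quasisimple $\tilde G$, and since by Corollary \ref{covering relations}(1) every quasisimple $G$ with $G/Z(G)$ isomorphic to a fixed simple group $S$ is a quotient of the universal cover $\tilde G$ of $S$ with $\hat r(G)\le\max\{\hat r(\tilde G),4\}$, it suffices to prove $h'(\tilde G)\le 5$ for $\tilde G$ running over the universal covers of the rank $2$ simple groups of Lie type: $\PSL(3,q)$, $\PSp(4,q)$, $\PSU(4,q)$, $\PSU(5,q)$, $G_2(q)$, ${}^3D_4(q)$, ${}^2F_4(q)$, and the Tits group ${}^2F_4(2)'$.

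For the four classical families this is essentially already done: when $\tilde G$ is $\SL(3,q)$, $\Sp(4,q)$, $\SU(4,q)$ or $\SU(5,q)$, Lemma \ref{rank 2 classical} gives $h(\tilde G)\le 3/2,\,3,\,9/4,\,4$, hence $h'(\tilde G)\le h(\tilde G)\le 4<5$. For the finitely many small parameters where the Schur multiplier of $S$ properly contains the part realised by $\SL/\SU/\Sp$, I would strip off the extra central factor one prime at a time using Lemma \ref{covering lemma}(4) — each step costing at most $+1$ — or simply quote the explicit computations of \cite{luxstudent}; either way $h'(\tilde G)\le 5$.

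For the exceptional families $G_2(q)$, ${}^3D_4(q)$, ${}^2F_4(q)$ (whose Schur multipliers are trivial apart from a few tiny $q$, handled by \cite{luxstudent} and \cite{rwilson}) I would bound $h'_r(\tilde G)$ for each prime $r$ by the parabolic/torus-normalizer technique of Lemmas \ref{rank one} and \ref{rank 2 classical}. In the defining characteristic $r=p$, pass via Lemma \ref{restriction} to a maximal parabolic $P=LQ$ containing a Sylow $p$-subgroup, where $L$ is a rank one Levi factor (of type $\SL_2$, $\SL_2(q^3)$ or $\Sz(q)$, possibly times a central torus $T$); filter $Q$ by its lower central series into abelian $FT$-modules and iterate Lemma \ref{usual}, bounding the Levi terms $H^2(L,-)$ and $H^1(L,-)$ by the rank one bounds of Lemma \ref{rank one} together with (1.2), bounding each abelian layer $Q_i$ by Lemma \ref{H2 for abelian}, and using the multiplicity-free exterior-square analysis of Lemma \ref{multiplicity free} — exactly as in the $\SL(3,q)$ case in Theorem \ref{sl3 lemma} — to see that $\dim\hom_L(Q_i,V)$ and $\dim\hom_L(\wedge^2 Q_i,V)$ are small multiples of $\dim V$. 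Since only a bounded number of layers contribute, and, as in the $\Re(q)$ and $\SU(3,q)$ arguments of Lemma \ref{rank one}, at most a couple of the resulting terms can be nonzero simultaneously, this yields $h'_p(\tilde G)\le 5$ (comfortably for $G_2$ and ${}^3D_4$; ${}^2F_4$ is tight). For $r\ne p$: if $r$ is large the Sylow $r$-subgroup is cyclic (Lemma \ref{cyclic}) or abelian of rank $\le 2$ (Lemma \ref{H2 for abelian}, giving $\le 3\dim M$); if $r$ is small it lies in a proper reductive subgroup of known type — related to $\SL(3,q)$ or $\SU(3,q)$ in $G_2$, to $\SL(2,q^3)$ or $\SL(2,q)\times\SL(2,q)$ in ${}^3D_4$, and to $\Sp(4,q)$, $\SU(3,q)$ or $\SL(2,q^2)$ in ${}^2F_4$ — so the bound follows from Lemmas \ref{rank one}, \ref{rank 2 classical} and \ref{kunneth} together with Lemma \ref{usual}.

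The hard part will be ${}^2F_4(q)$: both of its maximal parabolics have large unipotent radicals that are nilpotent of class greater than $2$, so the lower-central filtration has several layers; one must check that the relevant exterior squares are multiplicity free and, more delicately, that the constant accumulated through the successive applications of Lemma \ref{usual} stays at most $5$, with the Suzuki-group Levi factor (treated by Lemma \ref{rank one}(2)) and the long/short root asymmetry making the bookkeeping genuinely non-routine. The Tits group and all remaining small-field exceptional cases are disposed of by direct computation and known presentations.
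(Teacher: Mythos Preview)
Your proposal is correct and follows essentially the same approach as the paper: reduce via (1.4) to showing $h'(G)\le 5$, dispose of the classical rank~$2$ groups by Lemma~\ref{rank 2 classical}, and for $G_2(q)$, ${}^3D_4(q)$, ${}^2F_4(q)$ bound $H^2$ prime by prime using Lemma~\ref{restriction} into a parabolic (defining characteristic) or a rank-$\le 2$ torus or known reductive subgroup (cross characteristic), then iterate Lemma~\ref{usual} with Lemmas~\ref{H2 for abelian}, \ref{multiplicity free} and the rank~$1$ bounds. The only small corrections are in the cross-characteristic subgroup choices: for ${}^3D_4(q)$ the relevant subgroup for $r\in\{2,3\}$ is the central product $\SL(2,q)\circ\SL(2,q^3)$ (and its normalizer when $r=2$), not $\SL(2,q)\times\SL(2,q)$, and for ${}^2F_4(q)$ only $r=3$ needs a reductive subgroup, namely $\SU(3,q)$.
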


\begin{proof}  By the preceding lemma, we may assume that
$G$ is one of $\G_2(q), {^3}D_4(q)$ or ${^2}\FFF(q)'$.   
 Let $p$ be the prime dividing $q$.  Note that $p=2$ in
the last case.   

Since $\G_2(2) \cong \PSU(3,3)$, we assume that $q > 2$ if $G=\G_2(q)$.
We also note that a presentation is known for ${^2}\FFF(2)'$ which gives
the result (cf. \cite{rwilson}), so we also assume that $q > 2$ in that case.

Let $r$ be a prime, $F$ a field of characteristic $r$ and $M$ an irreducible
$FG$-module.  Let $R$ be a Sylow $r$-subgroup of $G$.

\noindent Case 1.  $G=\G_2(q), q > 2$.\\

If $r \ne p$ and $r > 3$, then $R$ is contained in a maximal
torus (since the order of $R$ is prime to the order of the
Weyl group) and so $R$ is abelian 
of rank at most $2$, whence $\dim H^2(G,M) \le 3 \dim M$
by Lemma \ref{H2 for abelian}.    If $p \ne r \le 3$,
then $R$ is contained in $L$ with 
$L \cong \SL(3,q).2$ or $\SU(3,q).2$ (for example, noting that the only
prime dividing the indices of both of these  subgroups is $p$).  If $r \ne 2$, the result
follows from the corresponding result for $L$.  If $r=2$,  by Lemma \ref{usual}
$\dim H^2(L,V) \le \dim H^2(L/J,V^J) + \dim H^2(J,V)^L + \dim H^1(L/J,H^1(J,V))$,
where $J$ is the derived subgroup of $L$ and $V$ is an $FL$-module.
If $V$ is trivial, then this gives  $\dim H^2(L,V) \le 1$.  Otherwise,  $V^J=0$, and
$\dim H^2(L,V) \le \dim H^2(J,V) + \dim H^1(J,V) < 4 \dim V$. 
So $\dim H^2(G,M) \le  4 \dim M$.

Now assume that $r=p$.   Let $R \le P$ be a maximal parabolic subgroup.
Write $P=LQ$ where $L$ is a Levi subgroup and $Q$ is the unipotent radical.
We may choose $P$ so that $Q$ has a normal subgroup $Q_1$ with
$Q/Q_1$ and $Q_1$ each elementary abelian (of dimension $2$ or $3$
over $\F_q$).  Let $V$
be an irreducible $FP$-module.  It suffices by Lemma \ref{restriction}
to prove the bound in the lemma for $P$. 

 By Lemma \ref{usual}, 
$$
\dim H^2(P,V) \le \dim H^2(L,V) + \dim H^2(Q,V)^L + \dim H^1(L,H^1(Q,V)).
$$
Note that $X:=H^1(Q,V) = \hom (Q,V)=\hom(W,V)$, where $W=Q/Q_1$
has order $q^2$.   If $q$ is prime, then $\dim X \le 2 \dim V$,
and so by (1.2), $\dim H^1(L,X) \le \dim V$.  If $q$ is not prime,
then $Z=Z(L)$ acts nontrivially on $W$ and so 
$H^1(L,X)=H^1(L,X^Z)$ by Corollary \ref{coprime}.   
Since $\dim X^Z \le 2 \dim V$, the same bound
holds in this case.   By the result  for $\SL(2,q)$ (Theorems \ref{h2forsl2odd}
and \ref{h2forsl2eve}), $\dim H^2(L,V) \le (1/2)(\dim V)$.
So to finish this case, it suffices to show that $\dim H^2(Q,V)^L \le  (5/2)(\dim V)$.

By Lemma \ref{usual}, 
$$
\dim H^2(Q,V) \le \dim H^2(Q/Q_1,V) + \dim H^2(Q_1,V) + \dim H^1(Q/Q_1,H^1(Q_1,V)).
$$
The proof of this inequality (either using a spectral sequence or more directly
in \cite{holt1}) shows that we have the same inequality after taking $L$-fixed points.
Using Lemma \ref{H2 for abelian} and arguing as usual, we see that 
the sum of the first two terms on the right is at most $\dim V$.
Similarly, the right-most term is $\hom(Q/Q_1 \otimes Q_1^*,V)$
and the dimension of the $L$-fixed points is at most $\dim V$.   The result follows.  \\

\noindent Case 2.  $G={^3}D_4(q)$.\\

First suppose  $r \ne p$.   If $p \ne r > 3$, then $R$ is  contained in
a maximal torus and is abelian.  By inspection, $R$ has
rank at most $2$ and so $\dim H^2(G,M) \le 3 \dim M$.
If $r=3$,  then $R \le H$, the central product of  $\SL(2,q) \circ SL(2,q^3)$, whence
we can use the bounds in \S \ref{SLlow} (obtaining a bound of $4 \dim M$).
If $r=2$, then $R  \le N_G(H)$ and $H$ has index $2$ in $N_G(H)$.
The bound for $H$ shows that $\dim H^2(N_G(H), M) \le 5 \dim M$.

If $r=p$, then $R \le P=LQ$ with $P$ a maximal parabolic, $Q$ its
unipotent radical and 
$L$ a Levi subgroup with simple composition factor $\SL(2,q^3)$.
Then $|Z(Q)|=q$ and $Q/Z(Q)$ is the tensor
product of the three twists of the natural module for $\SL(2,q^3)$
(over $\F_q)$).
We argue as in the previous case to see that $\dim H^2(G,M) \le 5 \dim M$.  \\

\noindent Case 3.  $G={^2}\FFF(q), q > 2'$. \\

First suppose that  $r> 3$. Then $R$ is abelian
of rank at most $2$ (by inspection of the maximal
tori -- see \cite{malle}),  and so  by Lemma \ref{H2 for abelian}
$\dim H^2(G,M) \le 3 \dim M$.

Note that $G$ contains a subgroup $H \cong SU(3,q)$
(see \cite{malle}). 
If $r=3$, then $R \le H$ and so
$\dim H^2(G,M) \le \dim H^2(\SU(3,q), M) \le 3 \dim M$
by Lemma \ref{rank one}.

If $r=p=2$, then $R \le P=LQ$ with $P$ a maximal parabolic, 
$L = \Sz(q) \times C_{q-1}$ its Levi subgroup  
and unipotent radical $Q$.   There is a sequence of normal
subgroups $Q_1 < Q_2 < Q$ with elementary abelian quotients of order 
$q, q^4$ and $q^5$ respectively. We argue as above and conclude
that $\dim H^2(G,M) \le 5 \dim M$.
\end{proof}

We consider two families of rank three groups that are used
in the bounds for $\FFF(q)$ and ${^2}\EE_6(q)$.

\begin{lemma}  \label{some rank 3}
If $G=\Sp(6,q)$ or $\SU(6,q)$, then $h(G) \le 6$.
\end{lemma}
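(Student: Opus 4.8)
The plan is to follow the template used for the rank one and rank two groups above. I reduce at once to the case that $M$ is an irreducible $\F_{r}G$-module for a prime $r$, and bound $\dim H^2(G,M)/\dim M$ by choosing a subgroup $H$ containing a Sylow $r$-subgroup $R$ of $G$ and estimating $\dim H^2(H,V)$ for $V$ an irreducible $FH$-module, via Lemma \ref{restriction}. Small $q$, and the trivial module, are handled using \cite{luxstudent} and the known Schur multipliers \cite[pp. 312--313]{gls3}, so assume $q=p^{e}$ large. There are two regimes, $r\ne p$ and $r=p$.

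Suppose first $r\ne p$. If $r$ does not divide $|W|$, then $R$ lies in a maximal torus. For $\Sp(6,q)$ this torus is abelian of rank at most $3$, so Lemma \ref{H2 for abelian} gives $\dim H^2(R,M)\le 6\dim M$. For $\SU(6,q)$ a maximal torus can have rank up to $5$ (take $r\mid q+1$), so I instead work in $H=N_G(T)=T.W_{0}$ with $W_{0}\le S_{6}$ of order prime to $r$; since $R$ is Sylow, $R\le H$. Applying Lemma \ref{usual} to $T\trianglelefteq H$: the $H^1(W_{0},\cdot)$ and $H^2(W_{0},\cdot)$ terms vanish as $|W_{0}|$ is coprime to $r$; $H^2(T,V)^{H}=0$ unless $T$ is trivial on $V$, and then $\dim H^2(T,V)^{H}\le\dim\Hom_{W_{0}}(T/rT,V)+\dim\Hom_{W_{0}}(\wedge^{2}(T/rT),V)$, which is tiny since $T/rT$ is (a summand of) the standard permutation module of $W_{0}$. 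For the remaining small primes $r\mid|W|$, $R$ is contained either in a stabilizer of an orthogonal decomposition of the natural module into nondegenerate subspaces --- a central product of $\SL$-, $\SU$- or $\Sp$-type groups of rank at most $2$, such as $\Sp(2,q)\times\Sp(4,q)$ or $\GU(4,q)\times\GU(2,q)$ and their wreathed analogues, possibly extended by a small $2$- or $3$-group --- or in the monomial group $N.S_{6}$. In each case Lemma \ref{usual}, Lemma \ref{kunneth}, Corollary \ref{coprime}, the rank $\le 2$ bounds of Lemmas \ref{rank one} and \ref{rank 2 classical} ($h(\SL(2,q))\le 1$, $h(\SU(3,q))\le 2$, $h(\SL(3,q))\le 3/2$, $h(\SU(4,q))\le 9/4$, $h(\Sp(4,q))\le 3$), and --- where a symmetric-group quotient appears --- the cohomology bounds of Section \ref{alternating}, combine to give at most $6\dim M$; a direct check confirms this.

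Now let $r=p$, and let $P=LQ$ be the stabilizer of a maximal totally singular subspace: for $\Sp(6,q)$ the Siegel parabolic, with $L\cong\GL(3,q)$ and $Q$ abelian isomorphic to the symmetric square of the natural $L$-module; for $\SU(6,q)$ one with $L\cong\GL(3,q^{2})$ and $Q$ of class $2$, $[Q,Q]=Z(Q)$. Since $\langle Q,Q^{-}\rangle=G$, replacing $P$ by its opposite if necessary I may assume $\dim V^{Q}\le(1/2)\dim V$; it suffices to bound $\dim H^2(P,V)$. By Lemma \ref{usual},
\[
\dim H^2(P,V)\le\dim H^2(L,V^{Q})+\dim H^2(Q,V)^{L}+\dim H^1(L,H^1(Q,V)).
\]
I bound the first term via $[L,L]\cong\SL(3,q)$ (resp. $\SL(3,q^{2})$) inside $L$, using Lemma \ref{usual}, $h(\SL(3,\cdot))\le 3/2$, and the $H^{1}$-bounds (1.2) and Theorem \ref{h1 for sl}; with $\dim V^{Q}\le(1/2)\dim V$ this contributes at most $\dim V$. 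For the third term, $H^1(Q,V)\cong\Hom(Q/[Q,Q]Q^{p},V)$, on which $Z(L)$ acts nontrivially (as $q$ is large), so Corollary \ref{coprime} reduces it to $H^1(L,H^1(Q,V)^{Z(L)})$, bounded by (1.2) and Theorem \ref{h1 for sl}; for $\SU(6,q)$ one first peels off $Z(Q)$ using Lemma \ref{h1 nonfaithful}. The middle term is handled by Lemma \ref{H2 for abelian} when $Q$ is abelian, and for $\SU(6,q)$ by filtering $Q$ through $Z(Q)$ and iterating Lemma \ref{usual} inside $Q$, exactly as in the $\SU(5,q)$ computation of Lemma \ref{rank 2 classical}; at each stage the exterior square of the relevant elementary abelian quotient is multiplicity free as an $L$-module and has no composition factor isomorphic to that quotient, by the argument of Lemma \ref{multiplicity free}. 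Summing, $\dim H^2(P,V)\le 6\dim V$.

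The value $6$ is attained already in the cross-characteristic case $\Sp(6,q)$, $r\ge 5$, where $R$ can be elementary abelian of rank $3$, so this method cannot do better. The bulk of the remaining work is the defining-characteristic analysis, whose one genuinely delicate ingredient --- as in the lower-rank cases --- is verifying the multiplicity-free structure of the exterior squares of the unipotent layers as $L$-modules, which for $\SU(6,q)$ must be done for a two-step radical with Levi $\GL(3,q^{2})$, one rank above the $\SU(5,q)$ computation already carried out in Lemma \ref{rank 2 classical}. A minor point is that the cross-characteristic case of $\SU(6,q)$ with $r\mid q+1$ forces one to pass to $N_G(T)$ rather than use $R$ directly; once that is done there is ample room.
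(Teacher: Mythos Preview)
Your approach is essentially the paper's own: restrict to a subgroup containing a Sylow $r$-subgroup, separate $r\ne p$ from $r=p$, and for $r=p$ work inside the parabolic $P=LQ$ with $L\cong\GL(3,q)$ or $\GL(3,q^2)$, bounding the three terms of Lemma~\ref{usual}. The cross-characteristic case splits are organised slightly differently (the paper treats $r=2$, $r=3$, $r\ge 5$, and $r\mid q+1$ directly rather than splitting on whether $r\mid |W|$), but the subgroups invoked --- $\GL(3,\cdot)$, $\Sp(4,q)\times\Sp(2,q)$, the monomial group $A.S_6$, a nondegenerate $4$-space stabilizer --- and the resulting estimates coincide.

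One factual correction: for $\SU(6,q)$ with $r=p$, the unipotent radical $Q$ of the stabilizer of a maximal totally singular $3$-space is \emph{abelian} of order $q^{9}$ (it identifies with the additive group of skew-Hermitian $3\times 3$ matrices over $\F_{q^2}$), not of class $2$ as you assert. Hence there is no $Z(Q)$-filtration to perform and no need to invoke Lemma~\ref{h1 nonfaithful}; Lemma~\ref{H2 for abelian} applies directly to $Q$, exactly as in the $\Sp(6,q)$ case, and the ``two-step radical'' difficulty you flag as the delicate point simply does not arise. This misidentification makes your sketch sound harder than the actual computation, but does not invalidate the method.
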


\begin{proof}  The proofs are similar to the rank $2$ cases and
since the bounds are quite weak, we only sketch the proof.

Let $F$ be a field
of characteristic $r$.  Let $R$ be a Sylow $r$-subgroup of $G$.

First consider $G=\Sp(6,q)$ with $q=p^a$.   

If $p \ne r \ge 5$, then $R$ is abelian of
rank at most $3$, whence $\dim H^2(R,M) \le 6 \dim M$
by Lemma \ref{H2 for abelian}.

If $r =3 \ne p$, then $R$ is a contained in the stabilizer of
a totally singular $3$-space and so $R \le \GL(3,q)$ and the
result follows by the result for $\SL(3,q)$ and the standard
argument.   If $r=2 \ne p$,
then $R \le \Sp(4,q) \times \Sp(2,q)$ and we argue as usual.

If $r=p$, then $R \le P$, the stabilizer of a totally isotropic
$3$-space.  Then $P=LU$ where $L=\GL(3,q)$ is the Levi subgroup
and $U$ is elementary abelian of order $q^6$ 
(and irreducible for $L$ when
$q$ is odd).   We argue as usual.  

Now suppose that  $G=\SU(6,q)$ with $q=p^a$.  
First consider the case that $p \ne r$. 
If $r >3$ does not divide $q+1$, then  $R$ is abelian of rank at most
$3$, whence the result holds.  If $r=3$ does not divide $q+1$,
then $R \le \GL(3,q^2)$ and the result
follows.

 If $3 \le r$ does divide $q+1$,
then  $R \le S:=A.S_6$ where $A$ is isomorphic
to $C_{q+1}^5$.  The result now follows by using the bounds
for $S_6$ and Lemma \ref{usual}.

If $r=2 \ne p$, then $R$ stabilizes a nondegenerate $4$-space.
So we use the results for $\SU(2,q)$ and $\SU(4,q)$
and argue as usual.

If $r=p$, then $R \le P$, the stabilizer of a totally singular $3$-space.
Note $P=LQ$ where $Q$ is the unipotent radical and $L$ the Levi
subgroup.  Note $Q$ is an irreducible $L$-module of order $q^9$
and $L \cong \GL(3,q^2)$.   We argue as usual. 
\end{proof}

\section{Groups of Lie Type  -- The  General Case}   \label{classical}

Here we essentially follow the argument in \cite{pap1}
but use profinite presentations.

\begin{theorem} \label{classical theorem}
Let $G$ be a quasisimple  finite group with $G/Z(G)$ a group of
Lie type. 
Then $\hat{r}(G) \le 18$.
\end{theorem}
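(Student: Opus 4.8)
The plan is to run, in the profinite category, the gluing argument of \cite[\S6.2]{pap1}, feeding it the small-rank bounds established above and using Lemma \ref{saving} to trim the relation count at the end. By Corollary \ref{covering relations} it suffices to bound $\hat r$ for one group in each central-isogeny class, and I take the simply connected Chevalley group $G_{\mathrm{sc}}$ of the given type over $\F_q$ (the finitely many groups whose Schur multiplier has an exceptional summand cost at most a bounded number of extra relations by Corollary \ref{covering relations}(2), and the tiny ones among them are covered by known presentations). The rank $1$ and rank $2$ groups are already done: Lemma \ref{rank one} gives $h(G)\le 3$ in rank $1$, hence $\hat r(G)\le 4$ by (1.4); Lemma \ref{rank 2 classical} gives $h(G)\le 4$ for the rank $2$ classical groups (so $\hat r(G)\le 5$) and Lemma \ref{rank 2 exceptional} gives $\hat r(G)\le 6$ in the rank $2$ exceptional cases; and Theorem \ref{sln theorem} gives $\hat r(\SL(n,q))\le 9$. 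So only the groups of rank $\ge 3$ other than those of type $A$ remain.

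For such a $G$ I would invoke the Curtis--Steinberg--Tits presentation (Lemma \ref{curtis}, or its twisted analogue): pick a node $\delta_0$ of the Dynkin diagram so that deleting it leaves a diagram whose connected components are of type $A$ and/or of rank $\le 3$ among the types already handled. For $\FFF,\EE_6,\EE_7,\EE_8$ delete the trivalent node (or, for $\FFF$, an end node, which leaves $C_3$, i.e.\ $\Sp_6(q)$, treated in Lemma \ref{some rank 3}); for ${}^2\EE_6$ the relevant subsystem subgroup is $\SU_6(q)$, again covered by Lemma \ref{some rank 3}; for the classical families $\Sp_{2n},\Omega_n^{\pm},\SU_n$ delete the special end node, leaving an $A_{n-1}$-type diagram. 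Then $G$ is generated by the type-$A$ (or type-${}^2A$) subsystem subgroup $G'$ --- an $\SL$ over $\F_q$, $\F_{q^2}$ or $\F_{q^3}$, presented profinitely with $\le 9$ relations by Theorem \ref{sln theorem} (one may instead use its alternating/monomial subgroup, presented with $\le 4$ relations, exactly as in the proof that $J\cong G$ in Section \ref{SLgen}) --- together with the extra rank-$1$ subgroup $L_{\delta_0}$ and the single rank-$\le 3$ end piece at $\delta_0$, presented with $\le 6$ relations by the results of Section \ref{low rank}. I then form $J$ from the disjoint union of these presentations plus amalgamation relations: the identification of the two copies of $L_{\delta_0}$, the identification of $\langle L_{\delta_0},L_\delta\rangle$ (for the neighbour $\delta$ of $\delta_0$) with the corresponding rank-$2$ subgroup of $G'$, and the commuting relations $[L_{\delta_0},L_\delta]=1$ for $\delta$ not adjacent to $\delta_0$.

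For the exceptional groups the commuting relations are already bounded in number, and reproducing the argument that $J\cong G$ in Section \ref{SLgen} --- now with the relevant Weyl group acting as a rank-$3$ permutation group on the root $\SL(2,q)$'s --- verifies that the surjection $J\to G$ is injective on each $\langle X_\alpha,X_\beta\rangle$, so Lemma \ref{curtis} gives $J\cong G$ up to center. For the classical families the number of commuting relations grows with the rank, and there I would use exactly the device of Section \ref{SLgen}: impose $[a,K]=1$ for a single generator $a$ of a conjugate of $L_{\delta_0}$ and a single two-generated subgroup $K$ of $G'$, then spread it out using a conjugating element together with the rank-$3$ action of the Weyl group on the root $\SL(2,q)$'s. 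In either case $J$ has a bounded number of generators and at most $9+6+c$ relations for an absolute constant $c$; Lemma \ref{saving} trades the surplus generators for fewer relations, the total comes out $\le 18$, and Corollary \ref{covering relations} then transfers the bound to every quasisimple $G$ with $G/Z(G)$ of the given type.

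The real work, and the step I expect to be the main obstacle, is the verification of the Curtis--Steinberg--Tits hypothesis for $J$ in each case --- choosing $\delta_0$, the conjugating element, and (for the classical $G$) the subgroup $K$ so that the handful of gluing relations genuinely forces the homomorphism to be injective on every rank-$2$ subgroup --- carried out for the exceptional diagrams $\FFF,\EE_6,\EE_7,\EE_8,{}^2\EE_6$, where there is no uniform ``permute the basis vectors'' picture and each diagram (together with its relevant subsystems) must be handled individually; keeping the relation count at $\le 18$ after this bookkeeping and after the application of Lemma \ref{saving} is what pins the constant down.
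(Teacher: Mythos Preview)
Your approach is the paper's approach --- Curtis--Steinberg--Tits gluing of a large type-$A$ piece $G_1$ and a small end piece $G_2$, followed by Lemma~\ref{saving} --- but you have overcomplicated the one step you flag as the obstacle, and your description of the amalgamation relations is slightly off.

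The paper does \emph{not} impose $[L_{\delta_0},L_\delta]=1$ for each non-adjacent $\delta$, and it does not need the rank-$3$ permutation device of Section~\ref{SLgen} here at all. Instead it sets $L_2=L_{\delta_0}$ and takes $L_1$ to be the \emph{full} centralizer of $L_2$ inside $G_1$; this $L_1$ is an $\SL$ (or, in type $D$, $\SL_2\times\SL_{n-2}$) and hence is $2$-generated, so $[L_1,L_2]=1$ costs exactly four relations regardless of rank. The only other gluing relations identify the two copies of the shared rank-$1$ subgroup $L=L_{\alpha_j}$ (the neighbour of $\delta_0$), not of $L_{\delta_0}$: two relations. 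The Curtis--Steinberg--Tits verification is then a one-line case split: any pair of simple root subgroups either both lie in $G_1$, both lie in $G_2$, or one is $L_{\delta_0}$ and the other lies in $L_1$ and so commutes with it. This yields $\hat r(G)\le \hat r(G_1)+\hat r(G_2)+6-2\le 9+5+4=18$ for the classical groups (with $G_2$ of type $B_2$ or $\SU_4,\SU_5$). For $\EE_n$ the same scheme works with $G_2$ of type $A_2$ and $L_1=A_2\times A_{n-4}$ (still $2$-generated); for $\FFF$ and ${}^2\EE_6$ one takes $G_1=\Sp_6(q)$ or $\SU_6(q)$ (Lemma~\ref{some rank 3}) and $G_2$ of type $A_2$, giving $\hat r(G)\le 14$. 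No case-by-case diagram chasing beyond this is needed.
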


\begin{proof}
 Let $G$ be the simply connected group of the given type
of rank $n$. By the results of the previous section,
we may assume that $n \ge 3$.  Consider the Dynkin diagram for $G$.
Let $\Pi$ be the set of simple roots and write 
$\Pi=\{ \alpha_1, \ldots, \alpha_n\}$.

First suppose that $G$ is a classical group.  We assume the
numbering of roots is such that the subsystem $\{ \alpha_1, \ldots, \alpha_{n-1}\}$
is of type $A_{n-1}$, $\alpha_n$ is an end node root and is connected
to only one simple root $\alpha_j$ in the Dynkin diagram
(in the typical numbering for a Dynkin diagram, $j=n-1$ except for
type $D$ when $j=n-2$).

Let $G_1$ be the subgroup generated by
the root subgroups $U_{\pm \alpha_i}, 1 \le i < n$.   
Let $G_2$ be the rank $2$
subgroup generated by the root subgroups  
$U_{\pm \alpha_n}, U_{\pm \alpha_j}$.  Let $L_2$ be the rank
$1$ subgroup corresponding to the simple root $\alpha_n$. 
Let $L_1$ be the subgroup of $G_1$ generated by the root subgroups
that commute with $L_2$.  Note that $L_1$ is an $\SL$  unless
$G$ has type $D_n$ in which case $L_2$ is of type $\SL(2) \times \SL(n-2)$.
Let $L$ be the
rank one subgroup generated by  $U_{ \pm \alpha_j}$.   
Let $\langle X | R \rangle$ be a presentation for $G_1$
and $\langle Y |  S  \rangle$ be a presentation for $G_2$
with $X$ and $Y$ disjoint.

We give a presentation for a group $J$ with generators
$X \cup Y$ and relations $R,S$, $[L_1,L_2]=1$ and we identify
the copies of $L$ in $G_1$ and $G_2$.  More precisely, 
take two generators for each $L_i$, express them as words
in $X$ and $Y$ and impose the four commutation relations. 
Similarly, take our two generators for $L$ and take two words each
in $X$ and $Y$ which map onto those generators of $L$ in $G$
and equate the corresponding words.

We claim $J \cong G$.  Clearly, $J$ surjects onto $G$.  Thus,
the subgroup generated by $X$ in this presentation can
be identified with $G_1$ and the subgroup generated by $Y$
can be identified with $G_2$.  Now $J$ is generated by the
simple root subgroups contained in $G_1$ or $G_2$.  Any two
of the these root subgroups (and their negatives) satisfy
the Curtis-Steinberg-Tits relations (for either they are both
in $G_1$ or $G_2$ or they commute by our relations since  $[L_1, L_2]=1$).  
By Lemma \ref{curtis}  $J$ is a homomorphic
image of the universal finite group of Lie type  of the given type,
and the claim follows.  

Note that the number of relations is $|R|+|S| + 6$ (since $4$ relations
are required to ensure that $[L_1,L_2]=1$ and $2$ relations to identify
the copies of $L$) and the number of generators
is $|X| +|Y|$.     Using
Lemma \ref{saving} and the fact that 
$G, G_1$ and $G_2$ are all $2$-generated,  
we see that 
$$
\hat{r}(G) \le \hat{r}(G_1) + \hat{r}(G_2) + 6 - 2.
$$

Now $G_1 \cong \SL$  and so satisfies $\hat{r}(G_1) \le 9$ by 
Corollary \ref{sln corollary}, 
and $G_2$ is either of type $B_2$ or $\SU(d,q)$ with $d=4$ or $5$.   In particular,
$\hat{r}(G_2) \le 5$ by Lemma \ref{rank 2 classical} and (1.4).
This gives $\hat{r}(G) \le 18$ as required, 
and also $\hat{r}(G/Z) \le 18$ for any central subgroup $Z$ of $G$ by
Corollary \ref{covering relations}.  

We now consider the exceptional groups.
The idea is essentially the same, but
we  have to modify the construction slightly.
If $G=\EE_n(q)$ with $6 \le n \le 8$,  $G_1$ still has type $A_{n-1}$,
$G_2$ has type $A_2$, but $L_1=A_2 \times A_{n-4}$, and $L_1$
is generated by $2$ elements, there is no difference in the 
analysis of the presentation.  Thus,  $\hat{r}(G) \le 18$.

If $G=\FFF$, we  take $G_1=C_3$ and $G_2=A_2$.
Then $L_1=A_1$.
Similarly, if $G={^2}\EE_6(q)$, then $G_1=\SU(6,q)$ and $G_2$ is of type $A_2$.
By Lemma \ref{some rank 3}, $\hat{r}(G_1) \le 7$.  Since $\hat{r}(G_2) \le 3$,
we see that $\hat{r}(G) \le 3 + 7 + 2 + 4 - 2 = 14$.  

Now let $G$ be a quasisimple group with $G/Z(G)$ a simple finite group
of Lie type.  If $G$ is a homomorphic image the universal Chevalley
group, then we have shown that in all cases $\hat{r}(G) \le 18$.
We need to consider the possibility that $G$ has a Schur multiplier whose
order divides the characteristic of $G$.  If $G/Z(G)$ is isomorphic
to  an  alternating group, we have already proved the result.
By  \cite[p. 313]{gls3} the only groups $G/Z(G)$ that remain to be considered
are $\PSL(3,2),  \PSL(3,4), \PSU(4,2), \PSU(6,2),$ \
 $ \Sp(6,2),
 \Sz(8), \POmega^+(8,2),\G_2(4), \FFF(2)$ and ${^2}\EE_6(2)$.
In all these cases, we have shown that $\hat{r}(G/Z) \le 14$ 
Thus, $\hat{r}(G) \le 15$ by Corollary \ref{covering relations}.
\end{proof}

\section{Sporadic Groups} \label{sporadic}

Now let $G$ be a quasisimple sporadic group
and $M$ an irreducible $\F_pG$-module.   In this section, we prove:

\begin{theorem} \label{sporadic theorem}  
Let $G$ be a finite quasisimple group
with $G/Z(G)$ a sporadic simple group.  Then
  $G$ has a profinite presentation with $2$ generators and $18$ relations, and
 $\dim H^2(G,M) \le (17.5)\dim M$ for any $FG$-module $M$.
\end{theorem}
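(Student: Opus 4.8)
The strategy is to deduce everything from the single fact that $\hat r(S)\le 17$ for each of the twenty-six sporadic \emph{simple} groups $S$. Granting this, Corollary~\ref{covering relations}(2) gives $\hat r(G)\le\hat r(S)+1\le 18$ for every quasisimple $G$ with $G/Z(G)\cong S$; since by Lemma~\ref{saving} the minimal number of profinite relations of $G$ is achieved on a presentation with $d(G)=2$ generators (\cite[Theorem B]{ag}), this furnishes the desired presentation. For the cohomology bound, the lower estimate in (1.4) applied to $\hat r(G)\le 18$ yields $h'(G)\le\hat r(G)-\tfrac12\le 17.5$, i.e.\ $\dim H^2(G,M)\le 17.5\dim M$ for every nontrivial irreducible $\F_pG$-module $M$ and every $p$; by Lemma~\ref{extension of scalars} the same bound holds over any field, a trivial module contributes a ratio at most the $p$-rank of the Schur multiplier of $G$, which is $\le 2$ \cite{gls3}, and Lemma~\ref{cohomology sequence} (passing to composition factors) then gives $\dim H^2(G,M)\le 17.5\dim M$ for an arbitrary $FG$-module $M$.

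So the task reduces to proving $\hat r(S)\le 17$; by (1.1), using $d(S)=2$ and $\dim H^2(S,\F_p)\le 2$ \cite{gls3}, it suffices to establish $h'_p(S)\le 16$ for every prime $p$. There are two complementary tools. First, \cite{rwilson} records an explicit discrete presentation of $S$, say on $g$ generators and $e$ relations; by Lemma~\ref{saving} this converts to a profinite presentation on $d(S)=2$ generators and $e-(g-2)$ relations, so $\hat r(S)\le e\le 17$, and in fact the presentations on record all have $e$ far below $17$. Second, Holt's theorem \cite{holt}: since $S$ is simple, every nontrivial irreducible $\F_pS$-module is faithful, so if a Sylow $p$-subgroup of $S$ has order $p^{m}$ then $h'_p(S)\le 2m$, which is $\le 16$ whenever $m\le 8$. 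This disposes of all primes $p\ge 5$, of $p=3$, and of $p=2$ for the smaller groups. For the finitely many remaining situations --- essentially $p=2$ (and a handful of instances of $p=3$) for the large groups such as $M_{24}$, the Conway and Fischer groups, $\mathbb{B}$, $\mathbb{M}$, $\mathrm{HN}$, $\mathrm{Th}$, $\mathrm{Ru}$, $\mathrm{Suz}$ and $J_4$ --- one either quotes the (short) presentation from \cite{rwilson} or argues as in Sections~\ref{SLlow}--\ref{low rank}: by Lemma~\ref{restriction} pass to a $p$-local subgroup $H\le S$ containing a Sylow $p$-subgroup and carrying a normal series with few cohomologically tame factors, then iterate Lemma~\ref{usual} (or Lemma~\ref{spectral}) down to alternating groups, low-rank linear groups and abelian groups, all of which were bounded earlier.

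The obstacle here is not conceptual but organizational: one must push this verification through all twenty-six sporadic simple groups, and, for the $H^2$ statement, through their quasisimple covers --- the factors of $12\cdot M_{22}$, $6\cdot\mathrm{Suz}$, $6\cdot\mathrm{Fi}_{22}$, $2\cdot\mathbb{B}$ and so on cost only the one extra relation allowed by Corollary~\ref{covering relations}(2), because in a fixed characteristic $p$ only the $p$-part of the centre matters and a central $p'$-subgroup is removed by Lemma~\ref{covering lemma}(2). The genuinely delicate entries are the large groups at $p=2$, where Holt's estimate is vacuous and one is thrown back either on an explicit presentation or on a hands-on local computation; fortunately every sporadic group is known to have a presentation with far fewer than $17$ relations, so the bound goes through with a great deal of slack, which is the reason the constants $18$ and $17.5$ are surely far from optimal in this case.
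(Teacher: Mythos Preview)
Your approach is essentially the paper's: use Holt's bound $h'_p(S)\le 2e_p(S)$ and the explicit presentations in \cite{rwilson} to dispose of most $(S,p)$, then fall back on restriction to a $p$-local subgroup for the residue. Your organization via the intermediate target $\hat r(S)\le 17$ (then Corollary~\ref{covering relations}(2) and (1.4)) is a clean repackaging of the same logic.

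The one genuine gap is your closing claim that ``every sporadic group is known to have a presentation with far fewer than $17$ relations.'' This is precisely what the paper does \emph{not} assume: it says explicitly that neither Holt's bound nor the presentations in \cite{rwilson} suffice for a residual list of pairs $(S,p)$ --- essentially $p=2$ for $Co_3$, $Co_2$, $Co_1$, $He$, $Fi_{22}$, $Fi_{23}$, $Fi_{24}'$, $Suz$, $J_4$, $HN$, $Th$, $B$, $M$, and $p=3$ for $Fi_{23}$, $Fi_{24}'$, $B$, $M$. For these the local analysis is not an optional alternative but the only route, and the paper carries it out concretely: for each such $S$ it names a specific subgroup $H$ containing a Sylow $p$-subgroup (e.g.\ $H=2^{11}.M_{24}$ inside $Co_1$, or $H=2^{1+24}.Co_1$ inside the Monster), then bounds $\dim H^2(H,V)$ by iterating Lemma~\ref{usual} through the normal $p$-subgroup and invoking the already-established bounds for $A_8$, $M_{24}$, $\Sp(6,2)$, $U_4(2)$, $O^\pm(8,3)$ and so on. Your gesture toward ``argue as in Sections~\ref{SLlow}--\ref{low rank}'' is the right idea, but it needs to be made concrete with the specific subgroups, since short discrete presentations for $B$ and $M$ were not available in \cite{rwilson} at the time.
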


  One can certainly
prove better bounds.   We use the main result of Holt \cite{holt} to 
see that $\dim H^2(G/Z,M) \le 2e_p(G/Z)\dim M$,  where $p^{e_p(G)}$
is the order of a Sylow $p$-subgroup of $G$.  Also, for many
of the groups, there is a presentation with less than $18$
relations (see \cite{rwilson}), whence the results follow
(note that in all cases the Schur multiplier is cyclic \cite[p. 313]{gls3}).

So we only need to deal with those sporadic groups (and their
covering groups) where neither of these arguments suffices.
The only cases to consider are $p=2$ and a  few cases
for $p=3$.  

In these cases, it is more convenient to work with the simple group
rather than the covering group.

Table 1 lists the cases that are not covered by Holt's
result or by the presentations given in \cite{rwilson}.
We give the structure of a subgroup $H$ of the simple
group $S:=G/Z$ that contains a Sylow $p$-subgroup of $S$
in order to apply Lemma \ref{restriction}.

Let $G=Co_1$ and let $N=O_p(H)$.   
 
Note that   a Sylow $2$-subgroup of $M_{24}$
is contained in a subgroup isomorphic to $2^4A_8$.   Using the results
for $A_8$ and the computations in \cite{luxstudent}, we see that
$\dim H^2(M_{24}, M) \le \dim M$.
The standard arguments now yield
$\dim H^2(H,M) \le 3 \dim M$ for $M$ an $\F_2H$-module where
$H = 2^{11}M_{24}$, and therefore we obtain the same bound for $G$.
Similar computations using the subgroups
in the table show that the results hold  in all the remaining cases.\\

 \begin{table}
 \centerline{TABLE 1}
$$
\begin{array}{lccc}
G/Z     & |Z|_p  &  p & H       \\
\hline &&& \\
Co_3    &  1 & 2 &      2^4\cdot A_8 \\
Co_2    &  1 & 2 &     2^{1+8}\cdot Sp(6,2)\\
Co_1    &  2 & 2 &     2^{11} \cdot M_{24} \\
He      &  1 & 2 &     2^6 \cdot 3.S_6    \\
Fi_{22} &  2 & 2 &     2^{10}\cdot M_{22} \\
Fi_{23} &  1 & 2 &     2\cdot Fi_{22} \\
Fi_{24}'&  1 & 2 &     2^{11} \cdot M_{24} \\
Suz     &  2 & 2 &      2^{1+6}\cdot U_4(2)\\
J_4     &  1 & 2 &     2^{11} \cdot M_{24} \\
HN      &  1 & 2 &     2^{1+8}\cdot (A_5 \times A_5).2       \\    
Th      &  1 & 2 &     2^5\cdot L_5(2)                \\
B       &  2 & 2 &     2^{1 + 22} \cdot Co_2 \\
M       &  1 & 2 &     2^{1+24} \cdot Co_1  \\
\hline &&& \\
Fi_{23}  & 1  & 3  &   O^+(8,3)\cdot 3\\
Fi_{24}' & 3  & 3  &   3^{1+10}\cdot U(5,2) \\
B        & 1  & 3  &   3^{1+8}\cdot 2^{1+6}\cdot U(4,2) \\
M        & 1  & 3  &   3^8 \cdot O^{-}(8,3)\\
\hline &&&
\end{array}
$$
\end{table}

By (1.1), this completes the proof of Theorem \ref{sporadic theorem}.

\section{Higher Cohomology}  \label{higher cohomology}

We have seen that  $\dim H^k(G,M) \le C \dim M$ for $M$ a faithful
irreducible $FG$-module and $k \le 2$.  
In fact, it is unknown whether there is an absolute
bound $C_k$  for $\dim H^k(G,M)$ for $M$ an absolutely irreducible 
$FG$-module with
$G$ simple.  It was conjectured by the first author over 
twenty years ago that this
was the case for $k=1$.  Indeed, there are no examples known
with  $\dim H^1(G,M) > 3$ for $M$
an absolutely irreducible $FG$-module and $G$ a 
finite simple group.   So we ask again:

\begin{question}  \label{absolute bound}
 For which $k$ is it true that there is an absolute
constant $C_k$ such that $\dim H^k(G,V) < C_k$ for  all absolutely
irreducible $FG$-modules $V$ and all finite simple groups $G$  with $F$ 
an algebraically closed field (of any characteristic)?
\end{question}

See \cite{cps} for some recent evidence related to this conjecture.

A slightly weaker version of this question for $k=1$ is relevant to
an old conjecture of Wall.   His conjecture is that the number of
maximal subgroups of a finite group $G$ is less than $|G|$.  If we
consider groups of the form $VH$ with $V$ an irreducible $\F_pH$-module,
a special case of Wall's conjecture (and likely the hardest case is):

\begin{question}  If $V$ is an irreducible $\F_pG$-module with $G$
finite, is $|H^1(G,V)| < |G|$?
\end{question}

This  is true for $G$ solvable \cite{wall},  and in that case is essentially
equivalent to Wall's conjecture.

We now give some examples to show that the analog of Theorem C
does not hold for $H^k, k > 2$.  

Let $F$ be an algebraically closed field of characteristic $p > 0$.

Let $S$ be a nonabelian finite simple group such that $p$ divides
both the order of $S$ and the order of its outer automorphism group.
Let $L$ be a subgroup of $\aut(S)$ containing $S$ with $L/S$
of order $p$.      Let $W$ be an irreducible $FS$-module
with $H^1(S,W) \ne 0$.  Note that if $x \in S$  has order $p$,
then all Jordan blocks of $x$ have size $p$ in
any projective $FS$-module. In particular, 
 the trivial module is not projective and so $H^1(S,W) \ne 0$
for some irreducible module $W$.  Obviously such a $W$ can not be the trivial
module.
   Let $U=W_S^L$.  Then either
$U$ is irreducible and $H^1(L,U) \cong H^1(S,W) \ne 0$ by Lemma \ref{shapiro},
or each of the $p$ composition factors of $U$ (as an $L$-module) is
 isomorphic to $W$
as  $FS$-modules.  Since $H^1(L,U) \ne 0$, some irreducible $L$-composition
factor of $U$ also has nontrivial $H^1$ by Lemma \ref{cohomology sequence}.
In either case, we see that there exists an irreducible faithful 
$FL$-module $V$
with $H^1(L,V) \ne 0$ and $H^1(L,F) \cong F$.

Let $G=L \wr C_t$ and let $N < G$ be the direct product
$L_1 \times \cdots \times L_t$ with $L_i \cong L$.   
Let $X=V \otimes F \cdots \otimes F$.  So
$X$ is an irreducible $FN$-module.   

By Lemma \ref{kunneth},  for $k \ge 3$, 
$$
\dim H^k(N,X) \ge \dim H^1(L,V) \cdot \binom{t-1}{k-1}
\ge c_kt^{k-1}.
$$
for some constant $c_k$.
   Thus,  for $t$ sufficiently large,   $\dim H^k(N,X) > d_kt^{k-1} \dim X$
   for the constant $d_k := (c_k \dim H^1(L,V))/(\dim V))$.
   
 Similarly, $\dim H^2(N,X) = \dim H^2(L,V) + (t-1) \dim H^1(L,V) \ge t-1$.

Now let $M = X_N^G$.   By Lemma \ref{shapiro}, 
$\dim H^k(G,M) = \dim H^k(N,X)$.

We record the following consequence for $k=2$.
 
 \begin{theorem} \label{example 2 lemma}  Let $F$ be an algebraically
 closed field of characteristic $p > 0$.  There is a constant $e_p > 0$
 such that if $d$ is a positive integer,  then there exist a finite group
 $G$ and an irreducible faithful $FG$-module with 
 $\dim H^2(G,M)  \ge e_p \dim M$ and $\dim M > d$.
 \end{theorem}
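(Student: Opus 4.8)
The statement is the $k=2$ instance of the construction carried out in the paragraphs immediately above it, so the plan is to specialize that construction and then choose the parameter $t$ appropriately. The one thing not literally supplied by the surrounding text is that the scheme can be started, i.e. that for \emph{every} prime $p$ there is a nonabelian finite simple group $S$ with $p$ dividing both $|S|$ and $|\mathrm{Out}(S)|$. I would take $S=\mathrm{PSL}(2,p^p)$ (simple since $p^p>3$): it has a field automorphism of order $p$, so $p\mid|\mathrm{Out}(S)|$, and $p\mid|S|$ trivially. Fix $L\le\mathrm{Aut}(S)$ with $L/S$ of order $p$; since $S$ is perfect, $H^1(S,F)=0$, so inflation--restriction gives $H^1(L,F)\cong H^1(L/S,F)\cong F$. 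As $L$ has an element of order $p$, the trivial module $F$ is not projective over $FL$, so some irreducible $FS$-module $W$ has $H^1(S,W)\ne0$; such $W$ is faithful (as $S$ is simple), and by the long exact sequence (Lemma \ref{cohomology sequence}) together with Shapiro's Lemma (Lemma \ref{shapiro}) applied to $W_S^L$ one extracts an irreducible $FL$-module $V$, faithful because $V|_S$ is a sum of copies of $W$, with $H^1(L,V)\ne0$.

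Next, for a parameter $t\ge 2$, set $G=L\wr C_t$, let $N=L_1\times\cdots\times L_t$ be the base group, put $X=V\otimes F\otimes\cdots\otimes F$ with $V$ in the first tensor factor, and let $M=X_N^G$. Then $M$ is irreducible and faithful for $G$: the inertia group of $X$ in $G$ is exactly $N$ (a nontrivial cyclic shift moves $V$ to a different coordinate, and $V\not\cong F$), so $M$ is irreducible by Clifford theory; the $G$-conjugates of $X$ together exhibit every $L_i$ acting faithfully on $M|_N$, so $\ker_G(M)\cap N=1$, and every element of $G\setminus N$ permutes the cosets $G/N$ freely, hence moves the summands of $M|_N$ nontrivially, so $\ker_G(M)=1$. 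By the K\"unneth formula (Lemma \ref{kunneth}(1)), using $H^0(L,V)=V^L=0$ and $\dim H^1(L,F)=1$, the only surviving terms give
$$
\dim H^2(N,X)=\dim H^2(L,V)+(t-1)\dim H^1(L,V)\ \ge\ (t-1)\dim H^1(L,V),
$$
and $\dim H^2(G,M)=\dim H^2(N,X)$ by Shapiro's Lemma. Since $\dim M=[G:N]\dim X=t\,\dim V$, we obtain
$$
\frac{\dim H^2(G,M)}{\dim M}\ \ge\ \frac{(t-1)\dim H^1(L,V)}{t\,\dim V}\ \ge\ \frac{\dim H^1(L,V)}{2\,\dim V}\ =:\ e_p\ >\ 0
$$
for every $t\ge2$. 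Finally, given any positive integer $d$, choosing $t>d/\dim V$ forces $\dim M=t\,\dim V>d$, which proves the claim with this choice of $e_p$ (depending only on the fixed triple $S,L,V$ attached to $p$).

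The only genuine content beyond bookkeeping is the existence of $S$ with both $|S|$ and $|\mathrm{Out}(S)|$ divisible by $p$ — the hypothesis under which the construction above operates — and I expect that (handled by the family $\mathrm{PSL}(2,p^p)$, or alternatively $\mathrm{PSL}(2,\ell^p)$ with $\ell\equiv1\pmod p$ via Dirichlet) to be the main point to pin down. Everything afterwards — the K\"unneth count, the Clifford-theoretic check that $M$ is irreducible and faithful, and the elementary estimate of the ratio — is routine and is already sketched in the text preceding the statement.
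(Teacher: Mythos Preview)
Your proof is correct and follows exactly the construction the paper lays out in the paragraphs preceding the theorem; your explicit choice $S=\PSL(2,p^p)$ usefully fills the one point the paper leaves implicit, namely that such $S$ exists for every prime $p$. One small slip: when producing $W$ with $H^1(S,W)\ne0$ you want the trivial module to be nonprojective over $FS$ (using $p\mid|S|$), not over $FL$ --- the conclusion is about $S$, so the projectivity argument should be run there.
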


In particular, we see that $\dim H^2(G,M)$ can be arbitrarily
large for $M$ an irreducible faithful $FG$-module in any
characteristic.    Another way of stating the previous result is
that for a fixed $p$, 
$$
u(p):=\limsup_{\dim M \rightarrow \infty}  \frac{ \dim H^2(G,M) }{ \dim M } > 0.
$$
Here we are allowing any finite group $G$ with $M$ any irreducible faithful
$\F_pG$-module.  The scarce evidence suggests:

\begin{conjecture} $\lim_{p \rightarrow \infty} u(p)=0$.
\end{conjecture}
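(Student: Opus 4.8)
The plan is to bound $u(p)$ from above by quantities attached to simple groups and then to let $p\to\infty$. Write $v(p)=\sup_L h_1^{(p)}(L)$ and $w(p)=\sup_L h_2^{(p)}(L)$, where $h_i^{(p)}(L)=\max_M\dim H^i(L,M)/\dim M$ over nontrivial irreducible $\F_pL$-modules and $L$ ranges over the nonabelian finite simple groups with $p\mid|L|$ (only these can occur, since a faithful module with nonvanishing cohomology is nonprojective). I would first prove
$$
u(p)\le 3v(p)+w(p),
$$
after which it suffices to show $v(p)\to0$ and $w(p)\to0$. For the inequality, take $G$ finite and $M$ irreducible, faithful with $H^2(G,M)\ne0$, so Lemma \ref{faithful lemma} applies. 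If $G$ has several minimal normal subgroups, Lemma \ref{kunneth}(2) forces exactly two components to act nontrivially on an irreducible $N$-submodule; repeating the Künneth computation from the proof of Lemma \ref{faithful lemma}(4) but tracking $\dim H^1(L,W)\le h_1^{(p)}(L)\dim W$ gives $\dim H^2(G,M)\le v(p)^2\dim M$. If $G$ has a unique minimal normal subgroup $N=L^t$, the characteristic-$p$ version of Lemma \ref{faithful lemma}(5b) --- its proof only uses cohomology of $L$ on $\F_p$-modules, together with $o_p(L)\le2$ for $p$ odd (Lemma \ref{out}) --- gives $\dim H^2(G,M)/\dim M\le h_1^{(p)}(L)(o_p(L)+\tfrac12)+h_2^{(p)}(L)/t\le\tfrac52 v(p)+w(p)$. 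Combining, and using $v(p)\le\tfrac12$, yields the display. (The matching lower bound $u(p)\gtrsim v(p)$ follows from the wreath-product construction of Section \ref{higher cohomology}, which is why $v(p),w(p)$ are the right objects.)

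Second, I would dispose of the two small families. Sporadic $L$ with $p\mid|L|$: below any bound on $p$ there are only finitely many such pairs $(L,p)$, so they contribute nothing to $\limsup_{p\to\infty}$. Alternating and symmetric groups with $p\le n$: for $p>3$, Theorem \ref{H1alt}(4) and Theorem \ref{H2alt}(1) give $h_1^{(p)}(A_n)\le 1/(p-2)$ and $h_2^{(p)}(A_n)\le 1/(p-2)$ (and likewise for $S_n$), \emph{uniformly in $n$}; so the alternating/symmetric contribution to $v(p)$ and $w(p)$ is $O(1/p)$.

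Third, the groups of Lie type $L=L_0(q)$ with $p\mid|L|$ divide into cross characteristic ($q=r^e$, $r\ne p$) and defining characteristic ($q=p^e$). In cross characteristic, $p\mid|L_0(r^e)|$ forces $p\mid r^k-1$ for some $k$ at most (roughly) the rank times $e$, so a large $p$ forces the untwisted rank or the field size to be large, and the minimal dimension of a faithful cross-characteristic module of $L$ then grows with $p$; I would feed this into the cross-characteristic $H^1$ estimates behind \cite{hoffman} (together with Guralnick--Tiep-type bounds $\dim H^1(L,M)/\dim M\to0$ as the minimal faithful dimension grows), and the $H^2$ estimates deduced from them by restricting to a Sylow-containing subgroup and applying Lemma \ref{usual}, exactly as in Section \ref{low rank}. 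In defining characteristic, for a \emph{fixed} root datum $L_0$ one expects $h_i^{(p)}(L_0(\F_{p^e}))\to0$ as $p\to\infty$ for $i=1,2$: by generic cohomology (Cline--Parshall--Scott--van der Kallen and its refinements) the nonvanishing of $H^i(L_0(\F_{p^e}),L(\lambda))$ for nontrivial $L(\lambda)$ forces the high weight $\lambda$ into a bounded family near the top of the restricted region, which forces $\dim L(\lambda)$ to grow at least like a fixed positive power of $p$ while $\dim H^i$ stays bounded; dividing gives decay $O_{L_0}(p^{-c})$. This last point is already established in full only in low rank.

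The main obstacle is precisely the uniformity in the defining-characteristic case: the constants in the generic-cohomology estimates depend on $L_0$, and as $p\to\infty$ one must simultaneously control $L_0=\SL(n,p),\Sp(2n,p),\dots$ with $n$ growing. After the reductions above, the conjecture is equivalent to a bound $\dim H^i(L,M)\le\varepsilon(p)\dim M$ for $i=1,2$, valid for \emph{all} defining-characteristic simple $L$ of order divisible by $p$, with $\varepsilon(p)\to0$ and no dependence on the rank. No such rank-uniform statement is available; it is closely tied to Question \ref{absolute bound} (a uniform bound on $\dim H^1$), and would presumably require either a rank-uniform form of generic cohomology or a direct argument bounding, for each $p$, the supremum of $\dim H^i(L,M)/\dim M$ over all such $L$ --- perhaps via the relation-module formula (1.1) if one could show $\hat r(L)\to2$ with quantitative control. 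I do not see how to remove this obstacle with the present methods, which is consistent with the assertion being stated only as a conjecture.
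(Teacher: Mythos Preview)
The paper does not prove this statement: it is explicitly presented as a conjecture (``The scarce evidence suggests\ldots''), with no argument offered beyond the lower bound $u(p)>0$ coming from Theorem~\ref{example 2 lemma}. You have correctly identified this, and your final paragraph says exactly the right thing --- the obstruction is the lack of a rank-uniform bound in defining characteristic, and the paper's methods do not provide one.

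Your reduction $u(p)\le 3v(p)+w(p)$ is a reasonable and natural reformulation, and the treatment of sporadic and alternating groups is fine. The cross-characteristic sketch is plausible but would need care: the Landazuri--Seitz bounds do give growing minimal dimensions, but turning that into decay of $h_2^{(p)}$ uniformly over all Lie-type groups is itself nontrivial (the arguments in Section~\ref{low rank} are case-by-case and do not obviously give constants tending to zero). Still, the real gap is the one you isolate: in defining characteristic, generic cohomology gives decay for each fixed root datum, but the implied constants depend on the rank, and as $p\to\infty$ one must control arbitrarily large rank simultaneously. This is open and is closely related to Question~\ref{absolute bound}. So your proposal is a correct diagnosis of why this is a conjecture rather than a theorem, not a proof --- and you say so yourself.
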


If $k > 2$, we obtain:

\begin{lemma}  \label{example lemma}  Keep notation as above.
\begin{enumerate}
\item $M$ is an irreducible faithful $\F_pG$-module with $\dim M = t \dim X$.
\item $\dim H^{k}(G,M) \ge d_kt^{k-2} \dim M$.  
\item There exists a constant $e_k >0 $ such
that $\dim H^k(G,M) \ge e_k(\dim M)^{k-1}$. 
\end{enumerate}
\end{lemma}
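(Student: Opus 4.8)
The plan is to prove the three assertions in turn, pushing the cohomology computation on $G$ down to the base group $N=L_1\times\cdots\times L_t$ via Shapiro's Lemma (Lemma \ref{shapiro}) and reading off the estimate from the K\"unneth formula (Lemma \ref{kunneth}). Recall $X=V\otimes F\otimes\cdots\otimes F$ with $V$ in the first position, $M=X_N^G$, and $[G:N]=t$.

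For part (1): since $G/N\cong C_t$ permutes the factors $L_i$ cyclically, any coset representative $g\notin N$ carries $X$ to an $N$-module with $V$ in a position other than the first, and this is not isomorphic to $X$ because $V$ is nontrivial. Hence the inertia group of $X$ in $G$ equals $N$, and Clifford theory shows $M=X_N^G$ is irreducible; its dimension is $[G:N]\dim X=t\dim V$ (so also $\dim X=\dim V$). For faithfulness, note $Z(L)=1$ since $L$ lies between $\mathrm{Inn}(S)\cong S$ and $\aut(S)$ and $C_{\aut(S)}(\mathrm{Inn}(S))=1$; therefore $C_G(N)=1$ in the wreath product. Restricting $M$ to $N$ gives the direct sum of the $G$-conjugates of $X$, which are faithful on $L_1,L_2,\dots,L_t$ respectively, so $N$ acts faithfully on $M$. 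If $K\trianglelefteq G$ with $K\cap N=1$ then $[K,N]\le K\cap N=1$, so $K\le C_G(N)=1$; applying this to $K=\ker(G\text{ on }M)$, which meets $N$ trivially, gives faithfulness of $M$.

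For part (2): Shapiro's Lemma gives $H^k(G,M)\cong H^k(N,X)$, and by Lemma \ref{kunneth}(1) this is the direct sum over tuples $(e_1,\dots,e_t)$ of nonnegative integers with $\sum e_i=k$ of $H^{e_1}(L_1,V)\otimes\bigotimes_{i\ge 2}H^{e_i}(L_i,F)$. Keeping only the $\binom{t-1}{k-1}$ tuples with $e_1=1$ and exactly $k-1$ of $e_2,\dots,e_t$ equal to $1$, and using $H^0(L,F)=F$ together with $H^1(L,F)\cong F$ (which holds because $S$ is perfect while $L$ is not, as $L/S\cong C_p$), each such summand has dimension $\dim H^1(L,V)$, whence
\[
\dim H^k(G,M)=\dim H^k(N,X)\ \ge\ \dim H^1(L,V)\binom{t-1}{k-1}.
\]
Choosing $c_k=c_k(k)>0$ with $\binom{t-1}{k-1}\ge c_k\,t^{k-1}$ for all large $t$ (e.g. $c_k=2^{-(k-1)}/(k-1)!$ works once $t\ge 2k$) and setting $d_k:=c_k\dim H^1(L,V)/\dim V>0$, the right-hand side is at least $d_k\,t^{k-1}\dim V=d_k\,t^{k-2}\dim M$, which is (2). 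Part (3) is then immediate: since $t=\dim M/\dim V$, the bound of (2) becomes $\dim H^k(G,M)\ge d_k(\dim M/\dim V)^{k-2}\dim M=e_k(\dim M)^{k-1}$ with $e_k:=d_k/(\dim V)^{k-2}>0$.

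The heart of the argument is the K\"unneth bookkeeping in part (2), which is routine; the one place that needs care is part (1), namely checking that the inertia group of $X$ is $N$ and that $C_G(N)=1$, which jointly force irreducibility and faithfulness of $X_N^G$. All the structural inputs used — $H^0(L,F)=F$ (from $S$ perfect), $H^1(L,F)\ne 0$ (from $L/S\cong C_p$), and $H^1(L,V)\ne 0$ by construction — are already established, so no genuine obstacle remains.
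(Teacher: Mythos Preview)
Your proof is correct and takes the same approach as the paper: Shapiro's Lemma reduces $H^k(G,M)$ to $H^k(N,X)$, and the K\"unneth count of the $\binom{t-1}{k-1}$ summands with $e_1=1$ and $k-1$ further ones gives the lower bound, from which (3) follows by substituting $t=\dim M/\dim V$. Your faithfulness argument in (1) via $C_G(N)=1$ is in fact cleaner than the paper's, which somewhat loosely calls $N$ the unique minimal normal subgroup of $G$ (strictly speaking it is $S^t$, though the conclusion is unaffected).
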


\begin{proof}  Note that $M$ is a direct sum of $t$ nonisomorphic irreducible
$FN$-modules that are permuted by $G$ and so $M$ is irreducible.  
Since $N$ is the unique minimal normal subgroup of $G$ and does not act 
trivially
on $M$, $G$ acts faithfully on $M$.  
Now (2) follows
by the discussion above and by Lemma \ref{shapiro}.  
Similarly, (3) follows with $e_k = c_k/(\dim V)^{k-1}$.  
\end{proof}

So we have shown:

\begin{theorem} \label{higher coh thm} Let $k$ be a positive integer. 
 If $k \ge 3$, there exist finite groups $G$ and faithful absolutely 
irreducible $FG$-modules
$M$ with $\dim H^k(G,M)/(\dim M)^{k-2}$ arbitrarily large.
\end{theorem}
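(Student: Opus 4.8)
The plan is to read the theorem straight off Lemma \ref{example lemma}, whose proof already contains the entire construction; what remains is only to check that the two quantities in the statement grow at the stated relative rate. Fix an integer $k \ge 3$. Recall from the construction preceding Lemma \ref{example lemma} the choice of a nonabelian finite simple group $S$ with $p$ dividing both $|S|$ and $|\mathrm{Out}(S)|$, the almost simple group $L$ with $S \le L$ and $|L:S| = p$, and a faithful irreducible $FL$-module $V$ with $H^1(L,V) \ne 0$ and $\dim H^1(L,F) = 1$ (the existence of such a $V$ was justified there, using that the trivial module is non-projective together with Shapiro's Lemma). Then one sets $G = L \wr C_t$, takes $N = L_1 \times \cdots \times L_t$ to be the base group, lets $X$ be the irreducible $FN$-module on which $L_1$ acts as $V$ and the remaining $L_i$ act trivially, and puts $M = X_N^G$.

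By Lemma \ref{example lemma}(1), $M$ is an absolutely irreducible faithful $FG$-module with $\dim M = t\dim X = t\dim V$, so $\dim M \to \infty$ as $t \to \infty$; and by Lemma \ref{example lemma}(3) there is a constant $e_k > 0$, depending only on $k$, $p$ and the fixed data $S, L, V$ (in particular independent of $t$), with $\dim H^k(G,M) \ge e_k(\dim M)^{k-1}$. Dividing gives
$$\frac{\dim H^k(G,M)}{(\dim M)^{k-2}} \ \ge\ e_k\,(\dim M)^{(k-1)-(k-2)} \ =\ e_k\dim M \ =\ e_k\, t\, \dim V,$$
and the right-hand side tends to infinity with $t$. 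Thus, letting $t$ run over $\mathbb{N}$, the family of pairs $(G,M)$ produced this way witnesses the claim, and the proof is complete.

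There is no real obstacle left at this point: the substantive steps — the Künneth lower bound $\dim H^k(N,X) \ge \dim H^1(L,V)\binom{t-1}{k-1} \ge c_k t^{k-1}$, the transfer to $G$ via Shapiro's Lemma, and the verification that $M$ is faithful and absolutely irreducible — are all carried out in Lemma \ref{example lemma} and the paragraphs preceding it. The only point worth a second glance is that the implied constant does not depend on $t$, which is clear since it is assembled from $\dim H^1(L,V)$, $\dim V$, and the asymptotics of $\binom{t-1}{k-1}$, all fixed once $S$, $L$ and $V$ are chosen. Alternatively, one may bypass part (3) and use Lemma \ref{example lemma}(2): from $\dim H^k(G,M) \ge d_k t^{k-2}\dim M$ one obtains $\dim H^k(G,M)/(\dim M)^{k-2} \ge d_k t^{k-2}(\dim M)^{3-k} = d_k(\dim V)^{3-k}\,t \to \infty$.
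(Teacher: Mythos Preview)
Your proposal is correct and follows exactly the paper's approach: the paper's proof of Theorem \ref{higher coh thm} is simply the sentence ``So we have shown:'', with all content already contained in Lemma \ref{example lemma} and the construction preceding it. Your derivation of $\dim H^k(G,M)/(\dim M)^{k-2} \ge e_k \dim M \to \infty$ from part (3) (and the alternative via part (2)) is precisely what is implicit there.
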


 Our reduction methods in Section \ref{faithful}
   give very weak bounds for the dimension of $H^k(G,M)$
 with $M$ faithful and irreducible in terms of the bounds for
 the simple groups.
We ask whether our examples are the best possible:

\begin{question}   For which positive integers $k$ is it true that there is an absolute
constant $d_k$ such that $\dim H^k(G,V) < d_k(\dim V)^{k-1}$ for  
all absolutely
irreducible faithful $FG$-modules $V$ and all finite groups $G$  with $F$ 
an algebraically closed field (of any characteristic)?
\end{question}

For $k=1$, the question reduces to the case of simple groups.
Theorem C says that we can take $d_2=18.5$.

\section{Profinite Versus Discrete Presentations} \label{applications}

In this section, we consider discrete and profinite presentations for 
finite groups.
Recall that $r(G)$ (respectively $\hat{r}(G)$) denotes the minimal number of relations
required in a presentation (respectively profinite presentation) of a finite group $G$.
In fact, if $G=F/N$ is a discrete presentation of $G$ (i.e. $F$ is a free group), then 
$G=\hat{F}/\bar{N}$, where $\hat{F}$ is the profinite completion of $F$ and 
$\bar{N}$ is the closure of $N$ in $\hat{F}$.
So $\hat{F}/\bar{N}$ is a profinite presentation for $G$.
Indeed, every profinite presentation of $G$ can be obtained this way.

Let $R=N/[N,N]$ and for a prime $p$, set $R(p)=N/[N,N]N^{p}$.
So $R$ (resp. $R(p)$) is the relation (resp. $p$-relation) module of $G$
with respect to the given presentation. Denote by $d_F(N)$ the minimal number
of generators required for $N$ as a normal subgroup of $F$ and $d_G(R)$
(resp. $d_G(R(p))$
the minimal number of generators required for $R$ (resp. $R(p)$)
as a $\mathbb{Z}G$-module. Similarly,
define $\hat{d}_{\hat{F}}(\bar{N})$ to be the minimal number of generators
required for $\bar{N}$ as a closed normal subgroup of $\hat{F}$.

A theorem of Swan \cite[Theorem 7.8]{relmod} asserts that 
$d_G(R) = \max_p d_G(R(p))$,  and Lubotzky \cite{lub1} showed that
$\hat{d}_{\hat{F}}(\bar{N}) =  \max_p d_G(R(p))$.  
So altogether $\hat{d}_{\hat{F}}(\bar{N}) = d_G(R)$.  Moreover,
it is shown in \cite{lub1} that $\hat{r}(G)=\hat{d}_{\hat{F}}(\bar{N}) $
for any minimal presentation of $G$, i.e. a presentation in which $d(F)=d(G)$
(see also Lemma \ref{saving}).
    The analogous property
for discrete presentations of finite groups is not known 
and fails for infinite groups (cf. \cite[p. 2]{relmod}).

The long standing open problem whether $d_F(N)=d_G(R)$ (see \cite[p. 4]{relmod})
 therefore has
an equivalent formulation:

\begin{question} \label{all}   Is $d_F(N)=\hat{d}_{\hat{F}}(\bar{N})$?
\end{question}

A variant of this question is even more interesting:

\begin{question}  \label{relations} Is $\hat{r}(G)=r(G)$?
\end{question}

Of course, a positive answer to Question \ref{all} would imply a positive
answer  to Question \ref{relations},  but not conversely.

A weaker version of Question \ref{all} is:

\begin{question} Given a presentation $G = F/N = \hat{F}/\bar{N}$ of the finite
group $G$, are there $d_{\hat{F}}(\bar{N})=d_G(R)$ elements of $N$ which
generate $\bar{N}$ as a closed normal subgroup of $\hat{F}$?
\end{question}

In light of the above discussion, it is not surprising that our results
in \cite{pap1} and in the current paper give better estimates for
profinite presentations of finite simple groups than for  discrete presentations.
Theorem A ensures that all finite simple groups have profinite presentations
with at most $18$ relations.   We investigate discrete presentations
in \cite{pap1} and \cite{pap3}.  In  \cite{pap3}, we worry less about
the total length of relations and  prove:

\begin{theorem}  \label{bounded simple} Every finite simple group, 
with the possible exception
of ${^2}\G_2(3^{2k+1})$, has a presentation with $2$ generators and at most
$100$ relations.
\end{theorem}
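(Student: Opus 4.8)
The plan is to combine three ingredients: uniformly bounded presentations of the quasisimple groups of Lie type of rank at most $2$, bounded presentations of the alternating groups, and the Curtis--Steinberg--Tits relations (Lemma \ref{curtis}) to glue these together for the higher-rank groups; the sporadic groups are then dispatched one at a time. Note that the cohomological machinery of the rest of the paper does not help here, since it controls only \emph{profinite} presentations -- we must exhibit genuine discrete presentations, and this is essentially the content of the companion paper \cite{pap3}.

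First I would record that $A_n$ and $S_n$ admit presentations with a bounded number of generators and relations, uniformly in $n$; indeed one obtains $3$ generators and at most $7$ relations, so by \cite[Lemma 2.1]{pap1} there are $2$-generator presentations with at most $9$ relations. Next, and this is the substantial case, I would produce presentations with a number of relations bounded independently of $q$ for the rank $1$ groups $\SL(2,q)$, $\Sz(q)$, $\PSU(3,q)$ -- the Ree groups ${^2}\G_2(3^{2k+1})$ being the one rank $1$ family where this is not known, hence the exception in the statement -- and for the rank $2$ groups $\PSL(3,q)$, $\PSp(4,q)$, $\G_2(q)$, ${^3}D_4(q)$, ${^2}\FFF(q)'$, $\PSU(4,q)$, $\PSU(5,q)$, together with $\SL(4,q)$ and $\SL(3,q)$ (built in turn from the rank $\le 2$ pieces by Lemma \ref{curtis}). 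The key point is that the field $\F_q$, presented as a ring via a primitive element, a defining polynomial and the Frobenius, needs only boundedly many relations; the Steinberg commutator relations among the (twisted) root subgroups over such a presented field then involve only boundedly many parameters. This is exactly the content of \cite{pap1} in the ``short'' category, and dropping the length requirement makes the construction cleaner -- in particular the twisted types ${^3}D_4$ and ${^2}\FFF$ are handled via their own Curtis--Steinberg--Tits data over the appropriate field.

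The gluing step then proceeds as in Sections \ref{SLgen} and \ref{classical}, but in the discrete category. For $\SL(n,q)$ with $n\ge 5$ I would present $A_n$ (as the even monomial matrices) and $\SL(4,q)$ (acting on the span of four basis vectors), adjoin the few relations that identify the common $A_4$ and force the relevant commutations, and invoke Lemma \ref{curtis} to see that the resulting group is $\SL(n,q)$; this costs $r(A_n)+r(\SL(4,q))+O(1)$ relations, a bound uniform in $n$ and $q$. For an arbitrary quasisimple group of Lie type and rank $n\ge 3$ one takes $G_1$ of type $A_{n-1}$ (a quotient of $\SL(n,q)$, now handled), $G_2$ a rank $2$ subgroup generated by the remaining fundamental roots -- at worst rank $3$, namely $\Sp(6,q)$ or $\SU(6,q)$ for $\FFF$ and ${^2}\EE_6$, which are themselves treated by one more application of the same idea -- glues along the common rank $1$ subgroup and the commutation $[L_1,L_2]=1$, and applies Lemma \ref{curtis}. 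Since $r(G_1)$ and $r(G_2)$ are bounded by absolute constants, so is $r(G)$; a careful accounting keeps the total below $100$, and passing from the simply connected group to the simple quotient costs only the $O(1)$ relations needed to kill the cyclic, bounded-rank centre. Finally, the finitely many sporadic simple groups and their covers have explicit presentations in the literature (\cite{rwilson} and the ATLAS) with far fewer than $100$ relations; one takes the maximum, or uses the maximal-subgroup argument of Section \ref{sporadic} for the few awkward cases.

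The hard part will be the second step: establishing, uniformly in $q$, bounded presentations of the rank $\le 2$ quasisimple groups of Lie type -- the book-keeping for $\F_q$ and for the Steinberg relations in the twisted types, and then tracking the relation count through the gluing, are exactly what must be done with care to reach the explicit constant $100$. Everything else is either classical (alternating and sporadic groups) or a routine application of the Curtis--Steinberg--Tits machinery already exploited in the profinite setting.
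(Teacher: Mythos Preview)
Your proposal is essentially correct, and you have identified the key point yourself: this theorem is not proved in the present paper at all --- it is simply quoted from the companion paper \cite{pap3}, immediately after the sentence ``In \cite{pap3}, we worry less about the total length of relations and prove:''. So there is no ``paper's own proof'' to compare against; the present paper uses this result only as a contrast to the profinite bounds obtained here.

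That said, the strategy you outline --- bounded discrete presentations for $A_n$, $S_n$ and the rank $\le 2$ groups of Lie type, then gluing via the Curtis--Steinberg--Tits relations (Lemma~\ref{curtis}) exactly as in Sections~\ref{SLgen} and~\ref{classical}, and handling the sporadics from known presentations --- is precisely the approach of \cite{pap1} and \cite{pap3} as described in the introduction, and your identification of the ``hard part'' (uniform-in-$q$ bounded presentations of the rank $\le 2$ groups, with the Ree groups ${^2}\G_2$ as the outstanding exception) is accurate. One small correction: in the discrete category Lemma~\ref{saving} is not available, so one cannot trade extra generators for fewer relations as freely as in the profinite arguments of this paper; the reduction to $2$ generators goes instead through \cite[Lemma 2.1]{pap1}, which costs $d(G)$ extra relations rather than saving any, and this must be absorbed into the final bookkeeping for the constant $100$.
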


Of course, both $18$ and $100$ are not optimal and indeed, as we
have already observed,  for many
groups we know much better bounds.  One may hope that $4$ is the right
upper bound for both types of presentations.  Indeed, there is no known
obstruction to the full covering group of a finite simple group having a
presentation with $2$ generators and $2$ relations  (see \cite{wilson}).

Let us now turn our attention to 
presentations (and cohomology) of general finite groups.

If $G = F/N$ is simple (and not ${^2}\G_2(q)$) with $F$
free and $d(F)=2$, 
then by the results of \cite{pap1}, $N$ can
be generated, as a normal subgroup of $F$, by $C$  words for some absolute 
constant $C$
(and the total length of the words used  can be bounded in
terms of $|G|$).   
Mann \cite{mann} showed that if every finite simple group can be presented
with $O(\log |G|)$ relations, then every finite group could be presented
with $O(d(G) \log |G|) \le O((\log|G|)^2)$ relations.      
Of course, in \cite{pap1},  we proved that simple
groups (with the possible exception of ${^2}\G_2(q)$) can be presented
with a bounded number of  relations -- but the better bound for simple groups 
does
not translate to a better bound for all groups.   Mann's argument  is valid also
in the profinite case and since there are no exceptions, we have:

\begin{theorem} \label{mann}  Let $G$ be a finite group.
\begin{enumerate} 
\item  If $G$  has no composition factors isomorphic to ${^2}G_2(3^{2k+1})$, 
then
$G$ has a presentation with  $O(d(G)\log|G|)$ relations.
\item  $G$ has a profinite presentation with $O(d(G)\log|G|)$ relations.
\end{enumerate}
\end{theorem}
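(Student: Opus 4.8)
The plan is to obtain both statements from Mann's theorem \cite{mann}: if every finite simple group has a presentation with $O(\log|S|)$ relations, then every finite group $G$ all of whose composition factors are of this kind has a presentation with $O(d(G)\log|G|)$ relations. The two inputs are already in hand. For part (1), Theorem \ref{bounded simple} (established in \cite{pap3}, refining \cite{pap1}) gives every finite simple group other than ${^2}\G_2(3^{2k+1})$ a presentation with $2$ generators and at most $100$ relations, which is certainly $O(\log|S|)$; so Mann's theorem applies to every finite group with no composition factor isomorphic to ${^2}\G_2(3^{2k+1})$. For part (2), Theorem A gives every finite quasisimple group, with no exceptions, a profinite presentation with $2$ generators and at most $18$ relations; combined with the fact (discussed below) that Mann's argument passes verbatim to the profinite category, this gives the conclusion for every finite group.

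To flesh out Mann's argument I would induct along a chief series $1 = G_0 \triangleleft G_1 \triangleleft \cdots \triangleleft G_n = G$ with each $G_i$ normal in $G$ and $V_i := G_i/G_{i-1}$ a chief factor; here $n = O(\log|G|)$ and $\sum_i \log|V_i| = \log|G|$. Assume inductively that $G/G_i$ has a presentation with $d(G/G_i) \le d(G)$ generators and $\rho_i$ relations, and splice in $V_i$ to build $G/G_{i-1}$. If $V_i \cong S^{k_i}$ is a nonabelian chief factor one uses the bounded presentation of $S$ from Theorem \ref{bounded simple} (resp.\ Theorem A) together with the facts that $V_i$ is normally generated in $G/G_{i-1}$ by a bounded number of elements (the action on the $k_i$ simple direct factors being transitive) and that the $k_i$-fold commutator relations reduce, modulo that action, to boundedly many; if $V_i$ is elementary abelian it is a simple $\F_p(G/G_{i-1})$-module, hence normally generated by one element. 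In either case the standard extension construction, followed by a collapse back to $d(G/G_{i-1})$ generators — free of charge in the profinite case by Lemma \ref{saving}, and at the cost of at most $d(G)$ extra relations in the discrete case by \cite[Lemma 2.1]{pap1} — yields $\rho_{i-1} \le \rho_i + O(d(G)) + O(\log|V_i|)$, where the $O(d(G))$ term records the relations describing how the $\le d(G)$ generators act on the new factor and the $O(\log|V_i|)$ term absorbs the presentation of $S$ (resp.\ the module relations of an abelian $V_i$). Summing over $i$ gives $\rho_0 = O(d(G)\,n) + O(\sum_i \log|V_i|) = O(d(G)\log|G|)$, which is (1); running the identical bookkeeping with free profinite groups, closed normal subgroups, and the facts recalled at the start of Section \ref{applications} in place of their discrete analogues gives (2).

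The main obstacle — and the only place requiring real work beyond quoting Theorems A and \ref{bounded simple} — is the splicing step: one must prevent the generating set from growing with $n$, since otherwise the cross terms in the extension construction would accumulate a second factor of $\log|G|$ and yield only the weaker bound $O((\log|G|)^2)$. This forces one both to normally generate each chief factor by a bounded number of elements — exploiting, for nonabelian chief factors, transitivity of the action on the simple direct factors and the bounded generation and bounded presentation length of simple groups, and, for abelian chief factors, the fact that they are simple modules and hence cyclic — and to apply the relation-saving lemmas (Lemma \ref{saving} profinitely, \cite[Lemma 2.1]{pap1} discretely) to return to a $d(G/G_i)$-generator presentation before each subsequent step. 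This is exactly the bookkeeping carried out in \cite{mann}; it is formal and survives passage to profinite completions, so that, since Theorem A leaves no exceptional simple groups, part (2) holds for every finite group while part (1) inherits only the family ${^2}\G_2(3^{2k+1})$ from Theorem \ref{bounded simple}.
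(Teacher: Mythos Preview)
Your proposal is correct and follows essentially the same approach as the paper: both reduce to Mann's theorem \cite{mann} and feed in the bounded presentation results (Theorem~\ref{bounded simple} for the discrete case, Theorem~A for the profinite case, where there are no exceptions). The paper in fact gives no proof beyond this citation; your additional sketch of Mann's chief-series induction is a reasonable elaboration, though the bookkeeping for abelian chief factors (controlling the module relations within $O(d(G)) + O(\log|V_i|)$) is somewhat delicate and is handled in full in \cite{mann} rather than here.
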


The example of an elementary abelian $2$-group shows that one can do no
better in general.  Results like the above have been used to count groups of a given
order (or perfect groups of a given order) and also for getting
results on subgroup growth.   Fortunately the profinite result is sufficient
for these types of results and so the Ree groups do not cause problems.
See \cite{lub1}.  

Using the reduction of \cite[Theorem 1.4]{bgkl} 
to simple groups for lengths of presentations,
one sees that:

 \begin{theorem}  Let $G$ be any finite group with no composition 
 factors isomorphic to ${^2}\G_2(q)$.  Then $G$ has a presentation
 of length $O((\log|G|)^3)$.  
 \end{theorem}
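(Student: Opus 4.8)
The plan is to obtain this as a direct consequence of the reduction theorem \cite[Theorem 1.4]{bgkl}, which bounds the length of a presentation of an arbitrary finite group in terms of the lengths of presentations of its non-abelian composition factors and a fixed power of $\log|G|$, fed with the short presentations of the finite simple groups produced in \cite{pap1}. In other words, all the work has already been done: one only needs to check that the hypotheses of the reduction apply to every $G$ with no composition factor isomorphic to ${^2}\G_2(3^{2k+1})$, and then read off the exponent.

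First I would recall the shape of the reduction. Fix a composition series of $G$; it has at most $\log_2|G|$ terms, and each non-abelian composition factor $S$ satisfies $|S|\le |G|$. The theorem of \cite{bgkl} states that if every non-abelian composition factor of $G$ has a presentation of length at most $\ell$, then $G$ has a presentation of length $O\!\left(\ell\log|G| + (\log|G|)^3\right)$; the cubic term is already forced by the solvable part of $G$ and by the cost of rewriting the relations of the factors in terms of generators of $G$. Second, I would invoke \cite{pap1}: every finite simple group other than ${^2}\G_2(3^{2k+1})$ has a bounded short presentation, so such a group $S$ has a presentation with boundedly many generators and relations, each of length $O(\log|S|)$, hence of total length $O(\log|S|)=O(\log|G|)$ when $S$ is a composition factor of $G$. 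Third, since by hypothesis $G$ has no composition factor isomorphic to ${^2}\G_2(3^{2k+1})$, the reduction applies with $\ell=O(\log|G|)$, and substituting gives a presentation of $G$ of length $O((\log|G|)^3)$, as claimed.

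The only genuine point requiring care — and the reason the Ree groups ${^2}\G_2(q)$ are excluded — is that the reduction of \cite{bgkl} consumes \emph{short} presentations of the composition factors: it is the total length, not merely the number of relations, that must be controlled before one can assemble a short presentation of the whole group. The presentations of ${^2}\G_2(3^{2k+1})$ provided in \cite{pap3} have boundedly many relations but are not known to be short, so they cannot serve as input here; this is exactly the obstruction that propagates the exclusion from Theorem \ref{bounded simple} and Theorem \ref{mann}(1) to the present statement. Beyond verifying this compatibility, no new idea is needed: the proof is the concatenation of \cite{pap1} with \cite[Theorem 1.4]{bgkl}.
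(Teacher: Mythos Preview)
Your proposal is correct and follows essentially the same route as the paper: invoke the reduction \cite[Theorem~1.4]{bgkl} to non-abelian composition factors, feed it the short presentations of length $O(\log|S|)$ from \cite{pap1} for all simple groups other than ${^2}\G_2(q)$, and read off the cubic bound. The paper's own proof is even terser---it simply notes that the result was already in \cite{bgkl} modulo the cases $\SU(3,q)$ and $\Sz(q)$, which \cite{pap1} has since covered---but your more explicit unpacking of the reduction and of why the Ree groups must be excluded is accurate and adds nothing substantively different.
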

 
 This is essentially in \cite{bgkl} aside from excluding $\SU(3,q)$
 and Suzuki groups (at the time of that paper it was not known that
 those groups had presentations with $\log |G|$ relations).
 As pointed out in \cite{bgkl},  the constant $3$ in the previous 
 theorem cannot be
 improved (by considering $2$-groups).

We now give some refinements of these results in the profinite setting.
We first prove some results about $H^2$.

We need to introduce some notation.    Recall that a chief factor $X$
of a finite group is a nontrivial section $A/B$ of $G$ where 
$B$ and $A$ are both
normal in $G$ and there is no normal subgroup of $G$ properly between $A$ and $B$.
Clearly $X$ is characteristically simple and so $X$ is either an elementary
abelian $r$-group for some prime $r$ or $X$ is isomorphic to a direct product
of copies of a nonabelian simple group and $G$ permutes these factors transitively.
There is an obvious definition of isomorphism of chief factors.  An
appropriate version of the  Jordan-H\"older
theorem   implies that the multi-set of chief factors coming from a maximal chain
of normal subgroups of $G$ is independent of the chain.

If $X$ is a nonabelian chief factor, let $s_p(X)$ denote the $p$-rank of
the Schur multiplier of a simple direct factor of $X$.  So $s_p(X) \le 2$
(and for $p > 3$, $s_p(X) \le 1$) \cite[pp. 312--313]{gls3}.   
Let $s_p(G)$ denote the sum of
the $s_p(X)$ as $X$ ranges over the nonabelian chief factors of $G$
(counting multiplicity).  
If $X$ is a chief factor of $G$ and is an elementary abelian $p$-group,  let
$\ell_p(X) = \log_p |X|$. Let $\ell_p(G)$ denote the sum of $\ell_p(X)$ as $X$
ranges over the chief factors of $G$ that are $p$-groups.

Define
$$
h_{p,1}(G) = \max\{1 + \dim H^1(G,V)/\dim V\},
$$
where $V$ is an irreducible $\F_pG$-module.  Note that this is always bounded
by $d(G)+1$ (or $d(G)$ if $V$ is nontrivial)  since a derivation
is determined by its images on a set of generators.  We can now prove:

\begin{theorem} \label{general h2}  Let $G$ be a finite group and $V$
an  $\F_pG$-module.   Then
$$ 
\dim H^2(G,V) \le (C + s_p(G) + h_{p,1}(G)\ell_p(G)) \dim V,
$$
where $C=18.5$ is the constant given in Theorem C.
\end{theorem}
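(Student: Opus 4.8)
The plan is to prove the bound by induction on $|G|$, after first reducing to the case that $V$ is irreducible. That reduction is immediate: applying Lemma~\ref{cohomology sequence} along a composition series of $V$, and noting that the coefficient $C+s_p(G)+h_{p,1}(G)\ell_p(G)$ depends only on $G$ and $p$ while both sides are additive in $\dim V$, it suffices to treat irreducible $V$. So assume $V$ is irreducible and $H^2(G,V)\ne 0$ (otherwise there is nothing to prove). If $G$ acts faithfully on $V$, then $V$ is a faithful irreducible $\F_pG$-module and Theorem~C gives $\dim H^2(G,V)\le C\dim V$, which is enough since $s_p(G),\ell_p(G)\ge 0$; this is the base case.

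Otherwise $K:=\ker(G\to\GL(V))\ne 1$, and I would pick a minimal normal subgroup $N$ of $G$ with $N\le K$, so that $N$ acts trivially on $V$ and $V^N=V$. Then apply Lemma~\ref{usual}(2):
$$
\dim H^2(G,V)\le\dim H^2(G/N,V)+\dim H^2(N,V)^G+\dim H^1(G/N,H^1(N,V)),
$$
bound the last two terms, and feed the first term into the inductive hypothesis (note $V$ stays irreducible as a $G/N$-module). The bookkeeping uses: the multiset of chief factors of $G$ is that of $G/N$ together with the single chief factor $N$; $h_{p,1}(G/N)\le h_{p,1}(G)$ by inflation--restriction on $H^1$; and $\dim H^1(G/N,W)\le(h_{p,1}(G/N)-1)\dim W$ for every $\F_p(G/N)$-module $W$, again by Lemma~\ref{cohomology sequence}.

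There are three cases. If $N$ is elementary abelian of order a power of $q\ne p$, then $\F_pN$ is semisimple, so the last two terms of the displayed inequality vanish, while $s_p(N)=\ell_p(N)=0$, and induction closes. If $N$ is nonabelian, say $N\cong T^k$ with $G$ permuting the factors transitively, then $N$ is perfect, so $H^1(N,V)=0$ (Lemma~\ref{perfect}) and the third term vanishes; Lemma~\ref{kunneth} identifies $H^2(N,V)$ with $\bigoplus_{i=1}^{k}H^2(T_i,\F_p)\otimes V$ as a $G/N$-module (the potential cross terms $H^1(T_i,\F_p)\otimes H^1(T_j,\F_p)$ vanish), and transitivity makes this an induced module, so Shapiro's Lemma (Lemma~\ref{shapiro}) gives $\dim H^2(N,V)^G\le\dim\bigl(H^2(T_1,\F_p)\otimes V\bigr)=s_p(N)\dim V$ (using Lemma~\ref{perfect} to read off $\dim H^2(T_1,\F_p)$), while $\ell_p(N)=0$. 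Finally, if $N$ is elementary abelian of rank $\ell=\ell_p(N)$, then $H^1(N,V)\cong N^*\otimes V$, so the third term is at most $(h_{p,1}(G/N)-1)\ell\dim V$; and by Lemma~\ref{H2 for abelian}(2) together with $\dim\Hom_G(N,V)\le\dim V$ ($N$ is cyclic) and Lemma~\ref{exterior square} applied to the irreducible module $N$ (so $\dim\Hom_G(\wedge^2 N,V)\le(\ell-1)\dim V$), the second term is at most $\ell\dim V$; hence $\dim H^2(G,V)\le\dim H^2(G/N,V)+h_{p,1}(G/N)\,\ell\,\dim V$, and since $\ell_p(G/N)+\ell=\ell_p(G)$ the inductive bound for $G/N$ gives the claim.

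I expect the main obstacle to be the two points where a naive estimate would introduce a quadratic loss, and where the precise (linear) definitions of $s_p$ and $\ell_p$ must be matched exactly: in the nonabelian case one must realize $H^2(N,V)$ as a $G/N$-module induced from $N_G(T_1)/N$ so that Shapiro's Lemma replaces the crude $k\,s_p(N)\dim V$ by $s_p(N)\dim V$; and in the elementary-abelian-$p$ case one must use Lemma~\ref{exterior square} so that $\wedge^2(N^*)$ contributes $(\ell-1)\dim V$ rather than $\binom{\ell}{2}\dim V$. Granting these, the remaining work is routine manipulation of the Lyndon--Hochschild--Serre inequality, Shapiro's Lemma, and the long exact sequence.
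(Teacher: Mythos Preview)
Your proposal is correct and follows essentially the same approach as the paper: reduce to irreducible $V$, handle the faithful case by Theorem~C, then peel off a minimal normal $N\le\ker(G\to\GL(V))$, apply Lemma~\ref{usual}(2), and treat the three cases for $N$ by induction. The minor presentational differences are that you justify $h_{p,1}(G/N)\le h_{p,1}(G)$ via the inflation--restriction injection on $H^1$ (the paper counts complements), and in the elementary-abelian-$p$ case you bound $\dim\Hom_G(N,V)\le\dim V$ uniformly using that $N$ is a cyclic $\F_pG$-module (your parenthetical ``$N$ is cyclic'' should say this explicitly; as written it could be misread as cyclic as a group), thereby avoiding the paper's case split on whether $V\cong N$.
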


\begin{proof}  Let $N$ be a minimal normal subgroup of $G$.
We first claim  that $h_{p,1}(G/N) \le h_{p,1}(G)$.
Let $W$ be an irreducible $\F_p(G/N)$-module which we may
consider as an $FG$-module.   Let $H=W.G$.   Then $N$ is normal
in $H$.  If $X$ is a complement to $W$ in $H/N$, then 
$Y$ is a complement to $W$ in $H$, where $Y$ is the preimage of
$X$ in $H$.  Thus, the number of complements of $W$ in
$H$ is at least the number of complements of $W$ in $H/N$.
So $\dim H^1(G/N,W)  \le \dim H^1(G,W)$, whence the claim.

It suffices to prove the theorem for $V$ irreducible.
If $G$ acts faithfully on $V$, this follows from Theorem C.
So we may assume that there is a minimal normal subgroup $N$ of $G$
that acts trivially on $V$.

By Lemma \ref{usual}, 
$$
\dim H^2(G,V) \le \dim H^2(G/N,V) + \dim H^2(N,V)^G + \dim H^1(G/N,H^1(N,V)).
$$

Suppose that $N$ is nonabelian.  Then $N$ is perfect, and so $H^1(N,V)=0$
by Lemma \ref{perfect}.
By Lemma \ref{kunneth}, $H^2(N,V) = \oplus H^2(L_i,V)$,
where $N$ is the direct product of the $L_i$.  Since $G$ permutes the
$L_i$ transitively,  it also permutes the $H^2(L_i,V)$, and so
 $H^2(N,V)^G$ embeds in $H^2(L,V)$ where $L \cong L_i$.
Since $V$ is a trivial module,  $\dim H^2(L,V) = \dim H^2(L,F) \dim V
=s_p(N)\dim V$.  So in this case, we have:
$\dim H^2(G,V) \le \dim H^2(G/N,V) + s_p(N)\dim V$ and the result
follows by induction.

Suppose that $N$ is abelian.  If $N$ is a $p'$-group, then the last two
terms in the inequality above are $0$ and the result follows.  So assume
that $N$ is an elementary abelian $p$-group.  Set $e=\ell_p(N)$.  By definition, 
$\dim  H^1(G/N,H^1(N,V)) \le (h_{p,1}(G)-1) e \dim V$.   
By induction, it suffices to show that  $\dim H^2(N,V)^G  \le e \dim V$.
By Lemma \ref{H2 for abelian}, 
$$
\dim H^2(N,V)^G \le \dim \mathrm{Hom}_G(N,V) +
\dim \mathrm{Hom}_G(\wedge^2(N),V).
$$   
If $V \cong N$, then clearly the number of composition factors
of $\wedge^2(N)$ isomorphic to $V$ is at most $(e-1)/2$,  and
so $\dim H^2(N,V)^G \le (e+1)/2 (\dim \mathrm{End_G(V)}) \le e \dim V$.
If $V$ is not isomorphic to $N$, then  $H^2(N,V)^G=0$, and so $\dim H^2(N,V)^G
\le \mathrm{Hom}_G(\wedge^2(N),V)$ and by Lemma \ref{exterior square}, 
$\dim H^2(N,V)^G  \le (e -1) \dim V$.   This completes the proof.
\end{proof}

Note that $s_p(G)$ is at most twice the number of nonabelian chief
factors of $G$.

If we only consider $d$-generated groups, then
as noted above, $h_{p,1} \le d+1$.  Indeed, $\dim H^1(G,V) \le (d-1) \dim V$
unless $V$ involves trivial modules.   So one has:

\begin{corollary} \label{d-general}  Let $G$ be a finite group
with $d(G) = d$  and $V$
an $\F_pG$-module.   Then $\dim H^2(G,V) \le (C + s_p(G) + (d+1)\ell_p(G)) \dim V$,
where $C=18.5$ is the constant given in Theorem C.
\end{corollary}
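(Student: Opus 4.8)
The plan is to read this off directly from Theorem \ref{general h2}, whose conclusion reads
\[
\dim H^2(G,V) \le \bigl(C + s_p(G) + h_{p,1}(G)\,\ell_p(G)\bigr)\dim V,
\]
so it suffices to prove that $h_{p,1}(G) \le d+1$ whenever $d(G)=d$. Recall that
$h_{p,1}(G) = \max_W\{\,1 + \dim H^1(G,W)/\dim W\,\}$, the maximum taken over irreducible $\F_pG$-modules $W$, so the task reduces to an elementary bound on $\dim H^1(G,W)$.

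For that step I would argue with cocycles directly. A derivation $f\colon G \to W$ satisfies $f(gh)=f(g)+g\,f(h)$, hence is completely determined by its values on any generating set of $G$. Choosing generators $g_1,\dots,g_d$, the map $f \mapsto (f(g_1),\dots,f(g_d))$ is an injective linear map $Z^1(G,W) \to W^{\oplus d}$, so $\dim Z^1(G,W) \le d\dim W$ and therefore $\dim H^1(G,W) \le d\dim W$. Consequently $1 + \dim H^1(G,W)/\dim W \le d+1$ for every irreducible $W$, i.e.\ $h_{p,1}(G) \le d+1$. Substituting this into the inequality of Theorem \ref{general h2} gives exactly $\dim H^2(G,V) \le (C + s_p(G) + (d+1)\ell_p(G))\dim V$.

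I would also record the sharper statement anticipated in the remark before the corollary: if $W$ has no trivial composition factor, then $W^G=0$, so the inner derivations $B^1(G,W)\cong W/W^G = W$ have dimension exactly $\dim W$, whence $\dim H^1(G,W) = \dim Z^1(G,W) - \dim W \le (d-1)\dim W$. This refinement is not needed for the stated bound but explains the parenthetical comment. I expect no genuine obstacle here: the corollary is simply Theorem \ref{general h2} combined with the observation that a derivation into a module is pinned down by its values on a set of generators.
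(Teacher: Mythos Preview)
Your proof is correct and follows exactly the paper's approach: apply Theorem \ref{general h2} and bound $h_{p,1}(G) \le d+1$ via the observation that a derivation is determined by its values on a generating set. Your additional remark about the $(d-1)\dim W$ refinement for modules without trivial composition factors also matches the paper's aside.
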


Now using (1.1), we can obtain results about profinite presentations.
Let $h_1(G)$ be the maximum of $h_{p,1}(G)$ over $p$, $\ell(G)$ the maximum
of the $\ell_p(G)$ and $s(G)$ the maximum of the $s_p(G)$.   The following
is a refinement of the results mentioned in the beginning of the section.

\begin{theorem} \label{gen-profinite}  Let $G$ be a finite group.
Then $\hat{r}(G) \le d(G) + C + s(G) + h_1(G)\ell(G)$, where $C - 1=18.5$ is the 
constant in Theorem C.   
In particular, if $d(G) \le d$, then $\hat{r}(G) \le d + C + s(G) + (d+1)\ell(G)$.
\end{theorem}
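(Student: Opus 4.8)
The plan is to derive Theorem~\ref{gen-profinite} as a direct consequence of Theorem~\ref{general h2} (the bound on $\dim H^2$) together with the profinite presentation formula (1.1) and the known bound (1.2) on $H^1$ for quasisimple groups. The key point is that (1.1) expresses $\hat r(G)$ as a supremum over primes $p$ and irreducible $\F_pG$-modules $M$ of
$$
\Big\lceil \frac{\dim H^2(G,M) - \dim H^1(G,M)}{\dim M}\Big\rceil + d(G) - \xi_M,
$$
so it suffices to bound, for each such $M$, the quantity $\dim H^2(G,M)/\dim M$ from above (we simply discard the $-\dim H^1(G,M)$ term, which only helps, and absorb the ceiling and the $-\xi_M$ correction into the constant).

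First I would invoke Theorem~\ref{general h2}: for any $\F_pG$-module $M$,
$$
\dim H^2(G,M) \le \big(C' + s_p(G) + h_{p,1}(G)\,\ell_p(G)\big)\dim M,
$$
where $C' = 18.5$ is the constant of Theorem~C (I write $C' = C-1$ to match the statement, where $C-1 = 18.5$). Dividing by $\dim M$ and plugging into (1.1), for each prime $p$ and each nontrivial irreducible $M$ the contribution to $\hat r(G)$ is at most
$$
\big\lceil C' + s_p(G) + h_{p,1}(G)\ell_p(G)\big\rceil + d(G) - 1 \le d(G) + C + s_p(G) + h_{p,1}(G)\ell_p(G),
$$
since $\lceil x \rceil \le x + 1$ and $C = C' + 1 = 19.5$. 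For the trivial module $M = \F_p$ the term $\xi_M = 0$, so the contribution is $\lceil \dim H^2(G,\F_p)/1 \rceil + d(G)$; here $\dim H^2(G,\F_p)$ counts $p$-rank contributions and is already bounded by $s_p(G) + \ell_p(G)$ (it is bounded by the same estimate used inside the proof of Theorem~\ref{general h2}, running induction with $V = \F_p$), so this case is no worse. Taking the supremum over $p$ and replacing $s_p(G), \ell_p(G), h_{p,1}(G)$ by their maxima $s(G), \ell(G), h_1(G)$ gives $\hat r(G) \le d(G) + C + s(G) + h_1(G)\ell(G)$.

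The second, ``in particular'' clause follows because if $d(G) \le d$ then a derivation from $G$ to any $\F_pG$-module $V$ is determined by its values on a fixed generating set of size $d$, so $\dim H^1(G,V) \le d\dim V$, hence $h_{p,1}(G) \le d+1$ for all $p$ and $h_1(G) \le d+1$; substituting gives $\hat r(G) \le d + C + s(G) + (d+1)\ell(G)$. There is no real obstacle here: the work has all been done in Theorem~C and Theorem~\ref{general h2}, and this statement is purely a bookkeeping translation via (1.1). The only mild care needed is the handling of the ceiling function and the trivial-module case, and ensuring the constants line up ($C = 19.5$ versus $C - 1 = 18.5$), which I would make explicit in the write-up so the reader sees where each unit comes from (one from $d(G)$ being added, one from the ceiling, $18.5$ from Theorem~C).
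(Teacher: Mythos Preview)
Your approach is exactly what the paper intends: the theorem is stated there without proof, as an immediate consequence of (1.1) and Theorem~\ref{general h2}, and your derivation via the supremum formula is the right one.

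There is one slip. In handling the trivial module you assert that $\dim H^2(G,\F_p)$ is bounded by $s_p(G) + \ell_p(G)$. This is false: take $G = (C_p)^2$, where $\dim H^2(G,\F_p) = 3$ by Lemma~\ref{H2 for abelian}(3) while $s_p(G) + \ell_p(G) = 0 + 2 = 2$. What re-running the induction of Theorem~\ref{general h2} with $V = \F_p$ actually gives (since the base case $G=1$ contributes nothing) is $\dim H^2(G,\F_p) \le s_p(G) + h_{p,1}(G)\ell_p(G)$; the factor $h_{p,1}(G)$ on $\ell_p(G)$ cannot be dropped. With this correction the trivial-module contribution in (1.1) is at most $d(G) + s_p(G) + h_{p,1}(G)\ell_p(G) \le d(G) + C + s(G) + h_1(G)\ell(G)$, and your conclusion stands. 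Alternatively, you can avoid the separate argument altogether: Theorem~\ref{general h2} applies verbatim to the trivial module, and since $\dim H^2(G,\F_p)$ is an integer the ceiling costs nothing, so the trivial case gives the same bound as the nontrivial one without needing the $-\xi_M$ saving. Either way the proof goes through; just fix or delete the incorrect side-claim.
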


This improves Theorem \ref{mann} in the profinite setting  since $s(G)$ and $\ell(G)$
are  bounded above by  $\log_2 |G|$ and $h_1(G) \le d(G)+1$.

We mention some special cases that are a bit surprising.

\begin{corollary} \label{no abelian}  Let $G$ be a finite group with 
no abelian composition factors.  Then $\hat{r}(G) \le d(G) + 19 + 2s$
where $s$ is the number of chief factors of $G$.
\end{corollary}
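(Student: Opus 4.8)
The plan is to read this off from Theorem \ref{gen-profinite} once the hypothesis has been used to eliminate every abelian contribution. First I would observe that if $G$ has no abelian composition factors, then $G$ has no abelian chief factors either: refining an elementary abelian chief factor to a composition series would produce composition factors of prime order, which are abelian. Consequently no chief factor of $G$ is a $p$-group, so $\ell_p(G)=0$ for every prime $p$ and hence $\ell(G)=0$. Substituting this into the estimate $\hat{r}(G)\le d(G)+C+s(G)+h_1(G)\ell(G)$ of Theorem \ref{gen-profinite}, in which $C=19.5$, makes the last term vanish and leaves $\hat{r}(G)\le d(G)+19.5+s(G)$; since $\hat{r}(G)$, $d(G)$ and $s(G)$ are non-negative integers, this sharpens to $\hat{r}(G)\le d(G)+19+s(G)$.

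Next I would bound $s(G)$ in terms of $s$. By definition $s(G)=\max_p s_p(G)$, and $s_p(G)$ is the sum, over the nonabelian chief factors $X$ of $G$ counted with multiplicity, of the numbers $s_p(X)$; moreover $s_p(X)\le 2$ for every such $X$ by the known structure of the Schur multipliers of the finite simple groups \cite[pp. 312--313]{gls3}. Under the present hypothesis all $s$ chief factors of $G$ are nonabelian, so $s_p(G)\le 2s$ for each $p$ and therefore $s(G)\le 2s$. Combining this with the inequality of the previous paragraph yields $\hat{r}(G)\le d(G)+19+2s$, which is the assertion.

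There is really no serious obstacle here, since all of the substantive work is already contained in Theorem \ref{general h2}, Theorem \ref{gen-profinite} and the formula (1.1); the only minor point to be careful about is the rounding, namely that the half-integer constant $19.5$ coming from Theorem \ref{gen-profinite} may be replaced by $19$ precisely because $\hat{r}(G)$ takes integer values while the remaining terms $d(G)$ and $s(G)$ are themselves integers. If one prefers to avoid invoking Theorem \ref{gen-profinite}, the same argument goes through directly: feed the bound of Theorem \ref{general h2} (with $\ell_p(G)=0$) into (1.1), noting $\dim H^1(G,M)\ge 0$, to get $\hat{r}(G)\le d(G)+\lceil 18.5+s_p(G)\rceil=d(G)+19+s_p(G)$, and then bound $s_p(G)\le 2s$ as above.
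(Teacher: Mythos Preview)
Your argument is correct and matches the paper's intended derivation: the paper states the corollary without proof, as an immediate consequence of Theorem \ref{gen-profinite}, and your route---observing $\ell(G)=0$, bounding $s(G)\le 2s$ via $s_p(X)\le 2$, and handling the half-integer constant by integrality of $\hat{r}(G)$---is exactly what is needed. Your alternative direct appeal to (1.1) with the ceiling $\lceil 18.5+s_p(G)\rceil=19+s_p(G)$ is in fact the cleaner way to see where the $19$ comes from, and is closer to how Theorem \ref{gen-profinite} itself is obtained.
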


\begin{corollary} \label{no schur}  Let $G$ be a finite group with 
no abelian composition factors and no composition factors that have
a nontrivial Schur multiplier.  Then $\hat{r}(G) \le d(G) + 19$.
\end{corollary}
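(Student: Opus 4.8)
The plan is to read the statement off Theorem~\ref{gen-profinite}: all the substantive work has already been carried out there (and, behind it, in Theorem~C and Lemma~\ref{H2 for abelian}), so what remains is only to check that, under the two hypotheses, the two correction terms $s(G)$ and $h_1(G)\ell(G)$ appearing in that bound both vanish.

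First I would translate the hypotheses into statements about chief factors. Every chief factor $X$ of $G$ is characteristically simple, hence either elementary abelian of order $r^{k}$ for some prime $r$, or a direct product of copies of a nonabelian simple group $L$; in the former case the composition factors of $X$ are all $\cong C_r$, in the latter they are all $\cong L$. Refining a chief series to a composition series and invoking Jordan--H\"older, the composition factors of $G$ are exactly these $C_r$'s and $L$'s. Thus ``$G$ has no abelian composition factor'' forces ``$G$ has no abelian chief factor'', which gives $\ell_p(G)=0$ for every prime $p$, hence $\ell(G)=0$ and $h_1(G)\ell(G)=0$. Likewise, for each nonabelian chief factor $X$ the simple direct factor $L$ of $X$ is a composition factor of $G$, so by the second hypothesis $L$ has trivial Schur multiplier and $s_p(X)=0$ for every $p$; summing over the nonabelian chief factors yields $s_p(G)=0$ for all $p$, hence $s(G)=0$.

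Finally I would substitute $s(G)=0$ and $h_1(G)\ell(G)=0$ into Theorem~\ref{gen-profinite}, obtaining $\hat r(G)\le d(G)+C$ with $C-1=18.5$, i.e.\ $\hat r(G)\le d(G)+19.5$; since $\hat r(G)$ and $d(G)$ are integers this forces $\hat r(G)\le d(G)+19$. There is no genuine obstacle here: the only mildly delicate points are the routine dictionary between composition factors and chief factors and the harmless final rounding. (If one preferred to avoid even that, one could instead quote Theorem~C directly for the faithful irreducible modules occurring in formula~(1.1) and Theorem~\ref{general h2} with $s_p=\ell_p=0$ for the remaining ones, but citing Theorem~\ref{gen-profinite} as a black box is cleanest.)
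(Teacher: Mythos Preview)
Your proof is correct and is exactly the approach the paper intends: the corollary is stated without proof immediately after Theorem~\ref{gen-profinite} as a special case, and your argument---checking that the hypotheses force $\ell(G)=0$ and $s(G)=0$, then substituting into Theorem~\ref{gen-profinite} and rounding---is precisely what is needed. The translation between composition factors and chief factors and the final integrality step are both handled correctly.
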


It is not clear that the previous result is true for discrete presentations
and may suggest a strategy for proving that one does not always have
$r(G)=\hat{r}(G)$.


\begin{thebibliography}{ATLAS}

\bibitem{hjl} H. Andersen, J. Jorgensen and P. Landrock,  
The projective indecomposable 
modules of ${\rm SL}(2,\,p\sp{n})$, Proc. London Math. Soc. 46  (1983), 38--52.
 
\bibitem{aschbook} M. Aschbacher,  Finite Group Theory,
 Cambridge Studies in Advanced Mathematics, 
10. Cambridge University Press, Cambridge, 1993.

\bibitem{aschclassical} M. Aschbacher,  On the maximal subgroups of the finite 
classical groups,
 Invent. Math.  76  (1984),  469--514.

\bibitem{ag}  M.~Aschbacher and R.~Guralnick. {Some applications of
the first cohomology group,} J.~Algebra 90 (1984),
446--460.

   

\bibitem{bgkl} L.~Babai, A.J.~Goodman, W. M. Kantor, E.M.~Luks and
P.P.~P\'alfy,  {Short presentations for finite groups.} J.~Algebra
$194$ (1997),  79--112.


\bibitem{baba}  A. Babakhanian, Cohomological Methods in Group
Theory, Marcel Dekker, New York, 1972.

\bibitem{bell}  G. Bell, On the cohomology of the finite special linear groups. I, II,
J.  Algebra  54  (1978), 216--238, 239--259. 

\bibitem{benson}  D. Benson,  Representations and Cohomology. I. 
Basic Representation Theory of Finite Groups and Associative Algebras, 
Second edition, Cambridge Studies in Advanced Mathematics, 30,
Cambridge University Press, Cambridge, 1998.

\bibitem{ber} A. Bereczky, Maximal overgroups of Singer elements in classical groups,
J. Algebra  234  (2000),  187--206. 

\bibitem{brown} K. Brown, Cohomology of Groups,
Graduate Texts in Mathematics, 87,  Springer-Verlag, New York-Berlin, 1982.

 
\bibitem{burk}
  R. Burkhardt, Die Zerlegungsmatrizen der Gruppen $\PSL(2,p^{f})$,  J.
Algebra 40 $(1976)$, $75 - 96$.


%\bibitem{carm}  R. D.  Carmichael, 
%Introduction to the Theory of Groups of Finite Order.
%Ginn, Boston  1937.

%\bibitem{crw}  C. Campbell, E. Robertson and P. Williams,
%Presentations of $\PSL(2,p^n)$,  J. Austral. Math. Soc. 
%(Series A) 48 (1990), 333--346.

\bibitem{cps}  E. Cline, B. Parshall and L. Scott,
Reduced standard modules and cohomology, 
Trans. Amer. Math. Soc., to appear.

 
\bibitem{curtis}
C. W. Curtis,
  Central extensions of 
groups of {Lie} type, 
   J. reine  angew. Math.   220  (1965) 
174--185.

\bibitem{dornhoff}  L.  Dornhoff, Group Representation Theory. Part B: 
Modular Representation Theory. Pure and Applied Mathematics, 7,  
Marcel Dekker, New York, 1972. 


\bibitem{feit} W. Feit, The representation theory of finite groups,
 North-Holland Mathematical Library, 25. North-Holland Publishing Co., 
Amsterdam-New York, 1982.

\bibitem{gls3} D.~Gorenstein, R.~Lyons, and R.~Solomon.  {The
Classification of the Finite Simple Groups, Number 3.}
Amer.~Math.~Soc.~Surveys and Monographs ${40, \# 3}$ (1998).


\bibitem{relmod} K. Gruenberg,  Relation Modules of Finite Groups.
 Conference Board of the Mathematical Sciences Regional Conference 
Series in Mathematics, No. 25. Amer. Math. Soc., 
Providence, R.I., 1976. 

\bibitem{gur}  R. Guralnick, Generation of simple groups, 
 J. Algebra  103  (1986),  381--401.

\bibitem{gurhoff} R. Guralnick and C. Hoffman,  
The first cohomology group and generation of simple groups,
Groups and geometries (Siena, 1996), 81--89, Trends Math.,
Birkh\"auser, Basel, 1998. 

\bibitem{gurkan}  R. Guralnick and W. M. Kantor, Probabilistic generation of 
finite simple groups, 
 J. Algebra  234  (2000), 743--792. 

 \bibitem{pap1}  R. Guralnick, W. M. Kantor, M. Kassabov and A. Lubotzky,
Presentations of finite simple groups: a quantitative approach, 
J. Amer. Math. Soc., to appear.
 

\bibitem{pap3}  R. Guralnick, W. M. Kantor, M. Kassabov and A. Lubotzky,
Presentations of finite simple groups: a computational approach,
preprint.

\bibitem{gurkim}  R. Guralnick and W. Kimmerle, On the 
cohomology of alternating and symmetric groups and
decompositions of relation modules, J. Pure Appl. Algebra
69 (1990), 135--140.

\bibitem{gpps}  R. Guralnick, T. Penttila, C. Praeger  and J. Saxl,
Linear groups with orders having certain large prime divisors,
Proc. London Math. Soc.  78  (1999),   167--214.

\bibitem{gursaxl} R. Guralnick and J. Saxl,
 Generation of finite almost simple groups by conjugates,
J. Algebra  268  (2003),  519--571.

\bibitem{gurtiep}  R. Guralnick and P. Tiep,  Some bounds on $H^2$,
in preparation.

\bibitem{hartley}  R. Hartley,  
Determination of the ternary collineation groups whose coefficients 
lie in the ${\rm GF}(2\sp n)$, 
Ann. of Math.  27 (1925),  140--158.

\bibitem{hoffman} C. Hoffman, On the cohomology of the finite Chevalley groups, 
J. Algebra  226  (2000),  649--689.

\bibitem{holt1} D. F.  Holt, Exact sequences in cohomology and an 
application, J. Pure and Applied Algebra, 18 (1980), 143--147.

\bibitem{holt} D. F.  Holt,  On the second cohomology group of a finite
group, Proc.~London Math.~Soc. 55 (1987), 22--36.

\bibitem{korlu} I. Korchagina and A. Lubotzky, 
On presentations and second cohomology of some finite simple
groups, Publ. Math. Debrecen 69 (2006), 
341--352.

\bibitem{landazuri} V. Landazuri and G. Seitz, On the minimal degrees of 
projective representations of the finite Chevalley groups,  J. Algebra  32  (1974), 
418--443. 

\bibitem{lub1}  A.~Lubotzky, {Enumerating boundedly generated finite groups,}
J. Algebra 238 (2001),  194--199.

\bibitem{lub2}  A.~Lubotzky,  {Pro-finite presentations,} 
J. Algebra 242 (2001), 672--690.

\bibitem{lub3}  A.~Lubotzky,
{Finite presentations of adelic groups, the congruence kernel
and cohomology of finite simple groups },  Pure Appl. Math. Q.  1  (2005),   241--256.

\bibitem{lubsegal} A.~Lubotzky and D. Segal,  Subgroup Growth,
 Progress in Mathematics 212,  Birkh\"auser Verlag, Basel, 2003.

\bibitem{maclane} S. MacLane, Homology,  Springer-Verlag, Berlin, 1963.

\bibitem{malle}  G. Malle,  The maximal subgroups of ${}\sp 2F\sb 4(q\sp 2)$,
J. Algebra 139 (1991),  52--69. 

\bibitem{mann} A. Mann, Enumerating finite groups and their defining relations,  
J. Group Theory  1  (1998), 59--64.

 
\bibitem{mitchell}  H. Mitchell, 
 Determination of the ordinary and modular ternary linear groups,
   Trans. Amer. Math. Soc.  12  (1911),   207--242.
   
%\bibitem{bn}  B.H.  Neumann, 
%On some finite groups with trivial multiplicator,
%Publ. Math. Debrecen  4  (1956), 190--194. 

\bibitem{neumann}  P. M. Neumann,  An enumeration theorem for finite groups,
Quart. J. Math. Oxford 20 (1969), 395--401.

\bibitem{luxstudent}  R. Pawlowksi,  Computing the Cohomology Ring and 
Ext-Algebra of Group
Algebras,  Ph. D. Thesis, University of Arizona, 2006.

 

\bibitem{serre}  J.-P. Serre, 
Galois Cohomology, Translated from the French by Patrick Ion 
and revised by the author, 
 Springer Monographs in Mathematics, 
Springer-Verlag, Berlin, 2002.

 
\bibitem{steinberg}
R.~Steinberg,
   Generators, relations and coverings of algebraic groups, II.
  J. Algebra 71  (1981)  527--543.
  
  \bibitem{sunday}
  J. G. Sunday,
Presentations of the 
groups ${\rm SL}(2,\,m)$ and ${\PSL}(2,\,m)$,
Canad. J. Math. 24 
(1972)  1129--1131.

\bibitem{suzuki1}  M. Suzuki,  On a class of doubly transitive groups,  
Ann. of Math.  
 75  (1962). 105--145.  

\bibitem{suzuki2} M. Suzuki, On a class of doubly transitive groups II,  
Ann. of Math.  
79  (1964), 514--589.

\bibitem{willems} W. Willems,  On irreducible faithful modules and 
their cohomology,
Bull. London Math. Soc. 23 (1991),  75--77.

\bibitem{wall}   G. Wall, Some applications of the Eulerian functions of a finite group,
J. Austral. Math. Soc. 2 (1961/1962),  35--59.

\bibitem{wilson}  J. Wilson,  Finite axiomatization of finite soluble groups,
 J. London Math. Soc. 74 (2006),  566-582.

\bibitem{rwilson} R. Wilson,  http://brauer.maths.qmul.ac.uk/Atlas/v3/.

\end{thebibliography}
\end{document}